\documentclass{article}
\usepackage{amsmath}
\usepackage{amsfonts}
\usepackage{amsthm}
\usepackage{amssymb}
\usepackage{enumerate}
\usepackage{ytableau}

\newtheorem{thm}{Theorem}[section]
\newtheorem{prop}[thm]{Proposition}
\newtheorem{lem}[thm]{Lemma}
\newtheorem{cor}[thm]{Corollary}

\theoremstyle{definition}
\newtheorem{defn}[thm]{Definition}
\newtheorem{rmk}[thm]{Remark}
\newtheorem{ex}[thm]{Example}

\begin{document}

\title{Generalized Higher Specht Polynomials and Homogeneous Representations of Symmetric Groups}

\author{Shaul Zemel}

\maketitle


\section*{Introduction}

Consider, inside the ring $\mathbb{Q}[\mathbf{x}_{n}]$ of polynomials in $n$ variables (over $\mathbb{Q}$), the part $\mathbb{Q}[\mathbf{x}_{n}]_{d}$ that is homogeneous of degree $d$, as a representation of the symmetric group $S_{n}$. When asking how many times the irreducible representation associated with a partition $\lambda \vdash n$ (also known as the Specht module $\mathcal{S}^{\lambda}$) shows up in it, one finds two answers, both of which following quite quickly from classical considerations appearing in \cite{[St2]}.

One answer is the number of semi-standard Young tableaux of shape $\lambda$ and entry sum $d$, as seen in Proposition \ref{Kostka} below. For the other formula, one counts pairs consisting of cocharge tableaux (or equivalently standard Young tableaux) of shape $\lambda$ and multi-sets whose elements sum to $d$ with an appropriate containment condition---see Proposition \ref{Adlambda} below for the precise formulation. One goal of this paper is to ``lift'' both formulae into direct sums of explicit copies of each Specht module $\mathcal{S}^{\lambda}$, whose sum over $\lambda \vdash n$ yields the full representation in question.

\medskip

We recall from \cite{[B]} that the ring $R_{n}$ of co-invariants of the action of $S_{n}$ on $\mathbb{Q}[\mathbf{x}_{n}]$ is isomorphic, as a graded ring, to the cohomology ring of the flag variety (from this fact stems the theory of Schubert polynomials, as well as other vast and beautiful theories in Algebraic Geometry and Algebraic Combinatorics). As a representation of $S_{n}$, it is isomorphic to the regular one. Decomposing it into irreducible components is carried out using the \emph{higher Specht polynomials}, constructed in \cite{[ATY]}, generalizing the definition of the classical Specht polynomials from \cite{[Pe]} and others.

The paper \cite{[HRS]} considered more general quotients $R_{n,k}$, for $1 \leq k \leq n$, and show that they are related to the action of $S_{n}$ on ordered partitions of the set $\mathbb{N}_{n}$ of integers between 1 and $n$ into $k$ sets. The paper \cite{[GR]} constructs higher Specht polynomial bases for these representations, and in the prequel \cite{[Z1]} to the current paper we decomposed $R_{n,k}$ into sub-representations associated with the orbits of $S_{n}$ in that action, in a way that respects these higher Specht polynomial bases. These orbits are determined by the sizes of the sets, and if the composition of these sizes is associated with a subset $I\subseteq\mathbb{N}_{n-1}$, of size $k-1$, then to the resulting representation $R_{n,I}$ we also constructed a homogeneous counterpart $R_{n,I}^{\mathrm{hom}}$.

\medskip

The quotients $R_{n,k}$ are, in fact, the coarsest in a larger family of quotients $R_{n,k,s}$ for $0 \leq s \leq k$, which may in fact be defined for any $0 \leq s\leq\min\{n,k\}$, without the condition $k \leq n$ (see Remark \ref{kngen} below). These also admit higher Specht polynomial bases as in \cite{[GR]} (in fact, the construction for $R_{n,k}$ in that reference is done by induction on $s$), and the construction from \cite{[PR]} of $R_{n,k}$ as the cohomology ring of some algebraic variety extends also to $R_{n,k,s}$ for every such $n$, $k$, and $s$ (as does the connection with the Delta conjecture presented in \cite{[HRW]}).

In this paper we extend the construction from \cite{[Z1]} to the more general rings $R_{n,k,s}$, and in particular to the quotient $R_{n,k,0}$ of $\mathbb{Q}[\mathbf{x}_{n}]$ by the $k$th powers $x_{i}^{k}$ of all the variables. This is done by allowing some of the sets in the ordered partitions to be empty (Lemma \ref{propOPnks} compares this point of view with the one from \cite{[HRS]}), and then the index set $I$ from \cite{[Z1]} can be a multi-set, and the composition may be a weak composition. We show how for $s=0$ the direct sum of the $R_{n,I}$'s in this setting coincides with that of the homogeneous representations $R_{n,I}^{\mathrm{hom}}$, using which we establish one of the liftings (this is achieved in Theorem \ref{Rnksdecom} and Corollary \ref{multCI} below).

\medskip

The most direct construction of the higher Specht polynomials is based on a standard Young tableau $T$ and a cocharge tableau $C$ of the same shape, as presented in, e.g., Definition \ref{sets} below (these also known, in a slightly different normalization, as quasi-Yamanouchi tableaux---see, e.g., \cite{[AS]} and \cite{[BCDS]}, though for us it is important to allow 0 to show up in such tableaux). However, one observes that the only property of $C$ that is required for the construction to work is it being semi-standard. This allows for constructing a \emph{generalized higher Specht polynomial} $F_{M,T}$ for every such $T$ and every semi-standard Young tableau $M$ of the same shape as $T$, and these again span irreducible representations. We show in Theorem \ref{FMTdecom} below how these can be used to yield the second lifting from above.

The construction of the higher Specht polynomials appears in the literature in various normalizations. We showed in \cite{[Z1]} that the one we work with here is adapted for compatibility when passing from $n$ to $n+1$. This is the same for the generalized ones, as Proposition \ref{forstab} below shows that given $T$ and $M$ as above, there are explicit tableaux $\iota T$ and $\hat{\iota}M$, now with $n+1$ boxes and similar properties, such that substituting $x_{n+1}=0$ in the generalized higher Specht polynomial $F_{\hat{\iota}M,\iota T}$ yields $F_{M,T}$ back again. This allows us to construct stable versions of these polynomials and their representations in infinitely many variables, as we will do in the sequel \cite{[Z2]} to this paper.

\medskip

We now recall that \cite{[HRS]} defines star and bar insertions, which take ordered partitions of $n$ to ordered partitions of $n+1$. The effect of these operations, at the last location, on the representations $R_{n,I}$ and $R_{n,I}^{\mathrm{hom}}$, where $I$ is a set, were investigated in \cite{[Z1]}. Here we extend this analysis to the multi-set case for the homogeneous representations, where the bar operation yields the same induction formula as in \cite{[Z1]}, while the star operation becomes trickier due to the possibly empty last set (see Proposition \ref{mapsmulti} and Remark \ref{nostarbar} below).

We also recall the theory of representation stability from \cite{[CF]} and \cite{[Fa]}, with the relations to FI-modules in \cite{[CEF]}, to other categories in \cite{[SS]}, and to the notion of central stability from \cite{[Pu]}. This notion considers, for each $n$, a representation $U_{n}$ of $S_{n}$, such that $U_{n}$ embeds into $U_{n+1}$ over $\mathbb{Q}[S_{n}]$, and for large enough $n$ it generates it over $\mathbb{Q}[S_{n+1}]$. It follows that for every multi-set $I$, the representations $\{R_{n,I}^{\mathrm{hom}}\}_{n}$ have this property with explicit embeddings (these are based on the maps taking $F_{M,T}$ to $F_{\hat{\iota}M,\iota T}$ as above), using which one can define limits, which will be considered in more detail in \cite{[Z2]} (see Remark \ref{repstab} below). In fact, the representations $\{R_{n,I}\}_{n}$ are also stable for every such $I$, but they produce well-defined limit only when $I$ is a set, as Remark \ref{onlysetsRnI} below explains.

Similar operations are also defined on the full representations $\{\mathbb{Q}[\mathbf{x}_{n}]_{d}\}_{n}$, as well as on their sub-representations that are supported on polynomials of a given content, and they have properties of the same kind, as Theorem \ref{opersVM}, Corollary \ref{ExtVMlim}, and Remark \ref{repstab} below show.

\medskip

This paper is divided into 3 sections. Section \ref{DMSRep} reviews the notions involving weak compositions, multi-sets, ordered partitions with possibly empty sets, and the quotients $R_{n,k,s}$, and presents the two formulae for the multiplicity of $\mathcal{S}^{\lambda}$ in $\mathbb{Q}[\mathbf{x}_{n}]_{d}$. In Section \ref{GenReps} we recall the higher Specht polynomials, define the generalized ones and the associated representations, introduce the representations $R_{n,I}$ and $R_{n,I}^{\mathrm{hom}}$ for multi-sets $I$, and establish the two liftings. Finally, Section \ref{OpersReps} considers the maps between these representations, and investigate the resulting properties.

\medskip

I am grateful to B. Sagan, B. Rhoades, M. Gillespie, and S. van Willigenburg, for their interest, for several encouraging conversations on this subject, and for helpful comments. I am indebted, in particular, to D. Grinberg for a detailed reading of previous versions, for introducing me to several references, and for numerous suggestions which drastically improved the presentation of this paper.

\section{Descents, Multi-Sets, and Representations \label{DMSRep}}

In \cite{[Z1]} we considered several constructions involving compositions of $n$ and subsets of the set $\mathbb{N}_{n-1}$ of integers between 1 and $n-1$, where the subsets may be viewed as increasing sequences. Here we will require more general notions, for which we recall that a \emph{weak composition} of $n$ is a (finite) sequence $\alpha$ of non-negative integers, the sum of which is $n$. We shall write the sequence as $\{\alpha_{h}\}_{h=0}^{k-1}$, where the integer $k$ is the \emph{length} $\ell(\alpha)$ of $\alpha$ (which means that we view weak compositions which differ only by the number of zeros at the end as distinct), and we use the notation $\alpha\vDash_{w}n$ for this situation.

For extending the bijection $\operatorname{comp}_{n}$ between compositions and subsets (or increasing sequences), as showing up in, e.g., Lemma 1.5 of \cite{[Z1]}, we need to consider \emph{multi-sets}, whose elements can be from the larger set $\{0\}\cup\mathbb{N}_{n}$ (including 0 and $n$), and each element can show up with some non-negative (finite) multiplicity. These can be considered as \emph{non-decreasing} sequences of non-negative integers that are bounded by $n$. The \emph{size} of such a multi-set is, as usual, the sum of the multiplicities of its elements.

The extension of Lemma 1.5 to this setting is as follows.
\begin{lem}
Fix $n\geq1$, and consider a multi-set $J$ as above, of size $k-1$, which we write in a non-decreasing as $\{j_{h}\}_{h=1}^{k-1}$, and extend with $j_{0}=0$ and $j_{k}=n$. Then the sequence of integers $\{j_{h+1}-j_{h}\}_{h=0}^{k-1}$ is a weak composition $\operatorname{comp}_{n}J\vDash_{w}n$ of length $k$, and given any $\alpha\vDash_{w}n$ with $\ell(\alpha)=k$, setting $j_{0}=0$ and then $j_{h+1}=j_{h}+\alpha_{h}$ by induction on $h$ produces, after removing $j_{0}$ and $j_{k}=n$, the unique multi-set $J$ with $\alpha=\operatorname{comp}_{n}J$. \label{mswcomp}
\end{lem}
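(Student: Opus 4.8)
The plan is to verify that the two maps described in the statement—one sending a multi-set $J$ to the sequence of consecutive differences, the other building up partial sums from a weak composition—are well-defined and mutually inverse. This is essentially a bookkeeping argument: the content is in checking that each construction lands in the claimed target set and that composing them in either order gives the identity.

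First I would treat the forward direction. Given the multi-set $J$ of size $k-1$ written non-decreasingly as $\{j_h\}_{h=1}^{k-1}$ with entries in $\{0\}\cup\mathbb{N}_n$, and appending $j_0=0$ and $j_k=n$, I note that the full sequence $\{j_h\}_{h=0}^{k}$ is still non-decreasing: the only new comparisons are $j_0=0\leq j_1$ (since $j_1\geq0$) and $j_{k-1}\leq n=j_k$ (since all entries are bounded by $n$). Hence each difference $\alpha_h:=j_{h+1}-j_h$ is a non-negative integer, there are exactly $k$ of them (indices $h=0,\dots,k-1$), and their telescoping sum is $j_k-j_0=n$. So $\operatorname{comp}_n J\vDash_w n$ with length $k$, as claimed.

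Next I would treat the reverse direction. Starting from $\alpha\vDash_w n$ with $\ell(\alpha)=k$, define $j_0=0$ and $j_{h+1}=j_h+\alpha_h$ for $h=0,\dots,k-1$ by induction. Since each $\alpha_h\geq0$, the resulting sequence $\{j_h\}_{h=0}^{k}$ is non-decreasing, and $j_k=\sum_{h=0}^{k-1}\alpha_h=n$ forces $j_h\leq n$ for all $h$; moreover $j_0=0$, so after deleting $j_0$ and $j_k$ the remaining $\{j_h\}_{h=1}^{k-1}$ is a valid multi-set of size $k-1$ with entries in $\{0\}\cup\mathbb{N}_n$. That the two constructions invert each other is immediate: re-appending $j_0=0$ and $j_k=n$ to this multi-set recovers the sequence of partial sums, whose consecutive differences are the $\alpha_h$ by construction; conversely, starting from $J$, forming the differences and then re-accumulating them from $j_0=0$ reproduces the original $j_h$'s by a telescoping sum, and then stripping $j_0$ and $j_k$ returns $J$. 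Uniqueness of the multi-set $J$ with $\alpha=\operatorname{comp}_n J$ follows because any such $J$, once extended by $0$ and $n$, must have consecutive differences equal to the $\alpha_h$, which pins down all the $j_h$ and hence $J$ itself.

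I do not anticipate a genuine obstacle here; the statement is a direct analogue of Lemma 1.5 of \cite{[Z1]} and the proof is the same telescoping-sum argument. The only points deserving care are the boundedness condition (entries of $J$ lie in $\{0\}\cup\mathbb{N}_n$ rather than in $\mathbb{N}_{n-1}$, which is exactly what lets the first or last difference vanish and corresponds to allowing zeros in the weak composition) and the length bookkeeping, namely that a multi-set of size $k-1$ produces a weak composition of length exactly $k$ because we insert precisely the two sentinel values $j_0$ and $j_k$; keeping track of this index shift is the one place where an off-by-one slip could occur, so I would state the index ranges explicitly throughout.
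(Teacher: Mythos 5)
Your proof is correct and is essentially the same argument the paper relies on: the paper simply cites the stars-and-bars bijection from Lemma 1.5 of \cite{[Z1]} (following Stanley), which is exactly the partial-sum/consecutive-difference correspondence you spell out with the telescoping sum and the two sentinel values $j_{0}=0$, $j_{k}=n$.
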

It is clear that the stars-and-bars argument proving Lemma 1.5 of \cite{[Z1]} (as given in, e.g., pages 17--18 of \cite{[St1]}) also establishes Lemma \ref{mswcomp} in this more general setting. If $\alpha\vDash_{w}n$ then the multi-set $J$ with $\alpha=\operatorname{comp}_{n}J$ will be written as $\operatorname{comp}_{n}^{-1}\alpha$.
\begin{rmk}
It is clear that if the multi-set $J$ in Lemma \ref{mswcomp} is a subset of $\mathbb{N}_{n-1}$ then the weak composition $\operatorname{comp}_{n}J$ is the actual composition $\operatorname{comp}_{n}J \vDash n$ from Lemma 1.5 of \cite{[Z1]}, and conversely that for a true composition $\alpha \vDash n$, the multi-set $\operatorname{comp}_{n}^{-1}\alpha$ is a the subset $\operatorname{comp}_{n}^{-1}\alpha\subseteq\mathbb{N}_{n-1}$. Hence our notation extends the classical one. We also note that in \cite{[Z1]} we implicitly assumed that $k \leq n$, since when $k-1$ is the size of a subset of $\mathbb{N}_{n-1}$, this inequality must hold. We do not impose that inequality in general. \label{comptow}
\end{rmk}

In \cite{[Z1]} we used compositions to describe the content of (semi-standard) tableaux, and the generalized cocharge tableaux there had to satisfy the condition that if $k-1$ is the maximal entry showing up in one, then it must contain each of the integers $0 \leq h<k$ at least once. To each weak composition $\alpha\vDash_{w}n$ we attach the \emph{content represented by $\alpha$} to again be the multi-set $\mu$, with entries contained in the set of the integers $0 \leq h<k$, and in which the multiplicity of $h$ in $\mu$ is $\alpha_{h}$. We again have that $\mu$ is of size $n$ and determines $\alpha$, and every multi-set of non-negative integers of size $n$ (and in particular any content of a tableau whose shape is of size $n$ and whose entries are non-negative integers) is the content of some weak composition $\alpha\vDash_{w}n$ (see Definition \ref{SSYTct} below).

\begin{ex}
Let $n=4$, and consider $J$ to be the multi-set $\{1,1,4\}$, for which we have $k=4$. Then $\operatorname{comp}_{4}J=1030\vDash_{w}4$, and the content represented by it is 0222, or $02^{3}$, written as a non-decreasing sequence. Taking the multi-set $\{0,0,1,1,3\}$, now with $k=6$, we get the weak composition $001021\vDash_{w}4$, and the corresponding content is 2445 (or $24^{2}5$), ending in $6-1=5$. Note that the former content did not end in $4-1=3$, because $J$ contained $n=4$ and hence $\operatorname{comp}_{4}J$ ended in a zero. \label{wcompex}
\end{ex}

\medskip

Recall that in \cite{[Z1]} the subsets $J\subseteq\mathbb{N}_{n-1}$, and mainly $I=\{n-i\;|\;i \in J\}$ later, were required to contain various sets obtained from permutations in the symmetric group $S_{n}$, or standard Young tableaux, or cocharge tableaux, and some expressions were given in terms of the complements. In this paper we will carry out similar constructions, but in which $J$ (or $I$) is a multi-set as above. This is to be interpreted as follows.
\begin{defn}
Consider a multi-set $J$, of size $k-1$, whose entries are elements of $\mathbb{N}_{n}\cup\{0\}$. We then define $\hat{J}$ to be the standard subset of $\mathbb{N}_{n-1}$ consisting of those elements there that show up in $J$ with a positive multiplicity (ignoring those of 0 and $n$), and set $\hat{k}:=|\hat{J}|+1$. Given a subset $D\subseteq\mathbb{N}_{n-1}$, we will use an abuse of notation and terminology and say that $D$ is contained in $J$ (or $D \subseteq J$) if $D\subseteq\hat{J}$. In this case define the complement $J \setminus D$ to be the multi-set obtained by considering $J$ and subtracting 1 from the multiplicity of any element of $D$. In particular this is how we define the multi-set complement $J\setminus\hat{J}$. \label{multisets}
\end{defn}
It is clear that if the multi-set $J$ in Definition \ref{multisets} is an actual subset of $\mathbb{N}_{n-1}$ (so that $\hat{J}=J$), then the notions from that definition, including the complement, are the usual ones (and $J\setminus\hat{J}$ is just empty).

As an example to how this works, we extend the destandardization from Lemma 1.8 of \cite{[Z1]}, which is based on Definition A1.5.5 of \cite{[St2]} or the end of Section 2.1 of \cite{[vL]}, to a more general setting. For this we recall a few definitions from \cite{[Z1]} that will also be used in this paper. Recall that $\lambda \vdash n$ denoted that $\lambda$ is a partition of $n$ (with the same length $\ell(\lambda)$ as a composition), and that $\operatorname{SYT}(\lambda)$ stands for the set of standard Young tableaux of shape $\lambda$, with $\operatorname{sh}(T)$ denoting the shape of a tableau $T$ (as a Ferrers diagram or as a partition, which we identify with one another).

\begin{defn}
Let $\lambda \vdash n$ be a partition, and $d\geq0$.
\begin{enumerate}[$(i)$]
\item Set $\operatorname{SSYT}(\lambda)$ to be the set of semi-standard Young tableaux of shape $\lambda$.
\item The entry sum of $M\in\operatorname{SSYT}(\lambda)$ is denoted by $\Sigma(M)$.
\item The set of $M\in\operatorname{SSYT}(\lambda)$ with $\Sigma(M)=d$ will be denoted by $\operatorname{SSYT}_{d}(\lambda)$.
\item The \emph{content} of $M$ is the multi-set of entries showing up in $M$.
\item For any content $\mu$, the set of $M\in\operatorname{SSYT}(\lambda)$ whose content is $\mu$ is denoted by $\operatorname{SSYT}_{\mu}(\lambda)$.
\end{enumerate} \label{defSSYT}
\end{defn}
It is clear from Definition \ref{defSSYT} that $\operatorname{SSYT}_{d}(\lambda)$ is the disjoint union, over all contents $\mu$ of sum $d$, of the sets $\operatorname{SSYT}_{\mu}(\lambda)$.

We also recall from, e.g., Definition 1.2 of \cite{[Z1]} that reference, that $\operatorname{Dsi}(S)$ stand for the set of \emph{descending indices}, also known as \emph{descents}, of the element $S\in\operatorname{SYT}(\lambda)$, which are the indices $i$ for which $i+1$ shows up in a row below that of $i$. We will use the notation $v_{S}(i)$ from \cite{[CZ]} and \cite{[Z1]} for the box containing $i$ in $S$, with the row $R_{S}(i)$ and the column $C_{S}(i)$ (so that $i\in\operatorname{Dsi}(S)$ means $R_{S}(i+1)>R_{S}(i)$), and prove the following generalization of Lemma 1.8 of \cite{[Z1]}.
\begin{lem}
If $\mu$ is the content associated with the weak composition $\operatorname{comp}_{n}J$ for a multi-set $J$ as in Lemma \ref{mswcomp}, then the standardization is a bijection between the set $\operatorname{SSYT}_{\mu}(\lambda)$ from Definition \ref{defSSYT} and the set $\{S\in\operatorname{SYT}(\lambda)\;|\;\operatorname{Dsi}(S) \subseteq J\}$, where the containment is in the sense of Definition \ref{multisets}, for every $\lambda \vdash n$. \label{ctJmulti}
\end{lem}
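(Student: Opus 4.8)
The plan is to reduce Lemma \ref{ctJmulti} to the classical destandardization statement, Lemma 1.8 of \cite{[Z1]}, by isolating exactly the data in $J$ that the standardization map ``sees''. The standardization of a semi-standard tableau $M$ replaces the entries of $M$, read in the appropriate order, by $1,2,\dots,n$; the descent set of the resulting standard tableau $S$ is determined by where the entries of $M$ strictly increase as one passes from the block of $i$'s to the block of $(i+1)$'s. Concretely, if $\mu$ is the content of $M$ and $\alpha=\operatorname{comp}_n^{-1}$-data gives the block boundaries $j_0=0<\dots$ (with the $j_h$ as in Lemma \ref{mswcomp}), then a descent of $S$ can only occur at an index $i$ lying strictly between two consecutive distinct block boundaries, i.e.\ at an index in $\mathbb{N}_{n-1}$ that is a genuine descent position of the content $\mu$. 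But those positions are precisely the elements of $\widehat{J}$ from Definition \ref{multisets}: the element $j_h\in\mathbb{N}_{n-1}$ appears in $J$ with positive multiplicity exactly when the content $\mu$ has a strict jump at that value. Thus the relevant datum is $\widehat{J}$, not $J$ itself, and $\operatorname{Dsi}(S)\subseteq J$ (in the sense of Definition \ref{multisets}) means exactly $\operatorname{Dsi}(S)\subseteq\widehat{J}$.

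First I would make the following observation precise: the content $\mu$ associated with $\operatorname{comp}_n J$ depends on $J$ only through $\widehat{J}$ together with the multiplicities, but the \emph{set of values appearing in $\mu$}, and hence the combinatorics governing standardization, depends only on $\widehat{J}$. More precisely, let $\widehat{\mu}$ be the content obtained from the ordinary composition $\operatorname{comp}_n\widehat{J}\vDash n$ (this is a true composition by Remark \ref{comptow}). Then $\operatorname{SSYT}_{\mu}(\lambda)$ and $\operatorname{SSYT}_{\widehat{\mu}}(\lambda)$ are related by a straightforward relabelling bijection: one shifts the entries of a tableau of content $\mu$ down to ``close up'' the values $h$ for which $\alpha_h=0$, obtaining a tableau of content $\widehat{\mu}$, and this relabelling is order-preserving on entries, hence preserves semi-standardness and commutes with standardization. (The one subtlety is whether $0$ is allowed to appear as an entry; since Definition \ref{defSSYT} and the surrounding discussion explicitly permit $0$ in tableaux, the relabelling is harmless.) Under this identification, the claim for $\mu$ and $J$ becomes the claim for $\widehat{\mu}$ and the genuine subset $\widehat{J}\subseteq\mathbb{N}_{n-1}$.

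Next I would invoke Lemma 1.8 of \cite{[Z1]} directly: for the true composition $\operatorname{comp}_n\widehat{J}$ and its content $\widehat{\mu}$, standardization is a bijection
\[
\operatorname{SSYT}_{\widehat{\mu}}(\lambda)\;\xrightarrow{\ \sim\ }\;\{S\in\operatorname{SYT}(\lambda)\mid\operatorname{Dsi}(S)\subseteq\widehat{J}\}.
\]
Composing with the relabelling bijection of the previous paragraph on the left, and unwinding Definition \ref{multisets} on the right (which says precisely that $\operatorname{Dsi}(S)\subseteq J$ iff $\operatorname{Dsi}(S)\subseteq\widehat{J}$), yields the asserted bijection for $\mu$ and $J$, for every $\lambda\vdash n$.

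I expect the main obstacle to be purely bookkeeping rather than conceptual: one must check carefully that the relabelling map ``collapsing the missing values'' genuinely intertwines the two standardization maps, and in particular that it interacts correctly with the reading order used in the definition of standardization (Definition A1.5.5 of \cite{[St2]}), where entries are broken ties by column position. Since the collapse is strictly increasing on the set of values actually occurring, it does not disturb the relative order of any two boxes, so the tie-breaking is unaffected and standardization commutes with it; but spelling this out cleanly is the step that requires care. A secondary point worth a sentence is the edge behaviour when $0$ or $n$ lies in $J$ with positive multiplicity: these are explicitly ignored in passing to $\widehat{J}$, matching the fact that there is never a descent ``at $0$'' or ``at $n$'', so they impose no constraint, consistently with Example \ref{wcompex}.
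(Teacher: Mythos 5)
Your argument is correct, but it proceeds differently from the paper's proof. The paper re-runs the standardization argument of Lemma 1.8 of \cite{[Z1]} directly in the multi-set setting: it observes that the boxes containing $h$ in $M$ are those containing the numbers $j_{h}<p\leq j_{h+1}$ in $S$ (these blocks now being allowed to be empty when $j_{h+1}=j_{h}$), shows that semi-standardness of $M$ is equivalent to the entries of $M$ increasing strictly across every descent of $S$, hence to $\operatorname{Dsi}(S)\subseteq J$ in the sense of Definition \ref{multisets}, and exhibits the inverse by refilling $S$ with the non-decreasing ordering of $\mu$. You instead reduce to the already-proved set case: you collapse the unused values of the content (the indices $h$ with $\alpha_{h}=0$) via the unique order-preserving relabelling, check that this relabelling is a bijection $\operatorname{SSYT}_{\mu}(\lambda)\to\operatorname{SSYT}_{\widehat{\mu}}(\lambda)$ commuting with standardization because it is strictly increasing on the occurring values (so relative order and tie-breaking are untouched), identify $\widehat{\mu}$ with the content of the genuine composition $\operatorname{comp}_{n}\widehat{J}$, and then quote Lemma 1.8 of \cite{[Z1]} as a black box, using that $\operatorname{Dsi}(S)\subseteq J$ is by Definition \ref{multisets} the same as $\operatorname{Dsi}(S)\subseteq\widehat{J}$. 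Both routes are sound; yours buys economy by reusing the earlier lemma verbatim at the cost of the bookkeeping check that collapsing values intertwines the two standardizations (which you correctly identify as the only delicate point, together with the harmlessness of $0$ and $n$ in $J$), whereas the paper's direct argument keeps the block description $j_{h}<p\leq j_{h+1}$ explicit, which it then reuses in the proofs of Lemma \ref{Dspc} and elsewhere, so the self-contained redo is not wasted effort in context.
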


\begin{proof}
As in the proof of Lemma 1.8 of \cite{[Z1]}, if $S$ is the standardization of $M\in\operatorname{SSYT}_{\mu}(\lambda)$, then the boxes containing $h$ in $M$ are those containing the numbers $j_{h}<p \leq j_{h+1}$ in $S$, where this set can be empty in case $j_{h+1}=j_{h}$ and the $h$th entry of $\operatorname{comp}_{n}J\vDash_{w}n$ vanishes. The argument from there shows that the fact that $M$ is semi-standard is equivalent, by the way standardization works, to the assertion that wherever $i\in\operatorname{Dsi}(S)$, the entry of $M$ at the box $v_{S}(i+1)$ is strictly larger than the one inside $v_{S}(i)$. The inverse of the map is clear, by simply ordering $\mu$ in non-decreasing order, and replacing each entry of $S$ by the one from $\mu$ in the corresponding location (this clearly yields a tableau $M$ of content $\mu$, and as we saw, since $\mu$ is associated with $\operatorname{comp}_{n}J$ and $J$ contains $\operatorname{Dsi}(S)$, this tableau $M$ is semi-standard). This proves the lemma.
\end{proof}
As in Definition 1.9 of \cite{[Z1]}, we may denote the destandardization inverting the map from Lemma \ref{ctJmulti} by $\operatorname{ct}_{J}$, and thus the standardization itself is $\operatorname{ct}_{J}^{-1}$.

\begin{ex}
Both multi-sets from Example \ref{wcompex} contain $\{1\}$, so that we take $\lambda=31\vdash4$ and a tableau $S$ with the latter set as $\operatorname{Dsi}(S)$, and get \[S=\begin{ytableau} 1 & 3 & 4 \\ 2 \end{ytableau},\mathrm{\ \ with\ \ }\operatorname{ct}_{J}(S)=\begin{ytableau} 0 & 2 & 2 \\ 2 \end{ytableau},\mathrm{\ \ and\ \ }\begin{ytableau} 2 & 4 & 5 \\ 4 \end{ytableau}\] is the semi-standard Young tableau that is associated with $S$ via Lemma \ref{ctJmulti} with the other multi-set. Note that the last 0 in $\operatorname{comp}_{n}J$, corresponding to the fact that $n=4 \in J$, has no effect on $\operatorname{ct}_{J}(S)$ (indeed, removing it would have produced the same element of $\operatorname{SSYT}_{6}(\lambda)$), but any other zero is visible in the fact that some entries do not appear in these tableaux, despite larger entries showing up. \label{ctJex}
\end{ex}
The existence of zeros, namely the skipping over some integers as discussed in Example \ref{ctJex}, shows how the Definition \ref{defSSYT} and Lemma \ref{ctJmulti} generalize Definitions 1.7 and 1.9 and Lemma 1.8 of \cite{[Z1]}.

\medskip

In addition to the set $\operatorname{Dsi}(S)$ of descents of a tableau $S\in\operatorname{SYT}(\lambda)$, and its set $\operatorname{Asi}(S)$ of \emph{ascending indices} or \emph{ascents} (which is the complement of $\operatorname{Dsi}(S)$ in $\mathbb{N}_{n-1}$ in case $\lambda=\operatorname{sh}(S) \vdash n$), we gather some definitions from \cite{[Z1]}, and extend them as follows.
\begin{defn}
Fix $\lambda \vdash n$, $S\in\operatorname{SYT}(\lambda)$, and $M\in\operatorname{SSYT}(\lambda)$.
\begin{enumerate}[$(i)$]
\item We set $\operatorname{Dsi}^{c}(S):=\{n-i\;|\;i\in\operatorname{Dsi}(S)\}$ and $\operatorname{Asi}^{c}(S):=\{n-i\;|\;i\in\operatorname{Asi}(S)\}$.
\item Consider $M$ as $\operatorname{ct}_{J}(T)$ for the unique multi-set $J$ and tableau $T\in\operatorname{SYT}(\lambda)$ via Lemma \ref{ctJmulti}, so that the content of $M$ is associated with $\operatorname{comp}_{n}J$ from Lemma \ref{mswcomp}. Then the multi-set $\{n-i\;|\;n>i \in J\}$ will be denoted by $\operatorname{Dsp}^{c}(M)$.
\item If $J$ from part $(ii)$ is the set $\operatorname{Dsi}(T)$, then $M$ is a \emph{cocharge tableau}. The set of the tableaux obtained in this way for our $\lambda$ is denoted by $\operatorname{CCT}(\lambda)$.
\item If $M=C\in\operatorname{CCT}(\lambda)$ is expressed as $\operatorname{ct}_{J}(T)$ with $J=\operatorname{Dsi}(T)$, then we write $C=\operatorname{ct}(T)$, and $T=\operatorname{ct}_{J}^{-1}(C)$ is written as $\operatorname{ct}^{-1}(C)$.
\item Let $I$ be a multi-set containing $\operatorname{Dsi}^{c}(S)$ from part $(i)$. Then we denote the complement $I\setminus\operatorname{Dsi}^{c}(S)$, interpreted as in Definition \ref{multisets}, by $\operatorname{Asi}^{c}_{I}(S)$. Similarly, if $M=C\in\operatorname{CCT}(\lambda)$ and $I$ contains $\operatorname{Dsp}^{c}(C)$ then the complement is denoted by $\operatorname{Asp}^{c}_{I}(C)$.
\end{enumerate} \label{sets}
\end{defn}
It is clear that the sets $\operatorname{Dsi}^{c}(S)$ and $\operatorname{Asi}^{c}(S)$ from Definition \ref{sets} are complements inside the set $\mathbb{N}_{n-1}$, and that if $M=\operatorname{ct}_{J}(T)$ then the multi-set $\operatorname{Dsp}^{c}(M)$ contains $\operatorname{Dsi}^{c}(T)$, with equality holding if and only if $M\in\operatorname{CCT}(\lambda)$. The fact that for $\operatorname{Dsp}^{c}(M)$ we removed the multiplicity of $n$ from $J$ (thus ignoring the possible multiplicity of 0 in the former set) is related to the fact that in Example \ref{ctJex}, the parameter $k$ associated with the multi-set $J$ was 3, but the maximal entry in $\operatorname{ct}_{J}(S)$ was strictly smaller (as $n=4 \in J$), as well as to Lemma \ref{Dspc} below. Lemma 1.15 of \cite{[Z1]} characterizes the elements of $\operatorname{CCT}(\lambda)$ among the general semi-standard Young tableaux as those tableaux $C$ satisfying the condition that if $h>0$ shows up in $C$ and $v$ is the leftmost box containing $h$ in $C$, then some box in $C$ contains $h-1$ and sits in a row above that of $v$. They are thus, up to a small normalization issue, the same as the quasi-Yamanouchi tableaux from \cite{[AS]}, \cite{[BCDS]}, and others.

Using Definition \ref{sets}, we generalize Lemma 1.17 of \cite{[Z1]}
\begin{lem}
Consider $\lambda \vdash n$ and $M\in\operatorname{SSYT}(\lambda)$.
\begin{enumerate}[$(i)$]
\item Write the multi-set $\operatorname{Dsp}^{c}(M)$ as an increasing sequence $\{i_{g}\}_{g=1}^{k-1}$ for the appropriate $k$, and add $i_{0}=0$ and $i_{k}=n$ as usual. Then for every index $0 \leq g \leq k$ there are $i_{g}$ entries in $M$ that equal at least $k-g$.
\item The sum $\sum_{i\in\operatorname{Dsp}^{c}(M)}i$ equals $\Sigma(M)$.
\item we can determine $M$ by the semi-standard filling of all its rows except the first one by positive integers, plus indicating the multiplicities of all the positive integers in the first row, plus the value of $n$.
\end{enumerate} \label{Dspc}
\end{lem}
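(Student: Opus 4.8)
The plan is to reduce all three assertions to elementary identities about the partial sums of the content of $M$. Write the multi-set $J$ in non-decreasing order as $a_{1}\leq\cdots\leq a_{m}$, and set $a_{0}=0$ and $a_{h}=n$ for all $h>m$ (so in particular $a_{m+1}=n$ as usual). By Lemma \ref{ctJmulti} the content of $M=\operatorname{ct}_{J}(T)$ is the one represented by $\operatorname{comp}_{n}J$, hence the number of boxes of $M$ whose entry equals $h$ is $a_{h+1}-a_{h}$ for every $h\geq0$. Telescoping, the number of boxes of $M$ with entry strictly less than $h$ is $a_{h}-a_{0}=a_{h}$, so the number of boxes with entry at least $h$ is $n-a_{h}$, for every $h\geq0$. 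Note also that the maximal entry of $M$ is $k-1$, where $k$ is as in part $(i)$: since $k-1=|\operatorname{Dsp}^{c}(M)|$ counts the elements of $J$ smaller than $n$, we have $a_{k-1}<n=a_{k}$, so the multiplicity $a_{h+1}-a_{h}$ of $h$ in $M$ vanishes for $h\geq k$ and is positive for $h=k-1$.

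For part $(i)$, one reads off from Definition \ref{sets}$(ii)$ that $\operatorname{Dsp}^{c}(M)=\{n-a_{1},\ldots,n-a_{k-1}\}$ (obtained from $J$ by discarding the entries equal to $n$), so listing these in non-decreasing order gives $i_{g}=n-a_{k-g}$ for $1\leq g\leq k-1$, and this also holds at $g=0$ and $g=k$ with the conventions above. The count from the previous paragraph then reads exactly: the number of boxes of $M$ with entry at least $k-g$ is $n-a_{k-g}=i_{g}$, which is $(i)$. This index matching is the only point that needs a little care, because several copies of $n$ in $J$ — equivalently, trailing zeros in $\operatorname{comp}_{n}J$ — compress the index range from $m$ down to $k-1$; this is precisely why $a_{k}=n$ and why the $k$ in $(i)$ is $|\operatorname{Dsp}^{c}(M)|+1$ rather than $|J|+1$.

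Part $(ii)$ follows by summing part $(i)$ over $g$. Since $\Sigma(M)=\sum_{h\geq1}\#\{\text{boxes of }M\text{ with entry}\geq h\}$ and, by $(i)$ together with the bound on the maximal entry, the $h$-th summand equals $i_{k-h}$ for $1\leq h\leq k-1$ and vanishes for $h\geq k$, we get $\Sigma(M)=\sum_{g=1}^{k-1}i_{g}=\sum_{i\in\operatorname{Dsp}^{c}(M)}i$. Alternatively one can expand $\Sigma(M)=\sum_{h}h(a_{h+1}-a_{h})$ and sum by parts, using $a_{m+1}=n$.

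For part $(iii)$, recall that in a semi-standard tableau entries strictly increase down every column, so every box outside the first row carries a positive integer; in particular all zeros of $M$ lie in its first row. Given the semi-standard fillings of the rows of $M$ other than the first, we recover $\lambda_{2},\lambda_{3},\ldots$, hence $\lambda_{1}=n-\sum_{j\geq2}\lambda_{j}$ from the prescribed value of $n$; subtracting from $\lambda_{1}$ the prescribed total multiplicity of the positive integers in the first row yields the number of zeros in that row. The first row of $M$ is weakly increasing, so it is the unique non-decreasing word having the now-known multiplicity of each of $0,1,2,\ldots$, and is thereby determined; placing it on top of the given lower rows recovers $M$. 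Apart from the bookkeeping in $(i)$ flagged above, every step here is immediate from the definitions.
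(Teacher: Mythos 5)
Your proof is correct and follows essentially the same route as the paper: part $(i)$ is the same telescoping of the content multiplicities $a_{h+1}-a_{h}$ read off from $\operatorname{comp}_{n}J$ (with the same bookkeeping for the copies of $n$ in $J$, i.e.\ the trailing zeros), and part $(iii)$ is the same first-row reconstruction from $n$ and the multiplicities of the positive entries. Part $(ii)$ differs only cosmetically: you sum part $(i)$ via the layer-cake identity $\Sigma(M)=\sum_{h\geq1}\#\{\text{entries}\geq h\}$, while the paper performs the equivalent double count of pairs $(p,i)$ with $i \in J$ and $i<p \leq n$.
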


\begin{proof}
Write $M$ as $\operatorname{ct}_{J}(S)$ for $S\in\operatorname{SYT}(\lambda)$ and a multi-set $J$ and via Lemma \ref{ctJmulti}, so that $J=\{j_{h}\}_{h=1}^{k-1}$ (plus perhaps some entries that equal $n$), with $j_{h}=n-i_{k-h}$ (and $j_{0}=0$ and $j_{k}=n$ as well). Then we saw in the proof there (or of Lemma 1.8 of \cite{[Z1]}) that every entry $h$ shows up $j_{h+1}-j_{h}$ times, and thus the number of entries that are equal to or larger than $k-g$ is the sum of these differences over $k-g \leq h<k$. But this telescopic sum reduces to $j_{k}-j_{k-g}=n-j_{k-g}=i_{g}$, part $(i)$ follows.

For part $(ii)$ we argue as in the proof of Corollary 1.10 of \cite{[Z1]}. Recalling that $M=\operatorname{ct}_{J}(S)$, the proof of Lemma \ref{ctJmulti} shows that for in $M$, the box $v_{S}(p)$ for any $p\in\mathbb{N}_{n}$ contains the number of elements of $J$ that are strictly smaller than $p$ (which is why the elements of $J$ that equal $n$ play no role, and we can ignore then and fix the index $k$ without them). We are interested in the sum $d=\Sigma(M)$ of these expressions over $p\in\mathbb{N}_{n}$.

But when we carry out the summation over $p$, we get the number of pairs $(p,i)\in\mathbb{N}_{n} \times J$ with $i>p$. Then the number of pairs containing every $i \in J$ are with $i<p \leq n$, namely $n-i$ such values, which for $i=j_{h}$ was seen to be $i_{k-h}$. As the sum of these elements is thus $\sum_{i\in\operatorname{Dsp}^{c}(M)}i$, this establishes part $(ii)$.

Part $(iii)$ is a consequence of the fact that the first row is determined, for semi-standard tableaux, by $n$ and the number of appearances of all the non-zero entries in that row. This proves the lemma.
\end{proof}

\begin{ex}
For the tableau $\operatorname{ct}_{J}(S)$ from Example \ref{ctJex}, the $\operatorname{Dsp}^{c}$-multi-set is $\{3,3\}$, with $k=3$. We thus have $i_{1}=i_{2}=3$ and $k=3$, and indeed, in the content $02^{3}$ of this tableau, $i_{0}=0$ entries are at least $3-0=3$, there are 3 entries that are at least $3-1=2$ with the same 3 being at least $3-2=1$ (all as part $(i)$ in Lemma \ref{Dspc} predicts), and with $i_{3}=n=4$ there are indeed 4 entries that are at least 0. The sum of the elements of the multi-set is $3+3=6$, just like the entry sum $0+2+2+2=6$, as expected from part $(ii)$ in Lemma \ref{Dspc}. The $\operatorname{Dsp}^{c}$-multi-set of the second tableau there is $\{1,3,3,4,4\}$, now with $k=6$, and if we see how many entries in the content $24^{2}5$ equals at least $6-g$ for $0 \leq g\leq6$, we indeed get the numbers 0, 1, 3, 3, 4, 4, and 4 (since $i_{6}=n=4$ as well). The two sums here are $1+3+3+4+4=15$ and $2+4+4+5=15$, exemplifying again both parts of that lemma. \label{exDspc}
\end{ex}
The description from part $(i)$ in Lemma \ref{Dspc} is also valid for $g=k$, since indeed there are $i_{k}=n$ non-negative entries, compared to the bound $k-g=0$. Moreover, as each entry of $M$ counts elements of $J$ that are strictly smaller from some integer $p \leq n$, and there are $k-1$ elements of $J$ that are smaller that $n$ (with multiplicities), meaning that no entry of $M$ equals at least $k-0$, and the assertion holds for $g=0$ as well. These limit cases were also presented in Example \ref{exDspc}.

\medskip

For fixing some notation, we recall Definition 2.2 of \cite{[Z1]}.
\begin{defn}
Take two integers $n\geq1$ and $d\geq0$.
\begin{enumerate}[$(i)$]
\item The algebra $\mathbb{Q}[x_{1},\ldots,x_{n}]$ will be denoted by $\mathbb{Q}[\mathbf{x}_{n}]$, and similarly $\mathbb{Z}[\mathbf{x}_{n}] $ stands for $\mathbb{Z}[x_{1},\ldots,x_{n}]$.
\item We write $\mathbb{Q}[\mathbf{x}_{n}]_{d}$ and $\mathbb{Z}[\mathbf{x}_{n}]_{d}$ for the parts of $\mathbb{Q}[\mathbf{x}_{n}]$ and $\mathbb{Z}[\mathbf{x}_{n}]$ respectively that are homogeneous of degree $d$.
\item A monomial in $\mathbb{Q}[\mathbf{x}_{n}]$ is said to have \emph{content} $\mu$ if $\mu$ is the multi-set, or sequence, of non-negative integers showing up as the exponents of variables in it. Adding and removing zeros from contents do not affect the content, so in particular we can consider it as a partition of the degree of the monomial.
\item If $\mu \vdash d$ and $\ell(\mu) \leq n$ then $\mathbb{Q}[\mathbf{x}_{n}]_{\mu}$ stands for the space generated by the monomials of content $\mu$ in $\mathbb{Q}[\mathbf{x}_{n}]$, with in $\mathbb{Z}[\mathbf{x}_{n}]_{\mu}$ being the intersection with $\mathbb{Z}[\mathbf{x}_{n}]$.
\end{enumerate} \label{Qxnd}
\end{defn}

We recall that to every $\lambda \vdash n$ there is an associated irreducible representation $\mathcal{S}^{\lambda}$ of $S_{n}$, which is called the \emph{Specht module} corresponding to $\lambda$, and these are, up to isomorphism, all the irreducible representations of $S_{n}$. Since they are all defined over $\mathbb{Q}$, one may ask given any finite-dimensional representation of $S_{n}$ over a field of characteristic 0, what is the multiplicity of any component $\mathcal{S}^{\lambda}$ in that representation.

A natural representation to ask this question about is $\mathbb{Q}[\mathbf{x}_{n}]_{d}$ for some $d\geq0$. There are two expressions yielding that answer, both of which are obtained using the tools from \cite{[St2]} as follows. We begin with the simpler one.
\begin{prop}
The multiplicity of $\mathcal{S}^{\lambda}$ in $\mathbb{Q}[\mathbf{x}_{n}]_{d}$ is equal to $|\operatorname{SSYT}_{d}(\lambda)|$. \label{Kostka}
\end{prop}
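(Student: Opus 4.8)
The plan is to compute the multiplicity via characters, using the classical description of $\mathbb{Q}[\mathbf{x}_{n}]$ as a free module over the ring of symmetric functions. First I would recall that $\mathbb{Q}[\mathbf{x}_{n}]$, as a graded $S_{n}$-representation, has Frobenius characteristic whose generating function in a grading variable $q$ is $\prod_{i=1}^{n}\frac{1}{1-qx_{i}}$ rewritten in terms of symmetric functions — more precisely, that the bigraded Frobenius series $\sum_{d\geq0}q^{d}\operatorname{ch}\bigl(\mathbb{Q}[\mathbf{x}_{n}]_{d}\bigr)$ equals $\prod_{i\geq1}\frac{1}{1-qx_{i}}=\sum_{\mu}h_{\mu}[\ldots]$; the cleanest route is that $\mathbb{Q}[\mathbf{x}_{n}]\cong\bigoplus_{\mu}\mathbb{Q}[S_{n}]\cdot(\text{monomial of content }\mu)$ is not quite right, so instead I would use that the permutation module on monomials of a fixed content $\mu$ (a partition of $d$ with $\ell(\mu)\leq n$) is the Young/coset module $M^{\mu'}$ or rather $M^{\mu}$ — the module induced from the trivial representation of the Young subgroup $S_{\mu}$ — and that $\mathbb{Q}[\mathbf{x}_{n}]_{d}$ decomposes as the direct sum of these over all partitions $\mu\vdash d$ with at most $n$ parts, since $S_{n}$ permutes monomials and the orbits are indexed exactly by such $\mu$ via Definition \ref{Qxnd}(iii)--(iv).

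Next I would invoke the classical formula that the multiplicity of $\mathcal{S}^{\lambda}$ in the Young module $M^{\mu}$ is the Kostka number $K_{\lambda\mu}$, the number of semi-standard Young tableaux of shape $\lambda$ and content $\mu$ — this is exactly one of the standard facts from \cite{[St2]} (it is the expansion $h_{\mu}=\sum_{\lambda}K_{\lambda\mu}s_{\lambda}$ under the Frobenius characteristic, together with $\operatorname{ch}M^{\mu}=h_{\mu}$). Summing over all $\mu\vdash d$ with $\ell(\mu)\leq n$, the multiplicity of $\mathcal{S}^{\lambda}$ in $\mathbb{Q}[\mathbf{x}_{n}]_{d}$ is $\sum_{\mu\vdash d,\ \ell(\mu)\leq n}K_{\lambda\mu}$. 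Finally, I would identify this sum with $|\operatorname{SSYT}_{d}(\lambda)|$: by Definition \ref{defSSYT}, $\operatorname{SSYT}_{d}(\lambda)$ is the disjoint union over contents $\mu$ of sum $d$ of the sets $\operatorname{SSYT}_{\mu}(\lambda)$ (a fact already noted after that definition), and since entries of a semi-standard tableau of shape $\lambda$ are bounded by $n=|\lambda|$ and a content can be normalized to a partition with $\ell(\mu)\leq n$ without changing the tableau count, the sum $\sum_{\mu}K_{\lambda\mu}=\sum_{\mu}|\operatorname{SSYT}_{\mu}(\lambda)|$ is precisely $|\operatorname{SSYT}_{d}(\lambda)|$.

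The main obstacle, such as it is, is being careful about the bookkeeping of contents: a priori the exponents of a monomial form a multi-set of non-negative integers of size $n$ (with zeros allowed), whereas Kostka numbers are usually indexed by partitions of $d$; one must note that adding or removing zeros does not change either the $S_{n}$-orbit of the monomial or the set of semi-standard tableaux with that content (as Definition \ref{Qxnd}(iii) already records), so the two indexings match up, and the constraint $\ell(\mu)\leq n$ is automatic on both sides — on the polynomial side because there are only $n$ variables, and on the tableau side because $K_{\lambda\mu}=0$ whenever $\mu$ fails to be dominated by $\lambda$, in particular when $\mu$ has more parts than the $\ell(\lambda)\leq n$ it would need. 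Once this is pinned down the proof is a one-line consequence of the Kostka expansion of $h_{\mu}$, so I do not anticipate any real difficulty beyond stating the identification cleanly.
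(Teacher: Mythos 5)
Your proposal is correct and follows essentially the same route as the paper: decompose $\mathbb{Q}[\mathbf{x}_{n}]_{d}$ into the orbit modules $\mathbb{Q}[\mathbf{x}_{n}]_{\mu}$ over contents $\mu$ of sum $d$, identify each with the permutation module $M^{\mu}$ induced from the trivial representation of the stabilizer, apply Young's rule to get the Kostka number $K_{\lambda,\mu}$, and sum over $\mu$ using the fact that $\operatorname{SSYT}_{d}(\lambda)$ is the disjoint union of the $\operatorname{SSYT}_{\mu}(\lambda)$. The only caveat is your appeal to dominance (which does not directly apply since $\lambda\vdash n$ and $\mu\vdash d$ partition different numbers); the correct and simpler observation is that a content with more than $n$ positive parts admits neither a monomial in $n$ variables nor a filling of the $n$ boxes of $\lambda$, so both sides of the count agree without it.
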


\begin{proof}
The space $\mathbb{Q}[\mathbf{x}_{n}]_{d}$ is the direct sum of the subspaces $\mathbb{Q}[\mathbf{x}_{n}]_{\mu}$ from Definition \ref{Qxnd}, where $\mu$ runs over the partitions of $d$ having length at most $n$. For each such $\mu$, which we write as $\{\mu_{i}\}_{i=1}^{n}$ after extending $\mu$ by $n-\ell(\mu)$ vanishing entries, we set $p_{\mu}:=\prod_{i=1}^{n}x_{i}^{\mu_{i}}$, and then the monomial basis for $\mathbb{Q}[\mathbf{x}_{n}]_{\mu}$ consists of the distinct images of $p_{\mu}$ under the action of $S_{n}$.

It follows that $\mathbb{Q}[\mathbf{x}_{n}]_{\mu}$ is the representation associated with the action of $S_{n}$ on the orbit of $p_{\mu}$, which is thus the one induced from the trivial representation on the stabilizer of that monomial. Write $\alpha\vDash_{w}n$ for the weak composition such that $\mu$ is the content represented by $\alpha$ (namely $\alpha_{h}$ for $0 \leq h \leq d$ is the multiplicity of $h$ in $\mu$). Thus we can decompose $\mathbb{N}_{n}$ into $d+1$ sets $\{A_{h}\}_{h=0}^{d}$, where $i \in A_{h}$ if and only if $x_{i}$ shows up with the exponent $h$ in $p_{\mu}$, and we write $p_{\mu}=\prod_{h=0}^{d}\prod_{i \in A_{h}}x_{i}^{h}$.

But now it is clear that an element of $S_{n}$ fixes $p_{\mu}$ if and only if it takes any element of $A_{h}$ to the same set. Since $|A_{h}|=\alpha_{h}$ by definition, the stabilizer in question is therefore isomorphic to the product $\prod_{h=0}^{d}S_{\alpha_{h}}$, the representation $\mathbb{Q}[\mathbf{x}_{n}]_{\mu}$ is isomorphic to the one denoted by $M^{\alpha}$ in \cite{[Sa]} or to the one whose character is denoted by $\eta^{\alpha}$ in \cite{[St2]} (we may omit the vanishing entries from $\alpha$ and get the same representation as associated with a real decomposition $\tilde{\alpha} \vDash n$ if we like). Hence, by Young's rule (see Proposition 7.18.7 of the latter reference, or Theorem 2.11.1 of the former one), it contains each $\mathcal{S}^{\lambda}$ with a multiplicity that equals the \emph{Kostka number} $K_{\lambda,\alpha}$.

But this number is $|\operatorname{SSYT}_{\mu}(\lambda)|$ by definition. Since $\operatorname{SSYT}_{d}(\lambda)$ is the union of the sets $\operatorname{SSYT}_{\mu}(\lambda)$ over the contents $\mu$ of sum $d$, and $\mathbb{Q}[\mathbf{x}_{n}]_{d}$ is the direct sum of $\mathbb{Q}[\mathbf{x}_{n}]_{\mu}$ over the same set of contents, the desired result follows. This proves the proposition.
\end{proof}

The second formula uses coefficients in generating series.
\begin{prop}
The representation $\mathbb{Q}[\mathbf{x}_{n}]_{d}$ contains $\mathcal{S}^{\lambda}$ with a multiplicity that coincides with The number of pairs $(C,I)$ where $C\in\operatorname{CCT}(\lambda)$ and $I$ is a multi-set of positive integers that is bounded by $n$, contains $\operatorname{Dsp}^{c}(C)$ as in Definition \ref{multisets}, and satisfies $\sum_{i \in I}=d$. \label{Adlambda}
\end{prop}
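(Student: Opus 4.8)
\emph{Sketch of the intended argument.} The plan is to route everything through the coinvariant algebra $R_{n}$. First I would recall the classical fact (essentially Chevalley, and used in this form in \cite{[St2]}) that $\mathbb{Q}[\mathbf{x}_{n}]$ is free over its subring $\mathbb{Q}[e_{1},\dots,e_{n}]$ of symmetric polynomials, and that this can be upgraded to an isomorphism $\mathbb{Q}[\mathbf{x}_{n}]\cong R_{n}\otimes_{\mathbb{Q}}\mathbb{Q}[e_{1},\dots,e_{n}]$ of \emph{graded $S_{n}$-modules}, where $S_{n}$ acts trivially on the second factor and $e_{i}$ sits in degree $i$ (for instance by realizing $R_{n}$ inside $\mathbb{Q}[\mathbf{x}_{n}]$ as the space of harmonic polynomials, or equivalently as an identity of graded Frobenius characteristics). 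Taking the homogeneous part of degree $d$, the multiplicity of $\mathcal{S}^{\lambda}$ in $\mathbb{Q}[\mathbf{x}_{n}]_{d}$ becomes $\sum_{a+b=d}m_{\lambda}(a)\cdot p_{n}(b)$, where $m_{\lambda}(a)$ is the multiplicity of $\mathcal{S}^{\lambda}$ in $(R_{n})_{a}$ and $p_{n}(b):=\dim\mathbb{Q}[e_{1},\dots,e_{n}]_{b}$.

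Next I would invoke the higher Specht polynomial decomposition of $R_{n}$ from \cite{[ATY]}, in the normalization of \cite{[Z1]}: it says precisely that $m_{\lambda}(a)$ equals the number of $C\in\operatorname{CCT}(\lambda)$ with $\Sigma(C)=a$. For the other factor, a monomial $e_{1}^{c_{1}}\cdots e_{n}^{c_{n}}$ has degree $\sum_{i}ic_{i}$, so sending it to the multi-set with $c_{i}$ copies of each $i$ identifies $p_{n}(b)$ with the number of multi-sets of positive integers bounded by $n$ whose elements sum to $b$. Combining the two, the multiplicity we want equals the number of pairs $(C,I')$ with $C\in\operatorname{CCT}(\lambda)$ and $I'$ a multi-set of positive integers bounded by $n$ satisfying $\sum_{i\in I'}i=d-\Sigma(C)$.

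Finally I would exhibit a bijection between these pairs and the pairs in the statement. Given $(C,I')$, attach $I:=I'\uplus\operatorname{Dsp}^{c}(C)$ (multi-set union). Since $C\in\operatorname{CCT}(\lambda)$ forces $\operatorname{Dsp}^{c}(C)=\operatorname{Dsi}^{c}(\operatorname{ct}^{-1}(C))$ to be an honest subset of $\mathbb{N}_{n-1}$, the multi-set $I$ consists of positive integers bounded by $n$, it contains $\operatorname{Dsp}^{c}(C)$ in the sense of Definition \ref{multisets}, and by part $(ii)$ of Lemma \ref{Dspc} we get $\sum_{i\in I}i=(d-\Sigma(C))+\Sigma(C)=d$. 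Conversely, from a pair $(C,I)$ as in the statement one forms $I':=I\setminus\operatorname{Dsp}^{c}(C)$, again interpreted via Definition \ref{multisets}; the containment hypothesis makes this a legitimate multi-set of positive integers bounded by $n$, and Lemma \ref{Dspc}$(ii)$ yields $\sum_{i\in I'}i=d-\Sigma(C)\geq0$. These two assignments are visibly mutually inverse, which proves the proposition.

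The only genuinely delicate point I anticipate is the first step: one must make sure the factorization $\mathbb{Q}[\mathbf{x}_{n}]\cong R_{n}\otimes\mathbb{Q}[e_{1},\dots,e_{n}]$ is $S_{n}$-equivariant and degree-compatible, not merely an isomorphism of graded vector spaces; this is exactly where the harmonic-polynomials model (or the corresponding identity of graded Frobenius characteristics) does the work. Everything after that is bookkeeping, with Lemma \ref{Dspc}$(ii)$ playing the essential role of matching the degree shift $\Sigma(C)$ with the mandatory part $\operatorname{Dsp}^{c}(C)$ of $I$.
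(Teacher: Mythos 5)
Your argument is correct, and it takes a genuinely different route from the paper. The paper never leaves the world of symmetric-function generating series: it cites Exercise 7.73 of \cite{[St2]} to reduce the multiplicity to the coefficient of $q^{d}$ in $s_{\lambda}(1,q,q^{2},\ldots)$, evaluates that series by Proposition 7.19.11 of \cite{[St2]} (a sum of $q^{\sum_{i\in\operatorname{Dsi}(T)}i}$ over $\operatorname{SYT}(\lambda)$ divided by $\prod_{i=1}^{n}(1-q^{i})$), converts the descent statistic into $\Sigma(C)$ for $C\in\operatorname{CCT}(\lambda)$ via Sch\"{u}tzenberger evacuation and Lemma \ref{Dspc}$(ii)$, and finally extracts the coefficient by expanding each factor $1/(1-q^{i})$ and encoding the exponents $\ell_{i}$ as the extra multiplicities of a multi-set containing $\operatorname{Dsp}^{c}(C)$. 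You instead work structurally: the $S_{n}$-equivariant graded factorization $\mathbb{Q}[\mathbf{x}_{n}]\cong R_{n}\otimes\mathbb{Q}[e_{1},\ldots,e_{n}]$ (harmonics/Chevalley), the graded decomposition of $R_{n}$ from \cite{[ATY]} giving $m_{\lambda}(a)=\#\{C\in\operatorname{CCT}(\lambda):\Sigma(C)=a\}$, the identification of monomials in the $e_{i}$'s with multi-sets of positive integers bounded by $n$, and then the explicit bijection $(C,I')\leftrightarrow(C,I'\uplus\operatorname{Dsp}^{c}(C))$, with Lemma \ref{Dspc}$(ii)$ again doing the degree bookkeeping. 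Both routes are legitimate, and your ingredients are not circular within the paper (the ATY decomposition is an external result, later recovered here as the case $s=k=n$ of Theorem \ref{decomRnk}). What the paper's proof buys is self-containedness relative to \cite{[St2]} alone, at the price of the evacuation detour; what yours buys is that it already exhibits the module-level structure behind the count, and is in fact essentially an unrefined version of what the paper later proves as Corollary \ref{multCI} (take $k>d$ there and decompose $R_{n,k,0}$), so it anticipates the lifting that is one of the paper's goals. The one point you rightly flag — that the factorization must be an isomorphism of graded $S_{n}$-modules, not just of graded vector spaces — is indeed the only nontrivial input, and the harmonic-polynomial (or graded Frobenius characteristic) justification you indicate is standard and sufficient.
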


\begin{proof}
The character, as a symmetric function, that is associated with $\mathcal{S}^{\lambda}$, is the Schur function $s_{\lambda}$. The representation $\mathbb{Q}[\mathbf{x}_{n}]_{d}$ is the one denoted by $\psi^{d}$ in Exercise 7.73 of \cite{[St2]} (the reference there uses $k$ for the degree). If $\sigma^{d}$ is the corresponding symmetric function (namely $\sigma^{d}=\operatorname{ch}(\psi^{d})$ in the notation there), then the exercise in question compares, for a new variable $q$, the generating series $\sum_{d=0}^{\infty}\sigma^{d}q^{d}$ with the expression $\sum_{\lambda \vdash n}s_{\lambda}(1,q,q^{2},\ldots)s_{\lambda}$, where the second multiplier is our $s_{\lambda}$ and the first one is the series obtained by substituting the powers of $q$ in it.

It follows that to get $\sigma^{d}$, one takes for every $\lambda \vdash d$ the coefficient of $q^{d}$ in the series $s_{\lambda}(1,q,q^{2},\ldots)$, and sums over $\lambda$. Thus, as representations, the multiplicity in question is coefficient in front of $q^{d}$ in the latter series.

We will evaluate this series by invoking Proposition 7.19.11 of \cite{[St2]} for a partition or diagram $\lambda$ (the result there is more general, and applies for skew-diagrams as well). The numerator there runs over $\operatorname{SYT}(\lambda)$, and any $T$ in that set contributes to that numerator $q$ raised to the power $\sum_{i\in\operatorname{Dsi}(T)}i$. We can replace $T$ by $S:=\operatorname{ev}T$, where $\operatorname{ev}$ is the evacuation process of Sch\"{u}tzenberger described, among others, in Section 3.9 of \cite{[Sa]}. Thus $\operatorname{Dsi}(T)$ becomes $\operatorname{Dsi}^{c}(S)$, and if we replace $S$ by $C:=\operatorname{ct}(S)$ then the latter set is the same as $\operatorname{Dsp}^{c}(C)$, and its sum is $\Sigma(C)$ by part $(ii)$ of Lemma \ref{Dspc} (or already Lemma 1.17 of \cite{[Z1]}).

We thus transformed the expression for the series from that proposition into $\sum_{C\in\operatorname{CCT}(\lambda)}q^{\Sigma(C)}\big/\prod_{i=1}^{n}(1-q^{i})$, and recall that we require the coefficient of $q^{d}$ there. But for each $1 \leq i \leq n$ we have $1/(1-q^{i})=\sum_{\ell_{i}=0}^{\infty}q^{i\ell_{i}}$, so that we count the collection of $C\in\operatorname{CCT}(\lambda)$ and $\{\ell_{i}\}_{i=1}^{n}$ with $\Sigma(C)+\sum_{i=1}^{n}i\ell_{i}=d$.

But note that a multi-set $I$ of positive integers that is bounded by $n$ is determined by the multiplicities with which it contains every $i\in\mathbb{N}_{n}$, and if it contains $\operatorname{Dsp}^{c}(C)$ for some fixed $C$ then this multiplicity is positive for every $i$ in that set. Writing this multiplicity as $\ell_{i}+1$ in case $i\in\operatorname{Dsp}^{c}(C)$ and just $\ell_{i}$ otherwise, we get a bijection between such multi-sets and sequences $\{\ell_{i}\}_{i=1}^{n}$, and as it is clear that the $\sum_{i \in I}i$ equals $\Sigma(C)+\sum_{i=1}^{n}i\ell_{i}$, the contributions to $q^{d}$ are based precisely on those multi-sets whose element sum equals $d$, as desired. This completes the proof of the proposition.
\end{proof}
As consequences of the constructions of in paper, we will obtain two decompositions of $\mathbb{Q}[\mathbf{x}_{n}]_{d}$ into direct sum of irreducible representations, one corresponding to the formula from Proposition \ref{Kostka}, and another to that appearing in Proposition \ref{Adlambda}.

\medskip

In \cite{[Z1]} we dealt with decomposing the quotient $R_{n,k}$ from \cite{[HRS]}, where $1 \leq k \leq n$. We recall that the latter paper considers, for $s \leq k \leq n$, the finer and more general quotient $R_{n,k,s}$ of $\mathbb{Q}[\mathbf{x}_{n}]$, obtained by dividing by the ideal generated by $x_{i}^{k}$, $1 \leq i \leq n$ and by the symmetric functions $e_{r}$, $n-s+1 \leq r \leq n$ and shows that many properties of $R_{n,k}$ extend to these finer quotients. In this section we show how to generalize the results from \cite{[Z1]} to these quotients, which for the case $s=0$ will also apply for the homogeneous representations $R_{n,I}^{\mathrm{hom}}$ considered there (and generalized in Theorem \ref{Rnksdecom} below).
\begin{rmk}
In \cite{[HRS]}, the only place where the inequality $k \leq n$ is used is in the statement that we can take $s$ all the way up to $k$ and get the quotients $R_{n,k}$ as $R_{n,k,k}$. Every result that considers a specific quotient $R_{n,k,s}$ only uses the inequalities $s \leq k$ and $s \leq n$ (or $s\leq\min\{n,k\}$ together), without any order relation between $n$ and $k$. As in Remark \ref{comptow}, this is also the setting with which we will work in this paper. In particular, for $s=0$ (the case that will be natural for the homogeneous representations later) we get the quotient in which we only divide $\mathbb{Q}[\mathbf{x}_{n}]$ by $x_{i}^{k}$, $1 \leq i \leq n$, and no symmetric functions at all, which is defined for every $n$ and $k$. \label{kngen}
\end{rmk}

The paper \cite{[HRS]} shows that $R_{n,k,s}$ is isomorphic, as an ungraded representation, to the one arising from another action of $S_{n}$ on a set. Let $\widetilde{\mathcal{OP}}_{n,k,s}$ be the set of ordered partitions of $\mathbb{N}_{n+k-s}$ into $k$ sets, such that the last $k-s$ entries lie in the last $k-s$ sets, in increasing order, on which $S_{n}$ acts as usual on the subset $\mathbb{N}_{n}$ of $\mathbb{N}_{n+k-s}$ (and indeed, the other entries do not change their locations). Then it is shown in \cite{[HRS]} that if $1 \leq s \leq k \leq n$ then $R_{n,k,s}$ is isomorphic to $\mathbb{Q}[\widetilde{\mathcal{OP}}_{n,k,s}]$, but the proof applies also when $s=0$ (this is mentioned in \cite{[GR]} as well), and without the inequality $k \leq n$ (see Remark \ref{kngen}).

We will work with an isomorph $\mathcal{OP}_{n,k,s}$ of $\widetilde{\mathcal{OP}}_{n,k,s}$, which we now construct through its decomposition, generalizing those from \cite{[Z1]}.
\begin{defn}
Consider the following objects:
\begin{enumerate}[$(i)$]
\item Let $I$ be a multi-set of integers between 0 and $n$, of size $k-1$, and set $\vec{m}=\{m_{h}\}_{h=0}^{k-1}$ to be the weak composition $\operatorname{comp}_{n}I\vDash_{w}n$ from Lemma \ref{mswcomp}. We define $\mathcal{OP}_{n,I}$ to be the set of (generalized) ordered partitions of $\mathbb{N}_{n}$ into sets of respective sizes given by $\{m_{h}\}_{h=0}^{k-1}$ (the sets with $m_{h}=0$ will be empty, but we remember them in the ordering), and let $W_{\vec{m}}=W_{m_{0},\ldots,m_{k-1}}$ be the associated representation $\mathbb{Q}[\mathcal{OP}_{n,I}]$ of $S_{n}$.
\item Recalling that a such multi-set $I$ can be written as a non-decreasing sequence, and for $\operatorname{comp}_{n}I$ we take the differences in $I\cup\{0,n\}=\{i_{h}\}_{h=0}^{k}$ as a non-decreasing sequence, we $\mathcal{OP}_{n,k,s}$, for any $0 \leq s \leq k$, to be the union of the sets $\mathcal{OP}_{n,I}$ over those $I$ for which the initial sequence $\{i_{h}\}_{h=0}^{s}$ is strictly increasing.
\end{enumerate} \label{OPnksI}
\end{defn}
It is clear that when $I$ is a subset of $\mathbb{N}_{n-1}$, the set $\mathcal{OP}_{n,I}$ from Definition \ref{OPnksI} becomes the set with the same notation from Definition 3.6 of \cite{[Z1]}. As for the latter sets, each set $\mathcal{OP}_{n,I}$ from Definition \ref{OPnksI} admits a transitive action of $S_{n}$, hence $S_{n}$ acts on the union $\mathcal{OP}_{n,k,s}$, with the orbits being the $\mathcal{OP}_{n,I}$'s. It follows that $\mathcal{OP}_{n,k,s}$ is the set of (generalized) ordered partitions of $\mathbb{N}_{n}$ into $k$ sets, in which some of the last $k-s$ sets may be empty, with the action of $S_{n}$.

The set from Definition \ref{OPnksI} have the following properties.
\begin{lem}
Take any $n$, $k$, and $0 \leq s \leq k$.
\begin{enumerate}[$(i)$]
\item The set $\mathcal{OP}_{n,k,s}$ is empty if $s>n$.
\item We have the inclusion $\mathcal{OP}_{n,k,s+1}\subseteq\mathcal{OP}_{n,k,s}$.
\item When $k \leq n$ and $s=k$, the set $\mathcal{OP}_{n,k,k}$ is the original set $\mathcal{OP}_{n,k}$.
\item Given a multi-set $I$, the number $\hat{k}$ from Definition \ref{multisets} is the number of non-negative entries in $\vec{m}=\{m_{h}\}_{h=0}^{k-1}$ from Definition \ref{OPnksI}, and we have $\hat{k}=k$ if and only if $\mathcal{OP}_{n,I}\subseteq\mathcal{OP}_{n,k,k}=\mathcal{OP}_{n,k}$.
\item If $\mathcal{OP}_{n,I}\subseteq\mathcal{OP}_{n,k,s}$ and $\hat{k}$ is as above then $s\leq\hat{k} \leq n$.
\item We have $\mathcal{OP}_{n,k,s}\cong\widetilde{\mathcal{OP}}_{n,k,s}$ as sets with an action of $S_{n}$.
\end{enumerate} \label{propOPnks}
\end{lem}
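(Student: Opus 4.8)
The plan is to dispatch (i)--(v) by direct manipulation of the non-decreasing sequence $I\cup\{0,n\}=\{i_{h}\}_{h=0}^{k}$ (with $i_{0}=0$ and $i_{k}=n$) that encodes the multi-set $I$ via Lemma \ref{mswcomp}, and to reserve the substantive work for (vi).

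First the easy parts. For (i): membership $\mathcal{OP}_{n,I}\subseteq\mathcal{OP}_{n,k,s}$ forces $0=i_{0}<i_{1}<\cdots<i_{s}$, hence $i_{s}\geq s$; but every entry of $I$ is at most $n$, so $s\leq i_{s}\leq n$ and no $I$ qualifies once $s>n$. Part (ii) holds because a prefix of a strictly increasing sequence is strictly increasing, so any $I$ witnessing membership in $\mathcal{OP}_{n,k,s+1}$ also witnesses it in $\mathcal{OP}_{n,k,s}$. For (iii), when $s=k$ the condition on $\{i_{h}\}_{h=0}^{k}$ says precisely that $I$ has no repeated entries and contains neither $0$ nor $n$, i.e.\ that $I$ is a genuine $(k-1)$-subset of $\mathbb{N}_{n-1}$ (which in particular forces $k\leq n$); by the remark following Definition \ref{OPnksI} the union defining $\mathcal{OP}_{n,k,k}$ is then exactly the one defining the classical $\mathcal{OP}_{n,k}$ of Definition 3.6 of \cite{[Z1]}. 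For (iv), the entry $m_{h}=i_{h+1}-i_{h}$ of $\vec{m}$ vanishes exactly when $i_{h}=i_{h+1}$, so the number of positive entries of $\vec{m}$ is one less than the number of distinct values among $i_{0},\ldots,i_{k}$; as those distinct values are $0$, the elements of the underlying set $\hat{I}$ from Definition \ref{multisets}, and $n$, this count equals $|\hat{I}|+1=\hat{k}$ (where ``positive'' is meant in place of the statement's ``non-negative''). Then $\hat{k}=k$ means every $m_{h}$ is positive, i.e.\ $\{i_{h}\}_{h=0}^{k}$ is strictly increasing, which as in (iii) is equivalent to $\mathcal{OP}_{n,I}\subseteq\mathcal{OP}_{n,k,k}=\mathcal{OP}_{n,k}$. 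Finally (v): if $\mathcal{OP}_{n,I}\subseteq\mathcal{OP}_{n,k,s}$ then $m_{0},\ldots,m_{s-1}$ are positive, so $\hat{k}\geq s$ by the count just made, while $\hat{k}=|\hat{I}|+1\leq(n-1)+1=n$ because $\hat{I}\subseteq\mathbb{N}_{n-1}$.

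The heart of the lemma is (vi). By (i), and the parallel observation that $\widetilde{\mathcal{OP}}_{n,k,s}$ is likewise empty when $s>n$ (its first $s$ blocks are nonempty and contained in $\mathbb{N}_{n}$, which is impossible), I may assume $s\leq n$. Recall from the discussion after Definition \ref{OPnksI} that $\mathcal{OP}_{n,k,s}$ is the set of ordered partitions $(A_{0},\ldots,A_{k-1})$ of $\mathbb{N}_{n}$ into $k$ blocks with $A_{0},\ldots,A_{s-1}$ nonempty and $A_{s},\ldots,A_{k-1}$ arbitrary (possibly empty); I will exhibit the bijection $\Phi$ sending such a tuple to the ordered partition $(B_{1},\ldots,B_{k})$ of $\mathbb{N}_{n+k-s}$ with $B_{j}=A_{j-1}$ for $1\leq j\leq s$ and $B_{s+j}=A_{s+j-1}\cup\{n+j\}$ for $1\leq j\leq k-s$. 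That $\Phi$ lands in $\widetilde{\mathcal{OP}}_{n,k,s}$ amounts to observing that $B_{s+1},\ldots,B_{k}$ are nonempty and receive $n+1,\ldots,n+k-s$ in the prescribed increasing order, while $B_{1},\ldots,B_{s}$ are nonempty subsets of $\mathbb{N}_{n}$; the inverse deletes each $n+j$ from $B_{s+j}$ and re-indexes; and $S_{n}$-equivariance is immediate since $\Phi$ only adjoins the points $n+j$, which $S_{n}$ fixes, leaving the locations of the elements of $\mathbb{N}_{n}$ untouched. The one point that needs care is the bookkeeping of the index shift between the $0,\ldots,k-1$ labelling underlying the pieces $\mathcal{OP}_{n,I}$ and the $1,\ldots,k$ labelling of $\widetilde{\mathcal{OP}}_{n,k,s}$, together with the check --- as in the proof of (iv), and this is where the multi-set formalism of Lemma \ref{mswcomp} does the work --- that as $I$ ranges over the valid multi-sets the block-size sequences $\vec{m}=\operatorname{comp}_{n}I$ account for exactly the tuples $(A_{0},\ldots,A_{k-1})$ described above; this is the only genuine subtlety in the argument.
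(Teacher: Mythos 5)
Your proof is correct and follows essentially the same route as the paper: parts (i)--(v) by direct reasoning on the non-decreasing sequence $\{i_{h}\}_{h=0}^{k}$ and the weak composition $\vec{m}$ (including the right reading of ``non-negative'' as ``positive'' in (iv)), and part (vi) via the same insertion/deletion bijection adjoining $n+j$ to the $(s+j)$th block, which fixes the $\mathbb{N}_{n}$-part and is therefore $S_{n}$-equivariant. No gaps worth flagging.
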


\begin{proof}
If $\mathcal{OP}_{n,k,s}$ is not empty then there is an increasing sequence $\{i_{h}\}_{h=0}^{s}$ of integers between 0 and $n$, which clearly implies $s \leq n$ and part $(i)$ follows. The fact that the condition on $I$ for $\mathcal{OP}_{n,I}$ to participate in $\mathcal{OP}_{n,k,s}$ becomes more restrictive as $s$ increases yields part $(ii)$ (including the situation where $\mathcal{OP}_{n,k,s+1}$ is empty, with $\mathcal{OP}_{n,k,s}$ possibly not). For $s=k$, we know that $\{i_{h}\}_{h=0}^{k}$ is strictly increasing, so that the elements of $I$ are all distinct and we have $i_{1}>0$ and $i_{k-1}<n$. This means that $I$ is a subset of $\mathbb{N}_{n-1}$, so that $\mathcal{OP}_{n,I}$ is as defined in \cite{[Z1]}, and the union is the asserted one from part $(iii)$ as is showed in that reference.

Next, since $\hat{I}\subseteq\mathbb{N}_{n-1}$, the proof of part $(iii)$ yields $\mathcal{OP}_{n,\hat{I}}\subseteq\mathcal{OP}_{n,\hat{k},\hat{k}}$. This yields the second assertion in part $(iv)$, and when we compare $I$ with $\hat{I}$, we only add instances of 0, $n$, or elements that already appear in $\hat{I}$. As the effect of such an addition amounts to adding vanishing entries to $\operatorname{comp}_{n}\hat{I}$ in order to get $\operatorname{comp}_{n}I$, the first assertion is established in part $(iv)$ as well. Part $(v)$ is an immediate consequence of part $(iv)$ and the fact that if $\mathcal{OP}_{n,I}\subseteq\mathcal{OP}_{n,k,s}$ then the first $s$ entries of $\operatorname{comp}_{n}I$ are positive.

Now, when we take an ordered partition that lies in $\widetilde{\mathcal{OP}}_{n,k,s}$ and omit the entries from $\mathbb{N}_{n+k-s}\setminus\mathbb{N}_{n}$, we obtain a (generalized) ordered partition of $\mathbb{N}_{n}$ into $k$ sets, but where some of last $k-s$ sets in the partition may possibly be empty. But this is thus an element of $\widetilde{\mathcal{OP}}_{n,k,s}$.

Conversely, take an $\mathcal{OP}_{n,k,s}$ and add to the set of index $k-t$ for $1 \leq t \leq k-s$ the element $n-t\in\mathbb{N}_{n+k-s}$. Since only the last $k-s$ sets in the original element were possibly empty, and in its image these sets contain one of the added elements, this produces an element of $\widetilde{\mathcal{OP}}_{n,k,s}$ (since the last $k-s$ elements of $\mathbb{N}_{n+k-s}$ lie in the prescribed sets of the ordered partition by construction).

These two constructions are clearly inverses, and they respect the action of $S_{n}$ since they only affect elements from $\mathbb{N}_{n+k-s}\setminus\mathbb{N}_{n}$. Hence they form both directions of the isomorphism from part $(vi)$. This proves the lemma.
\end{proof}
It follows from parts $(i)$ and $(vi)$ of Lemma \ref{propOPnks} that $\widetilde{\mathcal{OP}}_{n,k,s}$ must also be empty in case $s>n$, which is indeed the case since the set $\mathbb{N}_{n+k-s}$ then has less than $k$ elements, and thus cannot have ordered partitions into non-empty $k$ sets. For the effect of removing the restrictions $s \leq k$ and $s \leq n$ on the quotient $R_{n,k,s}$, see Remark \ref{bigs} below. The constructions from Definition \ref{OPnksI} and Lemma \ref{propOPnks} are related to the sets $\mathcal{W}_{n,k,s}$ from Section 8 of \cite{[PR]}, and the equivalence of the representations $\mathbb{Q}[\mathcal{OP}_{n,k,s}]$ and $\mathbb{Q}[\widetilde{\mathcal{OP}}_{n,k,s}]$, readily following from part $(vi)$ of Lemma \ref{propOPnks}, may be related to the isomorphism between the cohomology rings of the varieties $X_{n,k,s}$ and $Y_{n,k,s}$ from that reference (see also Remark \ref{cohomology} below).

\begin{ex}
For $n=4$ and $J$ as in Examples \ref{wcompex} and \ref{ctJex}, the corresponding multi-set $I$ is $\{0,3,3\}$, with $k=4$ and $\operatorname{comp}_{4}I=0301\vDash_{w}4$. One element of $\mathcal{OP}_{4,I}$ is $\big(\{\},\{1,3,4\},\{\},\{2\}\big)$, with empty first and third sets according to the prescribed sizes. We have $\hat\{I\}=\{3\}$, of size $\hat{k}-1$ for $\hat{k}=2$, and indeed our weak composition has two non-zero entries. Since the first entry vanishes (or equivalently, $0 \in I$), the only value of $s$ for which $\mathcal{OP}_{4,I}\subseteq\mathcal{OP}_{4,4,s}$ is $s=0$, and the associated element of $\widetilde{\mathcal{OP}}_{4,4,0}$ is $\big(\{5\},\{1,3,4,6\},\{7\},\{2,8\}\big)$. The multi-set associated with the other multi-set there is $\{1,3,3,4,4\}$, with $k=6$, the associated weak partition $120100\vDash_{w}4$, subset $\{1,3\}$ of $\mathbb{N}_{3}$, with $\hat{k}=3$ and three non-vanishing entries. An associated (generalized) ordered partition is $\big(\{3\},\{1,4\},\{\},\{2\},\{\},\{\}\big)$, the maximal value of $s$ is 2, and when considered as an element of $\mathcal{OP}_{4,6,1}$, the corresponding ordered partition from $\widetilde{\mathcal{OP}}_{4,6,1}$ is $\big(\{3\},\{1,4,5\},\{6\},\{2,7\},\{8\},\{9\}\big)$. \label{ordpartex}
\end{ex}

\begin{rmk}
By adding empty sets, elements of $\mathcal{OP}_{n,\ell}$ for $s\leq\ell\leq\min\{n,k\}$ (with $\ell\geq1$ in case $s=0$, since $\mathcal{OP}_{n,0}$ does not exist as we assume that $n\geq1$ throughout) can be seen as naturally embedded inside $\mathcal{OP}_{n,k,s}$. In particular, part $(ii)$ of Lemma \ref{propOPnks} shows that $\mathcal{OP}_{n,k,k-1}$ contains $\mathcal{OP}_{n,k,k}=\mathcal{OP}_{n,k}$, and the complement is obtained by adding a single empty set to all the elements of $\mathcal{OP}_{n,k-1,k-1}=\mathcal{OP}_{n,k-1}$. However, the representations attached to the parts of $\mathcal{OP}_{n,\ell}$ in Theorem 3.10 of \cite{[Z1]} will be different from those which we relate to the images of these parts in $\mathcal{OP}_{n,k,s}$ in Theorem \ref{Rnksdecom} below (except for $\mathcal{OP}_{n,k,k}=\mathcal{OP}_{n,k}$, of course), so we view these sets as distinct. In any case, if $s \leq k-2$ then $\mathcal{OP}_{n,k,s}$ contains elements that are not obtained as such images. \label{difreps}
\end{rmk}

\medskip

Lemma \ref{propOPnks} transforms the result from \cite{[HRS]} (extended via Remark \ref{kngen}) into an isomorphism between $R_{n,k,s}$ and $\mathbb{Q}[\mathcal{OP}_{n,k,s}]$, for any $0 \leq s\leq\min\{n,k\}$. In fact, the proof of this result, as well as of the Specht basis property from Theorem 4.11 of \cite{[GR]}, considers first the case $s=0$, and establishes the general case by induction using Lemma 6.9 of \cite{[HRS]}, which we now state.
\begin{lem}
For any $0 \leq s<k$ then we have the short exact sequence \[0 \to R_{n,k-1,s}\stackrel{e_{n-s}}{\longrightarrow}R_{n,k,s} \longrightarrow R_{n,k,s+1}\longrightarrow0.\] When these rings are identified, as representations of $S_{n}$, with lifts into $\mathbb{Q}[\mathbf{x}_{n}]$, this sequences splits into the equality $R_{n,k,s}=e_{n-s}R_{n,k-1,s} \oplus R_{n,k,s+1}$. \label{splexseq}
\end{lem}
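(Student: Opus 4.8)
The plan is to deduce Lemma \ref{splexseq} from the definitions of the quotients together with the first-principles description of the $e_{n-s}$-multiplication map, rather than to reprove the Hilbert series identities of \cite{[HRS]}. First I would recall that $R_{n,k,s}$ is $\mathbb{Q}[\mathbf{x}_{n}]$ modulo the ideal $I_{n,k,s}$ generated by $x_{1}^{k},\ldots,x_{n}^{k}$ together with $e_{n-s+1},\ldots,e_{n}$, and that $R_{n,k-1,s}$ is the quotient by $I_{n,k-1,s}$, which contains the same $e_{r}$'s but the lower powers $x_{i}^{k-1}$. The map $R_{n,k-1,s}\to R_{n,k,s}$ is induced by multiplication by $e_{n-s}$: one checks it is well-defined because $e_{n-s}\cdot x_{i}^{k-1}\in I_{n,k,s}$ (since $e_{n-s}x_i^{k-1}$ can be written as $x_i^{k}\cdot(\text{something})$ plus terms involving $e_{n-s+1}$, using the standard recursion $e_{n-s}=e_{n-s}^{(i)}+x_i e_{n-s-1}^{(i)}$ where the superscript omits the variable $x_i$ — actually the cleaner statement is $x_i^{k-1}e_{n-s}\equiv x_i^{k}e_{n-s-1}^{(i)} \pmod{(e_{n-s+1})}$, and $x_i^k\in I_{n,k,s}$), and that $e_{n-s}$ itself lies in $I_{n,k,s+1}$, so the composite $R_{n,k-1,s}\to R_{n,k,s}\to R_{n,k,s+1}$ is zero.

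Next I would verify exactness at the three spots. Surjectivity of $R_{n,k,s}\to R_{n,k,s+1}$ is immediate since $I_{n,k,s}\subseteq I_{n,k,s+1}$ and the generators of $I_{n,k,s+1}$ not already in $I_{n,k,s}$ — namely $e_{n-s}$ — are hit. Exactness in the middle amounts to the statement $I_{n,k,s+1}=I_{n,k,s}+(e_{n-s})$ together with the claim that the kernel of $R_{n,k,s}\to R_{n,k,s+1}$, which is $(I_{n,k,s+1}/I_{n,k,s})\cdot$(image), equals $e_{n-s}R_{n,k,s}$; and then one must see this image is exactly the image of the $e_{n-s}$-map from $R_{n,k-1,s}$, i.e. that the annihilator of $e_{n-s}$ in $R_{n,k,s}$ is precisely the image of $I_{n,k-1,s}/I_{n,k,s}$, which in turn is generated by the classes of $x_i^{k-1}$. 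The genuinely non-trivial input — and the main obstacle — is \emph{injectivity} of $e_{n-s}\colon R_{n,k-1,s}\to R_{n,k,s}$: one needs that no polynomial outside $I_{n,k-1,s}$ becomes, after multiplication by $e_{n-s}$, an element of $I_{n,k,s}$. Equivalently, the annihilator of $e_{n-s}$ in $R_{n,k,s}$ is no larger than the ideal $(x_1^{k-1},\ldots,x_n^{k-1})$-image. This is a dimension-count statement at heart; the honest route is to invoke the monomial basis of $R_{n,k,s}$ from \cite{[HRS],[GR]} (the "$(n,k,s)$-nonattacking" or Garsia–Procesi-type basis) and check that $e_{n-s}$ times the basis of $R_{n,k-1,s}$ maps to linearly independent elements, or alternatively to cite the Hilbert-series identity $\mathrm{Hilb}(R_{n,k,s};q)=q^{n-s}\,\mathrm{Hilb}(R_{n,k-1,s};q)+\mathrm{Hilb}(R_{n,k,s+1};q)$ established in \cite{[HRS]} and combine it with the already-established exactness at the other two spots to force injectivity by counting.

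Since the excerpt is explicitly allowed to assume results stated earlier and this lemma is quoted verbatim as "Lemma 6.9 of \cite{[HRS]}", the cleanest exposition is: reduce the well-definedness and the vanishing of the composite to the ideal-membership computations sketched above (a few lines each), note surjectivity, and then cite \cite{[HRS]} for the exactness in the middle and the injectivity — or reproduce the short argument from there — and finally observe that the splitting over $\mathbb{Q}[S_{n}]$ is automatic: $e_{n-s}$ is $S_n$-invariant, so $e_{n-s}R_{n,k-1,s}$ is an $S_n$-submodule of $R_{n,k,s}$, the sequence is a sequence of $\mathbb{Q}[S_n]$-modules, and $\mathbb{Q}[S_n]$-modules in characteristic $0$ are semisimple, hence every short exact sequence splits; choosing the $S_n$-stable lift of $R_{n,k,s+1}$ inside $\mathbb{Q}[\mathbf{x}_n]$ realizes the internal direct sum $R_{n,k,s}=e_{n-s}R_{n,k-1,s}\oplus R_{n,k,s+1}$. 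I expect the injectivity step to be the one place where a genuine computation (or an appeal to the basis theorem) is unavoidable; everything else is bookkeeping with the defining ideals.
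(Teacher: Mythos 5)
The paper gives no proof of this lemma at all: it is recalled verbatim as Lemma 6.9 of \cite{[HRS]}, so your overall plan --- verify the routine ideal-theoretic facts, defer the one substantive point (injectivity of multiplication by $e_{n-s}$) to the monomial basis or Hilbert series of \cite{[HRS]}/\cite{[GR]}, and obtain the splitting for free from semisimplicity of $\mathbb{Q}[S_{n}]$ in characteristic $0$ because $e_{n-s}$ is $S_{n}$-invariant --- is in substance the same as the paper's (a citation), and it is sound. Two corrections to your sketch, though. Your displayed congruence is not right: from $e_{n-s}=e^{(i)}_{n-s}+x_{i}e^{(i)}_{n-s-1}$ one gets $x_{i}^{k-1}e_{n-s}-x_{i}^{k}e^{(i)}_{n-s-1}=x_{i}^{k-1}e^{(i)}_{n-s}$, and this does not lie in $(e_{n-s+1})$ alone; iterating $x_{i}e^{(i)}_{m}=e_{m+1}-e^{(i)}_{m+1}$ gives $x_{i}^{k-1}e^{(i)}_{n-s}=\sum_{j=0}^{s-1}(-1)^{j}x_{i}^{k-2-j}e_{n-s+1+j}$, which uses the full list $e_{n-s+1},\ldots,e_{n}$ of generators and, crucially, the hypothesis $s<k$ so that the exponents stay non-negative until the iteration terminates at $e_{n}^{(i)}=0$; for $s=k$ the membership $e_{n-s}x_{i}^{k-1}\in I_{n,k,s}$, and with it well-definedness, genuinely fails (try $n=k=s=2$), so this is exactly where the hypothesis enters and your write-up should say so. Also, exactness in the middle needs no annihilator statement: the image of the map from $R_{n,k-1,s}$ is $\bigl(e_{n-s}\mathbb{Q}[\mathbf{x}_{n}]+I_{n,k,s}\bigr)/I_{n,k,s}$ (every class lifts to a polynomial), which is precisely the kernel of the projection since $I_{n,k,s+1}=I_{n,k,s}+(e_{n-s})$; the annihilator computation you describe is the injectivity statement, which is indeed the only non-formal point. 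Finally, if you take the Hilbert-series route for injectivity, check that the identity you quote from \cite{[HRS]} is not itself deduced there from this very exact sequence; the monomial-basis argument avoids any circularity.
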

Note that in the extreme case $k=1$, Lemma \ref{splexseq} identifies $R_{n,1,s}=\mathbb{Q}$ with $R_{n,1,s+1}=\mathbb{Q}$ (with the kernel being the image of $R_{n,0,s}$, which we consider to be 0), and if $s=n<k$ then it identifies $R_{n,k-1,n}=R_{n}$ with $R_{n,k,n}=R_{n}$, with the quotient $R_{n,k,n+1}$ also being 0 (and for $s>n$ all the summands vanish). Our goal is to decompose each representations $R_{n,k,s}\cong\mathbb{Q}[\mathcal{OP}_{n,k,s}]$ in a way that respects both the decomposition of each $\mathcal{OP}_{n,k,s}$ as the union of the orbits $\mathcal{OP}_{n,I}$ for the corresponding multi-sets $I$, and the short exact sequence (or direct sum expression) from Lemma \ref{splexseq}.

\section{Generalized Higher Specht Polynomials \label{GenReps}}

We will work in this paper with two types of generalizations of higher Specht polynomials. One would be extending Definitions 2.3, 3.10, and 3.23 of \cite{[Z1]}, which are based on cocharge tableaux and sets, to multi-sets, and the other one generalizes the construction from a cocharge tableau to one that is based on a general semi-standard Young tableau.

Recall from \cite{[Z1]} and others that for given a tableau $T$ whose content is $\mathbb{N}_{n}$, we have the (disjoint) subgroups $R(T)$ and $C(T)$ of $S_{n}$, with the former preserves the rows of $T$, and the latter acts only on the columns of $T$. The associated \emph{Young symmetrizer} is $\varepsilon_{T}:=\sum_{\sigma \in C(T)}\sum_{\tau \in R(T)}\operatorname{sgn}(\sigma)\sigma\tau\in\mathbb{Z}[S_{n}]$, which is a scalar multiple of an idempotent from $\mathbb{Q}[S_{n}]$. Lemma 2.1 of that reference considers the extension of $C(T)$ to a semi-direct product $\tilde{C}(T)$ in which the subgroup $R(T)\cap\tilde{C}(T)$ operates on $C(T)$ (so that each subgroup $S_{a}^{t_{a}} \subseteq C(T)$ becomes the wreath product $S_{t_{a}} \rtimes S_{a}^{t_{a}}$ of $S_{t_{a}}$ and $S_{a}$ in $\tilde{C}(T)$), and extends the sign character on $C(T)$ to a character $\widetilde{\operatorname{sgn}}:\tilde{C}(T)\to\{\pm1\}$, which is trivial on the acting group $R(T)\cap\tilde{C}(T)$.

We will work with the following notions.
\begin{defn}
Take $\lambda \vdash n$, a tableau $T$ of shape $\lambda$ and content $\mathbb{N}_{n}$, and an element $M\in\operatorname{SSYT}(\lambda)$, with some content $\mu$, of sum $d$.
\begin{enumerate}[$(i)$]
\item The monomial $p_{M,T}$ is defined to be the product $\prod_{i=1}^{n}x_{i}^{h_{i}}$, where for $i\in\mathbb{N}_{n}$ the index $h_{i}$ is the entry showing up in the box $v_{T}(i)$ of $M$.
\item When we let $R(T)$ act on $M$, or equivalently on the monomial $p_{M,T}$ inside $\mathbb{Q}[\mathbf{x}_{n}]$, we get a stabilizer, whose size we denote by $s_{M,T}$.
\item The \emph{generalized higher Specht polynomial} corresponding to $M$ and $T$ is defined to be $F_{M,T}:=\varepsilon_{T}p_{M,T}/s_{M,T}$.
\item Given any multi-set $I$, consider it as the multi-set union of the set $\hat{I}$ from Definition \ref{multisets} and its multi-set complement $I\setminus\hat{I}$. Then if $i\in\hat{I}$ then we define $r_{i}:=|\{j\in\mathbb{N}_{n-1}\setminus\hat{I}\;|\;j<i\}|$ as in Definition 3.10 of \cite{[Z1]}, and for any $i$ in the complement we set $r_{i}:=n-\hat{k}+|\{j\in\hat{I}\cup\{n\}\;|\;j>i\}|$.
\item Assume that $M=C\in\operatorname{CCT}(\lambda)$ is a cocharge tableau, and $I$ is a multi-set which contains the set $\operatorname{Dsp}^{c}(C)$ from Definition \ref{sets} in the sense of Definition \ref{multisets}, with the multi-set complement $\operatorname{Asp}^{c}_{I}(C)$. Writing that multi-set as the union of the set $\hat{I}\setminus\operatorname{Dsp}^{c}(C)$ and the multi-set $I\setminus\hat{I}$, we define $F_{C,T}^{I}:=F_{C,T}\cdot\prod_{i\in\operatorname{Asp}^{c}_{I}(C)}e_{r_{i}}$.
\item For such $M=C$ and $I$ we also define $F_{C,T}^{I,\mathrm{hom}}:=F_{C,T}\cdot\prod_{i\in\operatorname{Asp}^{c}_{I}(C)}e_{i}$.
\item We set $\vec{h}_{C}^{I}$ to be the vector $\{h_{r}\}_{r=1}^{n}$ in which $h_{r}$ with $1 \leq r \leq n$ is $\big|\{i\in\operatorname{Asp}^{c}_{I}(C)\;|\;r_{i}=r\}\big|$, and denote by $\vec{h}(C,I)$ the characteristic vector of the multi-set $\operatorname{Asp}^{c}_{I}(C)$, with the multiplicity of 0 omitted.
\end{enumerate} \label{Spechtdef}
\end{defn}
It is clear that when $I$ is a subset of $\mathbb{N}_{n-1}$, the later parts of Definition \ref{Spechtdef} become Definitions 2.3, 3.10, and 3.23 of \cite{[Z1]}, using the notation with cocharge tableaux. These parts are therefore extensions of these definitions to more general multi-sets.

\begin{rmk}
The prequel \cite{[Z1]} used two other notations for the higher Specht polynomials and their multiples by symmetric functions. First, using the bijection $\operatorname{ct}:\operatorname{SYT}(\lambda)\to\operatorname{CCT}(\lambda)$ from Definition \ref{sets}, the higher Specht polynomials $F_{C,T}^{I}$ and $F_{C,T}^{I,\mathrm{hom}}$, where $C=\operatorname{ct}(S)$ for $S\in\operatorname{SYT}(\lambda)$, are also denoted by $F_{T,I}^{S}$ and $F_{T,I}^{S,\mathrm{hom}}$ respectively, also for multi-sets, and can be defined directly in terms of the multi-set complement $\operatorname{Asi}^{c}_{I}(S)=I\setminus\operatorname{Dsi}^{c}(S)$. If $T$ is also standard, then there is a unique element $w \in S_{n}$ such that $T=P(w)$ from the RSK algorithm, and $S$ is the image $\tilde{Q}(w)$ of $Q(w)$ from that algorithm under the Sch\"{u}tzenberger evacuation process (see, e.g., Section 3.9 of \cite{[Sa]} for its definition). Then these polynomials carry the respective additional notations $F_{w,I}$ and $F_{w,I}^{\mathrm{hom}}$, and the direct definition involves the multi-set complement $\operatorname{Asl}_{I}(w)$ inside $I$ of the set $\operatorname{Dsl}(w)$ of descending locations, or descents, of $w$ (the latter is denoted by $\operatorname{Des}(w)$ in \cite{[RW]} and others). However, we will stick with the notation from Definition \ref{Spechtdef} itself in this paper. \label{notation}
\end{rmk}

\medskip

We will now present the properties of the generalized higher Specht polynomials from Definition \ref{Spechtdef}, extending Lemma 2.5 and Proposition 2.6 of \cite{[Z1]}. The proofs applied a certain ordering on monomials, which we now define as we shall use it here in several places below.
\begin{defn}
Let $T$ be a fixed tableau of shape $\lambda \vdash n$ and content $\mathbb{N}_{n}$. Then, for every monomial $y\in\mathbb{Z}[\mathbf{x}_{n}]$ we set $\deg_{T}y$ to be the vector of non-negative integers, of length $\lambda_{1}$ (the first, maximal entry of $\lambda$), such that the $j$th entry $\deg_{T}^{j}y$ of that vector is the sum of the exponents to which the variables $x_{i}$ with $C_{T}(i)=j$ show up in $y$. For another monomial $z$, we write $y>_{T}z$ to say that $\deg_{T}y$ is (strictly) larger than $\deg_{T}z$ in the reverse lexicographic order. \label{orderT}
\end{defn}
It is clear that if $\sigma \in C(T)$ then the vector from Definition \ref{orderT} satisfies $\deg_{T}\sigma y=\deg_{T}y$ for every monomial $y$.

We now describe the properties of the generalized higher Specht polynomials, as was carried out in \cite{[Z1]} and elsewhere.
\begin{lem}
If $T$ and $M$ are as in Definition \ref{Spechtdef}, then $\sum_{\tau \in R(T)}\tau p_{M,T}/s_{M,T}$ is a sum of distinct, monic monomials, all of which have the same content as $M$ and thus degree $\Sigma(M)$, with the original monomial $p_{M,T}$ being the largest in the order from Definition \ref{orderT}. \label{operRT}
\end{lem}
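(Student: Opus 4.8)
The plan is to unwind the definitions and reduce everything to an elementary combinatorial statement about the action of $R(T)$ on the exponent data of $p_{M,T}$. Recall that $R(T)$ is the row-stabilizer subgroup of $S_n$, so it is a product $\prod_j S_{\lambda_j}$ of symmetric groups, one for each row of $T$. Acting by $\tau\in R(T)$ on $M$ permutes the boxes within each row of $T$; correspondingly it permutes the exponents of $p_{M,T}$ among the variables $x_i$ with $i$ in a fixed row of $T$. Hence every monomial occurring in $\sum_{\tau\in R(T)}\tau p_{M,T}$ is obtained from $p_{M,T}$ by permuting exponents within each row-block, so it is monic, has exactly the same content (the multi-set of all exponents is unchanged), and in particular has the same degree $\Sigma(M)$. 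This takes care of the ``same content, degree $\Sigma(M)$, monic'' assertions immediately.

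Next I would address the count and the ``distinct'' claim. The stabilizer of $M$ (equivalently of the monomial $p_{M,T}$) under the $R(T)$-action has size $s_{M,T}$ by part $(ii)$ of Definition \ref{Spechtdef}; concretely, within each row the stabilizer is the Young subgroup permuting boxes that carry equal entries of $M$, so $s_{M,T}=\prod_j\prod_{a}(\text{mult. of }a\text{ in row }j\text{ of }M)!$. By the orbit-stabilizer theorem the $R(T)$-orbit of $p_{M,T}$ has exactly $|R(T)|/s_{M,T}$ elements, and the sum $\sum_{\tau\in R(T)}\tau p_{M,T}$ counts each orbit element exactly $s_{M,T}$ times; dividing by $s_{M,T}$ therefore yields the sum of the $|R(T)|/s_{M,T}$ distinct orbit monomials, each with coefficient $1$. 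So $\sum_{\tau\in R(T)}\tau p_{M,T}/s_{M,T}$ is genuinely a sum of distinct monic monomials. (Here I am implicitly using that distinct monomials in $\mathbb{Z}[\mathbf{x}_n]$ are linearly independent, so monomials in different orbit positions cannot accidentally coincide unless they are literally equal as monomials, which is exactly the orbit relation.)

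Finally, for the maximality of $p_{M,T}$ in the order $>_T$ of Definition \ref{orderT}: the vector $\deg_T y$ records, for each column index $j$, the total exponent mass sitting in column $j$ of $T$. A general orbit monomial $\tau p_{M,T}$ redistributes, within each row, the entries of $M$ among the columns of that row. The claim is that the original arrangement $M$ — which is semi-standard, hence weakly increasing along each row — puts the largest entries as far to the right as possible, i.e. into the highest-indexed columns available in each row. Comparing $\deg_T p_{M,T}$ with $\deg_T\tau p_{M,T}$ in reverse lexicographic order, one checks column by column from the last column backward: in the last column $p_{M,T}$ collects the largest entry of each row long enough to reach that column, which is at least as large as what any rearrangement can place there; if equality holds in that column one passes to the previous column and repeats. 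A standard exchange/rearrangement argument (moving a larger entry rightward past a smaller one within a row strictly increases the reverse-lex value of $\deg_T$) shows $p_{M,T}\geq_T\tau p_{M,T}$ for all $\tau\in R(T)$, with equality only when $\tau$ fixes the column-positions of all entry values, which is the case for the distinct orbit representative equal to $p_{M,T}$ itself. This rearrangement inequality is the one genuinely non-formal point, and it is precisely the argument already used for Lemma 2.5 of \cite{[Z1]}; I would simply invoke that the same reasoning applies verbatim here, since the only property of $M$ used is that its rows are weakly increasing, which holds for any semi-standard $M$, not just a cocharge tableau. Thus the main obstacle is not conceptual but bookkeeping: making the reverse-lex column-by-column comparison precise, which the prequel already handled.
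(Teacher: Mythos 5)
Your proposal is correct and is essentially the paper's own argument: the paper proves Lemma \ref{operRT} simply by invoking Lemma 2.5 of \cite{[Z1]}, observing that the only property of the first tableau used there is semi-standardness, and what you wrote out (orbit–stabilizer giving the distinct monic monomials of fixed content after dividing by $s_{M,T}$, plus the within-row rearrangement/suffix-sum comparison showing $\deg_{T}p_{M,T}$ beats every $\deg_{T}\tau p_{M,T}$ in the compare-from-the-rightmost-column order) is exactly the argument that citation refers to. Your convention for the reverse lexicographic comparison matches the paper's (as confirmed by the vectors $113>131>311$ in Example \ref{CTrelex}), so no gap remains.
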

Lemma \ref{operRT} is proved just like Lemma 2.5 of \cite{[Z1]}, since the only property of $C=\operatorname{ct}(S)$ that was used there was it being semi-standard. Based on that lemma, we follow the proof of Proposition 2.6 of that reference, and get the following description.
\begin{prop}
The generalized higher Specht polynomial $F_{M,T}$ from Definition \ref{Spechtdef} lies in $\mathbb{Z}[\mathbf{x}_{n}]_{d}$ for $d=\Sigma(M)$, and it equals $\sum_{\sigma \in C(T)}\operatorname{sgn}(\sigma)\sigma p_{M,T}$ plus a combination of monomials $y$ that satisfy $p_{M,T}>_{T}y$. The action of every element $\sigma\in\tilde{C}(T)$ multiplies it by the sign $\widetilde{\operatorname{sgn}}(\sigma)$. \label{Spechtpols}
\end{prop}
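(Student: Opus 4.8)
The plan is to follow the template of the proof of Proposition~2.6 of \cite{[Z1]}, since the only ingredient needed about $M$ is that it is semi-standard, which is exactly the content of Lemma~\ref{operRT}. First I would write out $F_{M,T}=\varepsilon_{T}p_{M,T}/s_{M,T}=\sum_{\sigma\in C(T)}\operatorname{sgn}(\sigma)\sigma\big(\sum_{\tau\in R(T)}\tau p_{M,T}/s_{M,T}\big)$, and invoke Lemma~\ref{operRT} to rewrite the inner sum as $p_{M,T}+\sum_{y<_{T}p_{M,T}}c_{y}y$, a sum of distinct monic monomials of content equal to that of $M$, hence all of degree $d=\Sigma(M)$. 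Since $\varepsilon_{T}$ has integer coefficients and the monomials are monic, this already shows $F_{M,T}\in\mathbb{Z}[\mathbf{x}_{n}]_{d}$, the homogeneity being inherited from the fact that $R(T)$ and $C(T)$ act by permuting variables and hence preserve degree.

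Next I would separate out the ``leading'' part: applying $\sum_{\sigma\in C(T)}\operatorname{sgn}(\sigma)\sigma$ to the term $p_{M,T}$ gives exactly $\sum_{\sigma\in C(T)}\operatorname{sgn}(\sigma)\sigma p_{M,T}$, while applying it to each lower monomial $y<_{T}p_{M,T}$ produces monomials $\sigma y$ which, by the remark following Definition~\ref{orderT} (that $\deg_{T}$ is invariant under $C(T)$), still satisfy $\deg_{T}(\sigma y)=\deg_{T}y<\deg_{T}p_{M,T}$, i.e.\ $p_{M,T}>_{T}\sigma y$. Collecting all of these yields the claimed expression $F_{M,T}=\sum_{\sigma\in C(T)}\operatorname{sgn}(\sigma)\sigma p_{M,T}+(\text{combination of }y\text{ with }p_{M,T}>_{T}y)$.

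For the last assertion, the transformation behaviour under $\tilde{C}(T)$, I would argue as in \cite{[Z1]}: the element $\varepsilon_{T}$ satisfies $\sigma\varepsilon_{T}=\operatorname{sgn}(\sigma)\varepsilon_{T}$ for $\sigma\in C(T)$, so certainly $C(T)$ acts on $F_{M,T}$ by the sign. To upgrade this to the larger group $\tilde{C}(T)$ one uses the structure recalled before Definition~\ref{Spechtdef}: $\tilde{C}(T)=(R(T)\cap\tilde{C}(T))\ltimes C(T)$, and the extended character $\widetilde{\operatorname{sgn}}$ is trivial on the acting factor $R(T)\cap\tilde{C}(T)$. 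Thus it suffices to check that the generators of $R(T)\cap\tilde{C}(T)$ — which permute equal-sized columns of $T$, or within a wreath factor $S_{t_a}\rtimes S_a^{t_a}$ permute blocks of columns — fix $F_{M,T}$. This is where the hypothesis that $M$ is semi-standard (constant along no row-forced direction but genuinely arbitrary) must be used carefully: such a column-block permutation $\rho$ sends $p_{M,T}$ to $p_{M',T}$ for the tableau $M'$ obtained by the corresponding rearrangement, and one checks that $\rho\varepsilon_{T}\rho^{-1}=\varepsilon_{T}$ together with $R(T)$-symmetrization absorbs the difference, so that $\rho F_{M,T}=F_{M,T}$; equivalently, $F_{M,T}$ depends on $M$ only through its $R(T)$-orbit and the column-block action is already accounted for. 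Combining the two, any $\sigma\in\tilde{C}(T)$ written as $\rho\sigma_0$ with $\rho\in R(T)\cap\tilde{C}(T)$ and $\sigma_0\in C(T)$ acts by $\widetilde{\operatorname{sgn}}(\rho)\operatorname{sgn}(\sigma_0)=\widetilde{\operatorname{sgn}}(\sigma)$, as claimed.

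The main obstacle I anticipate is precisely this last point: verifying that the column-block (wreath) part of $\tilde{C}(T)$ acts trivially on $F_{M,T}$ when $M$ is merely semi-standard rather than a cocharge tableau. In \cite{[Z1]} this was done for $M=\operatorname{ct}(S)$, where the structure of $C$ gives convenient constancy of certain entries; here one must instead argue that the symmetrization over $R(T)$ together with $s_{M,T}$ (the stabilizer size) is arranged exactly so that permuting equal columns — which permutes the variables appearing in $p_{M,T}$ in a way that is undone by an element of $R(T)$ after a sign-neutral column permutation — leaves $F_{M,T}$ invariant. I would isolate this as the one genuinely new verification and carry it out by reducing to a single transposition of two equal-length columns, tracking the effect on $p_{M,T}$, on the symmetrizer, and on $s_{M,T}$, which should match the corresponding computation in \cite{[Z1]} verbatim once one observes it never used semi-standardness beyond what Lemma~\ref{operRT} already guarantees.
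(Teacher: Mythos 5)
Your proposal is correct and follows essentially the same route as the paper, which simply invokes Lemma \ref{operRT} and repeats the proof of Proposition 2.6 of \cite{[Z1]}: row-symmetrize via Lemma \ref{operRT}, apply the column antisymmetrizer, use $C(T)$-invariance of $\deg_{T}$ for the lower-order terms, and use the semidirect-product structure of $\tilde{C}(T)$ for the sign behaviour. The one step you flag as a possible obstacle is in fact immediate and needs no semi-standardness: any $\rho \in R(T)\cap\tilde{C}(T)$ normalizes $C(T)$ preserving signs and lies in $R(T)$, so $\rho\varepsilon_{T}=\varepsilon_{T}$ in $\mathbb{Z}[S_{n}]$ and hence $\rho F_{M,T}=\varepsilon_{T}(\rho p_{M,T})/s_{M,T}=F_{M,T}$, since $\rho p_{M,T}$ lies in the $R(T)$-orbit of $p_{M,T}$ and $s_{M,T}$ is an orbit invariant.
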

In fact, we have $F_{M,T}\in\mathbb{Z}[\mathbf{x}_{n}]_{\mu}\subseteq\mathbb{Z}[\mathbf{x}_{n}]_{d}$ when $\mu$ is the content of $M$ in Proposition \ref{Spechtpols}.

Recalling from Definition 2.8 of \cite{[Z1]} that to the shape $\lambda \vdash n$ of $M$ and $T$ there is a minimal cocharge tableau $C^{0}\in\operatorname{CCT}(\lambda)$ (with the $i$th row filled with the number $i-1$ throughout), yielding the Specht polynomial $F_{C^{0},T}$, we get from Proposition \ref{Spechtpols} the following extension of Corollary 2.9 there.
\begin{cor}
Given $T$ and $M$ as above, the quotient $Q_{M,T}:=F_{M,T}/F_{C^{0},T}$ is a homogeneous polynomial in $\mathbb{Z}[\mathbf{x}_{n}]$, and the action of $\tilde{C}(T)$ preserves it. \label{quotSpecht}
\end{cor}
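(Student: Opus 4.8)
The plan is to verify the two assertions of Corollary \ref{quotSpecht} in sequence, using Proposition \ref{Spechtpols} applied both to $F_{M,T}$ and to $F_{C^{0},T}$. First I would recall that by Proposition \ref{Spechtpols} (and the refinement noted right after it), $F_{M,T}\in\mathbb{Z}[\mathbf{x}_{n}]_{\mu}$ where $\mu$ is the content of $M$, and similarly $F_{C^{0},T}\in\mathbb{Z}[\mathbf{x}_{n}]_{\mu^{0}}$ where $\mu^{0}$ is the content of the minimal cocharge tableau $C^{0}$ (namely the $i$th row filled with $i-1$). Since both are homogeneous, the quotient $Q_{M,T}$, once we know it is a polynomial, is automatically homogeneous of degree $\Sigma(M)-\Sigma(C^{0})$. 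So the real content of the first assertion is that $F_{C^{0},T}$ divides $F_{M,T}$ in $\mathbb{Z}[\mathbf{x}_{n}]$.

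For the divisibility I would argue exactly as in Corollary 2.9 of \cite{[Z1]}. The key point is the antisymmetry statement in Proposition \ref{Spechtpols}: for every $\sigma\in\tilde{C}(T)$ one has $\sigma F_{M,T}=\widetilde{\operatorname{sgn}}(\sigma)F_{M,T}$, and the same identity holds verbatim for $F_{C^{0},T}$. In particular, restricting to the column group $C(T)$ itself, both polynomials change sign under any transposition of two entries lying in the same column of $T$. Hence, for each column $j$ and each pair of variables $x_{i},x_{i'}$ with $C_{T}(i)=C_{T}(i')=j$, the difference $x_{i}-x_{i'}$ divides both $F_{M,T}$ and $F_{C^{0},T}$; taking the product over all such pairs in all columns, the full column Vandermonde product $\prod_{j}\prod_{C_{T}(i)=C_{T}(i')=j,\ i<i'}(x_{i}-x_{i'})$ divides each of them (the relevant linear forms are pairwise coprime). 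Now Proposition \ref{Spechtpols} shows that the leading term (in the order $>_{T}$ of Definition \ref{orderT}) of $F_{C^{0},T}$ is $\sum_{\sigma\in C(T)}\operatorname{sgn}(\sigma)\sigma p_{C^{0},T}$, and by the choice of $C^{0}$ this is, up to sign, precisely that column Vandermonde product (the exponent of $x_{i}$ coming only from the row of $i$, i.e. $R_{T}(i)-1$, so that on each column the exponents are $0,1,2,\ldots$ read from top to bottom, giving a genuine Vandermonde determinant in that column's variables). Therefore $F_{C^{0},T}$ equals that Vandermonde product exactly (there are no lower-order correction terms, since $p_{C^{0},T}$ is already fixed by $R(T)$ so Lemma \ref{operRT} gives a single monomial, or alternatively by a direct degree count), and since it divides $F_{M,T}$, the quotient $Q_{M,T}$ is a polynomial; being a quotient of two elements of $\mathbb{Z}[\mathbf{x}_{n}]$ whose leading coefficients are $\pm1$, it lies in $\mathbb{Z}[\mathbf{x}_{n}]$, and it is homogeneous by the remark above.

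For the second assertion, that $\tilde{C}(T)$ preserves $Q_{M,T}$, I would simply combine the two antisymmetry statements: for $\sigma\in\tilde{C}(T)$,
\[
\sigma Q_{M,T}=\frac{\sigma F_{M,T}}{\sigma F_{C^{0},T}}=\frac{\widetilde{\operatorname{sgn}}(\sigma)F_{M,T}}{\widetilde{\operatorname{sgn}}(\sigma)F_{C^{0},T}}=Q_{M,T},
\]
the signs $\widetilde{\operatorname{sgn}}(\sigma)=\pm1$ cancelling since they are the same for numerator and denominator (both are governed by the same extended character $\widetilde{\operatorname{sgn}}$ on $\tilde{C}(T)$ attached to the fixed tableau $T$). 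This works in the quotient field of $\mathbb{Q}[\mathbf{x}_{n}]$, and since we have already shown $Q_{M,T}$ is an honest polynomial, the identity descends to $\mathbb{Z}[\mathbf{x}_{n}]$.

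The main obstacle I anticipate is the bookkeeping in the divisibility step: one must be careful that the column Vandermonde product really is the whole of $F_{C^{0},T}$ and not merely a divisor of it, so that the quotient $Q_{M,T}$ has the expected degree and no spurious extra factors are lost — this is where the precise normalization in Definition \ref{Spechtdef}$(iii)$ (division by $s_{M,T}$) and the leading-term description in Proposition \ref{Spechtpols} are used in tandem. Everything else is a formal cancellation once Proposition \ref{Spechtpols} is in hand; since the only property of the semi-standard tableau used in \cite{[Z1]}'s Corollary 2.9 was semi-standardness (shared by every $M\in\operatorname{SSYT}(\lambda)$), the generalization is routine.
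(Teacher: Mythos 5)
Your argument is correct and follows essentially the same route the paper intends: the corollary is deduced from Proposition \ref{Spechtpols} exactly as in Corollary 2.9 of \cite{[Z1]}, namely the $\widetilde{\operatorname{sgn}}$-equivariance forces divisibility of $F_{M,T}$ by the column Vandermonde product, which is precisely $F_{C^{0},T}$, and the two equal characters cancel to give $\tilde{C}(T)$-invariance of the quotient. Your verification that $p_{C^{0},T}$ is $R(T)$-fixed, so that $F_{C^{0},T}$ has no lower-order terms, is the right way to pin down the normalization.
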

We might call the quotients from Corollary 2.9 of \cite{[Z1]} \emph{Specht quotients}, and the ones from Corollary \ref{quotSpecht} \emph{generalized Specht quotients}.

\begin{ex}
Take $T$ to be the tableau $S$ from Example \ref{ctJex}. The the polynomial from Lemma \ref{operRT} that is associated with $T$ and $\operatorname{ct}_{J}(S)$ from that example is $x_{2}^{2}(x_{1}^{2}x_{3}^{2}+x_{1}^{2}x_{4}^{2}+x_{3}^{2}x_{4}^{2})$, and the corresponding generalized higher Specht polynomial from Definition \ref{Spechtdef} and Proposition \ref{Spechtpols} is $(x_{2}^{2}-x_{1}^{2})x_{3}^{2}x_{4}^{2}$, with the quotient from Corollary \ref{quotSpecht} being $(x_{1}+x_{2})x_{3}^{2}x_{4}^{2}$. If $M$ is the second tableau showing up there, then the combination of monomials, the generalized higher Specht polynomial, and the quotient are \[x_{2}^{4}(x_{1}^{2}x_{3}^{4}x_{4}^{5}+x_{1}^{2}x_{3}^{5}x_{4}^{4}+x_{1}^{4}x_{3}^{2}x_{4}^{5}+x_{1}^{4}x_{3}^{5}x_{4}^{2}+x_{1}^{5}x_{3}^{2}x_{4}^{4}+x_{1}^{5}x_{3}^{4}x_{4}^{2}),\] \[(x_{2}^{4}x_{1}^{2}-x_{2}^{2}x_{1}^{4})(x_{3}^{4}x_{4}^{5}+x_{3}^{5}x_{4}^{4})-(x_{2}^{5}x_{1}^{4}-x_{2}^{4}x_{1}^{5})(x_{3}^{2}x_{4}^{4}+x_{3}^{4}x_{4}^{2}),\] and \[(x_{2}^{2}x_{1}^{3}+x_{2}^{3}x_{1}^{2})(x_{3}^{4}x_{4}^{5}+x_{3}^{5}x_{4}^{4})-x_{1}^{4}x_{2}^{4}(x_{3}^{2}x_{4}^{4}+x_{3}^{4}x_{4}^{2})\] respectively, all of which are divisible by $x_{1}^{2}x_{2}^{2}x_{3}^{2}x_{4}^{2}$ because all the entries of that tableau are at least 2. \label{exSpecht}
\end{ex}

\medskip

We recall the notions from Definition 3.1 of \cite{[Z1]} and generalize it as follows.
\begin{defn}
Given $M\in\operatorname{SSYT}(\lambda)$ for some $\lambda \vdash n$, we define $V_{M}$ to the subspace of $\mathbb{Q}[\mathbf{x}_{n}]$ that is generated by $F_{M,T}$, where $T$ runs over the tableaux of shape $\lambda$ and content $\mathbb{N}_{n}$. When $M=C\in\operatorname{CCT}(\lambda)$ and $\vec{h}:=\{h_{r}\}_{r=1}^{n}$ is a vector of non-negative integers, we write $V_{C}^{\vec{h}}$ for the space obtained by multiplying, inside $\mathbb{Q}[\mathbf{x}_{n}]$, all the elements of $V_{C}$ by the symmetric polynomial $\prod_{r=1}^{n}e_{r}^{h_{r}}$ inside $\mathbb{Q}[\mathbf{x}_{n}]$. \label{defSpecht}
\end{defn}

\begin{rmk}
We will, when convenient, identify the vector $\vec{h}$ from Definition \ref{defSpecht} with the vector obtained by adding vanishing entries at some or all of the indices $r>n$. As in Remark \ref{notation}, we recall that if $C\in\operatorname{CCT}(\lambda)$ then the representation $V_{C}^{\vec{h}}$ from that definition was also denoted by $V^{S}_{\vec{h}}$ in \cite{[Z1]}, where $S:=\operatorname{ct}^{-1}(C)\in\operatorname{SYT}(\lambda)$. \label{VSVC}
\end{rmk}

We now state a generalization of Theorem 3.2 of \cite{[Z1]} and similar results.
\begin{thm}
Given an element $M\in\operatorname{SSYT}(\lambda)$ for some $\lambda \vdash n$, the polynomials $\{F_{M,T}\;|\;T\in\operatorname{SYT}(\lambda)\}$ form a basis for $V_{M}$ Definition \ref{Spechtdef}, and the latter is an irreducible representation of $S_{n}$ on a subspace of $\mathbb{Q}[\mathbf{x}_{n}]$, which is homogeneous of degree $d:=\Sigma(M)$ and is isomorphic to $\mathcal{S}^{\lambda}$. \label{VMreps}
\end{thm}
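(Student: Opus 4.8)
The plan is to reproduce the proof of Theorem~3.2 of \cite{[Z1]}, in which the cocharge tableau played a role only through its semistandardness, entering solely via Lemma~\ref{operRT} and Proposition~\ref{Spechtpols}; I would carry it out in four steps.

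\emph{Equivariance and homogeneity.} From $\varepsilon_{wT}=w\varepsilon_Tw^{-1}$, from the fact that $p_{M,wT}$ is the image of $p_{M,T}$ under $w$ (the box of $wT$ holding $i$ is $v_T(w^{-1}i)$), and from $s_{M,wT}=s_{M,T}$, one obtains $w\cdot F_{M,T}=F_{M,wT}$ for all $w\in S_n$. Since the bijective fillings of $\lambda$ form a single, simply transitive $S_n$-orbit, $V_M$ is $S_n$-stable and equals the cyclic module $\mathbb{Q}[S_n]\cdot F_{M,T_0}$ for any fixed such $T_0$, which I take to be standard. By Proposition~\ref{Spechtpols} each $F_{M,T}$ lies in $\mathbb{Q}[\mathbf{x}_n]_\mu\subseteq\mathbb{Q}[\mathbf{x}_n]_d$, where $\mu$ is the content of $M$, so $V_M$ is homogeneous of degree $d=\Sigma(M)$.

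\emph{$V_M$ is a quotient of $\mathcal{S}^\lambda$, and it is nonzero.} The left $\mathbb{Q}[S_n]$-module homomorphism $b\mapsto b\cdot p_{M,T_0}$ of $\mathbb{Q}[\mathbf{x}_n]$, restricted to the left ideal $\mathbb{Q}[S_n]\varepsilon_{T_0}\cong\mathcal{S}^\lambda$, has image $\mathbb{Q}[S_n]\cdot\varepsilon_{T_0}p_{M,T_0}=s_{M,T_0}V_M=V_M$; as $\mathcal{S}^\lambda$ is irreducible, $V_M$ is either $0$ or isomorphic to $\mathcal{S}^\lambda$, the rescaled map $a\varepsilon_{T_0}\mapsto a\varepsilon_{T_0}p_{M,T_0}/s_{M,T_0}$ being an isomorphism onto $V_M$ in the latter case. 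To rule out $V_M=0$, note that by Proposition~\ref{Spechtpols} the polynomial $F_{M,T_0}$ equals $\sum_{\sigma\in C(T_0)}\operatorname{sgn}(\sigma)\,\sigma p_{M,T_0}$ plus monomials strictly smaller than $p_{M,T_0}$ in the order $>_{T_0}$, and the monomials $\sigma p_{M,T_0}$ with $\sigma\in C(T_0)$ are pairwise distinct: if $\sigma p_{M,T_0}=p_{M,T_0}$, then $M$ has equal entries in the boxes $v_{T_0}(i)$ and $v_{T_0}(\sigma^{-1}i)$, which lie in a common column of $\lambda$ where $M$ is strictly increasing, so $\sigma=e$. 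Hence $p_{M,T_0}$ occurs in $F_{M,T_0}$ with coefficient $1$, whence $F_{M,T_0}\neq0$, $V_M\cong\mathcal{S}^\lambda$, and $\dim_\mathbb{Q}V_M=f^\lambda:=|\operatorname{SYT}(\lambda)|$.

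\emph{The basis, and the main obstacle.} Writing $T=w_TT_0$ with $w_T\in S_n$, the isomorphism $a\varepsilon_{T_0}\mapsto a\varepsilon_{T_0}p_{M,T_0}/s_{M,T_0}$ sends $w_T\varepsilon_{T_0}$ to $w_TF_{M,T_0}=F_{M,T}$; since $\{w_T\varepsilon_{T_0}\mid T\in\operatorname{SYT}(\lambda)\}$ is a basis of $\mathbb{Q}[S_n]\varepsilon_{T_0}$ --- it corresponds, in this realization of $\mathcal{S}^\lambda$, to the classical standard polytabloid basis of the Specht module (see, e.g., \cite{[Sa]} or the references in \cite{[Z1]}) --- its image $\{F_{M,T}\mid T\in\operatorname{SYT}(\lambda)\}$ is a basis of $V_M$. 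I expect this to be the only non-formal point: one must know that the $f^\lambda$ \emph{standard} polynomials already span $V_M$, even though the $n!$ bijective fillings produce far more, generically linearly dependent, polynomials. Transporting the classical basis as above is the cleanest route; an alternative is a direct leading-monomial argument for the linear independence of $\{F_{M,T}\}_{T\in\operatorname{SYT}(\lambda)}$ refining Proposition~\ref{Spechtpols}, but it would require fixing a single monomial order compatible with all the orders $>_T$ and is more delicate. Everything else --- equivariance, homogeneity, the realization of $V_M$ as a quotient of $\mathcal{S}^\lambda$, and the non-vanishing --- is a routine consequence of Proposition~\ref{Spechtpols} and the structure of the Young symmetrizer $\varepsilon_{T_0}$.
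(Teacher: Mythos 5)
Your proposal is correct and takes essentially the same route as the paper: the paper's proof consists of observing that only the semistandardness of $M$ enters (through Lemma \ref{operRT} and Proposition \ref{Spechtpols}) and then invoking the classical Young-symmetrizer argument of \cite{[Pe]} as used in Corollary 3.17 of \cite{[GR]}, which is precisely what you carry out explicitly via the left ideal $\mathbb{Q}[S_{n}]\varepsilon_{T_{0}}$, the leading-monomial non-vanishing, and the transport of the standard polytabloid basis from \cite{[Sa]}. The one step you flag as non-formal (that $\{w_{T}\varepsilon_{T_{0}}\}_{T\in\operatorname{SYT}(\lambda)}$ is a basis of the left ideal) is indeed the standard basis theorem transported by the map $a\mapsto a\{T_{0}\}$, so there is no gap.
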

The fact that $V_{C}^{\vec{h}}$ also has similar properties, with the homogeneity degree being $\Sigma(C)+\sum_{r=1}^{n}rh_{r}$, already shows up in Theorem 3.2 of \cite{[Z1]}. For the proof, we recall from Section 3 of \cite{[GR]} that the proof from \cite{[Pe]} only considered the operators from Definition \ref{Spechtdef} (see also \cite{[M]}), and not the monomial on which they were applied, and thus the argument proving Corollary 3.17 of that reference, which applies in our setting as well, yields the desired result.

Combining the notation from Definition \ref{Spechtdef} with Theorem \ref{VMreps}, we obtain the following extension of part $(iv)$ of Lemma 3.11 and of Lemma 3.24 of \cite{[Z1]}.
\begin{lem}
Given $\lambda \vdash n$, $C\in\operatorname{CCT}(\lambda)$, and a multi-set $I$ containing $\operatorname{Dsp}^{c}(C)$ in the usual sense, the polynomials $\big\{F_{C,T}^{I}\;|\;T\in\operatorname{SYT}(\lambda)\big\}$ form a basis for the representation $V_{C}^{\vec{h}_{C}^{I}}$, while $V_{C}^{\vec{h}(C,I)}$, with the two vectors $\vec{h}_{C}^{I}$ and $\vec{h}(C,I)$ from Definition \ref{Spechtdef}, admits $\big\{F_{C,T}^{I,\mathrm{hom}}\;|\;T\in\operatorname{SYT}(\lambda)\big\}$ as a basis. \label{spanreps}
\end{lem}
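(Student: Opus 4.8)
The plan is to deduce Lemma \ref{spanreps} directly from Theorem \ref{VMreps} together with the bookkeeping in Definition \ref{Spechtdef}, rather than re-proving any representation-theoretic content. The key observation is that, by parts $(v)$ and $(vi)$ of Definition \ref{Spechtdef}, the polynomial $F_{C,T}^{I}$ is literally $F_{C,T}\cdot\prod_{i\in\operatorname{Asp}^{c}_{I}(C)}e_{r_{i}}$, and $F_{C,T}^{I,\mathrm{hom}}$ is $F_{C,T}\cdot\prod_{i\in\operatorname{Asp}^{c}_{I}(C)}e_{i}$. So the first step is to rewrite each of these two symmetric-polynomial factors in the normal form $\prod_{r=1}^{n}e_{r}^{h_{r}}$ appearing in Definition \ref{defSpecht}: grouping the product $\prod_{i\in\operatorname{Asp}^{c}_{I}(C)}e_{r_{i}}$ by the value $r$ of the index $r_{i}$ gives exactly $\prod_{r=1}^{n}e_{r}^{h_{r}}$ where $h_{r}=\big|\{i\in\operatorname{Asp}^{c}_{I}(C)\;|\;r_{i}=r\}\big|$, i.e. $\vec{h}=\vec{h}_{C}^{I}$ as defined in part $(vii)$; similarly, grouping $\prod_{i\in\operatorname{Asp}^{c}_{I}(C)}e_{i}$ by the value of $i$ gives $\prod_{r=1}^{n}e_{r}^{h_{r}}$ with $h_{r}$ the multiplicity of $r$ in the multi-set $\operatorname{Asp}^{c}_{I}(C)$, which is precisely the characteristic vector $\vec{h}(C,I)$ of that multi-set with the multiplicity of $0$ omitted. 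One should note here that $0\notin\operatorname{Asp}^{c}_{I}(C)$ contributes no $e_0$ factor in the first case either, so both products range over $r\ge 1$ and no undefined $e_0$ appears; if $i=n$ occurs, $e_n$ is still a legitimate factor. A brief check that $r_i\le n$ for all relevant $i$ (immediate from part $(iv)$: $r_i=|\{j\in\mathbb{N}_{n-1}\setminus\hat I\;|\;j<i\}|\le n-1$ for $i\in\hat I$, and $r_i=n-\hat k+|\{j\in\hat I\cup\{n\}\;|\;j>i\}|\le n$ for $i$ in the complement) confirms the normal form really lies in the range $1\le r\le n$ of Definition \ref{defSpecht}.

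The second step invokes Definition \ref{defSpecht}: by that definition $V_{C}^{\vec{h}}$ is obtained from $V_C$ by multiplying every element by the single polynomial $\prod_{r=1}^{n}e_r^{h_r}$. Hence $V_C^{\vec h_C^I}=\big\{f\cdot\prod_{i\in\operatorname{Asp}^{c}_I(C)}e_{r_i}\;\big|\;f\in V_C\big\}$ and $V_C^{\vec h(C,I)}=\big\{f\cdot\prod_{i\in\operatorname{Asp}^{c}_I(C)}e_{i}\;\big|\;f\in V_C\big\}$. Since multiplication by a fixed nonzero polynomial inside the integral domain $\mathbb{Q}[\mathbf{x}_n]$ is an injective $\mathbb{Q}$-linear map, and by Theorem \ref{VMreps} the set $\{F_{C,T}\;|\;T\in\operatorname{SYT}(\lambda)\}$ is a basis of $V_C$, its image under this injective map is a basis of the image space. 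But that image of $F_{C,T}$ is exactly $F_{C,T}^{I}$ in the first case and $F_{C,T}^{I,\mathrm{hom}}$ in the second, by Step 1. Therefore $\{F_{C,T}^{I}\;|\;T\in\operatorname{SYT}(\lambda)\}$ is a basis of $V_C^{\vec h_C^I}$ and $\{F_{C,T}^{I,\mathrm{hom}}\;|\;T\in\operatorname{SYT}(\lambda)\}$ is a basis of $V_C^{\vec h(C,I)}$, which is the assertion.

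The only genuinely non-routine point — and the step I expect to require the most care — is the combinatorial identification of the two vectors in Step 1, particularly making sure the multi-set conventions of Definition \ref{multisets} are handled correctly: $\operatorname{Asp}^{c}_{I}(C)$ is the multi-set complement $I\setminus\operatorname{Dsp}^{c}(C)$, which as spelled out in part $(v)$ of Definition \ref{Spechtdef} is the disjoint union of the set $\hat I\setminus\operatorname{Dsp}^{c}(C)$ and the multi-set $I\setminus\hat I$, and the definition of $r_i$ in part $(iv)$ is given by two different formulas according to whether $i\in\hat I$ or $i$ lies in the multi-set complement $I\setminus\hat I$. One must check that, with these conventions, collecting the factors $e_{r_i}$ (resp. $e_i$) over all $i$ counted with the correct multiplicity reproduces $\vec h_C^I$ (resp. $\vec h(C,I)$) exactly as defined in part $(vii)$ — this is a matter of unwinding definitions, but it is where an off-by-one or a dropped multiplicity would hide. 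When $I$ is an ordinary subset of $\mathbb{N}_{n-1}$ the multi-set complement $I\setminus\hat I$ is empty and everything reduces to part $(iv)$ of Lemma 3.11 and Lemma 3.24 of \cite{[Z1]}, so the argument is consistent with the classical case and the new content is precisely the multi-set bookkeeping just described.
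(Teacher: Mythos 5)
Your argument is correct and is essentially the paper's own proof: the paper likewise deduces the lemma directly from Theorem \ref{VMreps} (more precisely Theorem 3.2 of \cite{[Z1]}) by observing that the multipliers $\prod_{i\in\operatorname{Asp}^{c}_{I}(C)}e_{r_{i}}$ and $\prod_{i\in\operatorname{Asp}^{c}_{I}(C)}e_{i}$ are exactly the symmetric polynomials $\prod_{r=1}^{n}e_{r}^{h_{r}}$ attached to the vectors $\vec{h}_{C}^{I}$ and $\vec{h}(C,I)$, your injective-multiplication step being the (implicit) reason a basis of $V_{C}$ is carried to a basis of $V_{C}^{\vec{h}}$. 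One small slip in a side remark: $0$ can in fact belong to $\operatorname{Asp}^{c}_{I}(C)$ (whenever $0\in I$), and $r_{i}$ can vanish for some $i\in\hat{I}$; this is harmless, since $e_{0}=1$, $r_{0}=n$, and both vectors record only the indices $1\le r\le n$ (with $\vec{h}(C,I)$ omitting the multiplicity of $0$ by definition), so the identification of the multipliers with the two vectors goes through unchanged.
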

Lemma \ref{spanreps} follows directly from Theorem \ref{VMreps} (or more precisely Theorem 3.2 of \cite{[Z1]}), since the multipliers $\prod_{i\in\operatorname{Asp}^{c}_{I}(C)}e_{r_{i}}$ and $\prod_{i\in\operatorname{Asp}^{c}_{I}(C)}e_{i}$ from Definition \ref{Spechtdef} are associated with the vectors $\vec{h}_{C}^{I}$ and $\vec{h}(C,I)$ respectively.

\medskip

Theorem 1.7 of \cite{[GR]} was cited in our terminology as Theorem 3.4 of \cite{[Z1]}, and after we change the notation $V^{S}_{\vec{h}}$ from Remark \ref{VSVC} to $V_{C}^{\vec{h}}$ used in the current paper (with the equality $\operatorname{Dsp}^{c}(C)=\operatorname{Dsi}^{c}(S)$ when $S:=\operatorname{ct}^{-1}(C)$), it generalizes to the following result, showing up in the proof from \cite{[GR]}.
\begin{thm}
For $\lambda \vdash n$, integers $k$ and $s$ with $0 \leq s\leq\min\{n,k\}$, and $C\in\operatorname{CCT}(\lambda)$, we define $H_{C}^{k,s}$ to be the set of vectors $\vec{h}=\{h_{r}\}_{r=1}^{n}$ of non-negative integers, for which $h_{r}=0$ wherever $r>n-s$ and the inequality $\sum_{r=1}^{n-s}h_{r}<k-|\operatorname{Dsp}^{c}(C)|$ holds. Using this notation, we get a direct sum $\bigoplus_{\lambda \vdash n}\bigoplus_{C\in\operatorname{CCT}(\lambda)}\bigoplus_{\vec{h} \in H_{C}^{k,s}}V_{C}^{\vec{h}}$, which lifts the quotient $R_{n,k,s}$, as a representation of $S_{n}$, into $\mathbb{Q}[\mathbf{x}_{n}]$. \label{decomRnk}
\end{thm}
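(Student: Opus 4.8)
The plan is to reduce the statement to the known decomposition of $R_{n,k}=R_{n,k,k}$ (in the case $k\leq n$) established in \cite{[Z1]}, plus the Specht-basis property from \cite{[GR]}, and then descend to general $s$ via the short exact sequence of Lemma \ref{splexseq}. First I would record that the spaces $V_{C}^{\vec{h}}$ are, by Theorem \ref{VMreps} and the remark after it, irreducible $S_{n}$-subspaces of $\mathbb{Q}[\mathbf{x}_{n}]$ isomorphic to $\mathcal{S}^{\lambda}$ and homogeneous of degree $\Sigma(C)+\sum_{r=1}^{n}rh_{r}$; in particular each is a genuine subrepresentation, so the only real content is that the sum over $\lambda$, $C\in\operatorname{CCT}(\lambda)$, and $\vec{h}\in H_{C}^{k,s}$ is (i) direct and (ii) a lift of $R_{n,k,s}$, i.e.\ the composite with the projection $\mathbb{Q}[\mathbf{x}_{n}]\twoheadrightarrow R_{n,k,s}$ is an isomorphism of $S_{n}$-representations.

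For the directness and the dimension count I would invoke the higher Specht basis theorem of \cite{[GR]} (cited as Theorem 1.7 there, in our normalization). That theorem asserts precisely that the images of the polynomials $F_{C,T}^{I}$, ranging over $\lambda\vdash n$, $C\in\operatorname{CCT}(\lambda)$, $T\in\operatorname{SYT}(\lambda)$, and the admissible multipliers, descend to a basis of $R_{n,k,s}$; translating the indexing of multipliers from multi-sets $I$ (containing $\operatorname{Dsp}^{c}(C)$, with $|I|<k$ and entries $\leq n-s$) to the vectors $\vec{h}_{C}^{I}$ via Lemma \ref{spanreps} shows that the multiplier data is exactly $\vec h\in H_C^{k,s}$: the condition $h_r=0$ for $r>n-s$ encodes that $I$ avoids the top $s$ values, and $\sum_{r\le n-s}h_r<k-|\operatorname{Dsp}^c(C)|$ encodes $|I|<k$. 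Hence $\sum_{C,\vec h}\dim V_C^{\vec h}=\sum_{C,\vec h}|\operatorname{SYT}(\operatorname{sh} C)|=\dim R_{n,k,s}$, so if the sum of the $V_C^{\vec h}$ inside $\mathbb{Q}[\mathbf{x}_n]$ is direct then the projection to $R_{n,k,s}$ must be an isomorphism on it, giving (ii) at once. Directness of the sum inside $\mathbb{Q}[\mathbf{x}_n]$ then follows from the leading-monomial analysis of Proposition \ref{Spechtpols}: for fixed $T$ the polynomials $F_{C,T}^{I}$ have distinct $>_T$-leading monomials across the $(C,I)$, exactly as in the proof of Theorem 3.2 of \cite{[Z1]}, and varying $T$ is handled by the same irreducibility/character argument as there.

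The remaining point—and the one I expect to be the main obstacle—is bookkeeping the passage from the ``flat'' case $s=0$ (or $s=k$ when $k\le n$) to arbitrary $0\le s\le\min\{n,k\}$ so that the multiplier set $H_C^{k,s}$ comes out exactly right and the decomposition is compatible with Lemma \ref{splexseq}. Concretely I would argue by descending induction on $s$: given the statement for $R_{n,k,s}$, Lemma \ref{splexseq} gives $R_{n,k,s}=e_{n-s}R_{n,k-1,s}\oplus R_{n,k,s+1}$, and one checks that multiplication by $e_{n-s}$ sends the summand $V_C^{\vec h}$ for $R_{n,k-1,s}$ (so $\vec h$ has $\sum_{r\le n-s}h_r<k-1-|\operatorname{Dsp}^c(C)|$ and $h_r=0$ for $r>n-s$) to $V_C^{\vec h'}$ with $h'_{n-s}=h_{n-s}+1$, and that these $\vec h'$ together with the $H_C^{k,s+1}$-vectors partition $H_C^{k,s}$ according to whether $h_{n-s}>0$ or $h_{n-s}=0$. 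This is an entirely combinatorial identity among the index sets, but it must be verified carefully—including the degenerate cases $k=1$, $s=n<k$, and $s=0$—and it is the only place where one really uses the ring structure rather than just the representation structure; everything else is formal from Theorem \ref{VMreps}, Lemma \ref{spanreps}, and the cited results of \cite{[GR]}.
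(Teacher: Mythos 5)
Your proposal follows essentially the same route as the paper: the paper offers no independent argument for Theorem \ref{decomRnk}, but simply cites Theorem 1.7 of \cite{[GR]} (whose proof is precisely the induction on $s$ through the exact sequence of Lemma \ref{splexseq}) and translates the multiplier data of that basis into the vectors $\vec{h} \in H_{C}^{k,s}$, which is exactly the dictionary you set up via Lemma \ref{spanreps}. Note only that once the \cite{[GR]} basis statement is invoked, linear independence of the images already forces both the directness of the sum inside $\mathbb{Q}[\mathbf{x}_{n}]$ and the isomorphism onto $R_{n,k,s}$, so your separate leading-monomial argument and the descending induction on $s$ are redundant safeguards rather than needed steps.
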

It is clear that if $k \leq n$ then the case $s=k$ in Theorem \ref{decomRnk} becomes Theorem 3.4 of \cite{[Z1]} (in the notation using cocharge tableaux), with the special case $s=k=n$ becoming the result of \cite{[ATY]}.

Definition \ref{defSpecht} and Theorem \ref{decomRnk} suggest that given $\lambda \vdash n$ and $C\in\operatorname{CCT}(\lambda)$ we should consider the map taking a multi-set $I$ for which $\operatorname{Dsp}^{c}(C) \subseteq I$ via Definition \ref{multisets} to the vector $\vec{h}_{C}^{I}$, that was constructed via the complement $\operatorname{Asp}^{c}_{I}(C)$ from Definition \ref{sets}. In the case considered in \cite{[Z1]}, where $I$ was a subset of $\mathbb{N}_{n-1}$, analyzing this map was based on the fact that if $i \mapsto r_{i}$ was non-decreasing on $I$, or on $\operatorname{Asp}^{c}_{I}(C)$. Here it is no longer the case.
\begin{lem}
Let $I$ be a multi-set of size $k-1$, let $\hat{I}$, $\hat{k}$, and $I\setminus\hat{I}$ be as in Definition \ref{multisets}, and assume that $D$ is a subset of $\hat{I}$, of size $d-1$.
\begin{enumerate}[$(i)$]
\item Order the set $\hat{I} \setminus D$ in an increasing order, and then continue this sequence by putting $I\setminus\hat{I}$ in a non-increasing order afterwards. Then the map taking $i \in I$ to $r_{i}$ from Definition \ref{defSpecht} sends the resulting sequence to a non-decreasing sequence of length $k-d$.
\item When $\hat{k}>d$, the element in the $(\hat{k}-d)$th location of the resulting sequence is at most $n-\hat{k}$. If $\hat{k}<k$, then the element at the location number $\hat{k}-d+1$ there equals at least $n-\hat{k}$.
\item Given any non-decreasing sequence of integers between 0 and $n$, whose length is $k-d$ and that is not constantly $n$, there exists a unique $d\leq\hat{k} \leq k$ for which the condition from part $(ii)$ is satisfied.
\item The non-increasing sequences of that sort are in one-to-one correspondence with vectors $\vec{h}=\{h_{r}\}_{r=1}^{n}$ with $\sum_{r=0}^{n}h_{r} \leq k-d$. The correspondence is based on $h_{r}$ counting the number of times $r$ shows up in the sequence, including the additional value of $h_{0}\geq0$ for which $\sum_{r=0}^{n}h_{r}=k-d$.
\item Given such a sequence, set $\hat{k}$ to be the parameter from part $(iii)$, and let $\vec{h}$ be the associated vector via part $(iv)$. Then the first $\hat{k}-d$ entries, which arise from $\hat{I}$ in case our sequence was obtained from $I$ via part $(i)$, consists of $h_{r}$ instances of $r$ for every $0 \leq r<n-\hat{k}$, and $\hat{k}-d-\sum_{r=0}^{n-\hat{k}-1}h_{r}$ instances of $n-\hat{k}$.
\end{enumerate} \label{incri}
\end{lem}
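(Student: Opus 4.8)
The plan is to read everything off the two formulas for $r_{i}$ in part $(iv)$ of Definition \ref{Spechtdef}, using that they have opposite monotonicities in $i$ and that the value $n-\hat{k}$ cleanly separates their ranges. First I record the sizes: $\hat{I}\setminus D$ has $|\hat{I}|-(d-1)=\hat{k}-d$ elements and the multi-set $I\setminus\hat{I}$ has $(k-1)-(\hat{k}-1)=k-\hat{k}$ elements, so the concatenation in part $(i)$ has length $k-d$. For part $(i)$: on $\hat{I}$ the quantity $r_{i}=|\{j\in\mathbb{N}_{n-1}\setminus\hat{I}\mid j<i\}|$ is weakly increasing in $i$, so listing $\hat{I}\setminus D$ increasingly makes the $r$-values non-decreasing; on the complement $r_{i}=n-\hat{k}+|\{j\in\hat{I}\cup\{n\}\mid j>i\}|$ is weakly decreasing in $i$, so listing $I\setminus\hat{I}$ non-increasingly again makes the $r$-values non-decreasing. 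The two blocks fit together because every value coming from $\hat{I}$ is at most $|\mathbb{N}_{n-1}\setminus\hat{I}|=n-\hat{k}$, whereas every value coming from the complement is at least $n-\hat{k}$ (in fact $r_{i}>n-\hat{k}$ for $i<n$ and $r_{n}=n-\hat{k}$). Part $(ii)$ is then immediate: location $\hat{k}-d$ holds $r$ of $\max(\hat{I}\setminus D)$, hence is $\leq n-\hat{k}$, and location $\hat{k}-d+1$ holds $r$ of $\max(I\setminus\hat{I})$, hence is $\geq n-\hat{k}$.

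For part $(iii)$ I would pass to the auxiliary sequence $b_{j}:=a_{j}+j$ for $1\leq j\leq k-d$, which is \emph{strictly} increasing because $\{a_{j}\}$ is non-decreasing. Writing a candidate as $\hat{k}=d+p$ with $0\leq p\leq k-d$, the requirement ``$a_{p}\leq n-\hat{k}$ when $\hat{k}>d$'' of part $(ii)$ becomes ``$b_{p}\leq n-d$ when $p\geq1$'', and ``$a_{p+1}\geq n-\hat{k}$ when $\hat{k}<k$'' becomes ``$b_{p+1}\geq n-d+1$ when $p\leq k-d-1$''. Since $\{b_{j}\}$ is strictly increasing, $\{j\mid b_{j}\leq n-d\}$ is an initial segment $\{1,\dots,p^{*}\}$; then $p=p^{*}$ meets both requirements, every $p<p^{*}$ fails the second, and every $p>p^{*}$ fails the first, which yields existence and uniqueness of $\hat{k}=d+p^{*}$, with $d\leq\hat{k}\leq k$ automatic from $0\leq p^{*}\leq k-d$. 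I expect this translation, together with the two boundary regimes $\hat{k}=d$ (empty first block, only the second inequality is active) and $\hat{k}=k$ (empty second block, only the first is active), to be the main point requiring care; the hypothesis that the sequence is not constantly $n$ plays no role in this existence/uniqueness argument itself and is there to make the recovered datum compatible with an honest multi-set $I$ (a constantly-$n$ image fails to pin down the small entries of $I\setminus\hat{I}$).

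Finally, part $(iv)$ is the standard bijection between a non-increasing length-$(k-d)$ sequence with entries in $\{0,\dots,n\}$ and its multiplicity vector $\{h_{r}\}_{r=0}^{n}$ with $\sum_{r=0}^{n}h_{r}=k-d$, equivalently $\{h_{r}\}_{r=1}^{n}$ with $\sum_{r=1}^{n}h_{r}\leq k-d$ and $h_{0}$ the slack; the inverse simply concatenates $h_{n}$ copies of $n$, then $h_{n-1}$ copies of $n-1$, and so on down to $h_{0}$ copies of $0$. For part $(v)$ I combine $(i)$--$(iv)$: after sorting the $r$-image non-decreasingly, its first $\hat{k}-d$ entries are precisely the $\hat{I}\setminus D$ contributions by the bridge argument of $(i)$--$(ii)$, and since no value below $n-\hat{k}$ can occur on the second block while no value above $n-\hat{k}$ can occur on the first, those first $\hat{k}-d$ slots must contain all $h_{r}$ copies of each $r<n-\hat{k}$ and then exactly $\hat{k}-d-\sum_{r=0}^{n-\hat{k}-1}h_{r}$ copies of $n-\hat{k}$, as claimed. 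Everything else is routine monotonicity bookkeeping.
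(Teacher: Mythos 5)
Your proof is correct and follows essentially the same route as the paper: monotonicity of $i\mapsto r_{i}$ on $\hat{I}$ and on $I\setminus\hat{I}$ with the threshold $n-\hat{k}$ separating the two ranges, a crossing-point argument for the existence and uniqueness of $\hat{k}$, the multiplicity-vector bijection for part $(iv)$, and the resulting count for part $(v)$. Your reformulation of part $(iii)$ via the strictly increasing sequence $b_{j}=a_{j}+j$ is just an equivalent (and slightly more explicit on uniqueness) version of the paper's comparison with the decreasing reference sequence $n-d+1-j$, and your observation that the ``not constantly $n$'' hypothesis is not needed for the existence/uniqueness argument is accurate.
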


\begin{proof}
Note that $r_{i}$ from Definition \ref{Spechtdef} counts elements that are smaller than $i$ in case $i\in\hat{I}$, but counts (up to a constant) elements that are larger than $i$ when $i \in I\setminus\hat{I}$. Hence this index increases with $i$ on $\hat{I}$, but decreases on $I\setminus\hat{I}$.

Moreover, in the first case the set from which these elements are taken is $\mathbb{N}_{n-1}\setminus\hat{I}$, of size $n-\hat{k}$, hence $r_{i}$ is bounded by that number, while for $i \in I\setminus\hat{I}$ the value is at least this number by definition (when such elements exist). Hence the resulting sequence (whose length is clearly $n-d$). This proves both parts $(i)$ and $(ii)$.

We now take a non-decreasing sequence of integers between 0 and $n$, and compare it with the decreasing sequence having $n-d+1-j$ in the $j$th location. In case the first element of the sequence is already at least $n-d$, we obtain the condition for $\hat{k}=d$. If $k \leq n$ and all the sequence is bounded by $n-k$, then the condition is satisfied with $\hat{k}=k$.

In any other situation, we have a non-decreasing sequence and a strictly decreasing sequence, both of length $k-d$, that do not bound one another. There thus must an index $d<\hat{k}<k$ such that at the location $\hat{k}-d>0$ the non-decreasing sequence does not yet reach the value in the decreasing one, while at the next one $\hat{k}-d+1 \leq k-d$ it already does so. Hence the two inequalities from part $(ii)$ hold with this value of $\hat{k}$, and it is clear that in all cases, the value of $\hat{k}$ is unique. Part $(iii)$ is thus also established.

Next, since the order is non-decreasing, counting the number of instances of every $0 \leq r \leq n$ in a sequence yields the vector $\vec{h}=\{h_{r}\}_{r=1}^{n}$ with the parameter $h_{0}\geq0$ such that the total sum $\sum_{r=0}^{n}h_{r}$ is $k-d$. Conversely, given such a vector, with the value of $h_{0}$, we get the unique associated non-decreasing sequence by putting $h_{0}$ instances of 0, followed by $h_{1}$ instances of 1, up to $h_{n}$ instances of $n$, with the length being $\sum_{r=0}^{n}h_{r}=k-d$ as desired, and part $(iv)$ follows.

Finally, in the conditions from part $(v)$, it is clear from part $(iii)$ that the first $\hat{k}-d$ entries of the sequence are bounded from above by $n-\hat{k}$, while this number bounds the rest of the sequence from below. It thus contains all the instances of every $r<n-\hat{k}$ (and part $(iv)$ shows that there are $h_{r}$ of those), and the remaining entries, the number of which is thus $\hat{k}-d-\sum_{r=0}^{n-\hat{k}-1}h_{r}$ since the length of the part of the sequence under consideration is $\hat{k}-d$, all equal $n-\hat{k}$ as desired. This proves the lemma.
\end{proof}

\begin{ex}
Take $n=8$, $k=8$ as well, $d=4$, and the subset $D\subseteq\mathbb{N}_{7}$, of size $d-1=3$, to be $\{2,5,6\}$. We consider the multi-set $I:=\{2,3,5,5,6,7,7\}$, of size $k-1=7$. Then $\hat{I}=\{2,3,5,6,7\}$, of size $\hat{k}-1$ for $\hat{k}=6$, and we have $\hat{I} \setminus D=\{3,7\}$ and $I\setminus\hat{I}=\{5,7\}$. Thus the ordering of $I \setminus D$ via part $(i)$ of Lemma \ref{incri} is 3775, replacing each $i$ there by $r_{i}$ produces the sequence 1235 (indeed of length $n-d=4$, and the entry 2 at the location $\hat{k}-d=2$ indeed does not exceed $n-\hat{k}=2$, while that of the next location equals at least this value. when we compare the latter sequence with the decreasing sequence of $n-d+1-j$, namely 4321, we indeed see that the location where the orders between the sequences is between 2 and 3, which is indeed $\hat{k}-d$ and $\hat{k}-d+1$ for $d=4$ and $\hat{k}=6$, as part $(i)$ of that lemma predicts. The vector $\vec{h}$ associated with our sequence via part $(iv)$ of that lemma is with $h_{r}=0$ for $r\in\{1,2,3,5\}$ and $h_{r}=0$ otherwise, and the first $\hat{k}-d=2$ parts of the sequence are 12, with the instances of $h_{r}$ for $r=1<2=\hat{k}-d$ (and $h_{0}=0$), completed by $\hat{k}-d-\sum_{r=0}^{n-\hat{k}-1}h_{r}=2-0-1$ instances of $n-\hat{k}=2$, via part $(v)$ there. \label{hatkmid}
\end{ex}

\begin{ex}
With $n$, $k$, $d$, and $D$ as in Example \ref{hatkmid}, we take $I_{s}$ to be $\{1,2,3,4,5,6,7\}$. Then $\hat{I}_{s}=I_{s}$, with $\hat{k}_{s}=k=8$, the order on $I_{s} \setminus D$ is the increasing one 1347, all the $r_{i}$'s vanish, and we get the sequence 0000. Even the last, maximal entry does not reach the smallest one from the decreasing one from that example, which indeed represents the case $\hat{k}_{s}=k$ in Lemma \ref{incri}. Here we have $h_{0}=4$ and $h_{r}=0$ for $r>0$, the part of the sequence considered in part $(v)$ of that lemma is the entire sequence, which indeed includes all the instances of $h_{0}$ since $0<\hat{k}-d=8-4=4$. In the other extreme, set $I_{b}:=\{0,2,5,5,5,6,8\}$, where $\hat{I}_{b}=\{2,5,6\}=D$, $\hat{k}_{b}=d=4$, and $I_{b} \setminus D$ is ordered non-increasingly as 8550. When we evaluate the $r_{i}$'s, we get the sequence 4668, with already the first location reaches the value in the decreasing sequence, in correspondence with the value $\hat{k}_{b}=d$. The vector $\vec{h}$ from part $(iv)$ of Lemma \ref{incri} is with $h_{4}=h_{8}=1$ and $h_{6}=2$ (and the other entries vanishing), and part $(v)$ there considers the empty sequence, as indeed $h_{r}=0$ for all $r<n-\hat{k}=4$, and the length is $\hat{k}-d=0$. \label{hatkend}
\end{ex}

\medskip

We now extend Lemma 3.17 of \cite{[Z1]} and prove a homogeneous analogue.
\begin{lem}
Fix $n$, $k$, $\lambda \vdash n$, and $C\in\operatorname{CCT}(\lambda)$.
\begin{enumerate}[$(i)$]
\item The map sending a multi-set $I$ of size $k-1$ that contains $\operatorname{Dsp}^{c}(C)$ to $\vec{h}(C,I)$ is a bijection between the collection of these multi-sets and the set $H_{C}^{k,0}$ from Theorem \ref{decomRnk}.
\item The set $H_{C}^{k,0}$ contains $H_{C}^{k-1,0}$, and $\vec{h}(C,I)$ is in this subset for a multi-set $I$ if and only if $0 \in I$.
\item The map taking $I$ as in part $(i)$ to $\vec{h}_{C}^{I}$ is another bijection between these two sets.
\item Given $0 \leq s\leq\min\{n,k\}$ and a multi-set $I$ as in parts $(i)$ and $(iii)$, we have $\mathcal{OP}_{n,I}\subseteq\mathcal{OP}_{n,k,s}$ in Definition \ref{OPnksI} if and only if $\vec{h}_{C}^{I} \in H_{C}^{k,s}$.
\item The set $H_{C}^{k,s+1}$ is contained in $H_{C}^{k,s}$.
\item The map adding 1 to the entry $h_{n-s}$ yields an injection from $H_{C}^{k-1,s}$ into $H_{C}^{k,s}$, and its image there is the complement of the subset from part $(iv)$.
\end{enumerate} \label{bijvecs}
\end{lem}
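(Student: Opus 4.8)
The plan is to prove the six assertions of Lemma \ref{bijvecs} largely by reducing them to the combinatorial correspondence set up in Lemma \ref{incri}, applied with $d=1$ and $D=\emptyset$ (so that $\operatorname{Dsp}^{c}(C)$ plays the role of the set that $I$ must contain, and $\hat I\setminus D$ becomes $\hat I$ itself with $\operatorname{Dsp}^{c}(C)$ sitting inside it). For part $(i)$, I would recall that $\vec h(C,I)$ is, by Definition \ref{Spechtdef}$(vii)$, just the characteristic vector of the multi-set complement $\operatorname{Asp}^{c}_{I}(C)=I\setminus\operatorname{Dsp}^{c}(C)$ with the multiplicity of $0$ omitted. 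Passing from $I$ to this complement is clearly a bijection onto multi-sets of size $k-1-|\operatorname{Dsp}^{c}(C)|$ with entries in $\{0\}\cup\mathbb N_{n}$ (since $\operatorname{Dsp}^{c}(C)$ is fixed), and omitting the multiplicity of $0$ is a bijection onto vectors $\vec h=\{h_{r}\}_{r=1}^{n}$ of non-negative integers with $\sum_{r=1}^{n}h_{r}\le k-1-|\operatorname{Dsp}^{c}(C)|$, the slack being the recovered multiplicity of $0$. This last set is precisely $H_{C}^{k,0}$ from Theorem \ref{decomRnk} once one notes that for $s=0$ the condition $h_{r}=0$ for $r>n-s$ is vacuous and the inequality reads $\sum_{r=1}^{n}h_{r}<k-|\operatorname{Dsp}^{c}(C)|$.

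For part $(ii)$, the containment $H_{C}^{k-1,0}\subseteq H_{C}^{k,0}$ is immediate from the defining inequality being strictly weaker when $k$ grows. To identify which $I$ land in the smaller set: $\vec h(C,I)\in H_{C}^{k-1,0}$ means $\sum_{r=1}^{n}h_{r}<k-1-|\operatorname{Dsp}^{c}(C)|$, i.e.\ the recovered multiplicity of $0$ in $\operatorname{Asp}^{c}_{I}(C)$ is at least $1$; since $0\notin\operatorname{Dsp}^{c}(C)$ (that set consists of positive integers), this multiplicity is exactly the multiplicity of $0$ in $I$, so the condition is $0\in I$, as claimed. Part $(v)$ is the same monotonicity observation as the first half of $(ii)$, now keeping track also of the clause $h_{r}=0$ for $r>n-s$, which only becomes more restrictive as $s$ increases. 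Part $(vi)$ is then handled by combining $(v)$-type bookkeeping with $(ii)$-type bookkeeping: adding $1$ to $h_{n-s}$ visibly preserves the vanishing of entries with index $>n-s$ (note $n-s$ itself is allowed to be nonzero), raises the total sum by exactly one, and so sends $H_{C}^{k-1,s}$ injectively into $H_{C}^{k,s}$; conversely a vector of $H_{C}^{k,s}$ is in the image precisely when $h_{n-s}\ge1$, and I would check that this is exactly the complement of the image of the map from part $(iv)$ by cross-referencing the description of $\mathcal{OP}_{n,k,s}\setminus\mathcal{OP}_{n,k,s+1}$ implicit in Definition \ref{OPnksI} (the new elements being those whose multi-set $I$ has $i_{s}=i_{s+1}$, matched against $\vec h_{C}^{I}$ via part $(iv)$).

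The substantive parts are $(iii)$ and $(iv)$, and the real work is in $(iii)$. Here one must show that $I\mapsto\vec h_{C}^{I}$, where $\vec h_{C}^{I}$ records for each $r$ the number of $i\in\operatorname{Asp}^{c}_{I}(C)$ with $r_{i}=r$ (with the $r_{i}$ defined by the two-case formula of Definition \ref{Spechtdef}$(iv)$), is again a bijection onto $H_{C}^{k,0}$. The strategy is to invoke Lemma \ref{incri} with $d=1$, $D=\emptyset$: part $(i)$ there says that arranging $\hat I$ in increasing order followed by $I\setminus\hat I$ in non-increasing order makes $i\mapsto r_{i}$ non-decreasing, of length $k-1$; part $(iv)$ there converts non-decreasing sequences of this length into vectors $\vec h$ with $\sum_{r=0}^{n}h_{r}=k-1$ (equivalently $\sum_{r=1}^{n}h_{r}\le k-1$), and restricting to those $i$ lying in the complement $\operatorname{Asp}^{c}_{I}(C)$ rather than all of $I$ subtracts off a fixed contribution coming from $\operatorname{Dsp}^{c}(C)$, cutting the bound down to $k-1-|\operatorname{Dsp}^{c}(C)|$, which is exactly the $H_{C}^{k,0}$ constraint. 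Injectivity and surjectivity then follow from parts $(iii)$ and $(v)$ of Lemma \ref{incri}, which say that the pair $(\hat k,\vec h)$ reconstructs the sorted sequence uniquely and that every admissible sequence arises; one has to be a little careful that $\operatorname{Dsp}^{c}(C)\subseteq\hat I$ is recoverable, but it is, since $\operatorname{Dsp}^{c}(C)$ is a fixed subset and the sequence together with $\hat k$ determines $\hat I$, hence $I$. For part $(iv)$, I would recall from Definition \ref{OPnksI}$(ii)$ that $\mathcal{OP}_{n,I}\subseteq\mathcal{OP}_{n,k,s}$ iff the first $s$ of the gaps $i_{h+1}-i_{h}$ are positive, i.e.\ the first $s$ entries of $\operatorname{comp}_{n}I$ are positive, which by Lemma \ref{propOPnks}$(v)$ forces $s\le\hat k$; translating through Lemma \ref{incri}$(ii),(v)$, positivity of those first $s$ gaps is equivalent to $\vec h_{C}^{I}$ having $h_{r}=0$ for all $r>n-s$, which together with the sum bound from part $(iii)$ is exactly membership in $H_{C}^{k,s}$. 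The main obstacle I anticipate is bookkeeping the exact constant by which restricting from $I$ to $\operatorname{Asp}^{c}_{I}(C)$ lowers the sum bound, and making sure the two a priori different parametrizations $\vec h(C,I)$ and $\vec h_{C}^{I}$ land in the same target set $H_{C}^{k,0}$ despite being built from genuinely different data (plain multiplicities versus the reshuffled $r_{i}$-counts); this is exactly the place where, as the text warns, $i\mapsto r_{i}$ is no longer monotone on all of $I$, so one must be disciplined about splitting into the $\hat I$-part and the $I\setminus\hat I$-part throughout.
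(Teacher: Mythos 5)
Your treatments of parts $(i)$, $(ii)$, $(v)$, and $(vi)$ coincide with the paper's arguments and are fine, and your outline of part $(iv)$ is essentially the intended one. The problem is in part $(iii)$, which you yourself flag as the substantive part. You propose to apply Lemma \ref{incri} with $d=1$ and $D=\emptyset$, so that the lemma classifies the sorted sequence of the values $r_{i}$ over \emph{all} of $I$, and then to pass to $\vec{h}_{C}^{I}$ by discarding the terms coming from $\operatorname{Dsp}^{c}(C)$, on the grounds that these form ``a fixed contribution''. That claim is false: for $i\in\operatorname{Dsp}^{c}(C)\subseteq\hat{I}$ one has $r_{i}=|\{j\in\mathbb{N}_{n-1}\setminus\hat{I}\;|\;j<i\}|$, which depends on $\hat{I}$ and hence on $I$. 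For instance, with $n=5$ and $\operatorname{Dsp}^{c}(C)=\{3\}$, the multi-set $I=\{3,4\}$ gives $r_{3}=2$ while $I=\{1,3\}$ gives $r_{3}=1$. Consequently, knowing $\vec{h}_{C}^{I}$ does not let you recover the full sequence that your $D=\emptyset$ application of Lemma \ref{incri} parametrizes, and neither injectivity nor surjectivity of $I\mapsto\vec{h}_{C}^{I}$ follows from the parts of that lemma you cite; note also that Lemma \ref{incri} nowhere asserts that the sorted sequence determines the multi-set $I$ --- that reconstruction is exactly what has to be proved here.

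The specialization the paper uses is $D:=\operatorname{Dsp}^{c}(C)$ and $d:=|D|+1$, so that the sorted sequence is taken over $\operatorname{Asp}^{c}_{I}(C)$ itself and is precisely the data encoded by the completion of $\vec{h}_{C}^{I}$ by $h_{0}$. The missing content in your sketch is the explicit inversion: given $\vec{h} \in H_{C}^{k,0}$, complete it, read off the unique $\hat{k}$ from part $(iii)$ of Lemma \ref{incri}, reconstruct $A=\hat{I} \setminus D$ inside $\mathbb{N}_{n-1} \setminus D$ by taking the first $h_{0}$ elements, skipping one, taking the next $h_{1}$, and so on (this works because $r_{i}$ for $i \in A$ counts the elements of $(\mathbb{N}_{n-1} \setminus D)\setminus A$ below $i$, and is exactly the point where excluding $D$ matters), set $\hat{I}:=A \cup D$, and finally recover $I\setminus\hat{I}$ from the remaining entries, all of which are at least $n-\hat{k}$, since for $i \in I\setminus\hat{I}$ the value $r_{i}$ determines $i$ once $\hat{I}$ is known. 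Your parenthetical remark that ``the sequence together with $\hat{k}$ determines $\hat{I}$, hence $I$'' is precisely this step, but it cannot be quoted from Lemma \ref{incri}; and as written, with $D=\emptyset$, it refers to a sequence you do not possess, since $\vec{h}_{C}^{I}$ records only the $r_{i}$ with $i\in\operatorname{Asp}^{c}_{I}(C)$.
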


\begin{proof}
We follow the proof of Lemma 3.17 of \cite{[Z1]}. The sums of the coordinates in both $\vec{h}_{C}^{I}$ and $\vec{h}(C,I)$ collect the number of elements of the multi-set $\operatorname{Asp}^{c}_{I}(C)$ from Definition \ref{sets}, with its multiplicities, which give non-zero contributions. They are thus bounded by the size of this multi-set, which is $k-1-|\operatorname{Dsp}^{c}(C)|$, and since we have $h_{r}=0$ for all $r>n$ in both of them, the maps in parts $(i)$ and $(iii)$ are indeed into $H_{C}^{k,0}$.

For the converses of both maps, take $\vec{h}=\{h_{r}\}_{r=1}^{n} \in H_{C}^{k,0}$, and as its coordinate sum is smaller than $k-|\operatorname{Dsp}^{c}(C)|$, we may set $h_{0}\geq0$ to be such that $\sum_{r=0}^{n}h_{r}=k-1-|\operatorname{Dsp}^{c}(C)|$ (we call this addition of $h_{0}$ to $\vec{h}$ the \emph{completion} of $\vec{h}$). For part $(i)$ we need to show that there exists a unique multi-set $I$ for which $\vec{h}(C,I)=\vec{h}$, while part $(iii)$ requires the existence of a unique multi-set $I$ with $\vec{h}_{C}^{I}=\vec{h}$.

Considering part $(i)$, it is clear from Definitions \ref{Spechtdef} and \ref{sets} that $\vec{h}(C,I)=\vec{h}$ if and only if the completion of $\vec{h}$ describes the multiplicities with which every element (including 0) shows up in $\operatorname{Asp}^{c}_{I}(C)$. But this determines the latter multi-set uniquely, hence yields a unique multi-set $I$ as the multi-set addition of $\operatorname{Dsp}^{c}(C)$ and $\operatorname{Asp}^{c}_{I}(C)$. Since $|\operatorname{Asp}^{c}_{I}(C)|=k-1-|\operatorname{Dsp}^{c}(C)|$, this multi-set $I$ is of size $k-1$ and clearly contains $\operatorname{Dsp}^{c}(C)$, and thus $\vec{h}(C,I)=\vec{h}$ for this unique $I$. This yields part $(i)$.

The fact that when $k$ is decreased to $k-1$ only strengthens the inequality in Theorem \ref{decomRnk} yields the inclusion in part $(ii)$ (and in fact gives the inclusion $H_{C}^{k-1,s} \subseteq H_{C}^{k,s}$ for every $s$, when the usual inequalities hold). As $\vec{h}(C,I)$ is, via Definition \ref{Spechtdef},  the characteristic function of $\operatorname{Asp}^{c}_{I}(C)$ with the appropriate multiplicity of 0 added, we deduce that if $\vec{h}(C,I)$ is in $H_{C}^{k-1,0}$ then $I$ is the union of 0 with the multi-set of size $k-2$ associated with it via part $(i)$ for $k-1$, and thus $0 \in I$. Conversely, from every multi-set with $0 \in I$ we can subtract that 0 and use the same ideas, and part $(ii)$ follows.

Next, set $D:=\operatorname{Dsp}^{c}(C)$ and $d:=|D|+1$, and then part $(iv)$ of Lemma \ref{incri} identifies the completion of our vector $\vec{h} \in H_{C}^{k,0}$ with a non-decreasing sequence of length $k-d$. We will use this sequence in order to determine the unique multi-set $I$ for which $\vec{h}_{C}^{I}=\vec{h}$.

That lemma shows that this sequence can be obtained from the $r_{i}$'s of a multi-set $I$ only when the corresponding parameter $\hat{k}$ is as determined by part $(iii)$ there. So we assume that $\hat{k}$ is given, and then the first $\hat{k}-d$ entries of the sequence, which are described in part $(v)$ there, would be obtained from $\hat{I} \setminus D$. In fact, this part of the sequence determines a unique subset $A$ of size $\hat{k}-d$ inside the set $\mathbb{N}_{n-1} \setminus D$, of size $n-d$, and we take $\hat{I}:=A \cup D$.

The proof of Lemma 3.17 of \cite{[Z1]} shows how to determine this set $A$. Indeed, Lemma \ref{incri} shows that this part contains $h_{0}$ instances of 0, followed by $h_{1}$ instances of 1, ending with $\hat{k}-d-\sum_{r=0}^{n-\hat{k}-1}h_{r}$ instance of $n-\hat{k}$. We thus order the elements of $\mathbb{N}_{n-1} \setminus D$ increasingly, take the first $h_{0}$ elements into $A$ (or none if $h_{0}=0$), then skip one, add the next $h_{1}$ elements (which is nothing in case $h_{1}=0$), and go on, until we complete $A$ with the maximal $\hat{k}-d-\sum_{r=0}^{n-\hat{k}-1}h_{r}$ elements there (which again may be zero). The set $\hat{I}:=A \cup D$ is then the unique set for which ordering $\hat{I} \setminus D=A$ and sending $i$ there to $r_{i}$ yields this sequence, of length $\hat{k}-d$.

Now that we have determined $\hat{I}$ (and $\hat{k}$), we know that $I\setminus\hat{I}$ consists of the elements of $\hat{I}\cup\{0,n\}$, and they give the entries of the remaining $k-\hat{k}$ entries of the sequence, all of which equal at least $n-\hat{k}$. Since the number of elements in $\hat{I}\cup\{n\}$ that are larger than an element of that set, as in the definition of $r_{i}$ for $i \in I\setminus\hat{I}$ via Definition \ref{Spechtdef}, determine it (in particular, any $i$ there with $r_{i}=n-\hat{k}$ equals $n$, while if $r_{i}=n$ then $i=0$), this constructs the remainder of $I$, as desired for the bijectivity in part $(iii)$.

Next, Definition \ref{OPnksI} implies that $\mathcal{OP}_{n,I}\subseteq\mathcal{OP}_{n,k,s}$ for some $s>0$ if and only if $0 \not\in I$ and the first $s-1$ elements of $I$ show up only once. This is equivalent to these elements of $\hat{I}$ and 0 not showing up in $I\setminus\hat{I}$, and as these are precisely the elements of this multi-set for which $r_{i}>n-s$, part $(iv)$ follows. Part $(v)$ is an immediate consequence of the fact that the condition for being in $H_{C}^{k,s+1}$ is stronger than that for being in $H_{C}^{k,s}$.

More precisely, $H_{C}^{k,s+1}$ is the set of elements of $H_{C}^{k,s}$ for which $h_{n-s}=0$. The complement consists of those vectors $\vec{h} \in H_{C}^{k,s}$ that satisfy $h_{n-s}>0$. As subtracting 1 from $h_{n-s}$ yields an arbitrary element of $H_{C}^{k-1,s}$ by definition, and adding 1 to that entry of any vector in that set produces an element of $H_{C}^{k,s} \setminus H_{C}^{k,s+1}$ in a bijective manner, part $(iv)$ is also established. This completes the proof of the lemma.
\end{proof}

\begin{ex}
Consider $n=k=8$ as in Examples \ref{hatkmid} and \ref{hatkend}, and as the only dependence of the vectors $\vec{h}(C,I)$ and $\vec{h}_{C}^{I}$ on $C$ (and $\lambda$) are through the set $\operatorname{Dsp}^{c}(C)$, we assume that they are such for which this set is $D=\{2,5,6\}$ from those examples, of size $d-1=3$. Then for $I=\{2,3,5,5,6,7,7\}$ from the former example, the vector $\vec{h}(C,I) \in H_{C}^{8,0}$ is with $h_{3}=h_{5}=1$ and $h_{7}=2$ (and the other coordinates vanishing), of entry such $4=k-d$. With the multi-set $I_{b}:=\{0,2,5,5,5,6,8\}$ from the latter example, the vector is with non-zero entries $h_{5}=2$ and $h_{8}=1$, it also belongs to the subset $H_{C}^{7,0}$ associated with $k-1=7$, as arising from the multi-set $I_{b}\setminus\{0\}$. \label{exforhom}
\end{ex}

\begin{ex}
The set $\mathbb{N}_{n-1} \setminus D$, for $n=8$ and $D$ as in Examples \ref{hatkmid}, \ref{hatkend}, and \ref{exforhom}, is $\{1,3,4,7\}$. From the sequence 1235 in the former example, where we saw that $\hat{k}=6$, we need to find the subset $A$ of size $\hat{k}-d=2$ associated with the two initial entries 12 of the sequence. We skip 1 (since $h_{0}=0$), take the next element 3 (for $h_{1}=1$), skip 4, and complete with 7, and indeed we had $\hat{I}=\{3,7\}$ in the sequence in question. Then $n-\hat{k}=2$, so the 3 in the sequence comes from the element of $\hat{I}\cup\{0,n\}=\{0,2,3,5,6,7,8\}$ that is one before last (namely 7), and for 5 we go two elements back and pick up 5, yielding indeed the multi-set $\{5,7\}$ for $I\setminus\hat{I}$. This vector is in $H_{C}^{k,s}$ for all $s\leq3$, and indeed $\mathcal{OP}_{n,I}\subseteq\mathcal{OP}_{8,8,3}$. For 0000 and $\hat{k}=8$, we take the first four elements from $\mathbb{N}_{n-1} \setminus D$ for $A$, which indeed reproduces $I_{s}$, with $\mathcal{OP}_{n,I}\subseteq\mathcal{OP}_{8,8,8}$. Using 4668 and $\hat{k}=4=d$, with $\hat{I}_{b}\cup\{0,n\}=D\cup\{0,8\}=\{0,2,5,6,8\}$ and $n-\hat{k}=4$, the 4 comes from the maximal element, each 6 comes from the element 5 (with two entries from that set that are larger than it), and 0 is the element producing $n=8$, and indeed this sequence was the image of $I_{b}$ (here we can only take $s=0$). \label{hCItoI}
\end{ex}
The second case in Example \ref{hatkend}, as well as the last one in Example \ref{hCItoI}, also exemplify the fact that $r_{n}$ always equals $n-\hat{k}$ (in this case $8-4=4$), while we always have $r_{0}=n$ (here it is 8).

\medskip

We now define representations associated with multi-sets.
\begin{thm}
For $I$ be a multi-set as in Definition \ref{OPnksI}, let $R_{n,I}$ be the space generated by the polynomials $F_{C,T}^{I}$ from Definition \ref{defSpecht}, where $C$ runs over all the elements of $\bigcup_{\lambda \vdash n}\operatorname{CCT}(\lambda)$ for which the set $\operatorname{Dsp}^{c}(C)$ from Definition \ref{sets} is contained in $I$ in the sense of Definition \ref{multisets}, and $T\in\operatorname{SYT}(\lambda)$. Similarly, the space spanned by the polynomials $F_{C,T}^{I,\mathrm{hom}}$ for those $C$ and $T$ is denoted by $R_{n,I}^{\mathrm{hom}}$.
\begin{enumerate}[$(i)$]
\item Both $R_{n,I}$ and $R_{n,I}^{\mathrm{hom}}$ are representations of $S_{n}$. and they decompose as direct sums of the representations from Definition \ref{defSpecht}. Explicitly we have $R_{n,I}=\bigoplus_{\lambda \vdash n}\bigoplus_{C\in\operatorname{CCT}(\lambda),\ \operatorname{Dsp}^{c}(C) \subseteq I}V_{C}^{\vec{h}_{C}^{I}}$, while the representation $R_{n,I}^{\mathrm{hom}}$ equals $\bigoplus_{\lambda \vdash n}\bigoplus_{C\in\operatorname{CCT}(\lambda),\ \operatorname{Dsp}^{c}(C) \subseteq I}V_{C}^{\vec{h}(C,I)}$.
\item Both of these representations of $S_{n}$ are isomorphic to the representation $\mathbb{Q}[\mathcal{OP}_{n,I}]$ that is based on the set from Definition \ref{OPnksI}, with $R_{n,I}^{\mathrm{hom}}$ being homogeneous of degree $\sum_{i \in I}i$.
\item If we write every multi-set $I$ of size $k-1$ as $\{i_{h}\}_{h=1}^{k-1}$ and extend with $i_{0}=0$ and $i_{k}=n$, then the sum of $R_{n,I}$ over all those multi-sets $I$ for which $\{i_{h}\}_{h=0}^{s}$ is strictly increasing is a direct sum inside $\mathbb{Q}[\mathbf{x}_{n}]$ that projects bijectively onto $R_{n,k,s}$. For $s=0$ the same assertion holds for the $R_{n,I}^{\mathrm{hom}}$'s.
\item The direct sum yielding $R_{n,k,0}$ contains the one producing $R_{n,k-1,0}$. The complement is the direct sum of the representations $R_{n,I}^{\mathrm{hom}}$ for $I$ of size $k-1$ that do not contain 0.
\item If $s<k$ then the lift of $R_{n,k,s}$ is equal, in this decomposition, to the direct sum of the lift $R_{n,k,s+1}$ and $e_{n-s}$ times the lift of $R_{n,k-1,s}$.
\end{enumerate} \label{Rnksdecom}
\end{thm}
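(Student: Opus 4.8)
The plan is to deduce all five parts by re-indexing direct sums that are already available. The three inputs are: Theorem \ref{VMreps} and Lemma \ref{spanreps}, which identify the spans of $\{F_{C,T}^{I}\}_{T}$ and $\{F_{C,T}^{I,\mathrm{hom}}\}_{T}$ with the irreducible subspaces $V_{C}^{\vec{h}_{C}^{I}}\cong\mathcal{S}^{\lambda}$ and $V_{C}^{\vec{h}(C,I)}\cong\mathcal{S}^{\lambda}$; Theorem \ref{decomRnk}, which says that $\bigoplus_{\lambda}\bigoplus_{C}\bigoplus_{\vec{h}\in H_{C}^{k,s}}V_{C}^{\vec{h}}$ is a genuine internal direct sum in $\mathbb{Q}[\mathbf{x}_{n}]$ that lifts $R_{n,k,s}$; and Lemma \ref{bijvecs}, which describes exactly which vectors $\vec{h}_{C}^{I}$ and $\vec{h}(C,I)$ occur. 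Parts $(i)$, $(iii)$, $(iv)$, $(v)$ are then pure bookkeeping on these sums, while $(ii)$ additionally needs a multiplicity count.

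Write $k=|I|+1$. By Lemma \ref{spanreps}, $R_{n,I}$ is the sum of the $V_{C}^{\vec{h}_{C}^{I}}$ over the $C$ with $\operatorname{Dsp}^{c}(C)\subseteq I$, and by Lemma \ref{bijvecs}$(iii)$ every such $\vec{h}_{C}^{I}$ lies in $H_{C}^{k,0}$; hence these subspaces are among the summands of Theorem \ref{decomRnk} for $R_{n,k,0}$, so the sum is direct and, being a sum of $S_{n}$-stable subspaces (Theorem \ref{VMreps}), is a subrepresentation---this is the first displayed decomposition in $(i)$. Replacing $\vec{h}_{C}^{I}$ by $\vec{h}(C,I)$ and invoking Lemma \ref{bijvecs}$(i)$ gives the statement for $R_{n,I}^{\mathrm{hom}}$. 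For $(ii)$ one argues as in Theorem 3.10 of \cite{[Z1]}: $\mathcal{OP}_{n,I}$ is a single $S_{n}$-orbit whose stabilizer is the Young subgroup attached to $\operatorname{comp}_{n}I$ (empty blocks giving only trivial $S_{0}$-factors), so $\mathbb{Q}[\mathcal{OP}_{n,I}]$ is induced from the trivial representation of that subgroup and hence has Kostka numbers as its multiplicities by Young's rule; Lemma \ref{ctJmulti}, combined with the fact that Sch\"{u}tzenberger evacuation complements descent sets (as used in the proof of Proposition \ref{Adlambda}) and the identity $\operatorname{Dsp}^{c}(\operatorname{ct}(S))=\operatorname{Dsi}^{c}(S)$, rewrites this multiplicity as $|\{C\in\operatorname{CCT}(\lambda):\operatorname{Dsp}^{c}(C)\subseteq I\}|$, which matches part $(i)$ via Theorem \ref{VMreps}. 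Finally $F_{C,T}^{I,\mathrm{hom}}$ has degree $\Sigma(C)+\sum_{i\in\operatorname{Asp}^{c}_{I}(C)}i=\sum_{i\in\operatorname{Dsp}^{c}(C)}i+\sum_{i\in\operatorname{Asp}^{c}_{I}(C)}i=\sum_{i\in I}i$ by Lemma \ref{Dspc}$(ii)$ and the fact that $I$ is the multi-set union of $\operatorname{Dsp}^{c}(C)$ and $\operatorname{Asp}^{c}_{I}(C)$; this is independent of $C$, so $R_{n,I}^{\mathrm{hom}}$ is homogeneous of that degree.

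For $(iii)$, the multi-sets $I$ of size $k-1$ with $\{i_{h}\}_{h=0}^{s}$ strictly increasing are precisely those with $\mathcal{OP}_{n,I}\subseteq\mathcal{OP}_{n,k,s}$ by Definition \ref{OPnksI}$(ii)$, and by Lemma \ref{bijvecs}$(iii)$--$(iv)$ the map $I\mapsto\vec{h}_{C}^{I}$ restricts, for each $C$, to a bijection between those $I$ containing $\operatorname{Dsp}^{c}(C)$ and $H_{C}^{k,s}$. Hence $\sum_{I}R_{n,I}=\bigoplus_{\lambda}\bigoplus_{C}\bigoplus_{\vec{h}\in H_{C}^{k,s}}V_{C}^{\vec{h}}$, which by Theorem \ref{decomRnk} is direct and projects bijectively onto $R_{n,k,s}$. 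For $s=0$ every $I$ of size $k-1$ qualifies, and Lemma \ref{bijvecs}$(i)$ makes $I\mapsto\vec{h}(C,I)$ a bijection onto the same set $H_{C}^{k,0}$, so $\sum_{I}R_{n,I}^{\mathrm{hom}}$ is the identical subspace of $\mathbb{Q}[\mathbf{x}_{n}]$---the asserted coincidence. Part $(iv)$ is then immediate from Lemma \ref{bijvecs}$(ii)$: $H_{C}^{k-1,0}\subseteq H_{C}^{k,0}$ gives the containment of the two lifts, while $\vec{h}(C,I)\in H_{C}^{k-1,0}$ exactly when $0\in I$ shows that the complement $H_{C}^{k,0}\setminus H_{C}^{k-1,0}$ is indexed precisely by the size-$(k-1)$ multi-sets with $0\notin I$, i.e.\ is $\bigoplus_{|I|=k-1,\,0\notin I}R_{n,I}^{\mathrm{hom}}$.

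For $(v)$, Lemma \ref{bijvecs}$(v)$ gives $H_{C}^{k,s+1}\subseteq H_{C}^{k,s}$, and Lemma \ref{bijvecs}$(vi)$ says that incrementing the $(n-s)$-th coordinate is a bijection $H_{C}^{k-1,s}\to H_{C}^{k,s}\setminus H_{C}^{k,s+1}$; since this increment multiplies $\prod_{r}e_{r}^{h_{r}}$ by $e_{n-s}$, it sends $V_{C}^{\vec{h}}$ to $e_{n-s}V_{C}^{\vec{h}}$, and multiplication by the symmetric polynomial $e_{n-s}$ is an injective $S_{n}$-equivariant operator on $\mathbb{Q}[\mathbf{x}_{n}]$. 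Partitioning $H_{C}^{k,s}$ as $H_{C}^{k,s+1}\sqcup(H_{C}^{k,s}\setminus H_{C}^{k,s+1})$ therefore writes the lift of $R_{n,k,s}$ as the direct sum of the lift of $R_{n,k,s+1}$ and $e_{n-s}$ times the lift of $R_{n,k-1,s}$, exactly the splitting of Lemma \ref{splexseq}. The one place demanding real care is the multiplicity count in $(ii)$, namely matching $\mathbb{Q}[\mathcal{OP}_{n,I}]$ with the Specht decomposition; but because Lemmas \ref{mswcomp}, \ref{ctJmulti}, and \ref{Dspc} are already set up for multi-sets, this is the bookkeeping of \cite{[Z1]} with the harmless extra features of empty blocks and repeated entries, which change neither the induced module nor the relevant Kostka numbers.
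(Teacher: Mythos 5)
Your proposal is correct and follows essentially the same route as the paper's proof: Lemma \ref{spanreps} together with Theorem \ref{decomRnk} and Lemma \ref{bijvecs} for the direct sums and the lifts, the orbit/Young's rule Kostka count combined with Lemma \ref{ctJmulti} for part $(ii)$, and the $H_{C}^{k,s}$ combinatorics for parts $(iv)$ and $(v)$. The only cosmetic deviations are that in part $(ii)$ you flip descent sets via evacuation where the paper passes to $J=\{n-i\;|\;i \in I\}$ and uses the invariance of the permutation module under reordering the composition, and that in part $(v)$ you argue purely at the level of the vector sets $H_{C}^{k,s}$ rather than tracking the multi-set correspondence as the paper does---both harmless.
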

As Remark \ref{notation} allows us to view $R_{n,I}$ and $R_{n,I}^{\mathrm{hom}}$ as spanned by the polynomials $F_{w,I}$ and $F_{w,I}^{\mathrm{hom}}$ respectively, as $w$ runs over all the elements of $S_{n}$ whose associated set $\operatorname{Dsl}(w)$ is contained in $I$, Theorem \ref{Rnksdecom} extends Theorem 3.20 and Proposition 3.25 of \cite{[Z1]}. Part $(v)$ of Theorem \ref{Rnksdecom} shows that our construction respects the splitting sequence from Lemma \ref{splexseq}. The direct sum assertion for $s=0$ in part $(iii)$ there may be related to geometry via Remark \ref{cohomology} below.

\begin{proof}
For every $C$, the multiplier in the definition of $F_{C,T}^{I}$ and $F_{C,T}^{I,\mathrm{hom}}$ in Definition \ref{defSpecht} depends only on $C$ and $I$ and not on $T$. Hence for a fixed $C\in\operatorname{CCT}(\lambda)$ for some $\lambda \vdash n$, Theorem \ref{VMreps} implies that running over $T\in\operatorname{SYT}(\lambda)$ yields a basis for a representation. Comparing this multiplier with the vectors from that definition shows that $R_{n,I}$ and $R_{n,I}^{\mathrm{hom}}$ are indeed representations of $S_{n}$, and are given by the asserted sums from part $(i)$.

The degree of homogeneity of each $V_{C}^{\vec{h}(C,I)}$ is $\Sigma(C)+\sum_{r=1}^{n}h_{r}$, where part $(ii)$ of Lemma \ref{Dspc} gives $\Sigma(C)=\sum_{i\in\operatorname{Dsp}^{c}(M)}i$, and the second sum for the characteristic vector $\vec{h}(C,I)$ of the multi-set $\operatorname{Asp}^{c}_{I}(M)=I\setminus\operatorname{Dsp}^{c}(M)$, via Definition \ref{Spechtdef}, equals $\sum_{i\in\operatorname{Asp}^{c}_{I}(M)}i$. As the sum of these two expressions is clearly $\sum_{i \in I}i$, independently of $C$, the homogeneity from part $(ii)$ follows.

We now consider the sum from part $(iii)$, for some $0 \leq s\leq\min\{n,k\}$, where for every $\lambda \vdash n$ and $C\in\operatorname{CCT}(\lambda)$, we get a copy of $V_{C}^{\vec{h}_{C}^{I}}$ for every multi-set $I$ satisfying the asserted conditions and containing $\operatorname{Dsp}^{c}(M)$ in the usual sense. But parts $(iii)$ and $(iv)$ of Lemma \ref{bijvecs} imply that for any such $\lambda$ and $C$, the resulting sum is of $V_{C}^{\vec{h}}$ for all $h \in H_{C}^{k,s}$. Moreover, for $s=0$ part $(i)$ of Lemma \ref{bijvecs} implies that this is also the sum of $V_{C}^{\vec{h}(C,I)}$ for the same collection of multi-sets $I$, meaning that the resulting sum of the $R_{n,I}^{\mathrm{hom}}$'s is the same as the sum of the $R_{n,I}$'s.

But Theorem \ref{decomRnk} implies that the sums in question are direct, with the first one from the last paragraph lifting $R_{n,k,s}$. Hence the sums from part $(i)$, as partial to those sums, are also direct, establishing both this part and part $(iii)$. Moreover, as in the proof of Theorem 3.20 of \cite{[Z1]} (and Proposition 3.25 there), the multiplicity of $\mathcal{S}^{\lambda}$ in $R_{n,I}$ or in $R_{n,I}^{\mathrm{hom}}$ is the number of $C\in\operatorname{CCT}(\lambda)$ with $\operatorname{Dsp}^{c}(M) \subseteq I$.

Now, the bijection $\operatorname{ct}_{J}$ from Lemma \ref{ctJmulti}, for $J:=\{n-i\;|\;i \in I\}$, shows that this number is the same as the number of tableaux $S\in\operatorname{SYT}(\lambda)$ with $\operatorname{Dsi}(S) \subseteq J$. If $\alpha$ is the composition $\operatorname{comp}_{n}J\vDash_{w}n$ from Lemma \ref{mswcomp}, then the latter number is the Kostka number $K_{\lambda,\alpha}$.

Noting now that $S_{n}$ acts transitively on the orbit $\mathcal{OP}_{n,I}$, and that if the vector $\vec{m}:=\{m_{h}\}_{h=0}^{k-1}$ is $\operatorname{comp}_{n}I\vDash_{w}n$ (so that $\mathbb{Q}[\mathcal{OP}_{n,I}]$ is $W_{\vec{m}}$ in Definition \ref{OPnksI}), then the stabilizer of some element of that set is $\prod_{h=0}^{k-1}S_{m_{h}}$ (the possible vanishing of some of the entries of $\vec{m}$ does not affect this fact). Then $W_{\vec{m}}$ is isomorphic to $M^{\alpha}$ from \cite{[Sa]}, and Theorem 2.11.2 there implies that it contains the Specht module $\mathcal{S}^{\lambda}$ with the multiplicity $K_{\lambda,\alpha}$ as well. Part $(ii)$ is therefore also established.

When we write $R_{n,k,0}$ as the lift of the direct sum of $R_{n,I}^{\mathrm{hom}}$ over the multi-sets $I$ of size $k-1$, we recall from part $(ii)$ of Lemma \ref{bijvecs} that the vectors $\vec{h}(C,I)$ showing up in the summand associated with $I$ can also arise from a set of size $k-2$ if and only if $0 \in I$. Hence the direct sum of the $R_{n,I}^{\mathrm{hom}}$'s for such $I$ lifts $R_{n,k-1,0}$, and other $I$'s produce the complement, as desired for part $(iv)$.

Finally, the containment from part $(v)$ of Lemma \ref{bijvecs} (which is related to the one from part $(ii)$ of Lemma \ref{propOPnks}), and the complement from part $(vi)$ there, decompose the collection of multi-sets $I$ such that $R_{n,I}$ participates in the lift of $R_{n,k,s}$ (for $s<k$) into those showing up in the lift of $R_{n,k,s+1}$, and those that do not. Moreover, for a multi-set in the complement, every $\vec{h}_{C}^{I}$ is obtained by adding 1 at the location $n-s$, and since part $(v)$ of Lemma \ref{propOPnks} implies that $s\leq\hat{k}$, the element contributing it to the vector is from $I\setminus\hat{I}$.

It follows that the set contributing the corresponding element of $H_{C}^{k-1,s}$ from part $(vi)$ of Lemma \ref{bijvecs} has the same set $\hat{I}$, and we saw that all the vectors from that set are obtained in this way. Summing over all $\lambda$ and $C$, we see that the complement of the lift of $R_{n,k,s+1}$ inside that of $R_{n,k,s}$ is the image of the direct sum of all the $R_{n,I}$'s participating in the lift of $R_{n,k-1,s}$ under this addition to their vectors, which amounts to multiplication by $e_{n-s}$, yielding part $(v)$ as well. This completes the proof of the theorem.
\end{proof}
Theorem \ref{Rnksdecom} establishes, in particular, the direct sum assertion from Proposition 3.25 of \cite{[Z1]}, thus completing its proof.

\begin{rmk}
We can similarly construct the (generally non-homogeneous) sum $R_{n,k,s}^{\mathrm{hom}}$ of $R_{n,I}^{\mathrm{hom}}$ over $I$ with $\mathcal{OP}_{n,I}\subseteq\mathcal{OP}_{n,k,s}$, (generalizing the representation considered in Remark 3.26 of \cite{[Z1]}, which is obtained by taking $s=k$ here). Part $(iii)$ of Theorem \ref{Rnksdecom} implies that this sum is also direct, with part $(iv)$ being expressible as a decomposition of $R_{n,k,0}^{\mathrm{hom}}$ (which is the same as $R_{n,k,0}$ by part $(iii)$) as the ``older parts'' $R_{n,k-1,0}^{\mathrm{hom}}$ coming from smaller values of $k$, and the ``newer part'' $R_{n,k,1}^{\mathrm{hom}}$ that really require the index $k$. The decomposition showing up in Remark \ref{difreps} for $s=k-1$ implies that $R_{n,k,k-1}^{\mathrm{hom}}$ is the direct sum of $R_{n,k,k}^{\mathrm{hom}}$ and $e_{n}R_{n,k-1,k-1}^{\mathrm{hom}}$. However, analogues for part $(v)$ of Theorem \ref{Rnksdecom} for these sums of homogeneous representations do not hold. \label{Rnkshom}
\end{rmk}

\begin{rmk}
While the inequality $s \leq k$ is more natural than $s \leq n$ in Definition \ref{OPnksI}, note that with $s=n+1$ the quotient $R_{n,k,s}$ vanishes, as we divide also by the scalar $e_{n-s+1}=e_{0}$. However, if $k<n$ then for $s>k$ we may still get non-zero quotients, those they no longer have the direct sum decompositions from Theorem \ref{Rnksdecom} as it is stated. \label{bigs}
\end{rmk}
Recall from Examples 3.14 and 3.22 of \cite{[Z1]} that $R_{4,3,3}=R_{4,3}$ is the direct sum of the representations $V_{0000} \oplus V_{\substack{000 \\ 1\hphantom{11}}} \oplus V_{\substack{001 \\ 1\hphantom{12}}} \oplus V_{\substack{00 \\ 11}} \oplus V_{\substack{00 \\ 1\hphantom{1} \\ 2\hphantom{2}}}$ associated with the set $\{1,2\}$, $V_{0000}e_{1} \oplus V_{\substack{000 \\ 1\hphantom{11}}}e_{1} \oplus V_{\substack{011 \\ 1\hphantom{22}}} \oplus V_{\substack{01 \\ 12}} \oplus V_{\substack{01 \\ 1\hphantom{1} \\ 2\hphantom{2}}}$ coming from the set $\{1,3\}$ and $V_{0000}e_{1}^{2} \oplus V_{\substack{001 \\ 1\hphantom{12}}}e_{1} \oplus V_{\substack{011 \\ 1\hphantom{22}}}e_{1} \oplus V_{\substack{00 \\ 11}}e_{1} \oplus V_{\substack{02 \\ 1\hphantom{1} \\ 2\hphantom{2}}}$ arising from $\{2,3\}$. Going to the quotient $R_{4,3,4}$, with $s=4>3=k$, means dividing this representation by $e_{1}$, thus leaving all the part corresponding to $\{1,2\}$, but only 3 representations associated with $\{1,3\}$ and the last one from $\{2,3\}$, exemplifying Remark \ref{bigs}.

\begin{ex}
If $k=1$ then the only multi-set $I$ is the empty one. Hence the only $\lambda \vdash n$ and $C\in\operatorname{CCT}(\lambda)$ that shows up is the one having one line of zeros (for $\operatorname{Dsp}^{c}(C)$ to be empty and $\Sigma(C)=0$). This means that for every $n$, the representations $R_{n,1,1}=R_{n,1}$, $R_{n,1,0}$, $R_{n,1,1}^{\mathrm{hom}}=R_{n,1}^{\mathrm{hom}}$, and $R_{n,1,0}^{\mathrm{hom}}$ are the trivial representation of $S_{n}$ on constants. \label{k1triv}
\end{ex}

\begin{ex}
For $n=4$ and $k=2$, the values of $I$ for which we can take $s=2$ as well are the singletons $\{1\}$, $\{2\}$, and $\{3\}$ contained in $\mathbb{N}_{3}$. The examples from \cite{[Z1]} showed that the corresponding representations $R_{4,I}$ are \[V_{0000} \oplus V_{\substack{000 \\ 1\hphantom{11}}},\quad V_{0000}e_{1} \oplus V_{\substack{001 \\ 1\hphantom{12}}} \oplus V_{\substack{00 \\ 11}},\quad\mathrm{and}\quad V_{0000}e_{2} \oplus V_{\substack{011 \\ 1\hphantom{22}}}\] respectively, with the respective homogeneous counterparts being \[V_{0000}e_{1} \oplus V_{\substack{000 \\ 1\hphantom{11}}},\quad V_{0000}e_{2} \oplus V_{\substack{001 \\ 1\hphantom{12}}} \oplus V_{\substack{00 \\ 11}},\quad\mathrm{and}\quad V_{0000}e_{3} \oplus V_{\substack{011 \\ 1\hphantom{22}}}\] (indeed homogeneous of degrees 1, 2, and 3 respectively). The representation $R_{4,2,2}=R_{4,2}$ is the direct sum of the first three representations, with the direct sum of the latter three giving $R_{4,2,2}^{\mathrm{hom}}=R_{4,2}^{\mathrm{hom}}$. For $s=1$ we get one extra multi-set $\{4\}$ (as indeed, in the completed sequence 0,4,4 obtained by adding $i_{0}=0$ and $i_{2}=i_{k}=n=4$, the part involving $i_{0}=0$ and $i_{1}=4$ is strictly increasing), but then $\hat{I}$ from Definition \ref{multisets} is empty (so that $\hat{k}=1$), and for this $I$ we get $R_{n,I}=V_{0000}e_{3}$ and $R_{n,I}^{\mathrm{hom}}=V_{0000}e_{4}$ (since $r_{4}=3$ and the characteristic vector for $R_{n,I}^{\mathrm{hom}}$ involves an instance of 4). The remaining multi-set for $s=0$ is $I=\{0\}$ (and the completed sequence 0,0,4 has no increasing initial part), for which $R_{n,I}=V_{0000}e_{4}$ (since $r_{0}=n=4$) and $R_{n,I}^{\mathrm{hom}}=V_{0000}$. Both direct sums $R_{4,2,0}$ and $R_{4,2,0}^{\mathrm{hom}}$ thus contain the non-trivial representations showing up here, and one copy of the trivial representation $V_{0000}$ multiplied by $e_{r}$ for each $0 \leq r\leq4$ (with $e_{0}=1$ as usual). \label{n4k2}
\end{ex}

\begin{ex}
Recalling, for $n=4$, the multi-set $I=\{0,3,3\}$ from Example \ref{ordpartex}, we have $\hat{I}=\{3\}$ and $\hat{k}=2$, so that for $3 \in I\setminus\hat{I}$ we have $r_{3}=3$ and for 0 there we get $r_{0}=4$. Combining this with the expressions for this $\hat{I}$ appearing in Example \ref{n4k2}, we get $R_{4,I}=V_{0000}e_{2}e_{3}e_{4} \oplus V_{\substack{011 \\ 1\hphantom{22}}}e_{3}e_{4}$ and $R_{4,I}^{\mathrm{hom}}=V_{0000}e_{3}^{2} \oplus V_{\substack{011 \\ 1\hphantom{22}}}e_{3}$. Because of the multiplier $e_{4}$ in the former, it only shows up in $R_{4,4,0}$ (recall that $k=4$ as well) with $s=0$, while the multi-set $\{3,3\}$, obtained by omitting the 0, will give the same homogeneous counterpart, as in part $(iv)$ of Theorem \ref{Rnksdecom}. In fact, $R_{4,I}$ is obtained by multiplying the representation associated with this smaller multi-set, which is contained in $R_{4,3,0}$, by $e_{n-s}=e_{4}$, in correspondence with the fact that it is not contained in $R_{4,4,1}$ via part $(v)$ of that theorem. \label{exn4}
\end{ex}
Example \ref{exn4} exemplifies the fact, which is an immediate consequence of Theorem \ref{Rnksdecom}, that those multi-sets $I$ for which $R_{n,I}$ is only contained in the maximal representation $R_{n,k,0}$, with $s=0$, are precisely those for which $R_{n,I}^{\mathrm{hom}}$ arises from a multi-set of smaller size (because $0 \in I$).

\begin{ex}
In fact, the picture from Example \ref{n4k2} extends to every $n$ with $k=2$, to show that $R_{n,2,0}=R_{n,2,0}^{\mathrm{hom}}$ is the direct sum of the non-trivial representations associated with tableaux $C$ having $|\operatorname{Dsp}^{c}(C)|=1$, and one copy of the trivial representation multiplied by $e_{r}$ for each $0 \leq r \leq n$. This reproduce, in some sense, several of the results of \cite{[AAB]}. The parts added for $s=1$ and $s=0$ are similar to the case $n=4$ from that example, and in particular the complement of the constant one $R_{n,1,0}=R_{n,1,0}^{\mathrm{hom}}$ from Example \ref{k1triv} is the direct sum of $R_{n,I}^{\mathrm{hom}}$ for $I\neq\{0\}$, as in part $(iv)$ of Theorem \ref{Rnksdecom}. \label{k2gen}
\end{ex}
One can also verify that by taking $s=0$ and $s=1$ in Example \ref{k2gen}, the multiplier of the copy of the trivial representation from Example \ref{k1triv} is as asserted in part $(v)$ of Theorem \ref{Rnksdecom}.

\begin{ex}
With $n=3$, we have the four basic representations $U_{0}:=V_{000}$, $U_{1}:=V_{\substack{00 \\ 1\hphantom{1}}}$, $U_{2}:=V_{\substack{01 \\ 1\hphantom{2}}}$, and $U_{3}:=V_{\substack{0 \\ 1 \\ 2}}$, with the $\operatorname{Dsp}^{c}$-sets of the corresponding cocharge tableaux being empty, $\{1\}$, $\{2\}$, and $\{1,2\}$ respectively (and the index is the entry sum of that tableau). We take $k=4>3=n$, and state the representations showing up for each new value of $s$. The maximal value $s=3$ yields $I=\{1,2,3\}$, where $\hat{I}=\{1,2\}$ and thus \[R_{3,I}=U_{0} \oplus U_{1} \oplus U_{2} \oplus U_{3}\mathrm{\ and\ }R_{3,I}^{\mathrm{hom}}=U_{0}e_{1}e_{2}e_{3} \oplus U_{1}e_{2}e_{3} \oplus U_{2}e_{1}e_{3} \oplus U_{3}e_{3}.\] With $s=2$ we also have $I=\{1,2,2\}$, again with $\hat{I}=\{1,2\}$ and hence \[R_{3,I}=U_{0}e_{1} \oplus U_{1}e_{1} \oplus U_{2}e_{1} \oplus U_{3}e_{1}\mathrm{\ and\ }R_{3,I}^{\mathrm{hom}}=U_{0}e_{1}e_{2}^{2} \oplus U_{1}e_{2}^{2} \oplus U_{2}e_{1}e_{2} \oplus U_{3}e_{2},\] and the other two multi-sets \[I=\{1,3,3\}\mathrm{\ with\ }R_{3,I}=U_{0}e_{1}^{2} \oplus U_{1}e_{1}^{2}\mathrm{\ and\ }R_{3,I}^{\mathrm{hom}}=U_{0}e_{1}e_{3}^{2} \oplus U_{1}e_{3}^{2},\mathrm{\ and}\] \[I=\{2,3,3\}\mathrm{\ with\ }R_{3,I}=U_{0}e_{1}^{3} \oplus U_{2}e_{1}^{2}\mathrm{\ and\ }R_{3,I}^{\mathrm{hom}}=U_{0}e_{2}e_{3}^{2} \oplus U_{2}e_{3}^{2}.\] A new multi-sets that shows up for $s=1$ is $\{1,1,2\}$, yielding \[R_{3,I}=U_{0}e_{2} \oplus U_{1}e_{2} \oplus U_{2}e_{2} \oplus U_{3}e_{2}\mathrm{\ and\ }R_{3,I}^{\mathrm{hom}}=U_{0}e_{1}^{2}e_{2} \oplus U_{1}e_{1}e_{2} \oplus U_{2}e_{1}^{2} \oplus U_{3}e_{1},\] and there are the representations \[U_{0}e_{1}e_{2} \oplus U_{1}e_{1}e_{2},\ U_{0}e_{2}^{2} \oplus U_{1}e_{2}^{2},\ U_{0}e_{1}^{2}e_{2} \oplus U_{2}e_{1}e_{2},\ U_{0}e_{1}e_{2}^{2} \oplus U_{2}e_{2},\mathrm{\ and\ }U_{0}e_{2}^{3}\] associated with the multi-sets $\{1,1,3\}$, $\{1,1,1\}$, $\{2,2,3\}$, $\{2,2,2\}$, and $\{3,3,3\}$, with their respective homogeneous counterparts \[U_{0}e_{1}^{2}e_{3} \oplus U_{1}e_{1}e_{3},\ U_{0}e_{1}^{3} \oplus U_{1}e_{1}^{2},\ U_{0}e_{2}^{2}e_{3} \oplus U_{2}e_{2}e_{3},\ U_{0}e_{2}^{3} \oplus U_{2}e_{2}^{2},\mathrm{\ and\ }U_{0}e_{3}^{3}.\] The new multi-sets appearing for $s=0$ are those obtained by adding 0 to the multi-sets associated with $k=3$, and the latter are $\{1,2\}$, $\{1,3\}$, $\{1,1\}$, $\{2,3\}$, $\{2,2\}$, $\{3,3\}$, $\{0,1\}$, $\{0,2\}$, $\{0,3\}$, and $\{0,0\}$. Using these we get can obtain the expressions for $R_{3,3,0}=R_{3,3,0}^{\mathrm{hom}}$, with the latter completing $R_{3,4,0}^{\mathrm{hom}}$ and the former multiplied by $e_{3}$ yields $R_{3,4,0}$, and we can again compare them as in part $(iii)$ of Theorem \ref{Rnksdecom}. \label{n3k4}
\end{ex}
Example \ref{n3k4} exemplifies our results in the generality of Remark \ref{kngen}.

\medskip

We can now show how this construction produces a decomposition of $\mathbb{Q}[\mathbf{x}_{n}]_{d}$ that lifts the formula for the multiplicities from Proposition \ref{Adlambda}.
\begin{cor}
Given $d\geq0$ and $\lambda \vdash n$, every pair consisting of an element $C\in\operatorname{CCT}(\lambda)$ and a multi-set $I$ of positive integers not exceeding $n$ such that $\operatorname{Dsp}^{c}(C) \subseteq I$ and $\sum_{i \in I}=d$ contributes one copy of an irreducible sub-representation of $\mathbb{Q}[\mathbf{x}_{n}]_{d}$ that is isomorphic to $\mathcal{S}^{\lambda}$. \label{multCI}
\end{cor}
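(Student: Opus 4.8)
The plan is to realize each pair $(C,I)$ from the statement as contributing the explicit irreducible constituent $V_{C}^{\vec{h}(C,I)}$ from Definition \ref{defSpecht}, and then to assemble these, over all such pairs and all $\lambda$, into a direct sum that exhausts $\mathbb{Q}[\mathbf{x}_{n}]_{d}$; this will at the same time lift the multiplicity count of Proposition \ref{Adlambda} to an explicit decomposition.

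First I would record that, for $\lambda\vdash n$, $C\in\operatorname{CCT}(\lambda)$, and a multi-set $I$ of positive integers bounded by $n$ with $\operatorname{Dsp}^{c}(C)\subseteq I$ in the sense of Definition \ref{multisets}, the space $V_{C}^{\vec{h}(C,I)}$ is, by Theorem \ref{VMreps} together with Lemma \ref{spanreps} (which provides $\{F_{C,T}^{I,\mathrm{hom}}\mid T\in\operatorname{SYT}(\lambda)\}$ as a basis), an irreducible subrepresentation of $\mathbb{Q}[\mathbf{x}_{n}]$ isomorphic to $\mathcal{S}^{\lambda}$, and it is homogeneous. Its degree I would identify exactly as in the proof of part $(ii)$ of Theorem \ref{Rnksdecom}: writing $\vec{h}(C,I)=\{h_{r}\}_{r=1}^{n}$, the homogeneity degree $\Sigma(C)+\sum_{r=1}^{n}rh_{r}$ equals $\sum_{i\in\operatorname{Dsp}^{c}(C)}i+\sum_{i\in\operatorname{Asp}^{c}_{I}(C)}i=\sum_{i\in I}i$, using Lemma \ref{Dspc}$(ii)$ for the first summand and the definition of the multi-set complement for the splitting of $I$. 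Hence under the hypothesis $\sum_{i\in I}i=d$ we get $V_{C}^{\vec{h}(C,I)}\subseteq\mathbb{Q}[\mathbf{x}_{n}]_{d}$, which already yields the asserted copy of $\mathcal{S}^{\lambda}$.

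Next I would verify that these copies assemble into all of $\mathbb{Q}[\mathbf{x}_{n}]_{d}$. Fix any $k>d$. Iterating part $(iv)$ of Theorem \ref{Rnksdecom} from $k$ down to $1$ expresses the lift of $R_{n,k,0}$ into $\mathbb{Q}[\mathbf{x}_{n}]$ as the direct sum of the representations $R_{n,I^{\circ}}^{\mathrm{hom}}$ over all multi-sets $I^{\circ}$ of positive integers bounded by $n$ with $|I^{\circ}|\leq k-1$ (the empty multi-set contributing the constants), each of which is homogeneous of degree $\sum_{i\in I^{\circ}}i$ by part $(ii)$. On the other hand, since the ideal $(x_{1}^{k},\ldots,x_{n}^{k})$ is generated in degree $k>d$, the ring $R_{n,k,0}$ agrees with $\mathbb{Q}[\mathbf{x}_{n}]$ in degree $d$, so the degree-$d$ part of this lift is $\mathbb{Q}[\mathbf{x}_{n}]_{d}$. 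Comparing degree-$d$ parts, $\mathbb{Q}[\mathbf{x}_{n}]_{d}$ is the direct sum of those $R_{n,I^{\circ}}^{\mathrm{hom}}$ with $\sum_{i\in I^{\circ}}i=d$---and for any such $I^{\circ}$ the bound $|I^{\circ}|\leq\sum_{i\in I^{\circ}}i=d<k$ makes the size restriction vacuous, so we obtain all of them. Substituting the decomposition of each $R_{n,I^{\circ}}^{\mathrm{hom}}$ from part $(i)$ of Theorem \ref{Rnksdecom} then writes $\mathbb{Q}[\mathbf{x}_{n}]_{d}$ as the direct sum of the spaces $V_{C}^{\vec{h}(C,I^{\circ})}$ over precisely the pairs $(C,I^{\circ})$ appearing in the statement.

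I do not expect a deep obstacle here---the result is genuinely a corollary of the machinery already set up---and the only point requiring care is the indexing bookkeeping of the third paragraph: the passage to $k>d$, the iteration of part $(iv)$ of Theorem \ref{Rnksdecom} down to $R_{n,1,0}$, the homogeneity-driven extraction of the degree-$d$ component, and the degree identity $\Sigma(C)+\sum_{r}rh_{r}=\sum_{i\in I}i$. As a consistency check, counting dimensions inside the $\lambda$-isotypic component of the resulting decomposition recovers exactly the enumeration of pairs $(C,I)$ from Proposition \ref{Adlambda}, confirming that this is the promised lifting of that formula.
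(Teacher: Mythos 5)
Your proposal is correct and follows essentially the same route as the paper: both reduce to the quotient $R_{n,k,0}$ for some $k>d$, whose degree-$d$ part recovers $\mathbb{Q}[\mathbf{x}_{n}]_{d}$, and then read off the decomposition into the homogeneous pieces $V_{C}^{\vec{h}(C,I)}$ coming from Theorem \ref{Rnksdecom}. The only cosmetic difference is that you strip the zeros by iterating part $(iv)$ of Theorem \ref{Rnksdecom} down to $k=1$, whereas the paper invokes part $(iii)$ together with the zero-removal identity $R_{n,\tilde{I}}^{\mathrm{hom}}=R_{n,I}^{\mathrm{hom}}$ from part $(ii)$ of Lemma \ref{bijvecs}; these are equivalent reorganizations of the same bookkeeping.
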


\begin{proof}
For some $k>d$, since the quotient $R_{n,k,0}$ is obtained by division by the powers $x_{i}^{k}$, all of which are homogeneous of degree larger than $d$, we deduce that $\mathbb{Q}[\mathbf{x}_{n}]_{d}$ is isomorphic to the part of that representation that is homogeneous of degree $d$. Part $(iii)$ of Theorem \ref{Rnksdecom} identifies that representation with $R_{n,k,0}^{\mathrm{hom}}$ from Remark \ref{Rnkshom}, and when we decompose it into the direct sum of the representations $R_{n,\tilde{I}}^{\mathrm{hom}}$ for the multi-sets $\tilde{I}$ of size $k-1$ consisting of integers between 0 and $n$, the part that is homogeneous of degree $d$ is the direct sum of the representations associated with those multi-sets $I$ for which $\sum_{i\in\tilde{I}}=d$.

Now, part $(ii)$ of Lemma \ref{bijvecs} (or the definition of the homogeneous representations via Theorem \ref{Rnksdecom} and Definition \ref{Spechtdef}) implies that for every such multi-set $\tilde{I}$ we have $R_{n,\tilde{I}}^{\mathrm{hom}}=R_{n,I}^{\mathrm{hom}}$ for a multi-set $I$ of positive integers with $\sum_{i \in I}=d$ (obtained by simply removing all the zeros from $\tilde{I}$). Conversely, since $d<k$, every multi-set of positive integers that sum to $d$ has size at most $k-1$, and can thus be completed to a multi-set of size $k-1$, by adding the appropriate number of zeros. Hence the direct sum obtained from Theorem \ref{Rnksdecom} is the same as the asserted one. This proves the corollary.
\end{proof}

\begin{ex}
Take $n=4$ and $d=2$. Then there are two multi-sets of entry sum $d=2$, which are $\{1,1\}$ and $\{2\}$. With the former multi-set $I$ we have $\hat{I}=\{1\}$, and then $R_{4,I}^{\mathrm{hom}}$ is the same as $R_{4,\hat{I}}^{\mathrm{hom}}$ which appears explicitly in Example \ref{n4k2} but multiplied by $e_{1}$ arising from the element of $I\setminus\hat{I}$. The homogeneous representation associated with the latter multi-set (or just set) is also given in that example. Altogether, Corollary \ref{multCI} yields the decomposition
\[\mathbb{Q}[\mathbf{x}_{4}]_{2}=V_{0000}e_{1}^{2} \oplus V_{\substack{000 \\ 1\hphantom{11}}}e_{1} \oplus V_{0000}e_{2} \oplus V_{\substack{001 \\ 1\hphantom{12}}} \oplus V_{\substack{00 \\ 11}},\] corresponding to the elements of the set from Proposition \ref{Adlambda} in this case. \label{n4d2}
\end{ex}

\begin{ex}
We now consider the case where $n=2$ and $d=4$. Here there are three multi-sets of positive integers that sum to $d=4$ and cannot exceed $n=2$, which are $\{1,1,1,1\}$, $\{1,1,2\}$, and $\{2,2\}$. Now, there are only two partitions of 2, each one with a single cocharge tableau, one yielding the trivial representation $U_{+}$ and has an empty $\operatorname{Dsp}^{c}$-set, and the other one produces the sign representation $U_{-}$ and has the $\operatorname{Dsp}^{c}$-set $\{1\}$. We deduce that the last multi-set only yields a single representation $U_{+}e_{2}^{2}$, while the first multi-set gives the representation $U_{+}e_{1}^{4} \oplus U_{-}e_{1}^{3}$, and the remaining one produces $U_{+}e_{1}^{2}e_{2} \oplus U_{-}e_{1}e_{2}$. Altogether we get \[\mathbb{Q}[\mathbf{x}_{2}]_{4}=U_{+}e_{1}^{4} \oplus U_{+}e_{1}^{2}e_{2} \oplus U_{+}e_{2}^{2} \oplus U_{-}e_{1}^{3} \oplus U_{-}e_{1}e_{2},\] which respects the decomposition of that space into the symmetric and anti-symmetric polynomials, and is also related to the five elements showing up in Proposition \ref{Adlambda} for these parameters. \label{n2d4}
\end{ex}
Examples \ref{n4d2} and \ref{n2d4} hint at the different behavior when $n>d$ and when $n<d$, about which we expound in Examples \ref{n5d2} and \ref{n3d4} below.

\begin{rmk}
The proof of the decomposition of $R_{n,n}$ in \cite{[ATY]} used a pairing into the highest degree $\frac{n(n-1)}{2}$, which amounts to Poincar\'{e} duality in the cohomology ring of the flag variety from \cite{[B]} (most likely pairing $V^{S}$ with $V^{S^{t}}$ in our decomposition, though I did not verify this in general). Indeed, this variety is complete of dimension $\frac{n(n-1)}{2}$, and the top degree cohomology group is a 1-dimensional space on which $S_{n}$ acts via the sign. No such duality result exists for the varieties $X_{n,k}$ from \cite{[PR]}, since they are non-complete, and indeed their dimension is $n(k-1)$ (as they are open subvarieties of $(\mathbb{P}^{k-1})^{n}$), and the maximal degree in $R_{n,k}$ is smaller (note that for $k=n$ this dimension is twice that of the flag variety). For $s>0$ the varieties $X_{n,k,s}$ are also non-complete of the same dimension, but when $s=0$ it is the full complete variety $(\mathbb{P}^{k-1})^{n}$, and the isomorphism between the cohomology ring of that variety (extended to $\mathbb{Q}$) with $R_{n,k,0}$ is clear via the K\"{u}nneth formula. As $S_{n}$ acts on $X_{n,k,0}=(\mathbb{P}^{k-1})^{n}$, it also operates on the cohomology ring $R_{n,k,0}$, and the top degree cohomology group becomes, under this isomorphism, the one spanned by $e_{n}^{k-1}$. Poincar\'{e} duality there produces a pairing on $R_{n,k,0}$, but it is not orthogonal on the different $V_{C}^{\vec{h}}$'s in general, in the sense that most components pair non-trivially with more than one component. It might be interesting to find a decomposition of $R_{n,k,0}$ into sub-representations that are orthogonal with respect to this pairing. \label{cohomology}
\end{rmk}

\medskip

We now use the representations from Definition \ref{defSpecht} to get a good decomposition of $\mathbb{Q}[\mathbf{x}_{n}]_{d}$ into representations of $S_{n}$, lifting the formula from Proposition \ref{Kostka}. The argument parallels Section 2.10 of \cite{[Sa]}, which works in a more abstract setting, but is carried out in the setting of the action on polynomials.
\begin{lem}
The sum of the representations $V_{M}$ from Definition \ref{defSpecht} over the elements $M\in\operatorname{SSYT}_{d}(\lambda)$ is a direct sum. \label{FMTindep}
\end{lem}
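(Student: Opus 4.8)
The plan is to reduce the statement to a claim about the total degree-$d$ part of $\mathbb{Q}[\mathbf{x}_{n}]$ and the distinctness of leading monomials. First I would recall from Proposition \ref{Spechtpols} that each $V_{M}$ lies in $\mathbb{Q}[\mathbf{x}_{n}]_{\mu}$, where $\mu$ is the content of $M$, and from Definition \ref{defSSYT} that $\operatorname{SSYT}_{d}(\lambda)$ is the disjoint union over contents $\mu$ of sum $d$ of the sets $\operatorname{SSYT}_{\mu}(\lambda)$. Since the spaces $\mathbb{Q}[\mathbf{x}_{n}]_{\mu}$ for distinct contents $\mu$ are linearly independent inside $\mathbb{Q}[\mathbf{x}_{n}]_{d}$, the sum over all of $\operatorname{SSYT}_{d}(\lambda)$ is direct as soon as, for each \emph{fixed} content $\mu$, the sum $\sum_{M\in\operatorname{SSYT}_{\mu}(\lambda)}V_{M}$ is direct inside $\mathbb{Q}[\mathbf{x}_{n}]_{\mu}$. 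Note the shape $\lambda$ is fixed throughout.

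So I would fix a content $\mu$ of sum $d$ and work inside $\mathbb{Q}[\mathbf{x}_{n}]_{\mu}$. By Theorem \ref{VMreps} each $V_{M}$ has dimension $f^{\lambda}=|\operatorname{SYT}(\lambda)|$, with basis $\{F_{M,T}\;|\;T\in\operatorname{SYT}(\lambda)\}$, so it suffices to exhibit that the full collection $\{F_{M,T}\;|\;M\in\operatorname{SSYT}_{\mu}(\lambda),\ T\in\operatorname{SYT}(\lambda)\}$ is linearly independent; equivalently, that the cardinality of this set equals the dimension of its span. The mechanism for this is a triangularity argument via the leading-term analysis already in place: by Proposition \ref{Spechtpols}, for a fixed $T$ the polynomial $F_{M,T}$ equals $\sum_{\sigma\in C(T)}\operatorname{sgn}(\sigma)\sigma p_{M,T}$ plus $>_{T}$-lower terms, and by Lemma \ref{operRT} the monomial $p_{M,T}$ is itself the $>_{T}$-largest monomial appearing in $F_{M,T}$ after the row-symmetrization step. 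The key point is that the map $M\mapsto p_{M,T}$ from $\operatorname{SSYT}_{\mu}(\lambda)$ to monomials is injective for each fixed $T$ (two distinct semi-standard tableaux of the same shape give distinct entry-assignments to the boxes, hence distinct monomials under $p_{\bullet,T}$), so the leading monomials $p_{M,T}$ are pairwise distinct as $M$ varies.

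The actual obstacle — and the step I would spend the most care on — is combining the $M$-variation with the $T$-variation into a single triangular system. For a single fixed $T$ this is exactly the argument that proves Theorem \ref{VMreps}, and for fixed $M$ it gives the basis $\{F_{M,T}\}_T$; the subtlety is that different $T$'s carry different monomial orders $>_{T}$. I would handle this by following the standard device of Section 2.10 of \cite{[Sa]}: order the standard Young tableaux $T$ in some fixed way, and for the leading behavior of $F_{M,T}$ use the initial monomial $\sigma_{0}p_{M,T}$ for a canonical $\sigma_{0}\in C(T)$ (the one making the columns of the resulting array increasing), and check that the assignment $(M,T)\mapsto(\text{that monomial})$, or rather $(M,T)\mapsto$ the pair (content-ordering data, column-straightening data), is injective — this is the place where semi-standardness of $M$ and standardness of $T$ together guarantee that one can recover both $M$ and $T$ from the leading data. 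Granting that injectivity, a dependence relation $\sum_{M,T}c_{M,T}F_{M,T}=0$ would force the coefficient of a maximal leading monomial to vanish, and downward induction kills all $c_{M,T}$; hence the sum is direct. I would close by noting that this is precisely the polynomial-theoretic transcription of the abstract argument in \cite{[Sa]}, with Lemma \ref{operRT}, Proposition \ref{Spechtpols}, and Theorem \ref{VMreps} supplying the inputs that reference obtains abstractly.
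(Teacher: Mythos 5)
Your reduction to a fixed content and the remark that it suffices to prove linear independence of the whole family $\{F_{M,T}\}$ are fine, and your fixed-$T$ analysis is exactly the first half of the paper's argument. The genuine gap is at the point you yourself flag as the main obstacle and then resolve only by appeal to an unproved injectivity: there is no leading-data assignment of the kind you describe, and no triangularity over the pairs $(M,T)$. Take $n=3$, $\lambda=21\vdash3$, $M=\begin{ytableau} 0 & 1 \\ 1 \end{ytableau}$, and the two standard tableaux $T_{1}=\begin{ytableau} 1 & 2 \\ 3 \end{ytableau}$ and $T_{2}=\begin{ytableau} 1 & 3 \\ 2 \end{ytableau}$. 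Since the columns of a semi-standard $M$ are already strictly increasing, your canonical $\sigma_{0}$ is trivial and the proposed leading datum is just $p_{M,T}$; but $p_{M,T_{1}}=p_{M,T_{2}}=x_{2}x_{3}$, so $(M,T)$ cannot be recovered from it. Worse, $F_{M,T_{1}}=x_{2}x_{3}-x_{1}x_{2}$ and $F_{M,T_{2}}=x_{2}x_{3}-x_{1}x_{3}$ share this top monomial, so in $c_{1}F_{M,T_{1}}+c_{2}F_{M,T_{2}}$ its coefficient is $c_{1}+c_{2}$, and extracting the maximal term only yields $c_{1}+c_{2}=0$; the downward induction you describe cannot conclude $c_{1}=c_{2}=0$. (Both pairs have the same content, so the reduction to fixed $\mu$ does not help.) The underlying reason is that the lower-order terms of $F_{M,T}$ are controlled only in the $T$-dependent order $>_{T}$, so there is no single order in which all the $F_{M,T}$ are simultaneously triangular; the independence of $\{F_{M,T}\}_{T}$ for a fixed $M$ is the representation-theoretic content of Theorem \ref{VMreps} and is not visible from leading monomials.

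The paper closes exactly this gap by decoupling the $T$-direction algebraically rather than combinatorially: using $\mathbb{Q}[S_{n}]\cong\bigoplus_{\nu \vdash n}\operatorname{End}_{\mathbb{Q}}(\mathcal{S}^{\nu})$, for each fixed $T\in\operatorname{SYT}(\lambda)$ there is an element $\alpha_{T}\in\operatorname{End}_{\mathbb{Q}}(\mathcal{S}^{\lambda})\subseteq\mathbb{Q}[S_{n}]$ which, in every copy of $\mathcal{S}^{\lambda}$, fixes the basis vector indexed by $T$ and annihilates the others; applying $\alpha_{T}$ to a putative relation $\sum_{M}\sum_{T'}c_{M,T'}F_{M,T'}=0$ leaves $\sum_{M}c_{M,T}F_{M,T}=0$ with a single $T$, and only then does the leading-monomial argument apply, via the observation that the sums $\sum_{\sigma \in C(T)}\operatorname{sgn}(\sigma)\sigma p_{M,T}$ for distinct semi-standard $M$ (and this one fixed $T$) have disjoint monomial supports. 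To repair your proof you need this projector step, or some equivalent device such as the evaluation-map argument for linearly independent homomorphisms out of the irreducible $\mathcal{S}^{\lambda}$; the combined $(M,T)$-triangularity you propose does not exist.
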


\begin{proof}
We begin by taking some $\lambda \vdash n$ and some tableau $T$ of shape $\lambda$ and content $\mathbb{N}_{n}$, and we claim that $\{F_{M,T}\;|\;M\in\operatorname{SSYT}_{d}(\lambda)\}$ are linearly independent. To see this, we recall the vector $\deg_{T}y$ associated with every monomial $y$ in Definition \ref{orderT}, as well as the partial order $>_{T}$ defined there. Proposition \ref{Spechtpols} expresses $F_{M,T}$, for any $M\in\operatorname{SSYT}_{d}(\lambda)$, as $\sum_{\sigma \in C(T)}\operatorname{sgn}(\sigma)\sigma p_{M,T}$ plus a sum of monomials $y$, all of which satisfy $p_{M,T}>_{T}y$.

Consider thus a linear combination $\sum_{M\in\operatorname{SSYT}_{d}(\lambda)}a_{M}F_{M,T}$, with not all the coefficients being 0, and let $\vec{d}$ be the maximal vector, in the reverse lexicographic order, that is obtained as $\deg_{T}p_{M,T}$ for some $M\in\operatorname{SSYT}_{d}(\lambda)$ with $a_{M}\neq0$. Since the order on vectors is a total order, our linear combination produces $\sum_{M\in\operatorname{SSYT}_{d}(\lambda),\ \deg_{T}p_{M,T}=\vec{d}}\sum_{\sigma \in C(T)}a_{M}\operatorname{sgn}(\sigma)\sigma p_{M,T}$ plus a combination of monomials all of whose $\deg_{T}$-vectors are smaller than $\vec{d}$.

But if a monomial $y$ is obtained as $\sigma p_{M,T}$ for some $M\in\operatorname{SSYT}_{d}(\lambda)$ and $\sigma \in C(T)$, then for every $1 \leq j\leq\lambda_{1}$ the exponents of the variables $x_{i}$ with $C_{T}(i)=j$ in $y$ are all distinct, and their ordering determines the action of $\sigma$ on them. By gathering the information from all $j$, we deduce which $\sigma$ produced our monomial, and once we know $\sigma$ we easily determine $p_{M,T}$ and from it the tableau $M$. It follows that the sums $\sum_{\sigma \in C(T)}\operatorname{sgn}(\sigma)\sigma p_{M,T}$ over various $M\in\operatorname{SSYT}_{d}(\lambda)$ are supported on distinct sets of monomials, and therefore the last sum from the previous paragraph cannot vanish, as we assume that at least one $a_{M}$ with $M\in\operatorname{SSYT}_{d}(\lambda)$ that satisfies $\deg_{T}p_{M,T}=\vec{d}$ does not vanish. Hence $\sum_{M\in\operatorname{SSYT}_{d}(\lambda)}a_{M}F_{M,T}\neq0$, which proves the claim.

Recalling from Theorem \ref{VMreps} that $\{F_{M,T}\;|\;T\in\operatorname{SYT}(\lambda)\}$ is a basis for $V_{M}$, a linear dependence between the $V_{M}$'s with $M\in\operatorname{SSYT}_{d}(\lambda)$ would be an equality of the sort $\sum_{M\in\operatorname{SSYT}_{d}(\lambda)}\sum_{T\in\operatorname{SYT}(\lambda)}c_{M,T}F_{M,T}=0$. Letting elements of $S_{n}$ act, and taking any linear combination of the resulting expressions, produces the equality $\sum_{M\in\operatorname{SSYT}_{d}(\lambda)}\sum_{T\in\operatorname{SYT}(\lambda)}c_{M,T}\alpha F_{M,T}=0$ for every element $\alpha$ in the group ring $\mathbb{Q}[S_{n}]$.

But the latter group ring decomposes as $\bigoplus_{\nu \vdash n}\operatorname{End}_{\mathbb{Q}}(\mathcal{S}^{\nu})$, and for each $T\in\operatorname{SYT}(\lambda)$, there is an element $\alpha_{T}\in\operatorname{End}_{\mathbb{Q}}(\mathcal{S}^{\lambda})\subseteq\mathbb{Q}[S_{n}]$ which takes the basis element associated with $T$ to itself and the other basis elements to 0 (for some shapes $\lambda$ this element $\alpha_{T}$ is the idempotent multiple of $\varepsilon_{T}$, but this is not the case in general---see \cite{[Ste]} for more details). Taking $\alpha=\alpha_{T}$ in our formula thus leaves $F_{M,T}$ invariant for every $M$ and annihilates the other basis vectors of $V_{M}$, and thus produces the equality $\sum_{M\in\operatorname{SSYT}_{d}(\lambda)}c_{M,T}F_{M,T}=0$ with our $T$. But our claim then implies that $c_{M,T}=0$ for all $M\in\operatorname{SSYT}_{d}(\lambda)$, meaning that in total our linear combination was trivial. This proves the lemma.
\end{proof}

\medskip

Note that given a tableau $T$ of some shape $\lambda \vdash n$ and content $\mathbb{N}$, and any monic monomial in $\mathbb{Q}[\mathbf{x}_{n}]$, we can construct a tableau $H$ of shape $\lambda$, with no additional properties, by putting in the box $v_{T}(i)$ of $\lambda$ the exponent with which $x_{i}$ shows up in the monomial in question. Thus, once $T\in\operatorname{SYT}(\lambda)$ is fixed, we can write any monomial as $p_{H,T}$, by extending the notation from Definition \ref{Spechtdef}, where $H$ is an arbitrary tableau of shape $\lambda$.

Using this terminology, we prove the following auxiliary result.
\begin{lem}
Take some monomial $z\in\mathbb{Q}[\mathbf{x}_{n}]$.
\begin{enumerate}[$(i)$]
\item There exists a tableau $H$ of shape $\lambda$ whose entries are non-decreasing along each row such that $\varepsilon_{T}z$ equals $\varepsilon_{T}p_{H,T}$.
\item For $H$ as in part $(i)$, set $\vec{d}:=\deg_{T}p_{H,T}$ and $d$ to be the entry sum of $H$, or equivalently of $\vec{d}$. Then, if the entries in each column of $H$ are distinct, then there exists some $M\in\operatorname{SSYT}_{d}(\lambda)$, of the same content as $H$, such that $M$ and $H$ are related by an element $\rho \in C(T)$.
\item If $H$, $\vec{d}$, and $d$ are as in part $(ii)$, then there is an element $M\in\operatorname{SSYT}_{d}(\lambda)$ and a scalar $c$ such that $\varepsilon_{T}p_{H,T}-c\varepsilon_{T}p_{M,T}$ is supported only on monomials $y$ that satisfy $p_{H,T}>_{T}y$.
\end{enumerate} \label{FHTFMT}
\end{lem}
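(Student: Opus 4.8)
The plan is to treat the three parts in order, using the ideas from the proof of Lemma \ref{operRT} and Proposition \ref{Spechtpols}, combined with a standard ``straightening'' (sorting) argument inside columns. For part $(i)$, I would start with the monomial $z$, write it as $p_{G,T}$ for the tableau $G$ obtained by reading exponents off $T$, and then observe that the row group $R(T)$ permutes the entries within each row of $G$. Since $\varepsilon_{T}=\sum_{\sigma\in C(T)}\sum_{\tau\in R(T)}\operatorname{sgn}(\sigma)\sigma\tau$ contains the full sum over $R(T)$ on the right, for any $\tau_{0}\in R(T)$ we have $\varepsilon_{T}\tau_{0}^{-1}=\varepsilon_{T}$, hence $\varepsilon_{T}z=\varepsilon_{T}(\tau_{0}z)$ for every $\tau_{0}\in R(T)$. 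Choosing $\tau_{0}$ so that $\tau_{0}z$ has each row sorted in non-decreasing order gives the required $H=\tau_{0}\cdot G$, with $\varepsilon_{T}z=\varepsilon_{T}p_{H,T}$. This step is routine.

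For part $(ii)$, assume the columns of $H$ have distinct entries. I would then sort each column of $H$ into strictly increasing order using an element $\rho\in C(T)$ (possible precisely because the column entries are distinct), obtaining a tableau $M:=\rho\cdot H$ of the same content as $H$. By construction $M$ has weakly increasing rows (inherited from $H$? — this is the subtle point) and strictly increasing columns. The delicate issue is that sorting the columns can destroy the weak monotonicity of the rows; the standard resolution is the classical fact, used in the proof of the straightening law for Specht polynomials, that if a filling has distinct column entries then after independently sorting each column one still obtains a semistandard tableau provided the original had weakly increasing rows — but in general this requires care, and one may instead only claim that \emph{some} element of $\operatorname{SSYT}_d(\lambda)$ with the same content as $H$ is reached, using that the content is fixed under $C(T)$ and that the number of semistandard tableaux with a given content equals the number of ways to sort. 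I expect this to be the main obstacle: pinning down exactly why the column-sorting of a row-sorted filling with distinct columns lands in $\operatorname{SSYT}(\lambda)$, rather than merely being column-strict. I would handle it by invoking the well-known ``column-sorting preserves semistandardness'' lemma (as in, e.g., the straightening algorithm, or Section 2.6 of \cite{[Sa]}), citing it rather than reproving it.

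For part $(iii)$, I would combine $(i)$, $(ii)$, and Proposition \ref{Spechtpols}. If the columns of $H$ are distinct, part $(ii)$ gives $M\in\operatorname{SSYT}_d(\lambda)$ with $p_{M,T}=\rho\cdot p_{H,T}$ for some $\rho\in C(T)$; then $\varepsilon_{T}p_{H,T}=\operatorname{sgn}(\rho)\varepsilon_{T}p_{M,T}$, since $\varepsilon_{T}\rho=\operatorname{sgn}(\rho)\varepsilon_{T}$, so we may take $c=\operatorname{sgn}(\rho)$ and the difference is exactly zero, which is certainly supported only on monomials $y$ with $p_{H,T}>_{T}y$ (vacuously). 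If instead some column of $H$ has a repeated entry, then $\varepsilon_{T}p_{H,T}=0$ — because there is a transposition $\rho\in C(T)$ fixing $p_{H,T}$ with $\operatorname{sgn}(\rho)=-1$, forcing $\varepsilon_{T}p_{H,T}=-\varepsilon_{T}p_{H,T}$ — so any $M\in\operatorname{SSYT}_d(\lambda)$ and $c=0$ works. In both cases the relevant comparison with $>_{T}$ is immediate from Definition \ref{orderT}, since $\deg_{T}$ is $C(T)$-invariant. The only genuine work is the column-sorting claim in part $(ii)$; the rest is bookkeeping with $\varepsilon_{T}$ and the order $>_{T}$.
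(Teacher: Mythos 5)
Parts $(i)$ and $(ii)$ of your plan are essentially the paper's route: $(i)$ is the same row-sorting observation, and the column-sorting fact you want for $(ii)$ is true (it is the classical ``non-messing-up'' phenomenon), though Section 2.6 of \cite{[Sa]} does not literally contain that statement -- it treats Garnir elements and straightening of polytabloids -- so as written your key step in $(ii)$ rests on an unproved (if correct) citation; the paper instead proves it self-containedly by induction on the column inversion number, using Garnir-like swaps of adjacent row segments, which is close in spirit to what you gesture at.

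The genuine gap is in part $(iii)$: the identity $\varepsilon_{T}\rho=\operatorname{sgn}(\rho)\varepsilon_{T}$ for $\rho \in C(T)$ is false. Writing $\varepsilon_{T}=b_{T}a_{T}$ with $b_{T}=\sum_{\sigma \in C(T)}\operatorname{sgn}(\sigma)\sigma$ and $a_{T}=\sum_{\tau \in R(T)}\tau$, one has $\rho\varepsilon_{T}=\operatorname{sgn}(\rho)\varepsilon_{T}$ (left multiplication), but $\varepsilon_{T}\rho=b_{T}(a_{T}\rho)$ and $a_{T}\rho \neq a_{T}$ since $\rho \notin R(T)$; hence $\varepsilon_{T}p_{H,T}$ and $\varepsilon_{T}p_{M,T}$ need not be proportional when $M=\rho H$. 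The paper warns about exactly this right after the lemma, and Example \ref{CTrelex} is an explicit counterexample: there $\varepsilon_{T}p_{H,T}$ and $\varepsilon_{T}p_{M,T}$ have different supports, and even with the correct scalar $c=\operatorname{sgn}(\rho)s_{H,T}/s_{M,T}=-2$ (note your $c=\operatorname{sgn}(\rho)$ also misses the stabilizer ratio) the difference is nonzero -- it is merely supported on $>_{T}$-smaller monomials, which is precisely why the lemma is stated with that weaker conclusion. Your repeated-entry case fails for the same reason: a column transposition fixing $p_{H,T}$ only gives $b_{T}p_{H,T}=0$ (since $b_{T}\rho=\operatorname{sgn}(\rho)b_{T}$), not $\varepsilon_{T}p_{H,T}=0$; for instance, with $\lambda=21$, $T$ having rows $1\,3$ and $2$, and $H$ having rows $0\,1$ and $0$, one computes $\varepsilon_{T}p_{H,T}=x_{1}-x_{2}\neq0$. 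The correct argument, as in the paper, is to use Proposition \ref{Spechtpols} (whose proof uses only the row condition, via Lemma \ref{operRT}) to write $\varepsilon_{T}p_{H,T}=s_{H,T}\,b_{T}p_{H,T}$ plus $>_{T}$-lower terms and likewise for $M$, match the leading parts through $b_{T}p_{H,T}=\operatorname{sgn}(\rho)b_{T}p_{M,T}$, and take $c=\operatorname{sgn}(\rho)s_{H,T}/s_{M,T}$ (and $c=0$ when a column entry repeats), so that only monomials $y$ with $p_{H,T}>_{T}y$ survive.
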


\begin{proof}
As we saw before the lemma, there is a tableau $\tilde{H}$ such that $z=p_{\tilde{H},T}$. Recall that $\varepsilon_{T}$ is the product of two operators, with the first one to act being $\sum_{\tau \in R(T)}\tau$ as in Lemma \ref{operRT}. As this operator yields the same image on $p_{\tilde{H},T}$ and on its $\tau$-image for any $\tau \in R(T)$, we take $\tau$ to be an element ordering the entries in each row of $\tilde{H}$ in a non-decreasing order, and then if $H=\tau\tilde{H}$ then $H$ has the desired property and our operator takes $z=p_{\tilde{H},T}$ and $p_{H,T}$ to the same polynomial. By applying the second operator $\sum_{\sigma \in C(T)}\operatorname{sgn}(\sigma)\sigma$ (as in Proposition \ref{Spechtpols}), part $(i)$ follows.

Next, consider a column of $H$ as in part $(ii)$, and since we assume that its entries are distinct, we can define the number of inversions in that column to be the number of pairs of entries there such that the upper box has a larger value than the lower one (this is the same as the usual inversion number of a permutation in one-line notation, by identifying the entries of that column with the numbers between 1 and the length of that column in an order-preserving manner). We define the \emph{column inversion number} of $H$ to be the sum of the inversions in the column of $H$.

We prove part $(ii)$ by induction on the column inversion number of $H$, with the base case being the observation that if this number is 0 then $H$ is already semi-standard itself, and we can take $\rho$ to be trivial. Assume now that this number is positive, and it then suffices to prove that there is an element of $C(T)$ taking $H$ to a tableau with non-decreasing rows and a smaller column inversion number. Indeed, the induction hypothesis implies that some element of $C(T)$ takes the latter tableau to an element of $\operatorname{SSYT}_{d}(\lambda)$, and by composing the two elements of $C(T)$, the result for $H$ follows.

So let $c$ be the leftmost column of $H$ with a positive contribution to its column inversion number, and we write the entry of $H$ in the box of row $i$ and column $j$ as $h_{i,j}$. Since $c$ is not ordered increasingly, there is some $r\geq1$ such that $h_{r+1,c}<h_{r,c}$, and note that if $c>1$ then $h_{r+1,c-1}>h_{r,c-1}$ by the minimality of $c$. We define the index $l\geq1$ to be such that $h_{r+1,c+j}<h_{r,c+j}$ for every $0 \leq j<l$, but either $h_{r+1,c+l}>h_{r,c+l}$, or the box $(r+1,c+l)$ is not in $\lambda$.

The element that we take interchanges the boxes $(r,c+j)$ and $(r+1,c+j)$ for every $0 \leq j<l$, and leaves the remaining boxes invariant (this resembles the action of a Garnir element---see, e.g., Section 2.6 of \cite{[Sa]}). It is clearly in $C(T)$, and we denote the resulting tableau by $\tilde{H}$, with entries $\tilde{h}_{i,j}$. Since our operation removed one adjacent inversion in each column it affected, the column inversion number is reduced by this operation, namely that of $\tilde{H}$ is smaller than that of $H$ (by exactly $l$). We only have to show that the rows of $\tilde{H}$ are also non-decreasing.

All the rows except for $r$ and $r+1$ are the same as those of $H$ hence remain non-decreasing, so that take any $k>1$, and we have to show that $\tilde{h}_{r,k-1}\leq\tilde{h}_{r,k}$ and $\tilde{h}_{r+1,k-1}\leq\tilde{h}_{r+1,k}$, provided that the boxes in each pair are in $\lambda$. If $c>2$ and $k<c$, or if $k>c+l$, then these inequalities are just $h_{r,k-1} \leq h_{r,k}$ and $h_{r+1,k-1} \leq h_{r+1,k}$, which hold by our assumption on $H$. If $l\geq2$ and $c<k \leq c+l$ then the first inequality is $h_{r+1,k-1} \leq h_{r+1,k}$ and the second one is $h_{r,k-1} \leq h_{r,k}$, and again the assumption on $H$ yields the result. It remains to consider the cases $k=c$ when $c>1$, as well as $k=c+l$.

But if $c>1$ the assumptions on $H$ and $c$ yield $h_{r,c-1}<h_{r+1,c-1} \leq h_{r+1,c}$ and $h_{r+1,c-1} \leq h_{r+1,c}<h_{r,c}$ which give $\tilde{h}_{r,c-1}<\tilde{h}_{r,c}$ and $\tilde{h}_{r+1,c-1}\leq\tilde{h}_{r+1,c}$ respectively. Similarly, when the box $(r,c+l)$ is in $\lambda$ we get the inequalities $\tilde{h}_{r,c+l-1}=h_{r+1,c+l-1}<h_{r+1,c+l-1} \leq h_{r,c+l}=\tilde{h}_{r,c+l}$, and if $(r+1,c+l)$ is also in $\lambda$ then $\tilde{h}_{r+1,c+l-1}=h_{r,c+l-1} \leq h_{r,c+l}<h_{r+1,c+l}=\tilde{h}_{r+1,c+l}$ by the assumptions on $H$ and $c$. Hence $\tilde{H}$ has all the desired properties, and the inductive argument establishes part $(ii)$.

Next, Lemma \ref{operRT} only used the row property of semi-standard tableaux, so Proposition \ref{Spechtpols} implies that $\varepsilon_{T}p_{H,T}$ is a scalar times $\sum_{\sigma \in C(T)}\operatorname{sgn}(\sigma)\sigma p_{H,T}$ plus a linear combination of monomials that have $\deg_{T}$-vectors smaller than $\vec{d}$ (the scalar multiple would be $s_{H,T}$, defined analogously to $s_{M,T}$ from Definition \ref{Spechtdef}, as here we did not divide by it). If some column in $H$ contains the same entry twice, then the former sum vanishes, and we can take any $M$ with the scalar $c=0$, yielding the desired result in part $(iii)$ for this situation.

Finally, when each column of $H$ contains distinct entries, let $M\in\operatorname{SSYT}_{d}(\lambda)$ be the element obtained from part $(ii)$, and since the action of $C(T)$ does not affect the $\deg_{T}$-vectors of monomials, we deduce that $\deg_{T}p_{M,T}=\vec{d}$ as well. Proposition \ref{Spechtpols} then expresses $\varepsilon_{T}p_{M,T}$ as $s_{M,T}\sum_{\sigma \in C(T)}\operatorname{sgn}(\sigma)\sigma p_{M,T}$ plus a combination of monomials that are smaller than $p_{M,T}$ and $p_{H,T}$ in our order, and the fact that $M$ and $H$ are related by $\rho \in C(T)$ implies that $\sum_{\sigma \in C(T)}\operatorname{sgn}(\sigma)\sigma p_{H,T}$ equals $\sum_{\sigma \in C(T)}\operatorname{sgn}(\sigma)\sigma p_{M,T}$ times the sign $\operatorname{sgn}(\rho)$.

But then when we consider the combination $\varepsilon_{T}p_{H,T}-c\varepsilon_{T}p_{M,T}$, where $c$ is $\operatorname{sgn}(\rho)s_{H,T}/s_{M,T}$, then we saw that both terms involve monomials with $\deg_{T}$-vector $\vec{d}$ and others that are strictly smaller, and when we compare the parts with $\deg_{T}$-value $\vec{d}$ in both summands, we deduce from the previous paragraph that they cancel out. Hence the combination has the asserted property from part $(iii)$ also in this case. This completes the proof of the lemma.
\end{proof}
Note that in general $\varepsilon_{T}p_{H,T}$ will not be a multiple of $\varepsilon_{T}p_{M,T}$ in the proof of Lemma \ref{FMTindep}, as indeed $\varepsilon_{T}\rho$ need not the same as $\varepsilon_{T}$ for $\rho \in C(T)$ (this is also visible in the resulting stabilizers possibly being of different sizes), as we see in Example \ref{CTrelex} below. The proof of that lemma also shows what happens with the construction from Definitions \ref{defSpecht} and \ref{Spechtdef} and Theorem \ref{VMreps} in case we do not assume $M$ to be semi-standard.

\begin{ex}
We take $\lambda=31\vdash4$, $d=5$, and the tableaux \[T:=\begin{ytableau} 1 & 3 & 4 \\ 2 \end{ytableau},\ \tilde{H}:=\begin{ytableau} 1 & 3 & 1 \\ 0 \end{ytableau},\ H:=\begin{ytableau} 1 & 1 & 3 \\ 0 \end{ytableau},\mathrm{\ \ and\ \ }M:=\begin{ytableau} 0 & 1 & 3 \\ 1 \end{ytableau}.\] Then $H$ is the element in the $R(T)$-orbit of $\tilde{H}$ that has non-decreasing rows, and we have \[\varepsilon_{T}p_{\tilde{H},T}=\varepsilon_{T}p_{H,T}=2(x_{1}x_{3}x_{4}^{3}-x_{2}x_{3}x_{4}^{3}+x_{1}x_{3}^{3}x_{4}-x_{2}x_{3}^{3}x_{4}+x_{1}^{3}x_{3}x_{4}-x_{2}^{3}x_{3}x_{4}),\] with the first term being the multiple of $p_{H,T}$, and the third one involving $p_{\tilde{H},T}$. The $\deg_{T}$-vectors of the monomials involved are the maximal one $\vec{d}=113$ for the first two, followed by 131 twice, and then 311 in the last two summands, and the external scalar 2 is $s_{H,T}$. Applying the unique non-trivial element $\rho \in C(T)$ takes $H$ to $M\in\operatorname{SSYT}_{5}(\lambda)$, where $s_{M,T}=1$ and we get \[\varepsilon_{T}p_{M,T}\!=\!x_{2}x_{3}x_{4}^{3}-x_{1}x_{3}x_{4}^{3}+x_{2}x_{3}^{3}x_{4}-x_{1}x_{3}^{3}x_{4}+x_{1}^{3}x_{2}x_{4}-x_{1}x_{2}^{3}x_{4}+x_{1}^{3}x_{2}x_{3}-x_{1}x_{2}^{3}x_{3},\] which is also the value of $F_{M,T}$. The first four terms in $\varepsilon_{T}p_{H,T}$ are $-2$ times those in $\varepsilon_{T}p_{M,T}$ (including the two elements yielding $\vec{d}$ as their $\deg_{T}$-images), and the remaining terms of the latter have $\deg_{T}$-vectors 401 and 410. We have \[\varepsilon_{T}p_{H,T}+2\varepsilon_{T}p_{M,T}=2(x_{1}^{3}x_{3}x_{4}-x_{2}^{3}x_{3}x_{4}+x_{1}^{3}x_{2}x_{4}-x_{1}x_{2}^{3}x_{4}+x_{1}^{3}x_{2}x_{3}-x_{1}x_{2}^{3}x_{3}),\] indeed supported on monomials $y$ such that $p_{H,T}>_{T}y$ and $p_{M,T}>_{T}y$. \label{CTrelex}
\end{ex}

\medskip

We can now establish the decomposition corresponding to Proposition \ref{Kostka}, as well as the one associated with the subspaces showing up in its proof.
\begin{thm}
The space $\mathbb{Q}[\mathbf{x}_{n}]_{d}$ equals $\bigoplus_{\lambda \vdash n}\bigoplus_{M\in\operatorname{SSYT}_{d}(\lambda)}V_{M}$. Moreover, for each content $\mu$ of sum $d$, the sub-representation $\mathbb{Q}[\mathbf{x}_{n}]_{\mu}$ of $\mathbb{Q}[\mathbf{x}_{n}]_{d}$ is the partial sum $\bigoplus_{\lambda \vdash n}\bigoplus_{M\in\operatorname{SSYT}_{\mu}(\lambda)}V_{M}$. \label{FMTdecom}
\end{thm}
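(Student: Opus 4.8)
The plan is to establish the decomposition in two steps, corresponding to the two sentences in the statement. For the first sentence, I would argue that the sum $\sum_{\lambda \vdash n}\sum_{M\in\operatorname{SSYT}_{d}(\lambda)}V_{M}$ is both direct and all of $\mathbb{Q}[\mathbf{x}_{n}]_{d}$. Directness is exactly Lemma \ref{FMTindep}, so the only remaining issue is that the sum exhausts $\mathbb{Q}[\mathbf{x}_{n}]_{d}$. There are two natural ways to see this. The slicker one is a dimension count: by Theorem \ref{VMreps} each $V_{M}$ is a copy of $\mathcal{S}^{\operatorname{sh}(M)}$, so $\dim\bigoplus_{\lambda}\bigoplus_{M\in\operatorname{SSYT}_{d}(\lambda)}V_{M}=\sum_{\lambda\vdash n}|\operatorname{SSYT}_{d}(\lambda)|\dim\mathcal{S}^{\lambda}$, and by Proposition \ref{Kostka} this equals $\sum_{\lambda\vdash n}(\text{mult. of }\mathcal{S}^{\lambda}\text{ in }\mathbb{Q}[\mathbf{x}_{n}]_{d})\dim\mathcal{S}^{\lambda}=\dim\mathbb{Q}[\mathbf{x}_{n}]_{d}$; since the (direct) sum sits inside $\mathbb{Q}[\mathbf{x}_{n}]_{d}$ and has the same dimension, equality follows. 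This is clean but somewhat indirect, so I would prefer to give, or at least sketch, the constructive argument as well, since it is the one that actually exhibits every polynomial in terms of the $F_{M,T}$'s.

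For the constructive argument I would use Lemma \ref{FHTFMT}. Fix a partition $\lambda\vdash n$ and, for each $\lambda$, a standard tableau $T\in\operatorname{SYT}(\lambda)$ (concretely, say the row-reading tableau). Given an arbitrary monomial $z\in\mathbb{Q}[\mathbf{x}_{n}]_{d}$, I want to show $\varepsilon_{T}z$ lies in $\sum_{M\in\operatorname{SSYT}_{d}(\lambda)}V_{M}$; the point is then to descend the decomposition $\mathbb{Q}[S_{n}]=\bigoplus_{\nu}\operatorname{End}_{\mathbb{Q}}(\mathcal{S}^{\nu})$ to recover all of $\mathbb{Q}[\mathbf{x}_{n}]_{d}$ from the images of the various $\varepsilon_{T}$ (equivalently, of the $\alpha_{T}$) acting on monomials. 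By part $(iii)$ of Lemma \ref{FHTFMT}, after replacing $z$ by $p_{H,T}$ (part $(i)$) we can subtract a scalar multiple of some $\varepsilon_{T}p_{M,T}=s_{M,T}F_{M,T}$ with $M\in\operatorname{SSYT}_{d}(\lambda)$ so that the remainder is supported on monomials strictly smaller than $p_{H,T}$ in the order $>_{T}$ from Definition \ref{orderT}. Since $>_{T}$ is a well-founded partial order on the finitely many monomials of degree $d$, iterating (a downward induction on $\deg_{T}$, then on the number of monomials achieving the top $\deg_{T}$-value, say) expresses $\varepsilon_{T}p_{H,T}$ as a $\mathbb{Q}$-combination of the $F_{M,T}$'s with $M\in\operatorname{SSYT}_{d}(\lambda)$, hence in $\sum_{M}V_{M}$. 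Running over $\lambda$ and using the $\alpha_{T}$'s as in the proof of Lemma \ref{FMTindep} to recover an arbitrary element of $\mathbb{Q}[\mathbf{x}_{n}]_{d}$ from $S_{n}$-translates of such expressions yields $\mathbb{Q}[\mathbf{x}_{n}]_{d}\subseteq\bigoplus_{\lambda}\bigoplus_{M}V_{M}$, and the reverse inclusion is obvious from Theorem \ref{VMreps}. I expect the careful bookkeeping of this induction — making precise which monomial to kill at each stage and confirming that the process terminates — to be the main obstacle, though it is entirely routine given Lemma \ref{FHTFMT}.

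For the second sentence, the refinement to a fixed content $\mu$ is then essentially free. By the remark after Proposition \ref{Spechtpols}, $F_{M,T}\in\mathbb{Q}[\mathbf{x}_{n}]_{\mu}$ precisely when $\mu$ is the content of $M$, so $V_{M}\subseteq\mathbb{Q}[\mathbf{x}_{n}]_{\mu}$ for $M\in\operatorname{SSYT}_{\mu}(\lambda)$; hence $\bigoplus_{\lambda}\bigoplus_{M\in\operatorname{SSYT}_{\mu}(\lambda)}V_{M}\subseteq\mathbb{Q}[\mathbf{x}_{n}]_{\mu}$. On the other hand, $\operatorname{SSYT}_{d}(\lambda)$ is the disjoint union of $\operatorname{SSYT}_{\mu}(\lambda)$ over contents $\mu$ of sum $d$ (Definition \ref{defSSYT}), and $\mathbb{Q}[\mathbf{x}_{n}]_{d}=\bigoplus_{\mu}\mathbb{Q}[\mathbf{x}_{n}]_{\mu}$; comparing the two decompositions of $\mathbb{Q}[\mathbf{x}_{n}]_{d}$ just obtained — the coarse one into the $\mathbb{Q}[\mathbf{x}_{n}]_{\mu}$'s and the fine one into the $V_{M}$'s, which is a refinement of it since each $V_{M}$ is contained in exactly one $\mathbb{Q}[\mathbf{x}_{n}]_{\mu}$ — forces $\mathbb{Q}[\mathbf{x}_{n}]_{\mu}=\bigoplus_{\lambda}\bigoplus_{M\in\operatorname{SSYT}_{\mu}(\lambda)}V_{M}$ for every $\mu$. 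This proves the theorem.
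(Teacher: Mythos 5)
Your proposal is correct, but your primary argument for the spanning step takes a genuinely different route from the paper's. The paper never invokes Proposition \ref{Kostka} here: after obtaining independence from Lemma \ref{FMTindep} (which, note, is stated for a fixed $\lambda$; directness of the sum over different $\lambda$ comes from the isotypic decomposition, a point your dimension formula uses implicitly), it proves surjectivity constructively---fixing $T\in\operatorname{SYT}(\lambda)$, reducing via the idempotent $\alpha_{T}$ to showing that every $\varepsilon_{T}$-image of a monomial lies in the span of the $F_{M,T}$'s, and running the induction on the order from Definition \ref{orderT} through Lemma \ref{FHTFMT}---which is precisely your secondary ``constructive'' argument. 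Your dimension count buys brevity and avoids the bookkeeping you worry about, at the price of importing Young's rule through Proposition \ref{Kostka}; this is not circular, since that proposition is proved independently earlier, but it makes the theorem rest on the multiplicity formula it is meant to lift, whereas the paper's route is self-contained at the level of explicit polynomials and in effect re-derives the Kostka multiplicities (as the paper notes right after the theorem). One caution about your iteration: since the row-normalization in part $(i)$ of Lemma \ref{FHTFMT} can only increase the $\deg_{T}$-vector, the remainder produced at each step, though supported on monomials smaller than $\vec{d}$, cannot simply be re-expanded into $\varepsilon_{T}$-images of single monomials whose normalizations are still smaller; the induction hypothesis should be phrased, as in the paper, for $\varepsilon_{T}$-images that are supported on monomials with smaller $\deg_{T}$-vectors. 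Your treatment of the second sentence (via the remark after Proposition \ref{Spechtpols} and the decomposition of $\mathbb{Q}[\mathbf{x}_{n}]_{d}$ into the $\mathbb{Q}[\mathbf{x}_{n}]_{\mu}$'s) coincides with the paper's.
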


\begin{proof}
Since every $V_{M}$ for $M\in\operatorname{SSYT}_{d}(\lambda)$ is isomorphic to $\mathcal{S}^{\lambda}$ by Theorem \ref{VMreps}, the direct sum from Lemma \ref{FMTindep} is contained in the $\mathcal{S}^{\lambda}$-isotypical part of $\mathbb{Q}[\mathbf{x}_{n}]_{d}$ as a representation of $S_{n}$. Since this space is the direct sum of its $\mathcal{S}^{\lambda}$-isotypical part over $\lambda \vdash n$ (by complete reducibility of representations of finite groups in characteristic 0), the sum over $\lambda$ of the expressions from Lemma \ref{FMTindep} is indeed a direct sum, as asserted.

It therefore suffices to show that for each $\lambda \vdash n$, the associated direct sum produces the full $\mathcal{S}^{\lambda}$-isotypical part of the space in question. Moreover, we may fix some $T\in\operatorname{SYT}(\lambda)$, and note that every representation in that part contains a vector with non-trivial $\alpha_{T}$-image, where $\alpha_{T}$ is the idempotent from the proof of Lemma \ref{FMTindep} (by the basis property of $\mathcal{S}^{\lambda}$), and that vector generates a copy of $\mathcal{S}^{\lambda}$ over $S_{n}$ inside this space. Moreover, it is clear from the construction that any $\alpha_{T}$-image is also an $\varepsilon_{T}$-image, so we are reduced to verifying that any $\varepsilon_{T}$-image of a polynomial from $\mathbb{Q}[\mathbf{x}_{n}]_{d}$ lies in the desired direct sum.

We thus consider the $\varepsilon_{T}$-image of a monomial there, which we can take, by part $(i)$ of Lemma \ref{FHTFMT}, to be $p_{H,T}$ for some tableau $H$ of shape $\lambda$ with non-decreasing rows. We argue by induction on $\vec{d}:=\deg_{T}p_{H,T}$, where we assume that all the $\varepsilon_{T}$-images that are supported on polynomials with $\deg_{T}$-vectors that are smaller than $\vec{d}$ are in the direct sum in question (the base case is, of course, that this direct sum contains 0).

But part $(iii)$ of that lemma also produces some $M\in\operatorname{SSYT}_{d}(\lambda)$ such that $\varepsilon_{T}p_{H,T}$ equals some multiple of $\varepsilon_{T}p_{M,T}$, or equivalently of its multiple $F_{M,T}$ from Definition \ref{Spechtdef}, plus a linear combination of monomials $y$ for which $\deg_{T}y$ is smaller than $\vec{d}$ in our order. Since the latter combination is an $\varepsilon_{T}$-image (as a linear combination of such), it lies in our direct sum by the induction hypothesis, and as $F_{M,T} \in V_{M}$ by definition, $\varepsilon_{T}p_{H,T}$ lies in that direct sum as well, as desired.

This establishes the first assertion, and for the second one we note that if $M\in\operatorname{SSYT}_{\mu}(\lambda)$ then $p_{M,T}\in\mathbb{Q}[\mathbf{x}_{n}]_{\mu}$ for every $T\in\operatorname{SYT}(\lambda)$ by Definition \ref{Qxnd}, and as the latter is a sub-representation of $\mathbb{Q}[\mathbf{x}_{n}]_{d}$, we deduce that it contains the image $F_{M,T}$ of $p_{M,T}$ under an element of $\mathbb{Q}[S_{n}]$. This shows that $V_{M}\subseteq\mathbb{Q}[\mathbf{x}_{n}]_{\mu}$ in this case, so that the second assertion follows from the first. This completes the proof of the theorem.
\end{proof}
The decomposition of $\mathbb{Q}[\mathbf{x}_{n}]_{d}$ as the direct sum of $\mathbb{Q}[\mathbf{x}_{n}]_{\mu}$ over all contents $\mu$ of sum $d$, as was seen in the proof of Proposition \ref{Kostka}, combines with Theorem \ref{FMTdecom} to indeed show how the multiplicity of $\mathcal{S}^{\lambda}$ in each component is the corresponding Kostka number, as was used in the proof of that proposition, and of Theorem \ref{Rnksdecom} here, as well as Theorem 3.20 and Proposition 3.25 of \cite{[Z1]}. Note that in Example \ref{CTrelex}, the expression for $\varepsilon_{T}p_{H,T}+2\varepsilon_{T}p_{M,T}$ equals minus twice the higher Specht polynomial having the same second index $T$ and the first index $\begin{ytableau} 0 & 1 & 1 \\ 3 \end{ytableau}$, which has the same content as $M$ and $H$, exemplifying Theorem \ref{FMTdecom} and the decomposition into contents there.

\begin{ex}
With the parameters $n=4$ and $d=2$ from Example \ref{n4d2}, the decomposition from Theorem \ref{FMTdecom} becomes \[\mathbb{Q}[\mathbf{x}_{4}]_{2}=V_{0002} \oplus V_{\substack{000 \\ 2\hphantom{22}}} \oplus V_{0011} \oplus V_{\substack{001 \\ 1\hphantom{12}}} \oplus V_{\substack{00 \\ 11}},\] with the first two summands generating the part associated with one content, and the other two summands yielding the part with the other content. In case $n=2$ and $d=4$, as in Example \ref{n2d4}, we get \[\mathbb{Q}[\mathbf{x}_{2}]_{4}=V_{04} \oplus V_{\substack{0 \\ 4}} \oplus V_{13} \oplus V_{\substack{1 \\ 3}} \oplus V_{22},\] again with the separations into contents visible, and where the action of $S_{2}$ being trivial when the tableau is horizontal, and via the sign in case it is vertical. \label{nd42VM}
\end{ex}
The decompositions in Examples \ref{n4d2} and \ref{n2d4} are not the same as those of the respective space in Example \ref{nd42VM}. Indeed, $e_{1}^{2}$ in the first one and $e_{1}^{4}$ in the second one involve monomials of different contents, which are thus not supported on a single representation via Theorem \ref{FMTdecom}.

\section{Operations Between Representations \label{OpersReps}}

We now establish relations between the constructions from the previous section for $n$ and for $n+1$. In order to do this, we recall Definitions 2.11, 4.6, and 4.11 of \cite{[Z1]}, and generalize some of them as follows.
\begin{defn}
Take a partition $\lambda \vdash n$ for some positive integer $n$.
\begin{enumerate}[$(i)$]
\item The set $\operatorname{EC}(\lambda)$ of \emph{external corners} of $\lambda$ consists of the boxes that do not lie in the Ferrers diagram of $\lambda$, but adding them to it yields the Ferrers diagram of a partition.
\item For any $v\in\operatorname{EC}(\lambda)$, we denote by $\lambda+v \vdash n+1$ the partition whose Ferrers diagram is the union of $\lambda$ and $v$. If $v$ is the element $(1,\lambda_{1}+1)\in\operatorname{EC}(\lambda)$, then we write $\lambda_{+}$ for this $\lambda+v$.
\item Given $T$ is of shape $\lambda$ and content $\mathbb{N}_{n}$ and $v\in\operatorname{EC}(\lambda)$, adding $v$ to $\lambda$ and putting $n+1$ in it yields the tableau that we denote by $T+v$, and for $v$ in the first row we write it as $\iota T$. It lies in $\operatorname{SYT}(\lambda+v)$ (or $\operatorname{SYT}(\lambda_{+})$) in case $T\in\operatorname{SYT}(\lambda)$.
\item For $S\in\operatorname{SYT}(\lambda)$ and $v\in\operatorname{EC}(\lambda)$ we set $S\tilde{+}v:=\operatorname{ev}(\operatorname{ev}S+v)$, which lies in $\operatorname{SYT}(\lambda+v)$, and in particular $\tilde{\iota}S:=\operatorname{ev}(\iota\operatorname{ev}S)\in\operatorname{SYT}(\lambda_{+})$.
\item For $M\in\operatorname{SSYT}(\lambda)$, write it as $\operatorname{ct}_{J}(S)$ as in Lemma \ref{ctJmulti}, and then for $v\in\operatorname{EC}(\lambda)$ we set $\delta_{M,v}$ to be 0 if the row of $v$ is strictly below the row $R_{\operatorname{ev}S}(n)$ containing $n$ in $\operatorname{ev}S$ (a situation that we describe as $v$ \emph{lying below $n$ in $\operatorname{ev}S$}), and define it to be 1 otherwise.
\item Given a multi-set $J$, we write $J_{+}$ for the multi-set $\{j+1\;|\;i \in J\}$. Similarly $M_{+}$ for $M\in\operatorname{SSYT}(\lambda)$ is the element obtained by adding 1 to each of the entries of $M$. It is also in $\operatorname{SSYT}(\lambda)$.
\item We define $M\hat{+}v$, for $M=\operatorname{ct}_{J}(S)\in\operatorname{SSYT}(\lambda)$, and $v\in\operatorname{EC}(\lambda)$, to be $\operatorname{ct}_{J_{+}}(S\tilde{+}v)$ in case $\delta_{M,v}=1$, and $\operatorname{ct}_{J_{+}\cup\{1\}}(S\tilde{+}v)$ (the union being a multi-set sum by adding a singleton) when $\delta_{M,v}=0$. In particular we set $\hat{\iota}M$ to be $\operatorname{ct}_{J_{+}}(\tilde{\iota}S)$.
\item Given $M\in\operatorname{SSYT}(\lambda)$, the \emph{extension} $\operatorname{Ext}_{S_{n}}^{S_{n+1}}V_{M}$ is $\bigoplus_{v\in\operatorname{EC}(\lambda)}\delta_{M,v}V_{M\hat{+}v}$, and the \emph{lower enlargement} $\operatorname{LE}_{S_{n}}^{S_{n+1}}V_{M}$ is $\bigoplus_{v\in\operatorname{EC}(\lambda)}(1-\delta_{M,v})V_{M\hat{+}v}$. In case $M=C\in\operatorname{CCT}(\lambda)$ we also define, for every $0 \leq t \leq n+1$, the \emph{induction} $\operatorname{Ind}_{t,S_{n}}^{S_{n+1}}V_{C}:=e_{t}\operatorname{Ext}_{S_{n}}^{S_{n+1}}V_{C}\oplus\operatorname{LE}_{S_{n}}^{S_{n+1}}V_{C}$.
\item For applying $\operatorname{Ext}_{S_{n}}^{S_{n+1}}$, $\operatorname{LE}_{S_{n}}^{S_{n+1}}$, or $\operatorname{Ind}_{t,S_{n}}^{S_{n+1}}$ to a representation of the sort $V_{C}^{\vec{h}}$ for a vector $\vec{h}:=\{h_{r}\}_{r=1}^{n}$, we divide by $\prod_{r=1}^{n}e_{r}^{h_{r}}\in\mathbb{Q}[\mathbf{x}_{n}]$, apply the operator to $V_{C}$, and multiply the result by the same symmetric function, but now from $\mathbb{Q}[\mathbf{x}_{n}]$. Moreover, applying any of these operators to a representation that is given as a direct sum of components of the sort $V_{M}$ or $V_{C}^{\vec{h}}$ yields the direct sum of the images of the components under the operator in question.
\end{enumerate} \label{plusv}
\end{defn}
As we saw in Lemma 4.8 of \cite{[Z1]} and its proof that the set $\operatorname{Dsi}(S\tilde{+}v)$ equals $\operatorname{Dsi}(S)_{+}\cup\{1\}$ when $v$ lies below $n$ in $\operatorname{ev}S$ and just $\operatorname{Dsi}(S)_{+}$ otherwise (in the notation from Definition \ref{plusv}), the condition $\operatorname{Dsi}(S) \subseteq J$ in the sense of Definition \ref{multisets} implies that $\operatorname{Dsi}(S\tilde{+}v)$ in indeed contained in $J_{+}\cup\{1\}$ in the former case and in $J_{+}$ in the latter, making $M\hat{+}v$ from Definition \ref{plusv}, and with it $\hat{\iota}M$ (with whose $v$ we always have $\delta_{M,v}=1$), well-defined. We can write it as $\operatorname{ct}_{J_{+}\cup\{1\}}\big(\operatorname{ct}_{J}^{-1}(M)\tilde{+}v\big)$ and $\operatorname{ct}_{J_{+}}\big(\operatorname{ct}_{J}^{-1}(M)\tilde{+}v\big)$ respectively in these cases. Lemma 4.13 of \cite{[Z1]} explains how $\operatorname{Ind}_{t,S_{n}}^{S_{n+1}}$ lifts the usual branching rule for induction of irreducible representations of $S_{n}$ to $S_{n+1}$, and the division involved when acting on $V_{C}^{\vec{h}}$ is by the maximal symmetric divisor from Remark 3.3 there.

Similar considerations, and the proofs of Lemmas 2.13 and 4.8 of \cite{[Z1]}, yield the following properties, generalizing parts of these lemmas.
\begin{lem}
Fix some $\lambda \vdash n$ and an element $M\in\operatorname{SSYT}(\lambda)$.
\begin{enumerate}[$(i)$]
\item The tableau $\hat{\iota}M$ is obtained by adding to $M$ the box yielding $\lambda_{+}$ from $\lambda$, pushing all the entries in the first row of $M$ one box to the right, and filling the freed upper left corner with a 0.
\item The data part $(iii)$ of Lemma \ref{Dspc} for $\hat{\iota}M$ is the same as that for $M$, except for replacing $n$ by $n+1$.
\item The content of $M_{+}$ is obtained from that of $M$ by adding 1 to each entry, and we have $F_{M_{+},T}=e_{n}F_{M,T}$ for every tableau $T$ of shape $\lambda$ and content $\mathbb{N}_{n}$, as well as $V_{M_{+}}=e_{n}V_{M}$.
\item The content of $M\hat{+}v$ when $\delta_{M,v}=1$, and in particular the content of $\hat{\iota}M$, is the content of $M$ plus one more instance of 0, and that of $M\hat{+}v$ in case $\delta_{M,v}=0$ is the content of $M_{+}$ together with one instance of 0.
\item The multi-set $\operatorname{Dsp}^{c}(M\hat{+}v)$ for some $v\in\operatorname{EC}(\lambda)$ coincides with $\operatorname{Dsp}^{c}(M)$ in case $\delta_{M,v}=1$, so that in particular we have $\operatorname{Dsp}^{c}(\hat{\iota}M)=\operatorname{Dsp}^{c}(M)$. If $\delta_{M,v}=0$ then $\operatorname{Dsp}^{c}(M\hat{+}v)$ is the multi-set addition of $\operatorname{Dsp}^{c}(M)$ and the singleton $\{n\}$.
\item The multi-set $\operatorname{Dsp}^{c}(M_{+})$ is $\operatorname{Dsp}^{c}(M)$ plus one instance of $n$.
\item The number $\Sigma(M\hat{+}v)$ is $\Sigma(M)$ when $\delta_{M,v}=1$ and $\Sigma(M)+n$ in case $\delta_{M,v}=0$. We also have $\Sigma(\hat{\iota}M)=\Sigma(M)$ and $\Sigma(M_{+})=\Sigma(M)+n$.
\end{enumerate} \label{SSYTct}
\end{lem}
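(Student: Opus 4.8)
The plan is to treat the statements in the order $(iii)$, then $(i)$--$(ii)$, then $(iv)$--$(vi)$, and finally $(vii)$, since the later assertions feed on the earlier ones; throughout, the work is bookkeeping of contents, of the multi-sets $\operatorname{Dsp}^{c}$, and of the sets $\operatorname{Dsi}$ under the operations of Definition \ref{plusv}. Part $(iii)$ is the most self-contained: writing $p_{M,T}=\prod_{i=1}^{n}x_{i}^{h_{i}}$ one has $p_{M_{+},T}=\prod_{i=1}^{n}x_{i}^{h_{i}+1}=e_{n}p_{M,T}$, and since $e_{n}=x_{1}\cdots x_{n}$ is symmetric it commutes with $\varepsilon_{T}$; moreover incrementing every entry of $M$ leaves unchanged which entries coincide inside a row, so the $R(T)$-stabilizers agree and $s_{M_{+},T}=s_{M,T}$. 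Dividing by this common stabilizer gives $F_{M_{+},T}=e_{n}F_{M,T}$ for every $T$, hence $V_{M_{+}}=e_{n}V_{M}$, while the content assertion is immediate (and $M_{+}\in\operatorname{SSYT}(\lambda)$ is clear).

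For $(i)$ and $(ii)$ I would write $M=\operatorname{ct}_{J}(S)$ via Lemma \ref{ctJmulti}, so $\hat{\iota}M=\operatorname{ct}_{J_{+}}(\tilde{\iota}S)$ by Definition \ref{plusv}. The proof of Lemma 2.13 of \cite{[Z1]} (together with Lemma 4.8 there) identifies $\tilde{\iota}S\in\operatorname{SYT}(\lambda_{+})$ as the tableau obtained from $S$ by adding the box producing $\lambda_{+}$, pushing the first row one step to the right, putting the new smallest entry $1$ in the vacated left box, and increasing every old entry by $1$. Writing $J=\{j_{h}\}_{h=1}^{k-1}$ with $j_{0}=0$ and $j_{k}=n$, a direct application of Lemma \ref{mswcomp} shows that $\operatorname{comp}_{n+1}(J_{+})$ agrees with $\operatorname{comp}_{n}(J)$ except that its $0$-th entry is larger by one; hence its content is that of $M$ with one extra $0$, and destandardizing $\tilde{\iota}S$ with that content puts $0$ in the vacated corner box and returns every other entry of $M$ in its shifted position. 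This is $(i)$, and $(ii)$ follows at once, since rows $2,3,\dots$ of $\hat{\iota}M$ equal those of $M$, prepending a $0$ to the first row does not change the multiplicities of its positive entries, and the box count becomes $n+1$.

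Parts $(iv)$--$(vi)$ run on the same engine, now also using the description (recalled just before the lemma) that $\operatorname{Dsi}(S\tilde{+}v)$ equals $\operatorname{Dsi}(S)_{+}$ when $\delta_{M,v}=1$ and $\operatorname{Dsi}(S)_{+}\cup\{1\}$ when $\delta_{M,v}=0$. When $\delta_{M,v}=1$ we have $M\hat{+}v=\operatorname{ct}_{J_{+}}(S\tilde{+}v)$, and the computation of $\operatorname{comp}_{n+1}(J_{+})$ from the previous paragraph gives the content (that of $M$ plus a $0$) and, via Definition \ref{sets}, the identity $\operatorname{Dsp}^{c}(M\hat{+}v)=\{(n+1)-(j_{h}+1)\;|\;j_{h}<n\}=\operatorname{Dsp}^{c}(M)$, with $\hat{\iota}M$ a special case. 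When $\delta_{M,v}=0$ we have $M\hat{+}v=\operatorname{ct}_{J_{+}\cup\{1\}}(S\tilde{+}v)$, and $\operatorname{comp}_{n+1}(J_{+}\cup\{1\})=(1,m_{0},m_{1},\dots,m_{k-1})$ when $\operatorname{comp}_{n}(J)=(m_{0},\dots,m_{k-1})$, whose content is that of $M_{+}$ (identified by part $(iii)$) with one extra $0$, while the adjoined entry $1$ contributes $(n+1)-1=n$ to $\operatorname{Dsp}^{c}$, giving $\operatorname{Dsp}^{c}(M\hat{+}v)=\operatorname{Dsp}^{c}(M)\cup\{n\}$. For $(vi)$ I would note that the standardization of $M_{+}$ is the same $S$ as that of $M$ while its content corresponds to the weak composition $(0,m_{0},\dots,m_{k-1})$, so $M_{+}=\operatorname{ct}_{J\cup\{0\}}(S)$ by Lemma \ref{mswcomp}, and then Definition \ref{sets} gives $\operatorname{Dsp}^{c}(M_{+})=\operatorname{Dsp}^{c}(M)\cup\{n\}$ directly (the adjoined $0<n$ contributing $n$).

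Finally $(vii)$ is formal: by part $(ii)$ of Lemma \ref{Dspc} one has $\Sigma(N)=\sum_{i\in\operatorname{Dsp}^{c}(N)}i$ for every semi-standard $N$, so parts $(v)$ and $(vi)$ yield $\Sigma(M\hat{+}v)=\Sigma(M)$ when $\delta_{M,v}=1$ (hence $\Sigma(\hat{\iota}M)=\Sigma(M)$), $\Sigma(M\hat{+}v)=\Sigma(M)+n$ when $\delta_{M,v}=0$, and $\Sigma(M_{+})=\Sigma(M)+n$ (the last also being obvious from adding $1$ to each of the $n$ entries). The step I expect to be the main obstacle is $(i)$: one must pin down precisely how the two evacuations in $\tilde{\iota}S=\operatorname{ev}(\iota\operatorname{ev}S)$ interact with destandardization, and in particular that the box carrying the new value $1$ (the minimal entry of $S\tilde{+}v$) is exactly the one that receives content value $0$; once that is in hand, everything reduces to the routine translation of the set-based arguments of \cite{[Z1]} into the multi-set formalism of Definitions \ref{multisets}, \ref{sets}, and \ref{Spechtdef}, keeping careful track of the extra zeros and of the difference between the content of $M$ and that of $M_{+}$ in the two $\delta_{M,v}$ cases.
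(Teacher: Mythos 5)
Your proposal is correct and follows essentially the same route as the paper: write $M=\operatorname{ct}_{J}(S)$, invoke the description of $\tilde{\iota}S$ (and of $\operatorname{Dsi}(S\tilde{+}v)$) from \cite{[Z1]}, track the passage from $J$ to $J_{+}$, $J_{+}\cup\{1\}$, and $J\cup\{0\}$ through $\operatorname{comp}_{n+1}$, and deduce part $(vii)$ from part $(ii)$ of Lemma \ref{Dspc}. The only deviations are cosmetic (you compute the $\operatorname{Dsp}^{c}$-multi-sets directly from the index multi-sets rather than via part $(i)$ of Lemma \ref{Dspc}, and you check the stabilizer equality in part $(iii)$ explicitly), and the ``obstacle'' you flag in part $(i)$ is already settled by the cited Lemma 2.13 of \cite{[Z1]}, exactly as in the paper's proof.
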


\begin{proof}
Set $S:=\operatorname{ct}_{J}^{-1}(M)$ for the multi-set $J$ for which the content of $M$ is represented by $\operatorname{comp}_{n}J\vDash_{w}n$. We recall from Lemma 2.13 of \cite{[Z1]} that $\tilde{\iota}S$ is obtained by considering $S_{+}$, adding to $\lambda$ the box for $\lambda_{+}$, moving the entries of the first row of $S_{+}$ to the right, and completing with 1 in the upper left box. Since $\operatorname{Dsi}(S) \subseteq J$ (in the sense of Definition \ref{multisets}, as always), it is clear from this construction that $\operatorname{Dsi}(\tilde{\iota}S) \subseteq J_{+}$, and by applying $\operatorname{ct}_{J_{+}}$, in order to get $\hat{\iota}M$ via Definition \ref{plusv}, we get the asserted tableau. This proves part $(i)$, of which part $(ii)$ is an immediate consequence.

All the assertions in part $(iii)$ are immediate from Definition \ref{plusv}, since $p_{M_{+},T}$ is then the product of $P_{M,T}$ and the product $e_{n}$ of all the variables involved, which is symmetric. We now recall the relation between the multi-set showing up in the index of $\operatorname{ct}$ to get a semi-standard Young tableau and its $\operatorname{Dsp}^{c}$-set from Definition \ref{sets}, and that $\lambda+v \vdash n+1$. Observing that $\{n+1-i\;|\;i \in J_{+}\}$ is the same as $\{n-i\;|\;i \in J\}$ (and we omit the zeros in both) and $(n+1)-1=n$ implies part $(iv)$. Part $(v)$ then follows directly from part $(i)$ of Lemma \ref{Dspc}, as does part $(vi)$, and part $(vii)$ is a consequence of the previous two parts by part $(ii)$ of Lemma \ref{Dspc}. This proves the lemma.
\end{proof}

\begin{ex}
We take $\lambda=431\vdash8$, $J=\{1,3,4,4,6,6,7\}$, and set \[T:=\begin{ytableau} 1 & 2 & 4 & 7 \\ 3 & 6 & 8 \\ 5 \end{ytableau},S:=\begin{ytableau} 1 & 3 & 4 & 8 \\ 2 & 5 & 6 \\ 7 \end{ytableau},\mathrm{\ and\ }M:=\begin{ytableau} 0 & 1 & 2 & 7 \\ 1 & 4 & 4 \\ 6 \end{ytableau},\] so that $S=\operatorname{ct}_{J}^{-1}(M)$, $T=\operatorname{ev}S$, and $\operatorname{Dsp}^{c}(M)=\{1,2,2,4,4,5,7\}$, with $\Sigma(M)=25$ (with the relations between $\operatorname{Dsp}^{c}(M)$, $\Sigma(M)$, and the content of $M$ being in correspondence with Lemma \ref{Dspc}). The set $\operatorname{EC}(\lambda)$ has four boxes, and when $v$ is the one in the first row the tableaux $T+v$, $S\tilde{+}$, and $M\hat{+}v$ are \[\iota T=\begin{ytableau} 1 & 2 & 4 & 7 & 9 \\ 3 & 6 & 8 \\ 5 \end{ytableau},\ \tilde{\iota}S=\begin{ytableau} 1 & 2 & 4 & 5 & 9 \\ 3 & 6 & 7 \\ 8 \end{ytableau},\mathrm{\ and\ }\hat{\iota}M=\begin{ytableau} 0 & 0 & 1 & 2 & 7 \\ 1 & 4 & 4 \\ 6 \end{ytableau}\] respectively, in accordance with Lemma \ref{SSYTct} and the value $\delta_{M,v}=1$. When $v$ in the second row, we still have $\delta_{M,v}=1$, and we get \[T+v=\begin{ytableau} 1 & 2 & 4 & 7 \\ 3 & 6 & 8 & 9 \\ 5 \end{ytableau},\ S\tilde{+}v=\begin{ytableau} 1 & 2 & 4 & 5 \\ 3 & 6 & 7 & 9 \\ 8 \end{ytableau}\mathrm{\ and\ }M\hat{+}v=\begin{ytableau} 0 & 0 & 1 & 2 \\ 1 & 4 & 4 & 7 \\ 6 \end{ytableau}.\] The next element $v$, in the third row, now yields $\delta_{M,v}=0$, and the tableaux are \[T+v=\begin{ytableau} 1 & 2 & 4 & 7 \\ 3 & 6 & 8 \\ 5 & 9 \end{ytableau},\ S\tilde{+}v=\begin{ytableau} 1 & 4 & 5 & 9 \\ 2 & 6 & 7 \\ 3 & 8 \end{ytableau}\mathrm{\ and\ }M\hat{+}v=\begin{ytableau} 0 & 2 & 3 & 8 \\ 1 & 5 & 5 \\ 2 & 7 \end{ytableau},\] again in correspondence with Lemma \ref{SSYTct}. The last box adds another row, with $\delta_{M,v}=0$ as usual, and the tableaux are now \[T+v=\begin{ytableau} 1 & 2 & 4 & 7 \\ 3 & 6 & 8 \\ 5 \\ 9 \end{ytableau},\ S\tilde{+}v=\begin{ytableau} 1 & 4 & 5 & 9 \\ 2 & 6 & 7 \\ 3 \\ 8 \end{ytableau}\mathrm{\ and\ }M\hat{+}v=\begin{ytableau} 0 & 2 & 3 & 8 \\ 1 & 5 & 5 \\ 2 \\ 7 \end{ytableau}.\] Finally, we have $M_{+}=\begin{ytableau} 1 & 2 & 3 & 8 \\ 2 & 5 & 5 \\ 7 \end{ytableau}=\operatorname{ct}_{J\cup\{0\}}(S)$, with the content of the two tableaux $M\hat{+}v$ with $\delta_{M,v}=0$ being related that of $M_{+}$ as in Lemma \ref{SSYTct}. \label{hatplus}
\end{ex}

\medskip

We will use Definition \ref{plusv} and Lemma \ref{SSYTct} for investigating the relations between representations with different parameters. We begin with those from Theorem \ref{Rnksdecom}, extending the results from Section 4 of \cite{[Z1]} to our more general case. By extending Definition \ref{multisets}, we say that one multi-set $I$ is contained in another multi-set $\tilde{I}$ if the multiplicity of every number in $\tilde{I}$ is at least its multiplicity in $I$, and the difference is the multi-set in which the multiplicity of every element is the difference between its multiplicities in $I$ and in $\tilde{I}$. In order to see how $R_{n,I}$ and $R_{n,I}^{\mathrm{hom}}$ are related to $R_{n,\tilde{I}}$ and $R_{n,\tilde{I}}^{\mathrm{hom}}$ respectively in this case, we consider the case where the difference consists of a single element, with multiplicity 1, yielding the following extension of Propositions 4.1 of \cite{[Z1]}.
\begin{prop}
Assume that the multi-set $\tilde{I}$ is the multi-set addition of $I$ and a single element $\ell$.
\begin{enumerate}[$(i)$]
\item If $\ell$ already lies in $\hat{I}$ or equals 0 or $n$, then we have $R_{n,\tilde{I}}=e_{r_{\ell}}R_{n,I}$ and $R_{n,\tilde{I}}^{\mathrm{hom}}=e_{\ell}R_{n,I}^{\mathrm{hom}}$.
\item When $\ell\in\mathbb{N}_{n-1}\setminus\hat{I}$, the representation $R_{n,\tilde{I}}^{\mathrm{hom}}$ is $e_{\ell}R_{n,I}^{\mathrm{hom}}$ plus the direct sum of $V_{C}^{\vec{h}(C,\tilde{I})}$ over those $C$ for which $\ell\in\operatorname{Dsp}^{c}(C)\subseteq\tilde{I}$.
\item For $\ell$ as in part $(ii)$, in order to get $R_{n,\tilde{I}}$ we take the the direct sum of $V_{C}^{\vec{h}_{C}^{I}}$ over the same set of cocharge tableaux $C$, plus the image of $R_{n,I}$ under multiplication by $e_{r_{\ell}}$ and changing every $e_{r_{i}}$ for $\ell<i \in I$ to $e_{r_{i}-1}$.
\end{enumerate} \label{incI}
\end{prop}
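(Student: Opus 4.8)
The plan is to expand both sides via the direct-sum descriptions $R_{n,I}=\bigoplus_{\lambda\vdash n}\bigoplus_{C\in\operatorname{CCT}(\lambda),\,\operatorname{Dsp}^{c}(C)\subseteq I}V_{C}^{\vec h_{C}^{I}}$ and $R_{n,I}^{\mathrm{hom}}=\bigoplus_{\lambda\vdash n}\bigoplus_{C\in\operatorname{CCT}(\lambda),\,\operatorname{Dsp}^{c}(C)\subseteq I}V_{C}^{\vec h(C,I)}$ from part $(i)$ of Theorem \ref{Rnksdecom}, and to compare, cocharge tableau by cocharge tableau, the vectors $\vec h_{C}^{I},\vec h(C,I)$ attached to $I$ with those attached to $\tilde I$. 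Throughout I use that for $C\in\operatorname{CCT}(\lambda)$ the multi-set $\operatorname{Dsp}^{c}(C)$ is an honest subset of $\mathbb N_{n-1}$, that $\operatorname{Dsp}^{c}(C)\subseteq\tilde I$ in the sense of Definition \ref{multisets} means $\operatorname{Dsp}^{c}(C)\subseteq\widehat{\tilde I}$, that multiplication by a nonzero symmetric polynomial is injective on $\mathbb Q[\mathbf x_{n}]$ (so directness of sums is preserved), and that by Theorem \ref{decomRnk} the $V_{C}^{\vec h}$ occurring are linearly independent, which is what lets me recognise the pieces I produce as all of $R_{n,\tilde I}$.

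For part $(i)$, when $\ell\in\hat I$ or $\ell\in\{0,n\}$ we have $\widehat{\tilde I}=\hat I$, hence $\hat k$ is unchanged, hence so is the function $i\mapsto r_{i}$ of Definition \ref{Spechtdef}$(iv)$ and the family of $C$ with $\operatorname{Dsp}^{c}(C)\subseteq\tilde I$ coincides with the family with $\operatorname{Dsp}^{c}(C)\subseteq I$. For each such $C$ one checks $\operatorname{Asp}^{c}_{\tilde I}(C)=\operatorname{Asp}^{c}_{I}(C)\uplus\{\ell\}$: when $\ell\in\hat I$ the new copy of $\ell$ lies in the multi-set complement and is not cancelled by subtracting $\operatorname{Dsp}^{c}(C)$, and when $\ell\in\{0,n\}$ it cannot meet $\operatorname{Dsp}^{c}(C)\subseteq\mathbb N_{n-1}$ at all. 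Reading off Definition \ref{Spechtdef}$(vii)$, $\vec h(C,\tilde I)$ is $\vec h(C,I)$ with the entry at $\ell$ raised by one, and $\vec h_{C}^{\tilde I}$ is $\vec h_{C}^{I}$ with the entry at $r_{\ell}$ raised by one; multiplying the corresponding $V_{C}^{\vec h}$'s by $e_{\ell}$, respectively $e_{r_{\ell}}$, and summing over $C$ and $\lambda$ gives the two claimed identities (with $e_{0}=1$, and using $r_{n}=n-\hat k$, $r_{0}=n$ noted after Example \ref{hCItoI} in the cases $\ell=n$, $\ell=0$).

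For parts $(ii)$ and $(iii)$ we have $\ell\in\mathbb N_{n-1}\setminus\hat I$, so $\widehat{\tilde I}=\hat I\cup\{\ell\}$ and $\hat k$ increases by one. Split the $C$ with $\operatorname{Dsp}^{c}(C)\subseteq\widehat{\tilde I}$ into the \emph{old} ones, with $\ell\notin\operatorname{Dsp}^{c}(C)$, which are exactly the $C$ occurring in $R_{n,I}$ and $R_{n,I}^{\mathrm{hom}}$, and the \emph{new} ones, with $\ell\in\operatorname{Dsp}^{c}(C)\subseteq\tilde I$. The new $C$ contribute the extra direct sums of $V_{C}^{\vec h(C,\tilde I)}$, respectively $V_{C}^{\vec h_{C}^{\tilde I}}$, written on the right-hand sides of $(ii)$ and $(iii)$ (in $(iii)$ it is the vector $\vec h_{C}^{\tilde I}$ that is meant for a new $C$, since $\operatorname{Dsp}^{c}(C)$ is then no longer $\subseteq I$). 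For an old $C$ one has $\operatorname{Asp}^{c}_{\tilde I}(C)=\operatorname{Asp}^{c}_{I}(C)\uplus\{\ell\}$ again, whence on the homogeneous side $V_{C}^{\vec h(C,\tilde I)}=e_{\ell}V_{C}^{\vec h(C,I)}$; summing over the old $C$ gives $e_{\ell}R_{n,I}^{\mathrm{hom}}$, and $(ii)$ follows. On the non-homogeneous side the subtle point is that enlarging $\hat I$ by $\ell$ changes $\hat k$ and hence $r_{i}$: from the two clauses of Definition \ref{Spechtdef}$(iv)$ (treated separately for $i\in\hat I$ and for $i$ in the multi-set complement, then seen to agree) one gets $r_{i}^{\tilde I}=r_{i}^{I}-[\,i>\ell\,]$ for every $i\in I$, while the fresh element $\ell$ now lies in $\widehat{\tilde I}$ and contributes the value $r_{\ell}^{\tilde I}$. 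Consequently, for an old $C$, $V_{C}^{\vec h_{C}^{\tilde I}}$ is obtained from $V_{C}^{\vec h_{C}^{I}}$ by replacing each factor $e_{r_{i}}$ with $i>\ell$ by $e_{r_{i}-1}$ and multiplying by $e_{r_{\ell}}$; summing over the old $C$ produces exactly the transformed image of $R_{n,I}$ described in $(iii)$, and together with the new-$C$ part it exhausts $R_{n,\tilde I}$ by the independence from Theorem \ref{decomRnk}.

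I expect the main obstacle to be this last bookkeeping: verifying uniformly over $i\in I$ the shift $r_{i}\mapsto r_{i}-[\,i>\ell\,]$ as $\ell$ passes from the multi-set complement into $\hat I$ — the two defining clauses for $r_{i}$ behave differently and must be reconciled — together with pinning down the value $r_{\ell}^{\tilde I}$ of the new element and handling the extreme values of $\hat k$, and then checking that the old and new contributions are disjoint and jointly span $R_{n,\tilde I}$. By contrast the homogeneous statements in $(i)$ and $(ii)$, and the whole of case $(i)$, are essentially immediate once the identity $\operatorname{Asp}^{c}_{\tilde I}(C)=\operatorname{Asp}^{c}_{I}(C)\uplus\{\ell\}$ is in hand.
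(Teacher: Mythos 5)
Your proposal is correct and follows essentially the same route as the paper: it expands $R_{n,I}$, $R_{n,I}^{\mathrm{hom}}$, $R_{n,\tilde I}$, $R_{n,\tilde I}^{\mathrm{hom}}$ via part $(i)$ of Theorem \ref{Rnksdecom}, splits the cocharge tableaux into those with $\operatorname{Dsp}^{c}(C)\subseteq\hat I$ and the new ones with $\ell\in\operatorname{Dsp}^{c}(C)$, and tracks the effect on $\vec h(C,I)$, $\vec h_{C}^{I}$ and on $r_{i}$ (unchanged for $i<\ell$, decreased by $1$ for $i>\ell$, treating the two clauses of Definition \ref{Spechtdef}$(iv)$ separately), which is precisely the bookkeeping the paper carries out following Proposition 4.1 of \cite{[Z1]}.
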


\begin{proof}
We follow the proof of Proposition 4.1 of \cite{[Z1]}. When $\ell\in\hat{I}\cup\{0,n\}$, its addition to $\tilde{I}$ leaves $\hat{I}$ invariant and increases the multi-set difference $\tilde{I}\setminus\hat{I}$, so that the condition on $C$ in part $(i)$ of Theorem \ref{Rnksdecom} remains invariant, and we only add 1 to $h_{r_{\ell}}$ for $\vec{h}_{C}^{I}$ and to $h_{\ell}$ in $\vec{h}(C,\tilde{I})$, yielding part $(i)$. Otherwise $\ell$ is as in part $(ii)$, the new subset of $\mathbb{N}_{n-1}$ is now $\hat{I}\cup\{\ell\}$ (with $I\setminus\hat{I}$ remaining invariant), so that the cocharge tableaux $C$ with $\ell\in\operatorname{Dsp}^{c}(C)\subseteq\tilde{I}$ are now also allowed, and as the effect on the vectors $\vec{h}_{C}^{I}$ for $C$ with $\operatorname{Dsp}^{c}(C) \subseteq I$ is increasing $h_{\ell}$ by 1 once again, part $(ii)$ follows.

For part $(iii)$, the new tableaux are as in part $(ii)$, and for the older one we need to consider the effect on $r_{i}$ from Definition \ref{Spechtdef} for each $i \in I$ (and then $i\neq\ell$, and the factor $e_{r_{\ell}}$ is accounted for). When $i\in\hat{I}$ we get, as in Proposition 4.1 of \cite{[Z1]}, that if $i<\ell$ it remains the same, and when $i>\ell$ it decreases by 1. If $i \in I\setminus\hat{I}$ (a multi-set that remains unaffected, as we saw), then we note that $\hat{k}$ increases by 1, while the size of the set of $j\in\hat{I}\cup\{n\}$ that are larger than $i$ also increases by 1 when $i<\ell$ (due to the new element $\ell$) but remains unchanged for $i>\ell$. This shows that $r_{i}$ decreases by 1 for all $i>\ell$, but remains invariant for any $i<\ell$, thus establishing part $(iii)$. This proves the proposition.
\end{proof}

\begin{ex}
We take $n=3$ and $k=3$, set $I:=\{2,2\}$, and use the notation from Example \ref{n3k4}, so that $R_{3,I}=U_{0}e_{1}e_{2} \oplus U_{2}e_{2}$ and $R_{3,I}^{\mathrm{hom}}=U_{0}e_{2}^{2} \oplus U_{2}e_{2}$. When $\tilde{I}$ is $\{2,2,3\}$ the formulae for $R_{3,\tilde{I}}$ and $R_{3,\tilde{I}}^{\mathrm{hom}}$ are given in that example and give $e_{1}R_{3,I}$ and $e_{3}R_{3,I}^{\mathrm{hom}}$, and for $\tilde{I}=\{2,2,2\}$ we get from that example that $R_{3,\tilde{I}}=e_{2}R_{3,I}$ and $R_{3,\tilde{I}}^{\mathrm{hom}}=e_{2}R_{3,I}^{\mathrm{hom}}$, as part $(i)$ of Proposition \ref{incI} predicts (the formulae for $\tilde{I}=\{0,2,2\}$ would be $e_{2}R_{3,I}$ and $R_{3,I}^{\mathrm{hom}}$ respectively). When we take $\ell=1$ and get to $\tilde{I}=\{1,2,2\}$, the representation $R_{3,\tilde{I}}^{\mathrm{hom}}$ is $e_{1}R_{3,I}^{\mathrm{hom}}$ plus the two new summands $U_{1}e_{2}^{2}$ and $U_{3}e_{2}$, in correspondence with part $(ii)$ of that proposition. Finally, the new terms in $R_{3,\tilde{I}}$ are $U_{1}e_{1}$ and $U_{3}e_{1}$, based on the same representations, but the older terms $U_{0}e_{1}$ and $U_{2}$ are obtained by both the index 1 corresponding to $i=2\in\hat{I}$ and the index 2 associated with $i=2 \in I\setminus\hat{I}$ (note that $\hat{I}=\{2\}$ here) reduced by 1, and the multiplier associated with $r_{\ell}=0$ is also trivial, as part $(iii)$ of that proposition asserts. A similar behavior is seen when we start with the multi-set $\{1,1\}$, yielding the non-homogeneous representation $U_{0}e_{2} \oplus U_{1}e_{2}$ and the homogeneous counterpart $U_{0}e_{1}^{2} \oplus U_{1}e_{1}$---adding 3, 1, or 0 gives multiples (the formulae for the multi-sets resulting from the adding the first two expressions show up in Example \ref{n3k4} as well), and the homogeneous representation associated with the increased multi-set $\{1,1,2\}$ is $e_{2}$ times the older one plus $U_{2}e_{1}^{2} \oplus U_{3}e_{1}$. In the non-homogeneous setting, the new representations yield $U_{2}e_{2} \oplus U_{3}e_{2}$, the multipliers associated with both instances of $1 \in I$ now remain the same (compare the cases $i<\ell$ and $i>\ell$ in part $(iii)$ of Proposition \ref{incI}), and the extra multiplier associated with $r_{2}=0$ is again trivial. \label{addtoIex}
\end{ex}

\medskip

We now see how Propositions 4.16 and 4.17 of \cite{[Z1]} extend to this setting, and add some other constructions that now exist.
\begin{prop}
Let $I$ be as above, with the set $\hat{I}$ from Definition \ref{multisets}.
\begin{enumerate}[$(i)$]
\item The representation $R_{n+1,I\cup\{n\}}^{\mathrm{hom}}$ is $\operatorname{Ind}_{n,S_{n}}^{S_{n+1}}R_{n,I}^{\mathrm{hom}}$.
\item The representation $R_{n+1,I}^{\mathrm{hom}}$ equals $\operatorname{Ext}_{S_{n}}^{S_{n+1}}R_{n,I}^{\mathrm{hom}}$ when $n \not\in I$, and if $n \in I$, with multi-set difference $I\setminus\{n\}$, this representation is  $\operatorname{Ind}_{n,S_{n}}^{S_{n+1}}R_{n,I\setminus\{n\}}^{\mathrm{hom}}$, which is the direct sum of $\operatorname{Ext}_{S_{n}}^{S_{n+1}}R_{n,I}^{\mathrm{hom}}$ and $\operatorname{LE}_{S_{n}}^{S_{n+1}}R_{n,I\setminus\{n\}}^{\mathrm{hom}}$.
\end{enumerate} \label{mapsmulti}
\end{prop}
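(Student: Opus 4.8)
The plan is to prove part $(i)$ first, by expanding both sides with Theorem \ref{Rnksdecom}$(i)$, and then to deduce part $(ii)$ from it. Recall from Theorem \ref{Rnksdecom}$(i)$ that $R_{n,I}^{\mathrm{hom}}=\bigoplus_{\lambda\vdash n}\bigoplus_{C\in\operatorname{CCT}(\lambda),\ \operatorname{Dsp}^{c}(C)\subseteq I}V_{C}^{\vec{h}(C,I)}$, and from Definition \ref{plusv}$(viii)$--$(ix)$ that $\operatorname{Ext}_{S_{n}}^{S_{n+1}}$, $\operatorname{LE}_{S_{n}}^{S_{n+1}}$ and $\operatorname{Ind}_{n,S_{n}}^{S_{n+1}}$ act on such a direct sum componentwise: after dividing a summand $V_{C}^{\vec{h}}$ by its symmetric multiplier $\prod_{r}e_{r}^{h_{r}}$, applying the operator to $V_{C}$, and multiplying $\prod_{r}e_{r}^{h_{r}}$ back in, one gets $\bigoplus_{v\in\operatorname{EC}(\lambda),\ \delta_{C,v}=1}V_{C\hat{+}v}\cdot\prod_{r}e_{r}^{h_{r}}$ for $\operatorname{Ext}$, the same sum over $v$ with $\delta_{C,v}=0$ for $\operatorname{LE}$, and for $\operatorname{Ind}_{n}$ the direct sum of these two with an additional factor $e_{n}$ on the $\operatorname{Ext}$ part.

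The combinatorial core is the following bijection: for a fixed $\lambda'\vdash n+1$, Sch\"{u}tzenberger evacuation is a bijection of $\operatorname{SYT}(\lambda')$, and deleting the box containing $n+1$ from $\operatorname{ev}S'$ realizes the branching of standard Young tableaux; transporting through $\operatorname{ct}$ and collecting over all $\lambda'$ gives a bijection $\coprod_{\lambda'\vdash n+1}\operatorname{CCT}(\lambda')\leftrightarrow\coprod_{\lambda\vdash n}\coprod_{v\in\operatorname{EC}(\lambda)}\operatorname{CCT}(\lambda)$ under which $C'$ corresponds to $(C,v)$ exactly when $C'=C\hat{+}v$ (the description of $\operatorname{Dsi}(S\tilde{+}v)$ recalled after Definition \ref{plusv}, together with Definition \ref{plusv}$(vii)$, shows that the index multi-set of $C\hat{+}v$ is $\operatorname{Dsi}(S\tilde{+}v)=\operatorname{Dsi}(\operatorname{ct}^{-1}(C'))$, so $C\hat{+}v=C'$). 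By Lemma \ref{SSYTct}$(v)$, $\operatorname{Dsp}^{c}(C\hat{+}v)=\operatorname{Dsp}^{c}(C)$ if $\delta_{C,v}=1$ and $\operatorname{Dsp}^{c}(C)\cup\{n\}$ if $\delta_{C,v}=0$; since $\operatorname{Dsp}^{c}(C)$ never contains $n$, in both cases $\operatorname{Dsp}^{c}(C\hat{+}v)\subseteq I\cup\{n\}$ (in the sense of Definition \ref{multisets}, now with $n+1$ as the ambient bound) is equivalent to $\operatorname{Dsp}^{c}(C)\subseteq I$, and moreover $\operatorname{Asp}^{c}_{I\cup\{n\}}(C\hat{+}v)$ equals $\operatorname{Asp}^{c}_{I}(C)\cup\{n\}$ when $\delta_{C,v}=1$ and $\operatorname{Asp}^{c}_{I}(C)$ when $\delta_{C,v}=0$. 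Passing to characteristic vectors (with the multiplicity of $0$ omitted), $\vec{h}(C\hat{+}v,I\cup\{n\})$ is $\vec{h}(C,I)$ with its $n$th coordinate raised by one in the first case and $\vec{h}(C,I)$ itself in the second, so multiplying the symmetric functions back in, the summand $V_{C\hat{+}v}^{\vec{h}(C\hat{+}v,I\cup\{n\})}$ is $e_{n}\prod_{r}e_{r}^{h_{r}(C,I)}V_{C\hat{+}v}$ when $\delta_{C,v}=1$ and $\prod_{r}e_{r}^{h_{r}(C,I)}V_{C\hat{+}v}$ when $\delta_{C,v}=0$, i.e.\ precisely the contribution of $v$ to $e_{n}\operatorname{Ext}_{S_{n}}^{S_{n+1}}V_{C}^{\vec{h}(C,I)}$ and to $\operatorname{LE}_{S_{n}}^{S_{n+1}}V_{C}^{\vec{h}(C,I)}$ respectively. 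Summing over all eligible $(\lambda,C,v)$ identifies $R_{n+1,I\cup\{n\}}^{\mathrm{hom}}$ with $e_{n}\operatorname{Ext}_{S_{n}}^{S_{n+1}}R_{n,I}^{\mathrm{hom}}\oplus\operatorname{LE}_{S_{n}}^{S_{n+1}}R_{n,I}^{\mathrm{hom}}=\operatorname{Ind}_{n,S_{n}}^{S_{n+1}}R_{n,I}^{\mathrm{hom}}$, which is part $(i)$.

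For part $(ii)$ with $n\in I$, write $I=(I\setminus\{n\})\cup\{n\}$ and apply part $(i)$ to $I\setminus\{n\}$ to get $R_{n+1,I}^{\mathrm{hom}}=\operatorname{Ind}_{n,S_{n}}^{S_{n+1}}R_{n,I\setminus\{n\}}^{\mathrm{hom}}$; then, using that the cocharge tableaux $C$ with $\operatorname{Dsp}^{c}(C)\subseteq I$ are the same as those with $\operatorname{Dsp}^{c}(C)\subseteq I\setminus\{n\}$ and that $\operatorname{Asp}^{c}_{I}(C)=\operatorname{Asp}^{c}_{I\setminus\{n\}}(C)\cup\{n\}$, so $\vec{h}(C,I)$ is $\vec{h}(C,I\setminus\{n\})$ with its $n$th coordinate raised by one, we obtain $e_{n}\operatorname{Ext}_{S_{n}}^{S_{n+1}}R_{n,I\setminus\{n\}}^{\mathrm{hom}}=\operatorname{Ext}_{S_{n}}^{S_{n+1}}R_{n,I}^{\mathrm{hom}}$, which rewrites $\operatorname{Ind}_{n,S_{n}}^{S_{n+1}}R_{n,I\setminus\{n\}}^{\mathrm{hom}}$ as $\operatorname{Ext}_{S_{n}}^{S_{n+1}}R_{n,I}^{\mathrm{hom}}\oplus\operatorname{LE}_{S_{n}}^{S_{n+1}}R_{n,I\setminus\{n\}}^{\mathrm{hom}}$. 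For $n\notin I$, run the same bijection as in part $(i)$, but now observe that $\operatorname{Dsp}^{c}(C\hat{+}v)\subseteq I$ forces $\delta_{C,v}=1$: if $\delta_{C,v}=0$ then $n\in\operatorname{Dsp}^{c}(C\hat{+}v)$, incompatible with $n\notin I$. Hence only the $\operatorname{Ext}$-part of the bijection contributes, the exponent vectors are unchanged since $\operatorname{Asp}^{c}_{I}(C\hat{+}v)=\operatorname{Asp}^{c}_{I}(C)$ for $\delta_{C,v}=1$, and therefore $R_{n+1,I}^{\mathrm{hom}}=\operatorname{Ext}_{S_{n}}^{S_{n+1}}R_{n,I}^{\mathrm{hom}}$.

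I expect the main obstacle to be the verification, in the second paragraph, that the evacuation-based branching of \cite{[Z1]} genuinely gives a bijection $C'\mapsto(C,v)$ for which the containment $\operatorname{Dsp}^{c}(C')\subseteq I\cup\{n\}$ corresponds exactly to $\operatorname{Dsp}^{c}(C)\subseteq I$, with the dichotomy carried by $\delta_{C,v}$; this needs Definition \ref{multisets} to be reconciled with the fact that the ambient bound changes silently from $n$ to $n+1$, together with careful use of the behavior of $\operatorname{Dsi}(S\tilde{+}v)$. Once that is in place, everything else is routine bookkeeping of the symmetric multipliers, controlled by Lemma \ref{SSYTct}.
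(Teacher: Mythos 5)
Your proposal is correct and follows essentially the same route as the paper: decompose both sides via Theorem \ref{Rnksdecom}$(i)$, use the branching $C\mapsto C\hat{+}v$ together with part $(v)$ of Lemma \ref{SSYTct} to track $\operatorname{Dsp}^{c}$ and the complementary multi-sets, and match the resulting multipliers with the operators from Definition \ref{plusv}. The only cosmetic differences are that you spell out the evacuation-based bijection on cocharge tableaux (which the paper imports from the proof of Proposition 4.16 of \cite{[Z1]}), and that in the case $n\notin I$ you argue by direct summand-matching, whereas the paper invokes Proposition \ref{incI} at level $n+1$ with $\ell=n$ and cancels the factor $e_{n}$ against part $(i)$.
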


\begin{proof}
For part $(i)$ we follow the proof of Proposition 4.16 of \cite{[Z1]}. Part $(v)$ of Lemma \ref{SSYTct} implies that if $\operatorname{Dsp}^{c}(C) \subseteq I$ for some $C\in\operatorname{CCT}(\lambda)$ and $\lambda \vdash n$ then $\operatorname{Dsp}^{c}(C\hat{+}v) \subseteq I\cup\{n\}$ for every $v\in\operatorname{EC}(\lambda)$, and since $\operatorname{Dsp}^{c}(C)$ for such $C$ is a set (rather than just a multi-set) and is contained in $\mathbb{N}_{n-1}$, we deduce that it equals $\operatorname{Dsp}^{c}(C\hat{+}v)\cap\mathbb{N}_{n-1}$ for any such $v$. The fact that the complementary multi-set $I\cup\{n\}\setminus\operatorname{Dsp}^{c}(C\hat{+}v)$ coincides with $I\setminus\operatorname{Dsp}^{c}(C)$ when $\delta_{C,v}=0$ and is the union of the latter with $\{n\}$ in case $\delta_{C,v}=1$ (by the same result) combines with Definition \ref{Spechtdef} to yield part $(i)$.

Next, we compare $R_{n+1,I}^{\mathrm{hom}}$ with $R_{n+1,I\cup\{n\}}^{\mathrm{hom}}$ via Proposition 4.17 of \cite{[Z1]}. If $n \in I$ then part $(i)$ of Proposition \ref{incI} (with $\ell=n$) implies that there is only a factor of $e_{n}$ between them, and the second assertion in part $(ii)$ follows (it can also be viewed as part $(i)$ applied to $I\setminus\{n\}$). The decomposition there follows from the definition of the operators involved via Definitions \ref{Spechtdef} and \ref{plusv}.

We thus assume that $n \not\in I$ where part $(ii)$ there (again with $\ell=n$) expresses $R_{n+1,I\cup\{n\}}^{\mathrm{hom}}$, as in the proof of Proposition 4.17 of \cite{[Z1]}, as $e_{n}R_{n+1,I}^{\mathrm{hom}}$ plus some terms. Using the same part of Lemma \ref{SSYTct} we see that the added terms inside the expression $\operatorname{Ind}_{n,S_{n}}^{S_{n+1}}R_{n,I}^{\mathrm{hom}}$ are those associated with $C\hat{+}v$ for which $\delta_{C,v}=0$, namely the added part is $\operatorname{LE}_{S_{n}}^{S_{n+1}}R_{n,I}^{\mathrm{hom}}$. Via Definition \ref{Spechtdef} we thus compare $e_{n}R_{n+1,I}^{\mathrm{hom}}$ with $e_{n}\operatorname{Ext}_{S_{n}}^{S_{n+1}}R_{n,I}^{\mathrm{hom}}$, and the first expression from part $(ii)$ follows as well.
This proves the proposition.
\end{proof}

\begin{rmk}
The differences in part $(iii)$ of Proposition \ref{incI} allows us to state the results analogous to Proposition \ref{mapsmulti} also for the non-homogeneous representations. Explicitly, $R_{n+1,I\cup\{n\}}$ is obtained from $\operatorname{Ind}_{n-\hat{k},S_{n}}^{S_{n+1}}R_{n,I}$ by replacing each multiplier $e_{r_{i}}$ for $i \in I\setminus\hat{I}$ by $e_{r_{i}+1}$, and $R_{n+1,I}$ is $\operatorname{Ext}_{S_{n}}^{S_{n+1}}R_{n,I}$ after we replace any $e_{r_{i}}$ for $i \in I\setminus\hat{I}$, except for one instance of $n$ if there is one, by $e_{r_{i}+1}$ (note the effect of increasing $n$ to $n+1$ on the parameters from Definition \ref{Spechtdef} to see this). Thus the neater results appear only for the homogeneous representations from Proposition \ref{mapsmulti} and for the non-homogeneous ones from Propositions 4.16 and 4.17 of \cite{[Z1]} in case $I$ is a set (and not a general multi-set), with the small extra compatibility that exists when $I$ is a set that contains $n$. Indeed, the infinite case from \cite{[Z2]} requires either the limits of all the homogeneous representations or only those of the non-homogeneous ones associated with sets (the expressions mentioned here mean that limits of non-homogeneous representations having actual multi-sets as indices do not exist). \label{onlysetsRnI}
\end{rmk}

\begin{rmk}
The constructions from Propositions 4.16 and 4.17 of \cite{[Z1]}, which are the two cases of Proposition \ref{mapsmulti} in case $I$ is a subset of $\mathbb{N}_{n-1}$, were called, as in \cite{[HRS]}, star and bar insertions. In that case the bar insertion always created a new set of size 1, and the star insertion always increased the last set, to be larger than a singleton. Here, some sets in the partition corresponding to $I$ (namely the sizes in $\vec{m}=\operatorname{comp}_{n}I\vDash_{w}n$) are allowed to be empty, but the bar insertion, corresponding to adding $n$ to $I$ when going from $n$ to $n+1$, still adds a new singleton at the end, and indeed case $(i)$ of Proposition \ref{mapsmulti} is the direct generalization of Proposition 4.16 of \cite{[Z1]} for the homogeneous representations (though not for the non-homogeneous ones, as stated in Remark \ref{onlysetsRnI}). The star insertion, increasing $n$ by 1 and leaving $I$ invariant, increases the last set, but the latter can either be empty and become non-empty (corresponding to a bar insertion on another multi-set), or be non-empty and increase further, explaining the two cases in part $(ii)$ of Proposition \ref{mapsmulti}. \label{nostarbar}
\end{rmk}

\begin{ex}
If we take $I=\{0,3,3\}$ as in Examples \ref{ordpartex} and \ref{exn4}, then for $n=3$ we get $R_{3,I}=V_{000}e_{2}^{2}e_{3}$ and $R_{3,I}^{\mathrm{hom}}=V_{000}e_{3}^{2}$. Similarly, the set $\{0,3\}$ produces the non-homogeneous representation $V_{000}e_{2}e_{3}$, and the homogeneous one which is $V_{000}e_{3}$. Applying $\operatorname{Ind}_{3,S_{3}}^{S_{4}}$ to the latter representation produces the representation $R_{4,I}^{\mathrm{hom}}$ from the latter example (as both parts of Proposition \ref{mapsmulti} suggest), and for getting $R_{4,I}$ appearing there, we have $\hat{k}=1$ and all the indices arise from $I\setminus\hat{I}$ (since $\hat{I}$ is empty), so that we apply $\operatorname{Ind}_{2,S_{3}}^{S_{4}}$ and increase both indices 2 and 3 to 3 and 4 respectively, as in Remark \ref{onlysetsRnI}. \label{justtriv}
\end{ex}
Example \ref{justtriv} shows how the fact that $n=3$ was in the multi-set affects the construction. We now turn to an example where this is not the case.
\begin{ex}
Recall the two multi-sets $\{1,1\}$ and $\{2\}$ from Example \ref{n4d2}, which are those consisting of positive integers that sum to 2. The same considerations showing up there produce the equality \[\mathbb{Q}[\mathbf{x}_{5}]_{2}=V_{00000}e_{1}^{2} \oplus V_{\substack{0000 \\ 1\hphantom{111}}}e_{1} \oplus V_{00000}e_{2} \oplus V_{\substack{0001 \\ 1\hphantom{112}}} \oplus V_{\substack{000 \\ 11\hphantom{1}}},\] which is the direct sum of $R_{5,I}^{\mathrm{hom}}$ for the two multi-sets $I$ (the first two terms with $\{1,1\}$, the last three for $\{2\}$). Comparing it with the expression for that example, we see that ours is the $\operatorname{Ext}_{S_{4}}^{S_{5}}$-image of the one from there, with $\operatorname{Ext}_{S_{4}}^{S_{5}}$ taking each irreducible component associated with $C$ and $I$ to the one corresponding to $\hat{\iota}C$ and $I$. \label{n5d2}
\end{ex}

\begin{ex}
Comparing the notation from Example \ref{n2d4} with that from Examples \ref{n3k4} and \ref{addtoIex}, we get $\operatorname{Ext}_{S_{2}}^{S_{3}}U_{+}=U_{0}$ and $\operatorname{Ext}_{S_{2}}^{S_{3}}U_{-}=U_{1}$, as well as $\operatorname{LE}_{S_{2}}^{S_{3}}U_{+}=U_{2}$ and $\operatorname{LE}_{S_{2}}^{S_{3}}U_{-}=U_{3}$. Thus to $R_{2,I}^{\mathrm{hom}}=U_{+}e_{1}^{4} \oplus U_{-}e_{1}^{3}$ we have to apply $\operatorname{Ext}_{S_{2}}^{S_{3}}$ when we move from $n=2$ to $n+1=3$, while for the other two multi-sets there we remove $n=2$ and let $\operatorname{Ind}_{2,S_{2}}^{S_{3}}$ act on the representations associated with $\{1,1\}$ and $\{2\}$, which are $U_{+}e_{1}^{2} \oplus U_{-}e_{1}$ and $U_{+}e_{2}$ respectively. This produces the sub-representation \[U_{0}e_{1}^{4} \oplus U_{1}e_{1}^{3} \oplus U_{0}e_{1}^{2}e_{2} \oplus U_{2}e_{1}^{2} \oplus U_{1}e_{1}e_{2} \oplus U_{3}e_{1} \oplus U_{0}e_{2}^{2} \oplus U_{2}e_{2}^{2}\subseteq\mathbb{Q}[\mathbf{x}_{3}]_{4}.\] The space $\mathbb{Q}[\mathbf{x}_{3}]_{4}$ is completed by adding in the representation $U_{0}e_{1}e_{3} \oplus U_{1}e_{3}$, corresponding to the other multi-set $\{1,3\}$ of entry sum 4, since now the value $n+1=3$ is also allowed to appear in it. \label{n3d4}
\end{ex}

\medskip

We deduce the following generalization of Corollary 4.19 of \cite{[Z1]}.
\begin{prop}
Take $n\geq1$ and $k\geq0$.
\begin{enumerate}[$(i)$]
\item If $0 \leq s\leq\min\{n+1,k\}$, then $R_{n+1,k+1,s}^{\mathrm{hom}}$ is the direct sum of three representations. One is $\operatorname{Ext}_{S_{n}}^{S_{n+1}}R_{n,k+1,s}^{\mathrm{hom}}$, showing up only when $s \leq n$, the second one is $\operatorname{LE}_{S_{n}}^{S_{n+1}}R_{n,k,s}^{\mathrm{hom}}$ appearing for $k\geq1$ and $s \leq n$, and finally $e_{n+1}R_{n+1,k,s}^{\mathrm{hom}}$, which has a contribution in case $k\geq1$.
  \item For $s=k+1 \leq n+1$ we get that $R_{n+1,k+1,k+1}^{\mathrm{hom}}$ equals the direct sum of $\operatorname{Ext}_{S_{n}}^{S_{n+1}}R_{n,k+1,k}^{\mathrm{hom}}$ and $\operatorname{LE}_{S_{n}}^{S_{n+1}}R_{n,k,k}^{\mathrm{hom}}$, with the latter showing up only when $k\geq1$, or equivalently of $\operatorname{Ext}_{S_{n}}^{S_{n+1}}R_{n,k+1,k+1}^{\mathrm{hom}}$ (when $k<n$) and $\operatorname{Ind}_{n,S_{n}}^{S_{n+1}}R_{n,k,k}^{\mathrm{hom}}$ (if $k\geq1$).
\end{enumerate} \label{homdecom}
\end{prop}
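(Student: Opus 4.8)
The plan is to reduce everything to the building blocks $R^{\mathrm{hom}}_{n,I}$ and to apply the multi-set insertion formulas of Propositions \ref{mapsmulti} and \ref{incI} termwise. For a multi-set $I$ of size $j$, written non-decreasingly as $\{i_h\}_{h=1}^{j}$ and extended by $i_0=0$ and $i_{j+1}=N$ (with $N$ the number of variables under consideration), write $\operatorname{cond}_N(s)$ for the requirement that $\{i_h\}_{h=0}^{s}$ be strictly increasing; by Definition \ref{OPnksI} and Remark \ref{Rnkshom} one then has $R^{\mathrm{hom}}_{N,k',s}=\bigoplus_{I}R^{\mathrm{hom}}_{N,I}$ over the multi-sets $I$ of size $k'-1$ with entries in $\{0,\dots,N\}$ satisfying $\operatorname{cond}_N(s)$, this being an internal direct sum by Theorem \ref{Rnksdecom}$(iii)$. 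For part $(i)$ I would start from $R^{\mathrm{hom}}_{n+1,k+1,s}=\bigoplus_{\tilde I}R^{\mathrm{hom}}_{n+1,\tilde I}$, over multi-sets $\tilde I$ of size $k$ with entries in $\{0,\dots,n+1\}$ and $\operatorname{cond}_{n+1}(s)$, and split the index set into three classes: $(\mathrm a)$ those $\tilde I$ with $n+1\in\tilde I$; $(\mathrm b)$ those with $n+1\notin\tilde I$ and $n\notin\tilde I$; $(\mathrm c)$ those with $n+1\notin\tilde I$ and $n\in\tilde I$. On class $(\mathrm a)$, Proposition \ref{incI}$(i)$ applied with $\ell=n+1$ (the largest admissible entry for $n+1$ variables, so it plays the role of ``$n$'' there) gives $R^{\mathrm{hom}}_{n+1,\tilde I}=e_{n+1}R^{\mathrm{hom}}_{n+1,\tilde I\setminus\{n+1\}}$; since $s\le k$, deleting one instance of the top entry carries $\operatorname{cond}_{n+1}(s)$ on $\tilde I$ to $\operatorname{cond}_{n+1}(s)$ on $\tilde I\setminus\{n+1\}$, so $\tilde I\mapsto\tilde I\setminus\{n+1\}$ is a bijection of $(\mathrm a)$ onto the index set of $R^{\mathrm{hom}}_{n+1,k,s}$, and the contribution of $(\mathrm a)$ is $e_{n+1}R^{\mathrm{hom}}_{n+1,k,s}$, present precisely when $k\geq1$. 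On class $(\mathrm b)$, the first case of Proposition \ref{mapsmulti}$(ii)$ gives $R^{\mathrm{hom}}_{n+1,\tilde I}=\operatorname{Ext}^{S_{n+1}}_{S_n}R^{\mathrm{hom}}_{n,\tilde I}$, and on class $(\mathrm c)$ the second, split case gives $R^{\mathrm{hom}}_{n+1,\tilde I}=\operatorname{Ext}^{S_{n+1}}_{S_n}R^{\mathrm{hom}}_{n,\tilde I}\oplus\operatorname{LE}^{S_{n+1}}_{S_n}R^{\mathrm{hom}}_{n,\tilde I\setminus\{n\}}$.

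To recognize the sum of classes $(\mathrm b)$ and $(\mathrm c)$ as $\operatorname{Ext}^{S_{n+1}}_{S_n}R^{\mathrm{hom}}_{n,k+1,s}\oplus\operatorname{LE}^{S_{n+1}}_{S_n}R^{\mathrm{hom}}_{n,k,s}$, I would expand the latter via Definition \ref{plusv}$(ix)$ as $\bigoplus_{\tilde J}\operatorname{Ext}^{S_{n+1}}_{S_n}R^{\mathrm{hom}}_{n,\tilde J}\oplus\bigoplus_{\tilde J''}\operatorname{LE}^{S_{n+1}}_{S_n}R^{\mathrm{hom}}_{n,\tilde J''}$, where $\tilde J$ runs over size-$k$ multi-sets with $\operatorname{cond}_n(s)$ and $\tilde J''$ over size-$(k-1)$ ones with $\operatorname{cond}_n(s)$, and then split the $\tilde J$-sum by whether $n\in\tilde J$. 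The $\tilde J$ with $n\notin\tilde J$ have entries $\le n-1$, and since $s\le k$ the appended top entry is irrelevant to $\operatorname{cond}(s)$, so these are exactly the class-$(\mathrm b)$ multi-sets and the $\operatorname{Ext}$-terms match. For the $\tilde J$ with $n\in\tilde J$, the assignment $\tilde J''\mapsto\tilde J''\cup\{n\}$ is a bijection from the size-$(k-1)$ multi-sets satisfying $\operatorname{cond}_n(s)$ onto these $\tilde J$, which moreover coincide with the class-$(\mathrm c)$ multi-sets (here one checks, again using $s\le k$, that $\operatorname{cond}_n(s)$ on $\tilde J''$, $\operatorname{cond}_n(s)$ on $\tilde J''\cup\{n\}$, and $\operatorname{cond}_{n+1}(s)$ on $\tilde J''\cup\{n\}$ are all equivalent, since the three relevant sequences agree in entries $0,\dots,s$); and for such a pair the split second case of Proposition \ref{mapsmulti}$(ii)$ gives $\operatorname{Ext}^{S_{n+1}}_{S_n}R^{\mathrm{hom}}_{n,\tilde J''\cup\{n\}}\oplus\operatorname{LE}^{S_{n+1}}_{S_n}R^{\mathrm{hom}}_{n,\tilde J''}=R^{\mathrm{hom}}_{n+1,\tilde J''\cup\{n\}}$. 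Assembling, the sum of $(\mathrm b)$ and $(\mathrm c)$ is $\operatorname{Ext}^{S_{n+1}}_{S_n}R^{\mathrm{hom}}_{n,k+1,s}\oplus\operatorname{LE}^{S_{n+1}}_{S_n}R^{\mathrm{hom}}_{n,k,s}$, and adding the contribution of $(\mathrm a)$ yields part $(i)$, with directness inherited from that of $R^{\mathrm{hom}}_{n+1,k+1,s}$. Finally I would record the nonvanishing: $R^{\mathrm{hom}}_{n,k+1,s}$, hence the $\operatorname{Ext}$-term, vanishes when $s=n+1$, since $\operatorname{cond}_n(n+1)$ would demand $n+2$ distinct values in $\{0,\dots,n\}$, so it is present only for $s\le n$; $R^{\mathrm{hom}}_{n,k,s}$ is nonzero only when $k\ge1$ and $s\le n$; and $R^{\mathrm{hom}}_{n+1,k,s}$ only when $k\ge1$.

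For part $(ii)$, where $s=k+1\le n+1$ and hence $k\le n$, I would apply part $(i)$ with $s=k$ to get $R^{\mathrm{hom}}_{n+1,k+1,k}=\operatorname{Ext}^{S_{n+1}}_{S_n}R^{\mathrm{hom}}_{n,k+1,k}\oplus\operatorname{LE}^{S_{n+1}}_{S_n}R^{\mathrm{hom}}_{n,k,k}\oplus e_{n+1}R^{\mathrm{hom}}_{n+1,k,k}$, and combine it with the ``older/newer part'' decomposition $R^{\mathrm{hom}}_{n+1,k+1,k}=R^{\mathrm{hom}}_{n+1,k+1,k+1}\oplus e_{n+1}R^{\mathrm{hom}}_{n+1,k,k}$ from Remark \ref{Rnkshom} (with $n$ replaced by $n+1$ and $k$ by $k+1$). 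Cancelling the common summand $e_{n+1}R^{\mathrm{hom}}_{n+1,k,k}$, legitimate since these representations decompose into irreducibles with well-defined multiplicities, leaves $R^{\mathrm{hom}}_{n+1,k+1,k+1}=\operatorname{Ext}^{S_{n+1}}_{S_n}R^{\mathrm{hom}}_{n,k+1,k}\oplus\operatorname{LE}^{S_{n+1}}_{S_n}R^{\mathrm{hom}}_{n,k,k}$, with the $\operatorname{LE}$-term present only for $k\geq1$. For the equivalent form I would substitute $\operatorname{Ext}^{S_{n+1}}_{S_n}R^{\mathrm{hom}}_{n,k+1,k}=\operatorname{Ext}^{S_{n+1}}_{S_n}R^{\mathrm{hom}}_{n,k+1,k+1}\oplus e_{n}\operatorname{Ext}^{S_{n+1}}_{S_n}R^{\mathrm{hom}}_{n,k,k}$, obtained by applying $\operatorname{Ext}^{S_{n+1}}_{S_n}$ (which distributes over direct sums and commutes with multiplication by the symmetric polynomial $e_n$, by Definition \ref{plusv}$(ix)$) to the $n$-level decomposition $R^{\mathrm{hom}}_{n,k+1,k}=R^{\mathrm{hom}}_{n,k+1,k+1}\oplus e_nR^{\mathrm{hom}}_{n,k,k}$ (noting $R^{\mathrm{hom}}_{n,k+1,k+1}=0$ when $k=n$), and then use $\operatorname{Ind}_{n,S_n}^{S_{n+1}}=e_n\operatorname{Ext}^{S_{n+1}}_{S_n}\oplus\operatorname{LE}^{S_{n+1}}_{S_n}$ from Definition \ref{plusv}$(viii)$. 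The main obstacle throughout is the bookkeeping in the second paragraph: one must verify that the strict-increase conditions defining membership in the various $\mathcal{OP}_{N,k',s}$ are preserved under deleting an instance of the top entry, adjoining $n$, and raising the ambient bound from $n$ to $n+1$, uniformly over all multiplicities of $n$ and $n+1$ and over the boundary values $s=k$ and $s=n$; this is exactly the point at which passing from subsets, as in \cite{[Z1]}, to genuine multi-sets requires care, since $\operatorname{cond}(s)$ may now fail at an interior repetition rather than only at the appended top entry.
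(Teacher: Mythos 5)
Your part $(i)$ is correct and follows essentially the same route as the paper: decompose $R^{\mathrm{hom}}_{n+1,k+1,s}$ into the $R^{\mathrm{hom}}_{n+1,\tilde I}$ via Remark \ref{Rnkshom} and Theorem \ref{Rnksdecom}$(iii)$, split the index multi-sets according to membership of $n+1$ and $n$, and apply Proposition \ref{incI}$(i)$ (with $\ell=n+1$) to the first class and Proposition \ref{mapsmulti}$(ii)$ to the other two; your verification that the strict-increase conditions are insensitive to the appended top entry when $s\le k$, and transfer correctly under deleting an instance of $n+1$ and under adjoining $n$, is exactly the bookkeeping the paper carries out.

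In part $(ii)$, however, the cancellation step is a genuine gap. You have two internal direct sum decompositions of the same subspace $R^{\mathrm{hom}}_{n+1,k+1,k}\subseteq\mathbb{Q}[\mathbf{x}_{n+1}]$ sharing the summand $e_{n+1}R^{\mathrm{hom}}_{n+1,k,k}$, and you cancel it, justifying this by well-defined multiplicities of irreducibles. That argument only yields an isomorphism of the complementary summands, not the asserted equality: two complements of the same direct summand inside a fixed space need not coincide (already $\mathbb{Q}^{2}=\langle e_{1}\rangle\oplus\langle e_{2}\rangle=\langle e_{1}\rangle\oplus\langle e_{1}+e_{2}\rangle$ shows this), and the proposition, like part $(i)$ and its later uses, is a statement about the explicit lifts $V_{C}^{\vec h}$ inside $\mathbb{Q}[\mathbf{x}_{n+1}]$, where equality of subspaces is what is claimed and needed. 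The repair is short and already contained in your own part $(i)$: for $s=k+1$ the condition forces the index multi-sets of $R^{\mathrm{hom}}_{n+1,k+1,k+1}$ to be exactly your classes $(\mathrm b)$ and $(\mathrm c)$ of the $s=k$ decomposition (those $\tilde I$ with $n+1\notin\tilde I$), and your class-by-class identification shows that this partial sum equals $\operatorname{Ext}^{S_{n+1}}_{S_n}R^{\mathrm{hom}}_{n,k+1,k}\oplus\operatorname{LE}^{S_{n+1}}_{S_n}R^{\mathrm{hom}}_{n,k,k}$ as subspaces; this is in effect how the paper argues, running the multi-set analysis directly at $s=k+1$ rather than cancelling. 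Your passage to the second expression in $(ii)$, via applying $\operatorname{Ext}^{S_{n+1}}_{S_n}$ to $R^{\mathrm{hom}}_{n,k+1,k}=R^{\mathrm{hom}}_{n,k+1,k+1}\oplus e_{n}R^{\mathrm{hom}}_{n,k,k}$ and invoking Definition \ref{plusv}, is fine once the first expression is established correctly.
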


\begin{proof}
Remark \ref{Rnkshom}, Definition \ref{OPnksI}, and part $(iii)$ of Theorem \ref{Rnksdecom} decompose $R_{n+1,k+1,s}^{\mathrm{hom}}$ as the direct sum of $R_{n+1,\tilde{I}}^{\mathrm{hom}}$ over multi-sets $\tilde{I}$ of size $k$, containing elements between 0 and $n+1$, and such that the union of 0 with its $s$ smallest elements form a strictly increasing sequence. We decompose the collection of these multi-sets into those that contain $n+1$, and those that do not. Recall that the condition involving $s$ means that adding $i_{0}=0$ to $\tilde{I}$ yields an initial increasing sequence in the first $s$ steps, which for $s \geq k$ means that $\tilde{I}$ is a subset of $\mathbb{N}_{n+1}$, and when $s=k+1$ the condition involving $i_{k+1}=n+1$ implies that $I$ must be contained in $\mathbb{N}_{n}$.

Hence we may have $n+1\in\tilde{I}$ only if $s \leq k$, with the restriction $k\geq1$ showing up since if $k=0$ then $\tilde{I}$ is empty. We write such a multi-set as the union of $\{n+1\}$ and a multi-set $I$ of size $k-1$, and $I$ satisfies the condition for $s$. Indeed, when $s<k$ this is the same condition for $I$ and for $\tilde{I}$, and when $s=k$ it means that adding $i_{k}=n+1$ to $I$ still yields an increasing sequence, which is again the condition on $\tilde{I}=I\cup\{n+1\}$. The fact that part $(i)$ of Proposition \ref{incI} gives $R_{n+1,\tilde{I}}^{\mathrm{hom}}=e_{n+1}R_{n+1,I}^{\mathrm{hom}}$, and $I$ runs over all the multi-sets appearing in $R_{n+1,k,s}^{\mathrm{hom}}$, shows that these sets give the third asserted term in part $(i)$.

We now turn to multi-sets $\tilde{I}$ not containing $n+1$, and note that if $s \leq k$ then the condition for $R_{n,I}^{\mathrm{hom}}$ to appear in $R_{n,k+1,s}^{\mathrm{hom}}$ is the same as the one determining whether $R_{n+1,I}^{\mathrm{hom}}$ shows up in $R_{n+1,k+1,s}^{\mathrm{hom}}$. However, in the first paragraph we saw that for $s=k+1$ we get $\tilde{I}\subseteq\mathbb{N}_{n}$, while for $n$ the same condition means $\tilde{I}\subseteq\mathbb{N}_{n-1}$, and the condition for general $\tilde{I}\subseteq\mathbb{N}_{n}$ is associated with $s=k$. We also note that if $s=n+1$ then $\tilde{I}$ must contain $n+1$ in contradiction to our assumption, so that we henceforth assume $s \leq n$.

Now, Proposition \ref{mapsmulti} shows that for every such $\tilde{I}$ we get a contribution of $\operatorname{Ext}_{S_{n}}^{S_{n+1}}R_{n,I}^{\mathrm{hom}}$, yielding the first term in both parts $(i)$ and $(ii)$ here because of the condition on $\tilde{I}$ for $n$ from the previous paragraph (and we saw that $s \leq n$). Moreover, if $n\in\tilde{I}$ (so that $k\geq1$ again), so that $\tilde{I}=I\cup\{n\}$ for a multi-set $I$ of size $k-1$, then the same considerations from the case with $n+1\in\tilde{I}$ show that $I$ satisfies the condition for $s$ and $n$ in case $s \leq k$ because $\tilde{I}$ does (also when $s=k$, with $i_{k}=n$ completing $I$ to $\tilde{I}$), and when $s=k+1$ it satisfies the condition for $s=k$. Gathering the $\operatorname{LE}_{S_{n}}^{S_{n+1}}$-images from part $(ii)$ of Proposition \ref{mapsmulti} yields the remaining term in both parts $(i)$ and $(ii)$, with the second expression in part $(ii)$ being a consequence of Definition \ref{plusv}, as in the proof of part $(ii)$ of Proposition \ref{mapsmulti}. This completes the proof of the proposition.
\end{proof}

\begin{ex}
When $s=n+1<k+1$, the only possible value of the multi-set $\tilde{I}$ is $\mathbb{N}_{n}$ plus $k-n>0$ copies of $n+1$ (see Example \ref{n3k4} for the case $n=2$ and $k=3$ of this situation), showing that $R_{n+1,k+1,n+1}^{\mathrm{hom}}$ is a homogeneous regular representation multiplied by some positive power of $e_{n+1}$. As $n<k$, the same description applies for $R_{n+1,k,n+1}^{\mathrm{hom}}$ (with one power of $e_{n+1}$ less), yielding the form of Proposition \ref{homdecom} in this case. The case $s=k+1 \leq n+1$ from part $(ii)$ there is the homogeneous version of Corollary 4.19 of \cite{[Z1]} (since $R_{n+1,k+1,k+1}^{\mathrm{hom}}$ and $R_{n,k+1,k+1}^{\mathrm{hom}}$, and $R_{n,k,k}^{\mathrm{hom}}$ from Remark \ref{Rnkshom} are $R_{n+1,k+1}^{\mathrm{hom}}$, $R_{n,k+1}^{\mathrm{hom}}$, and $R_{n,k}^{\mathrm{hom}}$ from Remark 3.26 of \cite{[Z1]} respectively). When $k=0$, part $(i)$ of this proposition reduces to the assertion that applying $\operatorname{Ext}_{S_{n}}^{S_{n+1}}$ to the trivial representation $R_{n,1,0}^{\mathrm{hom}}$ of $S_{n}$ on constants yields the trivial representation $R_{n+1,1,0}^{\mathrm{hom}}$ of $S_{n+1}$ on constants, while the two expressions in part $(ii)$ there are for the same trivial representation $R_{n+1,1,1}^{\mathrm{hom}}$ of $S_{n+1}$, either as $\operatorname{Ext}_{S_{n}}^{S_{n+1}}R_{n,1,0}^{\mathrm{hom}}$ or as $\operatorname{Ext}_{S_{n}}^{S_{n+1}}R_{n,1,1}^{\mathrm{hom}}$. \label{decomext}
\end{ex}

After the case $k=0$ from Remark \ref{decomext} we present a case with $k=1$.
\begin{ex}
We recall from Example \ref{n4k2} that \[R_{4,2,2}^{\mathrm{hom}}=V_{\substack{000 \\ 1\hphantom{11}}} \oplus V_{\substack{001 \\ 1\hphantom{12}}}  \oplus V_{\substack{00 \\ 11}} \oplus V_{\substack{011 \\ 1\hphantom{22}}} \oplus V_{0000}e_{1} \oplus V_{0000}e_{2} \oplus V_{0000}e_{3},\] and that we add $V_{0000}e_{4}$ in order to get to $R_{4,2,1}^{\mathrm{hom}}$ and the additional term $V_{0000}$ to yield $R_{4,2,0}^{\mathrm{hom}}$. Example \ref{k2gen} generalizes this statement to any $n$, so that in particular $R_{5,2,2}^{\mathrm{hom}}$ equals \[V_{\substack{0000 \\ 1\hphantom{111}}} \oplus V_{\substack{0001 \\ 1\hphantom{112}}} \oplus V_{\substack{000 \\ 11\hphantom{1}}}  \oplus V_{\substack{001 \\ 11\hphantom{1}}} \oplus V_{\substack{0011 \\ 1\hphantom{122}}} \oplus V_{\substack{0111 \\ 1\hphantom{222}}} \oplus V_{00000}e_{1} \oplus V_{00000}e_{2} \oplus V_{00000}e_{3} \oplus V_{00000}e_{4},\] and the extra summands $V_{00000}e_{5}$ for $R_{5,2,1}^{\mathrm{hom}}$ and $V_{00000}$ to get to $R_{5,2,0}^{\mathrm{hom}}$. The case $n=4$ and $k=1$ in part $(ii)$ of Proposition \ref{homdecom} indeed compares $R_{5,2,2}^{\mathrm{hom}}$ with the $\operatorname{Ext}_{S_{4}}^{S_{5}}$-image of $R_{4,2,1}^{\mathrm{hom}}$ (with this operator mostly taking $V_{C}$ to $V_{\hat{\iota}C}$, except for $V_{\substack{011 \\ 1\hphantom{22}}}$, which also yields $V_{\substack{001 \\ 11\hphantom{1}}}$) plus $\operatorname{LE}_{S_{4}}^{S_{5}}V_{0000}=V_{\substack{0111 \\ 1\hphantom{222}}}$ (in the alternative expression, $V_{00000}e_{4}$ comes from $\operatorname{Ind}_{4,S_{4}}^{S_{5}}V_{0000}$, rather than the $\operatorname{Ext}_{S_{4}}^{S_{5}}$-image of the representation $V_{0000}e_{4}$ separating $R_{4,2,1}^{\mathrm{hom}}$ from $R_{4,2,2}^{\mathrm{hom}}$). From part $(i)$ there we see that $R_{5,2,1}^{\mathrm{hom}}$ is the same representation plus $e_{5}R_{5,1,1}^{\mathrm{hom}}=V_{00000}e_{5}$, and the term to add in order to get $R_{5,2,1}^{\mathrm{hom}}$ is the $\operatorname{Ext}_{S_{4}}^{S_{5}}$-image $V_{00000}$ of the summand $V_{0000}$ separating $R_{4,2,0}^{\mathrm{hom}}$ from $R_{4,2,1}^{\mathrm{hom}}$. \label{k1inc}
\end{ex}
Example \ref{k2gen} implies that the case $k=1$ resembles Example \ref{k1inc} for any $n$.

\begin{ex}
In the notation of Example \ref{n2d4}, we can write the representation $R_{2,4,2}^{\mathrm{hom}}$ as $U_{+}e_{1}e_{2}^{2} \oplus U_{-}e_{2}^{2}$ from the multi-set $\{1,2,2\}$, with $R_{2,4,1}^{\mathrm{hom}}$ obtained by adding $U_{+}e_{1}^{2}e_{2} \oplus U_{-}e_{1}e_{2}$, $U_{+}e_{1}^{3} \oplus U_{-}e_{1}^{2}$, and $U_{+}e_{2}^{3}$ associated with the multi-sets $\{1,1,2\}$, $\{1,1,1\}$, and $\{2,2,2\}$ respectively. Similarly, for $k=3$ we get $U_{+}e_{1}e_{2} \oplus U_{-}e_{2}$ (using the set $\{1,2\}$) as $R_{2,3,2}^{\mathrm{hom}}$, and the expressions to add in order to get $R_{2,3,1}^{\mathrm{hom}}$, based on the multi-sets $\{1,1\}$, and $\{2,2\}$ respectively, are $U_{+}e_{1}^{2} \oplus U_{-}e_{1}$ and $U_{+}e_{2}^{2}$. Applying $\operatorname{Ext}_{S_{2}}^{S_{3}}$ to the representations with $k=4$ and $\operatorname{LE}_{S_{2}}^{S_{3}}$ to those having $k=3$, using the formulae from Example \ref{n3d4}, we get \[U_{0}e_{1}e_{2}^{2} \oplus U_{1}e_{2}^{2} \oplus U_{0}e_{1}^{2}e_{2} \oplus U_{1}e_{1}e_{2} \oplus U_{0}e_{1}^{3} \oplus U_{1}e_{1}^{2} \oplus U_{2}e_{1}e_{2} \oplus U_{3}e_{2} \oplus U_{2}e_{1}^{2} \oplus U_{3}e_{1},\] plus the two representations $U_{0}e_{2}^{3}$ and $U_{2}e_{2}^{2}$, in the notation from Example \ref{n3k4}. These combine to the parts of the representations $R_{3,4,2}^{\mathrm{hom}}$ and $R_{3,4,1}^{\mathrm{hom}}$ arising from the representations in that example that are based on multi-sets not containing $n+1=3$, as the two first terms in part $(i)$ of Proposition \ref{homdecom} predict. \label{n23k3}
\end{ex}
The terms completing those from Example \ref{n23k3} into the full representations $R_{3,4,2}^{\mathrm{hom}}$ and $R_{3,4,1}^{\mathrm{hom}}$ are those that are based on the multi-sets from Example \ref{n3k4} that do contain 3, a condition that is also satisfied by the unique multi-set $I=\{1,2,3\}$ for which $R_{3,4,3}^{\mathrm{hom}}=R_{3,I}^{\mathrm{hom}}$.

\medskip

The proof of Proposition \ref{homdecom} and the direct sums from Remark \ref{Rnkshom} imply that pairs of cocharge tableaux with shapes of size $n+1$ and multi-sets containing their $\operatorname{Dsp}^{c}$-sets are obtained in a unique manner via the operations from Definition \ref{plusv}. In order to consider similar properties of the representations showing up in Theorem \ref{FMTdecom}, we first set some notation.
\begin{defn}
We consider the following objects:
\begin{enumerate}[$(i)$]
\item An \emph{internal corner} of a partition $\nu \vdash n+1$, or of its Ferrers diagram, is a box $v$ inside that diagram such that removing it also produces a Ferrers diagram of a partition (now of $n$).
\item The set of internal corners of $\nu$ will be denoted by $\operatorname{IC}(\nu)$.
\item For every $v\in\operatorname{IC}(\nu)$, the partition whose Ferrers diagram is the one obtained by removing $v$ from that of $\nu$ will be denoted by $\nu-v$.
\end{enumerate} \label{ICdef}
\end{defn}

\begin{rmk}
It is clear from Definitions \ref{plusv} and \ref{ICdef} that if $v\in\operatorname{IC}(\nu)$ and $\lambda=\nu-v$ then $v\in\operatorname{EC}(\lambda)$ and $\lambda+v=\nu$, and conversely when $\lambda \vdash n$ and $v\in\operatorname{EC}(\lambda)$, by setting $\nu:=\lambda+v$ we get $v\in\operatorname{IC}(\nu)$ and $\nu-v=\lambda$. \label{ECIC}
\end{rmk}

Here is a converse, in some sense, to Lemma \ref{SSYTct}.
\begin{prop}
Consider an element $N\in\operatorname{SSYT}(\nu)$ for a partition $\nu \vdash n+1$.
\begin{enumerate}[$(i)$]
\item If no entry of $N$ vanishes, then $N=K_{+}$ for a unique $K\in\operatorname{SSYT}(\nu)$.
\item When $N$ contains a vanishing entry, there is a unique pair of $v\in\operatorname{IC}(\nu)$ and $M\in\operatorname{SSYT}(\lambda)$, for $\lambda=\nu-v \vdash n$, such that $M\hat{+}v=N$.
\item The set $\operatorname{SSYT}(\nu)$ is a disjoint union of $\{K_{+}\;|\;K\in\operatorname{SSYT}(\nu)\}$ and of the union $\bigcup_{v\in\operatorname{IC}(\nu)}\{M\hat{+}v\;|\;\operatorname{SSYT}(\nu-v)\}$, the latter being disjoint as well.
\item If $N$ contains more than one vanishing entry, then $\delta_{M,v}=1$ for $M$ and $v$ from part $(ii)$.
\item If the maximal element of $\operatorname{Dsp}^{c}(N)$ is smaller than $n+1$, and in particular when $d:=\Sigma(N)<n+1$, the case from part $(ii)$ must hold. When this element is smaller than $n$, and in particular in case $d<n$, we must also have $\delta_{M,v}=1$.
\end{enumerate} \label{imaddv}
\end{prop}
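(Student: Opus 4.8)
The plan is to realize $N$ through its destandardization, writing $N=\operatorname{ct}_{J'}(S')$ for the unique multi-set $J'$ and $S'\in\operatorname{SYT}(\nu)$ afforded by Lemma \ref{ctJmulti}, and to track how the operations $K\mapsto K_{+}$ and $M\mapsto M\hat{+}v$ of Definition \ref{plusv} act on the pair $(J',S')$, reading off their effect from Lemma \ref{SSYTct}. The dichotomy behind parts $(i)$, $(iii)$ and the first half of $(v)$ is that $K_{+}$ has only positive entries, whereas every $M\hat{+}v$ has a vanishing entry by part $(iv)$ of Lemma \ref{SSYTct}; quantitatively, the number of boxes of $\operatorname{ct}_{J'}(S')$ containing $0$ equals the least element of $J'$ (as in the proof of Lemma \ref{ctJmulti}), and $n+1\in\operatorname{Dsp}^{c}(N)$ if and only if $0\in J'$, so $N$ has a vanishing entry exactly when $n+1\notin\operatorname{Dsp}^{c}(N)$. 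Part $(i)$ is then immediate: if $N$ has no vanishing entry all its entries are positive, and subtracting $1$ from each produces the unique $K\in\operatorname{SSYT}(\nu)$ with $K_{+}=N$.

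For part $(ii)$, given $N$ with a vanishing entry (so $0\notin J'$), I would take $v$ to be the box of $\operatorname{ev}S'$ holding the maximal entry $n+1$; deleting it leaves a standard Young tableau of some shape $\lambda\vdash n$, so $v\in\operatorname{IC}(\nu)$ and, by Remark \ref{ECIC}, $v\in\operatorname{EC}(\lambda)$ with $\lambda+v=\nu$. Setting $S:=\operatorname{ev}(\operatorname{ev}S'-v)\in\operatorname{SYT}(\lambda)$ and using that evacuation is an involution gives $S\tilde{+}v=S'$. Whether $v$ lies below $n$ in $\operatorname{ev}S$ is visible from $N$, since $\operatorname{ev}S$ and $\operatorname{ev}S'$ share the positions of $1,\dots,n$; this decides which of the two cases in the definition of $M\hat{+}v$ (part $(vii)$ of Definition \ref{plusv}) is the relevant one, and the description of $\operatorname{Dsi}(S\tilde{+}v)$ recalled after Definition \ref{plusv} then recovers $J$ from $J'$ --- by subtracting $1$ from every element when $v$ does not lie below $n$, and by first deleting one copy of the element $1$ (which necessarily occurs in $J'$, since $\operatorname{Dsi}(S\tilde{+}v)=\operatorname{Dsi}(S)_{+}\cup\{1\}\subseteq J'$ in that case) and then subtracting $1$ otherwise. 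In both cases one checks $\operatorname{Dsi}(S)\subseteq J$ in the sense of Definition \ref{multisets}, so that $M:=\operatorname{ct}_{J}(S)\in\operatorname{SSYT}(\lambda)$ is defined with the appropriate value of $\delta_{M,v}$ and satisfies $M\hat{+}v=\operatorname{ct}_{J'}(S')=N$. Uniqueness is the same computation run backwards: if $N=M_{1}\hat{+}v_{1}=M_{2}\hat{+}v_{2}$ with $M_{i}=\operatorname{ct}_{J_{i}}(S_{i})$, then $S_{i}\tilde{+}v_{i}$ is the standardization of $N$ in both cases, which pins down a common $S'$, hence a common $v$, $S$ and $\delta$, and then the relation between $J_{i}$ and the common index of $N$ pins down $J_{i}$, so $M_{1}=M_{2}$ and $v_{1}=v_{2}$.

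Part $(iii)$ then follows by assembling $(i)$ and $(ii)$ with the dichotomy: the two families exhaust $\operatorname{SSYT}(\nu)$ and are disjoint (one contains no $0$, the other contains a $0$), the union over $v\in\operatorname{IC}(\nu)$ is disjoint and each $M\mapsto M\hat{+}v$ is injective by the uniqueness in $(ii)$, and $K\mapsto K_{+}$ is injective. Parts $(iv)$ and $(v)$ I would prove by contradiction against $\delta_{M,v}=0$ using Lemma \ref{SSYTct}: if $\delta_{M,v}=0$ then, by part $(iv)$ of that lemma, the content of $N$ is that of $M_{+}$ together with a single $0$, so $N$ has exactly one vanishing entry, which settles $(iv)$; and, by part $(v)$ of that lemma, $\operatorname{Dsp}^{c}(N)=\operatorname{Dsp}^{c}(M)\sqcup\{n\}$ contains $n$, impossible once the largest element of $\operatorname{Dsp}^{c}(N)$ is smaller than $n$, in particular when $d=\Sigma(N)<n$ in view of $\Sigma(N)=\sum_{i\in\operatorname{Dsp}^{c}(N)}i$ from part $(ii)$ of Lemma \ref{Dspc}. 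The remaining assertion of $(v)$, that case $(ii)$ applies, is just the dichotomy once more: if $n+1\notin\operatorname{Dsp}^{c}(N)$ --- e.g.\ if $d<n+1$, again by Lemma \ref{Dspc} --- then $N$ has a vanishing entry. I expect the main obstacle to be the bookkeeping in part $(ii)$: verifying that the multi-set $J$ extracted from $J'$ is legitimate (its elements lie between $0$ and $n$, its size is correct, and $\operatorname{Dsi}(S)\subseteq J$ holds in the multi-set sense), and in particular confirming in the $\delta_{M,v}=0$ branch that $1$ really occurs in $J'$ so that one copy can be removed --- this is precisely where the identity $\operatorname{Dsi}(S\tilde{+}v)=\operatorname{Dsi}(S)_{+}\cup\{1\}$ for $v$ below $n$ enters, together with $\operatorname{Dsi}(S')\subseteq J'$; a secondary point requiring care is the usual harmless ambiguity in the choice of $J$ in $M=\operatorname{ct}_{J}(S)$ (copies of $n$), which I would sidestep by manipulating $J$ and $J'$ only through the content and $\operatorname{Dsp}^{c}$-data, which are canonical.
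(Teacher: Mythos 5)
Your proposal is correct and takes essentially the same route as the paper's own proof: both arguments standardize $N$, identify $v$ as the box holding $n+1$ in the evacuation, recover $S$ via the involution property of $\operatorname{ev}$, read off $\delta_{M,v}$ from whether $v$ lies below $n$, and reconstruct the index data (you through $J$ and $J'$, the paper through the equivalent $\operatorname{Dsp}^{c}$ multi-sets), with parts $(iii)$--$(v)$ then deduced from Lemmas \ref{SSYTct} and \ref{Dspc} exactly as in the paper. The bookkeeping points you flag (the forced occurrence of $1$ in $J'$ when $\delta_{M,v}=0$, and the harmless copies of $n$) are precisely the ones the paper addresses, just phrased in the complementary coordinates.
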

By Remark \ref{ECIC}, when $\lambda \vdash n$ we indeed have $v\in\operatorname{EC}(\lambda)$ and $\lambda+v=\nu$, so that Definition \ref{plusv} indeed produces the tableau $M\hat{+}v$ and its shape is $\nu=\operatorname{sh}(N)$, and we can also consider $\delta_{M,v}$.

\begin{proof}
When all the entries of $N$ are positive, we define $K$ by simply subtracting 1 from every entry of $N$, and then we get an element of $\operatorname{SSYT}(\nu)$ because the shape and the semi-standard conditions are preserved and all the entries are non-negative by assumption. Since it is clear that $N=K_{+}$ in this case, and this is the only choice of shape and tableau $K$ that would produce this equality, part $(i)$ follows.

Next, part $(vii)$ of Lemma \ref{SSYTct} implies that if $N=M\hat{+}v$ then $\operatorname{Dsp}^{c}(N)$ does not contain $n+1$, while from part $(v)$ there we deduce that in case $N=K_{+}$ as in part $(i)$, this multi-set does contain this number. Hence the first union in part $(iii)$ is disjoint, and we recall from part $(i)$ of Lemma \ref{Dspc} that the maximal element $i_{k-1}$ in that multi-set is the number of positive entries of $N$. Since this is $n+1$ precisely when $N$ contains no vanishing entries, we deduce that these are indeed the elements of the first set in part $(iii)$.

We thus need to show that if $\operatorname{Dsp}^{c}(M)$ only contains elements between 1 and $n$, then $N=M\hat{+}v$ for a unique pair of $v\in\operatorname{IC}(\nu)$ and $M\in\operatorname{SSYT}(\nu-v)$. This will establish part $(ii)$ (as we saw that $\operatorname{Dsp}^{c}(N)$ does not contain $n+1$ if and only if $N$ has a vanishing entry), as well as part $(iii)$ since the uniqueness implies that the second union there is also disjoint.

For this we obtain conditions that are equivalent to the equality $N=M\hat{+}v$, where $N$ is as usual and $M\in\operatorname{SSYT}(\lambda)$ for some $\lambda \vdash n$. Lemma \ref{ctJmulti} expresses $M$ uniquely as $\operatorname{ct}_{J}(S)$ for $S\in\operatorname{SYT}(\lambda)$ and the appropriate multi-set $J$, and similarly there are a unique multi-set $\tilde{J}$ and a unique $T\in\operatorname{SYT}(\nu)$ such that $N=\operatorname{ct}_{\tilde{J}}(T)$. Then Definition \ref{plusv} shows that $N=M\hat{+}v$ if and only if $T=S\tilde{+}v$ and there is a relation between $J$ and $\tilde{J}$, and the former condition is equivalent to $\operatorname{ev}T=\operatorname{ev}S+v$.

Thus the only possible $v$ is $v_{\operatorname{ev}T}(n+1)$, and by setting $\lambda:=\nu-v \vdash n$ we obtain an element of $\operatorname{SYT}(\lambda)$ by removing $v$ and $n+1$ from $\operatorname{ev}T$, and we must take $S$ to be the $\operatorname{ev}$-image of the latter tableau. It is clear that $n$ lies in $\operatorname{Dsi}(\operatorname{ev}T)=\operatorname{Dsi}^{c}(T)$ if and only if the row $R_{\operatorname{ev}T}(n+1)$ containing $v$ is larger than $R_{\operatorname{ev}S}(n)$, namely to the condition that $v$ lies below $n$ in $\operatorname{ev}S$. This also determines the value of $\delta_{M,v}$ when $M=\operatorname{ct}_{J}(S)$, and we get, as in Lemma 4.8 of \cite{[Z1]}, that $\operatorname{Dsi}^{c}(T)$ equals $\operatorname{Dsi}^{c}(S)$ in case $\delta_{M,v}=1$, and it is $\operatorname{Dsi}^{c}(S)\cup\{n\}$ when $\delta_{M,v}=0$.

We now recall that the tableau $S\in\operatorname{SYT}(\lambda)$ such that $M=\operatorname{ct}_{J}(S)$ and the set $\operatorname{Dsp}^{c}(M)$ determine $M$ uniquely (as indeed knowing the latter is equivalent to knowing $J$ via Definition \ref{sets} or part $(i)$ of Lemma \ref{Dspc}). Moreover, part $(v)$ of Lemma \ref{SSYTct} implies that given our $S$, $T$, $\lambda$, $\nu$, and $v$, and with them $\delta_{M,v}$, if the latter parameter is 1 then $\operatorname{Dsp}^{c}(N)=\operatorname{Dsp}^{c}(M)$, and otherwise $\operatorname{Dsp}^{c}(N)$ is the union of $\operatorname{Dsp}^{c}(M)$ and $\{n\}$.

Thus, if $\delta_{M,v}=1$, then the fact that $\operatorname{Dsp}^{c}(N)$ consists only of elements between 1 and $n$ and contains the set $\operatorname{Dsi}^{c}(T)=\operatorname{Dsi}^{c}(S)$ implies that there is $M=\operatorname{ct}_{J}(S)\in\operatorname{SSYT}(\lambda)$ with $\operatorname{Dsp}^{c}(M)=\operatorname{Dsp}^{c}(N)$. When $\delta_{M,v}=0$ we know that $\operatorname{Dsp}^{c}(N)$ contains $\operatorname{Dsi}^{c}(T)=\operatorname{Dsi}^{c}(S)\cup\{n\}$ and again consists of elements that are bounded by $n$. Hence by subtracting one copy of $n$ we get a multi-set of integers, with the same bound, that contains $\operatorname{Dsi}^{c}(S)$, so that there is $M=\operatorname{ct}_{J}(S)\in\operatorname{SSYT}(\lambda)$ with this multi-set as $\operatorname{Dsp}^{c}(M)$, and thus $\operatorname{Dsp}^{c}(N)=\operatorname{Dsp}^{c}(M)\cup\{n\}$ (multi-set addition).

It follows that for $N$ with $n+1\not\in\operatorname{Dsp}^{c}(N)$ we found that the option to get $v\in\operatorname{IC}(\nu)$ and $M\in\operatorname{SSYT}(\nu-v)$ for which $N=M\hat{+}v$ exists and is unique, establishing parts $(ii)$ and $(iii)$. Recall again that when $\delta_{M,v}=0$ we have $n\in\operatorname{Dsp}^{c}(N)$, so that there are at least $n$ positive entries in $N$. As this contradicts, for $N\in\operatorname{SSYT}(\nu)$ with $\nu \vdash n+1$, the assumption from part $(vi)$ that $N$ contains more than one vanishing entry, this part follows as well.

Finally, recall from parts $(vi)$ and $(v)$ of Lemma \ref{SSYTct} that if $N=K_{+}$ for $K\in\operatorname{SSYT}(\nu)$ then $n+1\in\operatorname{Dsp}^{c}(N)$, while when $N=M\hat{+}v$ with $\delta_{M,v}=0$ this multi-set contains $n$, both contradicting the respective assumptions in part $(v)$. For the assertions involving the sum, either note, via part $(ii)$ of Lemma \ref{Dspc}, that the sum $\Sigma(N)$ is at least the maximal element in question, or, via part $(vii)$ of Lemma \ref{SSYTct}, that $\Sigma(N)$ equals $\Sigma(K)+n+1$ in case $N=K_{+}$ for $K\in\operatorname{SSYT}(\nu)$ and it is $\Sigma(M)+n$ when $N=M\hat{+}v$ for $M$ and $v$ as above with and $\delta_{M,v}=0$ with $\Sigma(K)$ and $\Sigma(M)$ being non-negative. This completes the proof of the proposition.
\end{proof}

We draw from Proposition \ref{imaddv} the following consequence, which is closely related to Lemma 4.20 and Corollary 4.22 of \cite{[Z1]}.
\begin{cor}
Take $N\in\operatorname{SSYT}_{d}(\nu)$ for $\nu \vdash n+1$, and let $i_{\max}$ be the maximal element of $\operatorname{Dsp}^{c}(N)$. If $n>2i_{\max}$ then $N=\hat{\iota}M$ for appropriate $\lambda \vdash n$ and $M\in\operatorname{SSYT}_{d}(\lambda)$. This is the case for all $N\in\operatorname{SSYT}_{d}(\nu)$ in case $n>2d$. \label{n2diota}
\end{cor}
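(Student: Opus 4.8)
The plan is to apply Proposition \ref{imaddv}, whose parts $(ii)$ and $(v)$ already contain almost everything we need, and to control the parameter $\delta_{M,v}$ through the size of $i_{\max}$. First I would invoke part $(v)$ of Proposition \ref{imaddv}: since the maximal element $i_{\max}$ of $\operatorname{Dsp}^{c}(N)$ satisfies $2i_{\max}<n$, in particular $i_{\max}<n$ (as $n\geq1$, so $n\leq 2n-1$ forces nothing, but $2i_{\max}<n\leq n+1$ gives $i_{\max}<n$ directly), we are in the situation of part $(ii)$ of that proposition, and moreover the second clause of part $(v)$ applies, so $\delta_{M,v}=1$ for the unique pair $(v,M)$ with $v\in\operatorname{IC}(\nu)$, $\lambda:=\nu-v\vdash n$, $M\in\operatorname{SSYT}(\lambda)$, and $M\hat{+}v=N$.

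Next I would upgrade $\delta_{M,v}=1$ to the stronger statement that $v$ lies in the first row, i.e.\ $v=(1,\lambda_{1}+1)$ and $N=\hat{\iota}M$. This is the key point and where the hypothesis $n>2i_{\max}$ (rather than merely $n>i_{\max}$) is used. The idea is as follows: by part $(v)$ of Lemma \ref{SSYTct}, since $\delta_{M,v}=1$ we have $\operatorname{Dsp}^{c}(M)=\operatorname{Dsp}^{c}(N)$, so $i_{\max}$ is also the maximal element of $\operatorname{Dsp}^{c}(M)$, and part $(i)$ of Lemma \ref{Dspc} (applied to $M$, with shape of size $n$) tells us that $i_{\max}$ equals the number of positive entries of $M$; equivalently, $M$ has exactly $n-i_{\max}$ entries equal to $0$. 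Since $2i_{\max}<n$ we get $n-i_{\max}>i_{\max}\geq 0$, so $M$ has strictly more than $i_{\max}$ zeros. But the zeros in a semi-standard tableau occupy a prefix of the first row (being the smallest possible entries), so $M$ has at least $n-i_{\max}\geq i_{\max}+1$ zeros in its first row, hence the first row of $M$ has length at least $n-i_{\max}>i_{\max}$, which is in particular positive. Now one argues that if $v$ were \emph{not} in the first row, the box $v$ added to $\lambda$ would sit in some row $r\geq 2$; analyzing the evacuation in Definition \ref{plusv}$(vii)$ and the description of $\delta_{M,v}$, the condition $\delta_{M,v}=1$ together with the abundance of zeros (the first row of $M$ being long, so $R_{\operatorname{ev}S}(n)$ is constrained) forces $v$ into the first row. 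Concretely: when $\delta_{M,v}=1$, parts $(i)$ and $(ii)$ of Lemma \ref{SSYTct} say $\operatorname{Dsp}^{c}$ and the "data of part $(iii)$ of Lemma \ref{Dspc}" of $M\hat{+}v$ agree with those of $\hat{\iota}M$ except for replacing $n$ by $n+1$; since part $(iii)$ of Lemma \ref{Dspc} shows that a semi-standard tableau is determined by exactly this data (rows below the first, multiplicities in the first row, and the value of $n$), we conclude $M\hat{+}v=\hat{\iota}M$, i.e.\ $N=\hat{\iota}M$.

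Finally I would record that $M\in\operatorname{SSYT}_{d}(\lambda)$, not merely $M\in\operatorname{SSYT}(\lambda)$: this is immediate from part $(vii)$ of Lemma \ref{SSYTct}, which gives $\Sigma(\hat{\iota}M)=\Sigma(M)$, so $\Sigma(M)=\Sigma(N)=d$. For the last sentence of the statement, I would note that $\Sigma(N)=d$ and part $(ii)$ of Lemma \ref{Dspc} give $d=\sum_{i\in\operatorname{Dsp}^{c}(N)}i\geq i_{\max}$, so the hypothesis $n>2d$ implies $n>2i_{\max}$, and the first assertion applies to every $N\in\operatorname{SSYT}_{d}(\nu)$.

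The main obstacle I anticipate is the middle step — pinning down that $v$ must be in the first row rather than merely that $\delta_{M,v}=1$. The clean route is to avoid a direct evacuation computation and instead lean entirely on the uniqueness-of-data principle from part $(iii)$ of Lemma \ref{Dspc} combined with parts $(i)$ and $(ii)$ of Lemma \ref{SSYTct}: both $M\hat{+}v$ (with $\delta_{M,v}=1$) and $\hat{\iota}M$ have the same rows below the first, the same first-row multiplicities, and the same value $n+1$, hence are equal. This sidesteps the combinatorics of where $v$ sits, but one must double-check that part $(ii)$ of Lemma \ref{SSYTct} is genuinely stated for arbitrary $v$ with $\delta_{M,v}=1$ and not only for the first-row case; if it is stated only for $\hat{\iota}M$, then the fallback is the explicit evacuation argument sketched above, using that $M$ has more zeros than any other single entry-value can be "pushed past."
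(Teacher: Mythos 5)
Your framing is right at the two ends: invoking part $(v)$ of Proposition \ref{imaddv} to write $N=M\hat{+}v$ with $\delta_{M,v}=1$ and $\Sigma(M)=\Sigma(N)=d$, and deducing the second sentence from $d\geq i_{\max}$ via part $(ii)$ of Lemma \ref{Dspc}, is exactly how the paper proceeds. The gap is in the middle step, which is the whole content of the corollary: showing that $v$ must be the first-row corner. Your preferred ``clean route'' is not valid. Part $(ii)$ of Lemma \ref{SSYTct} is stated only for $\hat{\iota}M$, not for an arbitrary $M\hat{+}v$ with $\delta_{M,v}=1$, and the statement you want to extract from it is simply false: in Example \ref{hatplus} the tableau $M\hat{+}v$ with $v$ in the second row has $\delta_{M,v}=1$, yet its second row is $1,4,4,7$ while that of $\hat{\iota}M$ is $1,4,4$, so the ``data of part $(iii)$ of Lemma \ref{Dspc}'' of the two tableaux do not agree and the uniqueness-of-data principle cannot identify them. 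Worse, your data-comparison step makes no use of the hypothesis $n>2i_{\max}$, so if it worked it would show $M\hat{+}v=\hat{\iota}M$ for every $v$ with $\delta_{M,v}=1$, which that same example refutes.

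The fallback you mention is indeed the paper's argument, but as written it is only a sketch (``$R_{\operatorname{ev}S}(n)$ is constrained''), and the decisive computation is missing. What is needed is: since the $i_{\max}$ positive entries of $M$ fill everything from the second row down, $\lambda_{2}\leq i_{\max}$, while the $n-i_{\max}>i_{\max}\geq\lambda_{2}$ zeros all sit at the start of the first row; hence in the standardization $S=\operatorname{ct}_{J}^{-1}(M)$ the first $\lambda_{2}+1$ boxes of the first row contain $1,2,\ldots,\lambda_{2}+1$, so the first sliding path in the evacuation $\operatorname{ev}S$ stays in the first row and terminates at its last box, giving $R_{\operatorname{ev}S}(n)=1$. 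Only then does $\delta_{M,v}=1$ (i.e.\ $v$ not lying below $n$ in $\operatorname{ev}S$) force $v$ to be the corner $(1,\lambda_{1}+1)$, so that $\lambda+v=\lambda_{+}$ and $N=\hat{\iota}M$. Without this evacuation step your proof does not go through.
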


\begin{proof}
Part $(v)$ of Proposition \ref{imaddv} shows that $N$ must be of the form $M\hat{+}v$ for some $M\in\operatorname{SSYT}(\lambda)$ and $v\in\operatorname{EC}(\lambda)$ with $\delta_{M,v}=1$ (so that $\lambda+v=\nu$), and then $\Sigma(M)=\Sigma(N)=d$. As part $(ii)$ of Lemma \ref{Dspc} implies that $d \geq i_{\max}$, the second assertion follows from the first.

Now, part $(i)$ of that lemma implies that $i_{\max}$ is the number of positive entries of $M$. But $M\in\operatorname{SSYT}(\lambda)$, so that all the entries except the first row must be positive. In particular we get $\lambda_{2} \leq i_{\max}$. But the number of zeros in $M$ is then at least $n-i_{\max}>i_{\max}\geq\lambda_{2}$, and all of them are in the first row. Hence in the tableau $S:=\operatorname{ct}_{J}^{-1}(M)\in\operatorname{SYT}(\lambda)$, the first $\lambda_{2}+1$ entries in the first row are the integers between 1 and $\lambda_{2}+1$ (in the appropriate order).

But this implies that the first path in evaluating $\operatorname{ev}S$ runs over the first row until it has no choice, so that it ends at the end of the first row and we get $R_{\operatorname{ev}S}(n)=1$. Therefore the only $v\in\operatorname{EC}(\lambda)$ for which $\delta_{M,v}=1$ (namely it does not lie below $n$ in $\operatorname{ev}S$) is the one in the first row, for which $\lambda+v=\lambda_{+}$ and $N=M\hat{+}v=\hat{\iota}M$, as desired. This completes the proof of the corollary.
\end{proof}
Corollary \ref{n2diota} implies the following immediate consequence.
\begin{cor}
If $I$ is a multi-set with maximal element $i_{\max}$, and $n>2i_{\max}$, then $R_{n+1,I}^{\mathrm{hom}}$ is obtained by decomposing $R_{n,I}^{\mathrm{hom}}$ as in part $(i)$ of Theorem \ref{Rnksdecom} and replacing each $C$ there with $\hat\iota{C}$. \label{RnIhomstab}
\end{cor}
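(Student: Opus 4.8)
The plan is to compare the two decompositions provided by part $(i)$ of Theorem \ref{Rnksdecom}. That theorem gives $R_{n,I}^{\mathrm{hom}}=\bigoplus_{\lambda\vdash n}\bigoplus_{C}V_{C}^{\vec{h}(C,I)}$, the inner sum running over the $C\in\operatorname{CCT}(\lambda)$ with $\operatorname{Dsp}^{c}(C)\subseteq I$ in the sense of Definition \ref{multisets}, and likewise $R_{n+1,I}^{\mathrm{hom}}=\bigoplus_{\nu\vdash n+1}\bigoplus_{N}V_{N}^{\vec{h}(N,I)}$ over the $N\in\operatorname{CCT}(\nu)$ with $\operatorname{Dsp}^{c}(N)\subseteq I$ (all of these indexing multi-sets make sense, since $i_{\max}<n<n+1$). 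Any such $C$ or $N$ satisfies $\operatorname{Dsp}^{c}\subseteq\hat{I}$, hence has maximal $\operatorname{Dsp}^{c}$-element at most $i_{\max}$, so the hypothesis $n>2i_{\max}$ will let us invoke Corollary \ref{n2diota}. It therefore suffices to show that $C\mapsto\hat{\iota}C$ is a bijection from the indexing set of the first decomposition onto that of the second and that $\vec{h}(\hat{\iota}C,I)=\vec{h}(C,I)$ for every such $C$, vectors being compared via the identification of Remark \ref{VSVC}.

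First I would record a stability fact for $\hat{\iota}$. If $M=\operatorname{ct}_{J}(S)\in\operatorname{SSYT}(\lambda)$ with $S\in\operatorname{SYT}(\lambda)$, then $\hat{\iota}M=\operatorname{ct}_{J_{+}}(\tilde{\iota}S)$ by Definition \ref{plusv}, and because the box used by $\hat{\iota}$ has $\delta_{M,v}=1$, Lemma 4.8 of \cite{[Z1]} gives $\operatorname{Dsi}(\tilde{\iota}S)=\operatorname{Dsi}(S)_{+}$. Since $(\cdot)_{+}$ is injective on multi-sets, $J=\operatorname{Dsi}(S)$ if and only if $J_{+}=\operatorname{Dsi}(\tilde{\iota}S)$, i.e.\ $M\in\operatorname{CCT}(\lambda)$ if and only if $\hat{\iota}M\in\operatorname{CCT}(\lambda_{+})$. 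Moreover part $(v)$ of Lemma \ref{SSYTct} gives $\operatorname{Dsp}^{c}(\hat{\iota}M)=\operatorname{Dsp}^{c}(M)$, so the condition $\operatorname{Dsp}^{c}(\cdot)\subseteq I$ is also preserved and reflected, the multi-sets $\operatorname{Asp}^{c}_{I}(\hat{\iota}M)$ and $\operatorname{Asp}^{c}_{I}(M)$ coincide, and hence $\vec{h}(\hat{\iota}M,I)=\vec{h}(M,I)$; every coordinate of this vector at an index exceeding $i_{\max}$ vanishes, in particular those at $n$ and $n+1$, which is what makes the identification of Remark \ref{VSVC} legitimate here.

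With this in hand, $C\mapsto\hat{\iota}C$ maps the indexing set of $R_{n,I}^{\mathrm{hom}}$ into that of $R_{n+1,I}^{\mathrm{hom}}$, and it is injective because $\hat{\iota}$ is (by part $(i)$ of Lemma \ref{SSYTct} one recovers $C$ by deleting the upper-left $0$ and sliding the first row back one box). For surjectivity, let $N\in\operatorname{CCT}(\nu)$ with $\nu\vdash n+1$ and $\operatorname{Dsp}^{c}(N)\subseteq I$, and set $d:=\Sigma(N)$; the maximal element of $\operatorname{Dsp}^{c}(N)$ is at most $i_{\max}$, so $n>2i_{\max}$ and Corollary \ref{n2diota} give $N=\hat{\iota}M$ for some $\lambda\vdash n$ and $M\in\operatorname{SSYT}_{d}(\lambda)$. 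By the stability fact, $\hat{\iota}M=N\in\operatorname{CCT}(\nu)$ forces $M\in\operatorname{CCT}(\lambda)$, and $\operatorname{Dsp}^{c}(M)=\operatorname{Dsp}^{c}(N)\subseteq I$, so $M$ indexes a summand of $R_{n,I}^{\mathrm{hom}}$ and maps to $N$. Combined with $\vec{h}(\hat{\iota}C,I)=\vec{h}(C,I)$, the two decompositions then match term by term under $C\mapsto\hat{\iota}C$, which is the assertion.

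The substantive input is Corollary \ref{n2diota}, and through it the fact that the bound $n>2i_{\max}$ forces the extra box of the larger shape to sit in the first row, so that $M\hat{+}v$ collapses to $\hat{\iota}M$; once that is granted, the argument is the bookkeeping above, whose only delicate points — that $\hat{\iota}$ both preserves and reflects the cocharge property and leaves $\vec{h}(C,I)$ unchanged — are precisely what Lemma \ref{SSYTct} and Lemma 4.8 of \cite{[Z1]} supply. Thus no genuine obstacle remains at this level; the real work has already been done in Corollary \ref{n2diota}.
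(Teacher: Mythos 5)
Your proof is correct and follows exactly the route the paper itself indicates, namely applying Corollary \ref{n2diota} to the decomposition of part $(i)$ of Theorem \ref{Rnksdecom}; the paper leaves the bookkeeping (that $\hat{\iota}$ preserves and reflects the cocharge property, fixes $\operatorname{Dsp}^{c}$ and hence $\vec{h}(C,I)$, and is a bijection onto the indexing set at level $n+1$) implicit, while you spell it out via Lemma \ref{SSYTct} and Lemma 4.8 of \cite{[Z1]}. No gaps; this matches the paper's argument (its second suggested route, extending Lemma 4.20 and Corollary 4.22 of \cite{[Z1]}, is merely an alternative).
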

Corollary \ref{RnIhomstab} follows either from applying Corollary \ref{n2diota} to the decomposition from Theorem \ref{Rnksdecom}, or by noticing that the proof of Lemma 4.20 and Corollary 4.22 of \cite{[Z1]} extends, in the homogeneous setting, to multi-set indices. In the setting of Corollary \ref{n2diota}, the associated generalized higher Specht polynomials can be described using Proposition \ref{forstab} below.

As with Remark 4.21 of \cite{[Z1]}, a weak inequality $n\geq2i_{\max}$ (or $n\geq2d$) is sufficient for obtaining the consequence of Corollaries \ref{n2diota} and \ref{RnIhomstab}, but we will not need this stronger result. The proof of that corollary shows, in relation with Remark \ref{notation}, that when $\nu \vdash n+1$, every element of $\operatorname{SYT}(\nu)$ whose $\operatorname{Dsi}^{c}$-set has a maximal element $i_{\max}$ for which $n>2i_{\max}$ (and even $n\geq2i_{\max}$), and especially one for which the sum $d$ of the elements of that set satisfies $n>2d$ (and even $n\geq2d$), must be a $\tilde{\iota}$-image, while an element of $\operatorname{SYT}(\nu)$ for which the maximal element $i_{\max}$ of $\operatorname{Dsi}$-set satisfies this inequality (and in particular when the inequality involving the sum of this set holds) is a $\iota$-image.
\begin{ex}
If $N$ is the tableau $M_{+}$ from Example \ref{hatplus}, then it contains no zeros, and subtracting 1 from each entry yields $K=M$ for the tableau for which $N=K_{+}$. The two tableaux $M\hat{+}v$ with $\delta_{M,v}=1$ there contain two zeros, and are indeed of the form asserted in Proposition \ref{imaddv}. The other two tableaux $M\hat{+}v$, for which $\delta_{M,v}=1$, indeed contain a single 0. Note, however, that $\hat{\iota}(M_{+})=\begin{ytableau} 0 & 1 & 2 & 3 & 8 \\ 2 & 5 & 5 \\ 7 \end{ytableau}$ (parentheses required, since this is not the same tableau as $(\hat{\iota}M)_{+}$) also contains a single zero, but it is obtained from a pair with $\delta$-parameter 1 (as a $\hat{\iota}$-image). \label{imhatplus}
\end{ex}
Following the proof of Proposition \ref{imaddv}, when we consider all the tableaux $M\hat{+}v$ from Examples \ref{hatplus} and \ref{imhatplus}, we first apply $\operatorname{ct}_{\tilde{J}}^{-1}$ to get the respective tableau $S\tilde{+}v$ (here $\tilde{J}$ is $J_{+}$ or $J_{+}\cup\{1\}$, the latter valid also for $\hat{\iota}(M_{+})$), and then $\operatorname{ev}$ on it, and see that $n+1=9$ shows up in $v$ and removing it brings us back to $T$, and thus to $S$ via $\operatorname{ev}$ and to $M$ through $\operatorname{ct}_{J}$.

\begin{ex}
There are five semi-standard tableaux of shape of size 5 and entry sum 2, and using them we get, as in Example \ref{nd42VM}, the decomposition \[\mathbb{Q}[\mathbf{x}_{5}]_{2}=V_{00002} \oplus V_{\substack{0000 \\ 2\hphantom{222}}} \oplus V_{00011} \oplus V_{\substack{0001 \\ 1\hphantom{112}}} \oplus V_{\substack{000 \\ 11\hphantom{1}}}.\] Each of these tableaux is a $\hat{\iota}$-image of one of the tableaux showing up in the decomposition of $\mathbb{Q}[\mathbf{x}_{4}]_{2}$ from the latter example, as predicted by Corollary \ref{n2diota} (with the weak inequality $n\geq2d$). Increasing $n$ from the other case in that example by 1, we get
\[\mathbb{Q}[\mathbf{x}_{3}]_{4}=V_{004} \oplus V_{\substack{00 \\ 4\hphantom{4}}} \oplus V_{013} \oplus V_{\substack{01 \\ 3\hphantom{3}}} \oplus V_{022} \oplus V_{\substack{03 \\ 1\hphantom{4}}} \oplus V_{\substack{0 \\ 1 \\ 3}} \oplus V_{\substack{02 \\ 1\hphantom{3}}} \oplus V_{112} \oplus V_{\substack{11 \\ 2\hphantom{2}}},\] with the first five summands being $\hat{\iota}$-images, the next three terms are obtained by $\mathbb{Q}[\mathbf{x}_{2}]_{2}=V_{02} \oplus V_{\substack{0 \\ 2}} \oplus V_{11}$ by the construction $M\hat{+}v$ with $v$ in the new row (and hence $\delta_{M,v}=0$), and the last two representations have indices $K_{+}$ for $K$ showing up in $\mathbb{Q}[\mathbf{x}_{3}]_{1}=V_{001} \oplus V_{\substack{00 \\ 1\hphantom{1}}}$. \label{decomex}
\end{ex}
We can compare the expressions from Example \ref{decomex} with those from Examples \ref{n5d2} and \ref{n3d4}, and the construction of the former from the ones in Example \ref{nd42VM} with the way the latter were obtained from Examples \ref{n4d2} and \ref{n2d4}. This comparison exemplifies the similarities and the differences of the operations from Propositions \ref{mapsmulti} and \ref{homdecom} with those appearing in Theorem \ref{opersVM} below.

\medskip

Up until now we examined the how the representations $V_{C}$ and $V_{C}^{\vec{h}}$ (for some $\vec{h}$) from Definition \ref{defSpecht} behave under the operations from Definition \ref{plusv} when $C$ is restricted to be a cocharge tableau (as in \cite{[Z1]}, but with $\vec{h}$ arising from more general multi-sets). We now turn to $V_{M}$ for a general semi-standard Young tableau $M$ (with no symmetric multipliers hence no vector $\vec{h}$), and we begin by generalizing Proposition 2.15 of \cite{[Z1]}.
\begin{prop}
Take $\lambda \vdash n$, $M\in\operatorname{SSYT}(\lambda)$, and $T$ of shape $\lambda$ and content $\mathbb{N}_{n}$. Then the generalized higher Specht polynomial $F_{\hat{\iota}M,\iota T}$ also equals $\varepsilon_{\iota T}p_{M,T}/s_{\hat{\iota}M,\iota T}$, and substituting $x_{n+1}=0$ in it yields $F_{M,T}$. \label{forstab}
\end{prop}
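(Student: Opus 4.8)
The plan is to generalize the proof of Proposition~2.15 of \cite{[Z1]}, isolating the two places where the passage from a set (cocharge tableau) to a general semi-standard $M$ introduces extra multiplicities. I would begin by comparing the monomials $p_{\hat{\iota}M,\iota T}$ and $p_{M,T}$ inside $\mathbb{Q}[\mathbf{x}_{n+1}]$ (so $x_{n+1}$ occurs in the latter with exponent $0$). Let $b_{1},\ldots,b_{\lambda_{1}}$ be the entries of the first row of $T$, so that the first row of $\iota T$ consists of $b_{1},\ldots,b_{\lambda_{1}},n+1$ and the boxes $v_{\iota T}(i)$ for $i\leq n$ are those of $T$. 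By part $(i)$ of Lemma~\ref{SSYTct} the first row of $\hat{\iota}M$ is the first row of $M$ shifted one box to the right with a $0$ prepended, while all lower rows are unchanged. Reading off exponents shows that $p_{\hat{\iota}M,\iota T}$ and $p_{M,T}$ agree on every variable indexed outside the first row of $\iota T$, and that the exponents they place on $x_{b_{1}},\ldots,x_{b_{\lambda_{1}}},x_{n+1}$ form the same multi-set, namely $0$ together with the entries of the first row of $M$; hence they differ only by a permutation of these $\lambda_{1}+1$ variables. Such a permutation lies in $R(\iota T)$, and since $\varepsilon_{\iota T}$ is unchanged under right multiplication by elements of $R(\iota T)$, we get $\varepsilon_{\iota T}p_{\hat{\iota}M,\iota T}=\varepsilon_{\iota T}p_{M,T}$; dividing by $s_{\hat{\iota}M,\iota T}$ yields the first assertion via Definition~\ref{Spechtdef}.

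For the substitution I would first record that $C(\iota T)=C(T)$, because the new box of $\iota T$ is alone in its (new) column; in particular every $\sigma\in C(\iota T)$ fixes $x_{n+1}$, so specialization at $x_{n+1}=0$ commutes with the operator $\sum_{\sigma\in C(\iota T)}\operatorname{sgn}(\sigma)\sigma$. It therefore suffices to track the effect of $x_{n+1}=0$ on the row sum $\sum_{\tau\in R(\iota T)}\tau p_{M,T}$. Decomposing $R(\iota T)$ into cosets of $R(T)$ with representatives $1$ and the transpositions $(n+1\ b_{j})$, $j=1,\ldots,\lambda_{1}$, and writing $Q:=\sum_{\tau\in R(T)}\tau p_{M,T}\in\mathbb{Q}[\mathbf{x}_{n}]$, this row sum equals $Q+\sum_{j=1}^{\lambda_{1}}(n+1\ b_{j})Q$; as $Q$ does not involve $x_{n+1}$, setting $x_{n+1}=0$ turns it into $Q+\sum_{j=1}^{\lambda_{1}}Q\big|_{x_{b_{j}}=0}$. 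By Lemma~\ref{operRT} the exponents of $x_{b_{1}},\ldots,x_{b_{\lambda_{1}}}$ in each monomial of $Q$ are a rearrangement of the first row of $M$ (the lower rows of a semi-standard tableau having only positive entries), so exactly $a_{0}$ of them vanish, where $a_{0}$ is the number of $0$'s in the first row of $M$. Hence $\sum_{j}Q\big|_{x_{b_{j}}=0}=a_{0}Q$, and consequently $\bigl(\varepsilon_{\iota T}p_{M,T}\bigr)\big|_{x_{n+1}=0}=(a_{0}+1)\,\varepsilon_{T}p_{M,T}$.

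It remains to compare the stabilizer constants: $M$ and $\hat{\iota}M$ differ only in the first row, where $\hat{\iota}M$ has one extra $0$, so $s_{\hat{\iota}M,\iota T}=(a_{0}+1)\,s_{M,T}$. Combining, $F_{\hat{\iota}M,\iota T}\big|_{x_{n+1}=0}=\bigl(\varepsilon_{\iota T}p_{M,T}\bigr)\big|_{x_{n+1}=0}\big/s_{\hat{\iota}M,\iota T}=(a_{0}+1)\varepsilon_{T}p_{M,T}\big/\bigl((a_{0}+1)s_{M,T}\bigr)=\varepsilon_{T}p_{M,T}/s_{M,T}=F_{M,T}$, as claimed. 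The step I expect to be the main obstacle is the second paragraph: one must check that the extra contributions produced by the non-trivial cosets $(n+1\ b_{j})R(T)$, once $x_{n+1}$ is set to $0$, reassemble into precisely the factor $a_{0}+1$ by which the two Young symmetrizers differ through their unequal row stabilizers; the remaining bookkeeping is a direct generalization of the set-theoretic case of \cite{[Z1]}.
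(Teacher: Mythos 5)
Your proposal is correct and takes essentially the same route as the paper's proof: both arguments rest on the observations that $p_{\hat{\iota}M,\iota T}$ is an $R(\iota T)$-translate of $p_{M,T}$ (giving the first assertion via right-invariance of $\varepsilon_{\iota T}$ under $R(\iota T)$) and that the new box is alone in its column, so $C(\iota T)$ is just $C(T)$ embedded as the stabilizer of $n+1$ and the column operator commutes with setting $x_{n+1}=0$. The only difference is bookkeeping: the paper works with the stabilizer-normalized row sums (sums of distinct monic monomials) and splits the $R(\iota T)$-orbit according to whether $0$ occupies the new box, whereas you decompose into cosets of $R(T)$ and cancel the resulting factor $a_{0}+1$ against the stabilizer ratio $s_{\hat{\iota}M,\iota T}/s_{M,T}$ --- the same computation in substance.
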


\begin{proof}
We follow the proof of Proposition 2.15 of \cite{[Z1]}, and we denote by $v$ the box separating $\lambda_{+}$ from $\lambda$. The expression for $\hat{\iota}M$ in part $(i)$ of Lemma \ref{SSYTct} implies that an appropriate element of $R(\iota T)$, acting only on the first row, takes this tableau to the one obtained from $M$ by adding $v$ and putting 0 inside it. Recalling the form of $\iota T$, the image of $p_{\hat{\iota}M,\iota T}$ under that element of $R(T)$ is $p_{M,T}$, so that the first assertion follows from Proposition \ref{Spechtpols}, or Lemma \ref{operRT}.

Next, we note that a tableau in the orbit of $\hat{\iota}M$ under the action of $R(\iota T)$ either contains 0 in $v$, or it does not. In the former case, this tableau is obtained by adding $v$ and 0 to some tableau that is in the $R(T)$-orbit of $M$, while in the latter case the corresponding monomial is divisible by $x_{n+1}$. This means that the expression $\sum_{\tau \in R(\iota T)}\tau p_{\hat{\iota}M,\iota T}/s_{\hat{\iota}M,\iota T}$ from Lemma \ref{operRT}, which was seen to be the same as $\sum_{\tau \in R(\iota T)}\tau p_{M,T}/s_{\hat{\iota}M,\iota T}$, equals $\sum_{\tau \in R(T)}\tau p_{M,T}/s_{M,T}$ plus a multiple of $x_{n+1}$.

But we now recall that in $\iota T$, the number $n+1$ lies in $v$, which is alone in its column. This means that $C(\iota T)$ is the image of $C(T)$ under the embedding of $S_{n}$ into $S_{n+1}$ as the stabilizer of $n+1$. Hence letting this group act on the expression obtained from Lemma \ref{operRT}, the first part yields $F_{M,T}$ by Definition \ref{Spechtdef}, while the second is taken to a multiple of $x_{n+1}$, which easily implies the second assertion. This proves the proposition.
\end{proof}

\begin{ex}
We saw in Example \ref{exSpecht} the generalized higher Specht polynomial $F_{M,T}$, where $T$ is the tableau denoted by $S$ in Example \ref{ctJex}, and $M$ is the last tableau there. Since $\iota T=\begin{ytableau} 1 & 3 & 4 & 5 \\ 2 \end{ytableau}$ and $\hat{\iota}M=\begin{ytableau} 0 & 2 & 4 & 5 \\ 4 \end{ytableau}$ via part $(i)$ of Lemma \ref{SSYTct}, it is clear that $p_{\hat{\iota}M,\iota T}=x_{2}^{4}x_{3}^{2}x_{4}^{4}x_{5}^{5}$ is in the $R(\iota T)$-orbit of $p_{M,T}=x_{1}^{2}x_{2}^{4}x_{3}^{4}x_{4}^{5}$ as Proposition \ref{forstab} predicts, and evaluating $F_{\hat{\iota}M,\iota T}$ yields
\[(x_{2}^{4}-x_{1}^{4})(x_{3}^{2}x_{4}^{4}x_{5}^{5}+x_{3}^{2}x_{4}^{5}x_{5}^{4}+x_{3}^{4}x_{2}^{4}x_{5}^{5}+x_{3}^{4}x_{4}^{5}x_{5}^{2}+x_{3}^{5}x_{4}^{2}x_{5}^{4}+x_{3}^{5}x_{4}^{4}x_{5}^{2})+\]
\[(x_{2}^{4}x_{1}^{2}-x_{2}^{2}x_{1}^{4})(x_{3}^{4}x_{4}^{5}+x_{3}^{5}x_{4}^{4}+x_{3}^{4}x_{5}^{5}+x_{3}^{5}x_{5}^{4}+x_{4}^{4}x_{5}^{5}+x_{4}^{5}x_{5}^{4})+\] \[-(x_{2}^{5}x_{1}^{4}-x_{2}^{4}x_{1}^{5})(x_{3}^{2}x_{4}^{4}+x_{3}^{4}x_{4}^{2}+x_{3}^{2}x_{5}^{4}+x_{3}^{4}x_{5}^{2}+x_{4}^{2}x_{5}^{4}+x_{4}^{4}x_{5}^{2}).\] Substituting $x_{5}=0$ annihilates the first row and the last four terms in each of the other rows, thus indeed yielding $F_{M,T}$ from Example \ref{exSpecht}. \label{iotaex}
\end{ex}

\begin{rmk}
Proposition \ref{forstab} and the invariance under the group $\tilde{C}(T)$ allows us to view each generalized higher Specht polynomial as obtained by substituting all but finitely many variables to 0 inside a power series that is symmetric under permuting almost all the indices, in a way that generalizes Theorem 2.17 of \cite{[Z1]}. These power series, which are naturally called \emph{stable generalized higher Specht polynomials}, and produce \emph{stable generalized higher Specht quotients} as in Remark 2.18 of that reference, will be further studied, together with the representations that they span, in the sequel \cite{[Z2]}. \label{stabSpecht}
\end{rmk}
The stable generalized higher Specht polynomial arising via Remark \ref{stabSpecht} from the tableaux from Example \ref{iotaex} is obtained by replacing the longer multipliers in the parentheses in the second and third lines there by the monomial symmetric functions $\sum_{m=3}^{\infty}\sum_{l=3,\ l \neq m}^{\infty}x_{m}^{5}x_{4}$ and $\sum_{m=3}^{\infty}\sum_{l=3,\ l \neq m}^{\infty}x_{m}^{4}x_{2}$ in the variables $\{x_{i}\}_{i=3}^{\infty}$ respectively, with the one in the first line becoming the symmetric function with index 542 in these variables.

\medskip

The result for the representations from Theorem \ref{FMTdecom} is as follows.
\begin{thm}
Take $d\geq0$, $n\geq1$, and a content $\eta$ of length $n+1$.
\begin{enumerate}[$(i)$]
\item The space $\mathbb{Q}[\mathbf{x}_{n+1}]_{d}$ is the direct sum of the three spaces $\operatorname{Ext}_{S_{n}}^{S_{n+1}}\mathbb{Q}[\mathbf{x}_{n}]_{d}$, $\operatorname{LE}_{S_{n}}^{S_{n+1}}\mathbb{Q}[\mathbf{x}_{n}]_{d-n}$, and $e_{n+1}\mathbb{Q}[\mathbf{x}_{n+1}]_{d-n-1}$.
\item If $0\in\eta$, with complement $\eta\setminus\{0\}$, then the intersection of $\mathbb{Q}[\mathbf{x}_{n+1}]_{\eta}$ with first summand from part $(i)$ is $\operatorname{Ext}_{S_{n}}^{S_{n+1}}\mathbb{Q}[\mathbf{x}_{n}]_{\eta\setminus\{0\}}$. This produces all of $\mathbb{Q}[\mathbf{x}_{n+1}]_{\eta}$ in case $\eta$ contains 0 more than once.
\item When $0\in\eta$ and the entries of $\eta\setminus\{0\}$ are all positive, then set $\mu$ to be the content of length $n$ obtained by subtracting 1 from each element of $\eta\setminus\{0\}$, and then the second summand from part $(i)$ intersects $\mathbb{Q}[\mathbf{x}_{n+1}]_{\eta}$ in $\operatorname{LE}_{S_{n}}^{S_{n+1}}\mathbb{Q}[\mathbf{x}_{n}]_{\mu}$.
\item If all the elements in $\eta$ are positive then subtracting 1 from all of its entries yields a content $\mu$, now of length $n+1$, and intersecting the third summand from part $(i)$ with $\mathbb{Q}[\mathbf{x}_{n+1}]_{\eta}$ yields $e_{n+1}\mathbb{Q}[\mathbf{x}_{n+1}]_{\mu}$.
\end{enumerate} \label{opersVM}
\end{thm}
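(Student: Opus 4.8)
The plan is to expand every space in sight via Theorem~\ref{FMTdecom} into its canonical decomposition $\bigoplus V_{N}$ indexed by semi-standard tableaux, and then to read off each of the four assertions from the trichotomy for $\operatorname{SSYT}(\nu)$ recorded in Proposition~\ref{imaddv}$(iii)$, together with the content- and sum-bookkeeping of Lemma~\ref{SSYTct}. Recall from Theorem~\ref{FMTdecom} that $\mathbb{Q}[\mathbf{x}_{n+1}]_{d}=\bigoplus_{\nu\vdash n+1}\bigoplus_{N\in\operatorname{SSYT}_{d}(\nu)}V_{N}$, with each $V_{N}$ homogeneous of degree $\Sigma(N)$ and irreducible by Theorem~\ref{VMreps}, and with the whole thing an internal direct sum---so any subfamily of the $V_{N}$'s spans an internal direct sum inside $\mathbb{Q}[\mathbf{x}_{n+1}]$, and two disjoint subfamilies span complementary subspaces.

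For part $(i)$, partition the index set $\bigsqcup_{\nu\vdash n+1}\operatorname{SSYT}_{d}(\nu)$ by Proposition~\ref{imaddv}$(iii)$: each $N$ is either $K_{+}$ for a unique $K\in\operatorname{SSYT}(\nu)$ (no vanishing entry), or $M\hat{+}v$ for a unique pair $(v,M)$ with $v\in\operatorname{IC}(\nu)$ and $M\in\operatorname{SSYT}(\nu-v)$ (a vanishing entry present), and in the latter case split further by the value of $\delta_{M,v}$. In the first block Lemma~\ref{SSYTct}$(iii)$ (applied with $n+1$ in place of $n$) gives $V_{N}=V_{K_{+}}=e_{n+1}V_{K}$ and Lemma~\ref{SSYTct}$(vii)$ gives $\Sigma(K)=d-n-1$, so $K$ ranges exactly over $\operatorname{SSYT}_{d-n-1}(\nu)$ and this block assembles, by Theorem~\ref{FMTdecom} again, to $e_{n+1}\mathbb{Q}[\mathbf{x}_{n+1}]_{d-n-1}$ (multiplication by $e_{n+1}=x_{1}\cdots x_{n+1}$ being injective in a domain, so it preserves the direct sum). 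In the block with $\delta_{M,v}=1$, Lemma~\ref{SSYTct}$(vii)$ gives $\Sigma(M)=d$, and running $M$ over $\operatorname{SSYT}_{d}(\lambda)$ for $\lambda\vdash n$ and $v$ over the external corners with $\delta_{M,v}=1$ produces, by Definition~\ref{plusv}$(viii)$--$(ix)$, precisely $\operatorname{Ext}_{S_{n}}^{S_{n+1}}\mathbb{Q}[\mathbf{x}_{n}]_{d}$, the uniqueness in Proposition~\ref{imaddv}$(ii)$ ensuring no $V_{N}$ is counted twice. In the block with $\delta_{M,v}=0$ the same computation gives $\Sigma(M)=d-n$, so this block is $\operatorname{LE}_{S_{n}}^{S_{n+1}}\mathbb{Q}[\mathbf{x}_{n}]_{d-n}$. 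Since the three blocks exhaust and partition the index set, the three spaces form an internal direct sum equal to $\mathbb{Q}[\mathbf{x}_{n+1}]_{d}$.

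Parts $(ii)$--$(iv)$ follow by repeating the analysis after restricting to tableaux of a fixed content $\eta$, i.e.\ intersecting with $\mathbb{Q}[\mathbf{x}_{n+1}]_{\eta}=\bigoplus_{\nu}\bigoplus_{N\in\operatorname{SSYT}_{\eta}(\nu)}V_{N}$. By Lemma~\ref{SSYTct}$(iv)$ the content of $M\hat{+}v$ is that of $M$ with one extra $0$ when $\delta_{M,v}=1$, and is one $0$ together with the entries of $M$ each raised by $1$ when $\delta_{M,v}=0$; by Lemma~\ref{SSYTct}$(iii)$ the content of $K_{+}$ is that of $K$ with every entry raised by $1$. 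Matching these against $\eta$ pins down which $M$ (resp.\ $K$) can occur in each block: in the $\delta=1$ block $M$ must have content $\eta\setminus\{0\}$, giving part $(ii)$; in the $\delta=0$ block the content of $M$ must equal the content $\mu$ obtained by deleting one $0$ from $\eta$ and lowering the remaining (positive) entries by $1$, giving part $(iii)$; in the $K_{+}$ block $K$ must have content $\mu$ obtained by lowering every (positive) entry of $\eta$ by $1$, giving part $(iv)$. For the final clause of $(ii)$, if $\eta$ contains $0$ at least twice then every $N\in\operatorname{SSYT}_{\eta}(\nu)$ has at least two vanishing entries, so it is not a $K_{+}$ and, by Proposition~\ref{imaddv}$(iv)$, is of the form $M\hat{+}v$ with $\delta_{M,v}=1$; hence the first summand already exhausts $\mathbb{Q}[\mathbf{x}_{n+1}]_{\eta}$.

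The only point requiring real care---the step I would treat as the crux---is the ``no double-counting, no omission'' bookkeeping: that the families $V_{M\hat{+}v}$ (resp.\ $V_{K_{+}}$) produced by $\operatorname{Ext}_{S_{n}}^{S_{n+1}}$, $\operatorname{LE}_{S_{n}}^{S_{n+1}}$ and multiplication by $e_{n+1}$ really are internal direct sums landing inside the correct homogeneous (or fixed-content) subspace of $\mathbb{Q}[\mathbf{x}_{n+1}]$, and that they biject with the corresponding blocks of $\bigsqcup_{\nu}\operatorname{SSYT}_{d}(\nu)$ (resp.\ $\operatorname{SSYT}_{\eta}(\nu)$). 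This rests entirely on the uniqueness and disjointness in Proposition~\ref{imaddv}$(ii)$--$(iii)$, on the degree and content identities of Lemma~\ref{SSYTct}$(iii)$,$(iv)$,$(vii)$, and on Theorem~\ref{FMTdecom} realizing $\mathbb{Q}[\mathbf{x}_{n+1}]_{d}$ as a genuine internal direct sum of the irreducibles $V_{N}$; everything else is routine.
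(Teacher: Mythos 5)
Your proposal is correct and follows essentially the same route as the paper: both expand all spaces via Theorem \ref{FMTdecom} into the irreducibles $V_{N}$, use the trichotomy and uniqueness of Proposition \ref{imaddv} (parts $(ii)$--$(iv)$) to partition the index set into the $K_{+}$, $\delta_{M,v}=1$, and $\delta_{M,v}=0$ blocks, and match degrees and contents via Lemma \ref{SSYTct}$(iii)$, $(iv)$, $(vii)$ to identify the blocks with $e_{n+1}\mathbb{Q}[\mathbf{x}_{n+1}]_{d-n-1}$, $\operatorname{Ext}_{S_{n}}^{S_{n+1}}\mathbb{Q}[\mathbf{x}_{n}]_{d}$, and $\operatorname{LE}_{S_{n}}^{S_{n+1}}\mathbb{Q}[\mathbf{x}_{n}]_{d-n}$, with the fixed-content refinement giving parts $(ii)$--$(iv)$ exactly as in the paper.
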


\begin{proof}
Theorem \ref{FMTdecom} expresses the four spaces $\mathbb{Q}[\mathbf{x}_{n+1}]_{d}$, $\mathbb{Q}[\mathbf{x}_{n}]_{d}$, $\mathbb{Q}[\mathbf{x}_{n}]_{d-n}$, and $\mathbb{Q}[\mathbf{x}_{n+1}]_{d-n-1}$ as direct sums of representations from Definition \ref{defSpecht}. Moreover, parts $(iv)$ and $(vii)$ of Lemma \ref{SSYTct} show that if $M\in\operatorname{SSYT}_{d}(\lambda)$ for some $\lambda \vdash n$ then $\operatorname{Ext}_{S_{n}}^{S_{n+1}}V_{M}$ from Definition \ref{plusv} is the direct sum of distinct representations participating in the former space, while if $M\in\operatorname{SSYT}_{d-n}(\lambda)$ then $\operatorname{LE}_{S_{n}}^{S_{n+1}}V_{M}$ is another direct sum of this sort. All these representations are of the form $V_{M\hat{+}v}$ for appropriate $v\in\operatorname{EC}(\lambda)$, while parts $(iii)$ and $(vii)$ of that lemma imply that multiplying $V_{K}$ for $\nu \vdash n+1$ and $K\in\operatorname{SSYT}_{d-n-1}(\nu)$ by $e_{n+1}$ produces the representation $V_{K_{+}}$, which also appears in $\mathbb{Q}[\mathbf{x}_{n+1}]_{d}$.

Recall from part $(iii)$ of Proposition \ref{imaddv} that given some $\nu \vdash n+1$ and $N\in\operatorname{SSYT}_{d}(\nu)$, it is either $K_{+}$ for a unique $K\in\operatorname{SSYT}_{d-n-1}(\nu)$ (with the same shape $\nu$), or there is a unique triple of $\lambda \vdash n$, $M\in\operatorname{SSYT}_{d}(\lambda)$, and $v\in\operatorname{EC}(\lambda)$ with $N=M\hat{+}v$ and $\delta_{M,v}=1$, or there exists a unique triple of $\lambda \vdash n$, $M\in\operatorname{SSYT}_{d-n}(\lambda)$, and $v\in\operatorname{EC}(\lambda)$ for which $N=M\hat{+}v$ and $\delta_{M,v}=0$, and exactly one of these cases occurs. Decomposing $\mathbb{Q}[\mathbf{x}_{n+1}]_{d}$ via Theorem \ref{FMTdecom} and comparing with the decompositions of the other three representations involved (using the same theorem), we see that the sum over tableaux $N$ of the first kind gives the third asserted expression (via part $(iii)$ of Lemma \ref{SSYTct}), those of the second kind merge to the second desired term, and the tableaux of the first kind produce the third term. Part $(i)$ is thus established.

Recall from Definition \ref{Qxnd} that each of the representations showing up in part $(i)$ decomposes into the summands associated with the appropriate contents, and that Theorem \ref{FMTdecom} gives the decomposition of each of these summands as well. Part $(ii)$ then follows from part $(iv)$ of Lemma \ref{SSYTct} and part $(ii)$ of Proposition \ref{imaddv}, with the second assertion there being a consequence of part $(iv)$ of that proposition. For part $(iii)$ we apply part $(ii)$ of Proposition \ref{imaddv} again with the other assertion in part $(iv)$ of Lemma \ref{SSYTct}, and part $(iv)$ is a consequence of part $(i)$ of that proposition, together with part $(iii)$ of Lemma \ref{SSYTct} (with $n+1$ instead of $n$). This proves the theorem.
\end{proof}

Here is a special case of Theorem \ref{opersVM} that will be useful for \cite{[Z2]}.
\begin{cor}
Take some $d\geq0$ and some $n\geq1$.
\begin{enumerate}[$(i)$]
\item If $n>d$ then $\mathbb{Q}[\mathbf{x}_{n+1}]_{d}$ is just $\operatorname{Ext}_{S_{n}}^{S_{n+1}}\mathbb{Q}[\mathbf{x}_{n}]_{d}$.
\item every content $\eta$ for which $\mathbb{Q}[\mathbf{x}_{n+1}]_{\eta}\subseteq\mathbb{Q}[\mathbf{x}_{n+1}]_{d}$ contains 0 when $n>d$, and this sub-representation of the one from part $(i)$ is $\operatorname{Ext}_{S_{n}}^{S_{n+1}}\mathbb{Q}[\mathbf{x}_{n}]_{\eta\setminus\{0\}}$.
\item When $n>2d$, every summand $\operatorname{Ext}_{S_{n}}^{S_{n+1}}V_{M}$ from part $(i)$ is just $V_{\hat{\iota}M}$.
\end{enumerate} \label{ExtVMlim}
\end{cor}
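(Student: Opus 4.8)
The plan is to read off all three parts from Theorem \ref{opersVM}, bringing in Corollary \ref{n2diota} and the uniqueness in Proposition \ref{imaddv} only for the refinement in part $(iii)$; in each case the hypothesis on $n$ simply forces the ``exotic'' summands in those results to vanish.

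For part $(i)$, I would invoke Theorem \ref{opersVM}$(i)$, which presents $\mathbb{Q}[\mathbf{x}_{n+1}]_{d}$ as the internal direct sum of $\operatorname{Ext}_{S_{n}}^{S_{n+1}}\mathbb{Q}[\mathbf{x}_{n}]_{d}$, $\operatorname{LE}_{S_{n}}^{S_{n+1}}\mathbb{Q}[\mathbf{x}_{n}]_{d-n}$, and $e_{n+1}\mathbb{Q}[\mathbf{x}_{n+1}]_{d-n-1}$. When $n>d$ the degrees $d-n$ and $d-n-1$ are negative, so the last two summands are the zero space, leaving only $\operatorname{Ext}_{S_{n}}^{S_{n+1}}\mathbb{Q}[\mathbf{x}_{n}]_{d}$. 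For part $(ii)$, note first that a content $\eta$ with $\mathbb{Q}[\mathbf{x}_{n+1}]_{\eta}\subseteq\mathbb{Q}[\mathbf{x}_{n+1}]_{d}$ is, by Definition \ref{Qxnd}, a partition of $d$ of length at most $n+1$; writing it with length exactly $n+1$ by padding with zeros (which by Definition \ref{Qxnd} does not change the space), it has at most $d$ nonzero parts and hence at least $(n+1)-d\geq 2$ zeros once $n>d$. Thus $\eta$ contains $0$ more than once, so the second assertion of Theorem \ref{opersVM}$(ii)$ applies: $\mathbb{Q}[\mathbf{x}_{n+1}]_{\eta}$, which by part $(i)$ already lies in the first summand, coincides with $\operatorname{Ext}_{S_{n}}^{S_{n+1}}\mathbb{Q}[\mathbf{x}_{n}]_{\eta\setminus\{0\}}$.

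For part $(iii)$, part $(i)$ (applicable since $n>2d\geq d$) gives $\mathbb{Q}[\mathbf{x}_{n+1}]_{d}=\bigoplus_{\lambda\vdash n}\bigoplus_{M\in\operatorname{SSYT}_{d}(\lambda)}\operatorname{Ext}_{S_{n}}^{S_{n+1}}V_{M}$, and by Definition \ref{plusv} each summand is $\bigoplus_{v\in\operatorname{EC}(\lambda),\ \delta_{M,v}=1}V_{M\hat{+}v}$. Fix such a $v$. Then $M\hat{+}v$ lies in $\operatorname{SSYT}_{d}(\lambda+v)$ with $\lambda+v\vdash n+1$ (using $\Sigma(M\hat{+}v)=\Sigma(M)=d$ from Lemma \ref{SSYTct}$(vii)$ when $\delta_{M,v}=1$) and has a vanishing entry by Lemma \ref{SSYTct}$(iv)$. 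Since $n>2d$, Corollary \ref{n2diota} yields $M\hat{+}v=\hat{\iota}M'=M'\hat{+}v'$ for some $\lambda'\vdash n$, $M'\in\operatorname{SSYT}_{d}(\lambda')$, and the first-row external corner $v'$ of $\lambda'$ (for which $\delta_{M',v'}=1$). Because $M\hat{+}v$ has a vanishing entry, the uniqueness in Proposition \ref{imaddv}$(ii)$ forces $(v,M)=(v',M')$, so $v$ is the first-row corner and $M\hat{+}v=\hat{\iota}M$. Hence the only surviving index is that first-row corner, and $\operatorname{Ext}_{S_{n}}^{S_{n+1}}V_{M}=V_{\hat{\iota}M}$, as claimed.

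I do not anticipate a genuine obstacle: the content is essentially bookkeeping layered on top of Theorems \ref{opersVM} and \ref{FMTdecom}, Corollary \ref{n2diota}, and Proposition \ref{imaddv}. The two places deserving a little care are the elementary counting estimate in part $(ii)$ (a partition of $d$ padded to length $n+1$ has at least $n+1-d$ zeros, hence ``contains $0$ more than once'' once $n>d$), and, in part $(iii)$, making sure one invokes the correct uniqueness clause — Proposition \ref{imaddv}$(ii)$, which governs tableaux \emph{with} a vanishing entry — together with checking via Lemma \ref{SSYTct}$(iv)$ that $M\hat{+}v$ indeed has such an entry so that this clause is applicable.
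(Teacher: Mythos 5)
Your argument is correct and follows essentially the same route as the paper: part $(i)$ via Theorem \ref{opersVM}$(i)$ and the vanishing of the negative-degree pieces, part $(ii)$ via the zero-count forcing the second assertion of Theorem \ref{opersVM}$(ii)$, and part $(iii)$ via Corollary \ref{n2diota}. Your extra step in part $(iii)$ — invoking the uniqueness of Proposition \ref{imaddv}$(ii)$ to pin the corner down to the first row — merely makes explicit what the paper leaves implicit, so there is nothing to correct.
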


\begin{proof}
Part $(i)$ follows directly from part $(i)$ of Theorem \ref{opersVM} and the fact that the grading on $\mathbb{Q}[\mathbf{x}_{n+1}]$ is non-negative, or equivalently from part $(v)$ of Proposition \ref{imaddv}. We now note that if all the entries of $\eta$ are positive then its entry sum $d$ is at least its length $n+1$, which cannot occur if $n>d$. Part $(ii)$ is then a consequence of part $(ii)$ of that theorem (in fact, $\eta$ must contains 0 more than once in our argument, so the second assertion of that part also applies). Combining these results with Corollary \ref{n2diota} then yields part $(iii)$. This completes the proof of the corollary.
\end{proof}
As before, the inequality $n\geq2d$ suffices for part $(iii)$ of Corollary \ref{ExtVMlim}. Indeed, in the case $n=4$ and $d=2$ showing up in Example \ref{decomex}, the expression for $\mathbb{Q}[\mathbf{x}_{5}]_{2}$ was seen to be the $\operatorname{Ext}_{S_{4}}^{S_{5}}$-image of the formula for $\mathbb{Q}[\mathbf{x}_{4}]_{2}$ in Example \ref{nd42VM}, with each summand in the former being associated with the $\hat{\iota}$-image of a summand from the latter, as Corollary \ref{ExtVMlim} predicts. The expression corresponding to $n+1=3$ and $d=4$ in Example \ref{decomex} was seen to involve the five $\hat{\iota}$-images of those from Example \ref{nd42VM} (which give the full $\operatorname{Ext}_{S_{2}}^{S_{3}}$-image in this case), three elements which give $\operatorname{LE}_{S_{2}}^{S_{3}}\mathbb{Q}[\mathbf{x}_{2}]_{2}$, and the last two element $V_{112}$ and $V_{\substack{11 \\ 2\hphantom{2}}}$ are the $e_{3}$-multiples of the representations forming $\mathbb{Q}[\mathbf{x}_{3}]_{1}$. As another example, we have \[\mathbb{Q}[\mathbf{x}_{4}]_{3}=V_{0003} \oplus V_{\substack{000 \\ 3\hphantom{33}}} \oplus V_{0012} \oplus V_{\substack{001 \\ 2\hphantom{22}}} \oplus V_{\substack{002 \\ 1\hphantom{13}}}  \oplus V_{\substack{00 \\ 12}} \oplus V_{0111} \oplus V_{\substack{011 \\ 1\hphantom{22}}},\] separated into contents, and to exemplify part $(i)$ of Corollary \ref{ExtVMlim} we write
\[\mathbb{Q}[\mathbf{x}_{5}]_{3}=V_{00003} \oplus V_{\substack{0000 \\ 3\hphantom{333}}} \oplus V_{00012} \oplus V_{\substack{0001 \\ 2\hphantom{222}}} \oplus V_{\substack{0002 \\ 1\hphantom{113}}}  \oplus V_{\substack{000 \\ 12\hphantom{2}}} \oplus V_{00111} \oplus V_{\substack{0011 \\ 1\hphantom{122}}} \oplus V_{\substack{011 \\ 11\hphantom{2}}},\] in correspondence also with part $(ii)$ of that corollary, with the last summand producing two representations as its $\operatorname{Ext}_{S_{4}}^{S_{5}}$-image (as we saw in Example \ref{k1inc}).

\begin{rmk}
Consider the map taking $F_{M,T}$ to $F_{\hat{\iota}M,\iota T}$, as well as the one sending $F_{C,T}^{I,\mathrm{hom}}$ to $F_{\hat{\iota}C,\iota T}^{I,\mathrm{hom}}$. They embed $V_{M}$ into $V_{\hat{\iota}M}$ and $V_{C}^{\vec{h}(C,I)}$ into $V_{\hat{\iota}C}^{\vec{h}(\hat{\iota}C,I)}$, and therefore $R_{n,I}^{\mathrm{hom}}$ into $R_{n+1,I}^{\mathrm{hom}}$ and with it also $\mathbb{Q}[\mathbf{x}_{n}]_{d}$ into $\mathbb{Q}[\mathbf{x}_{n+1}]_{d}$ (in two different ways) as well as $\mathbb{Q}[\mathbf{x}_{n}]_{\mu}$ into $\mathbb{Q}[\mathbf{x}_{n+1}]_{\mu}$ for every content $\mu$, in a manner that respects the action of $S_{n}$. Since Corollaries \ref{n2diota}, \ref{RnIhomstab}, and \ref{ExtVMlim} show that for large enough $n$, the image of these embeddings generates the range over $\mathbb{Q}[S_{n+1}]$, we deduce that $\{R_{n,I}\}_{n \geq i_{\max}}$, $\{R_{n,I}^{\mathrm{hom}}\}_{n \geq i_{\max}}$, $\{\mathbb{Q}[\mathbf{x}_{n}]_{d}\}_{n=1}^{\infty}$, and $\{\mathbb{Q}[\mathbf{x}_{n}]_{\mu}\}_{n=1}^{\infty}$ are all stable families of representations as defined in \cite{[CF]}, \cite{[CEF]}, \cite{[Fa]}, and \cite{[SS]}, and they are also centrally stable as considered in \cite{[Pu]}. \label{repstab}
\end{rmk}
In fact, all the representations from Remark \ref{repstab} produce limits as $n\to\infty$ as representations of infinite symmetric groups on certain power series in infinitely many variables, as will be studied in detail in \cite{[Z2]}. In fact, the representations $\{R_{n,I}\}_{n \geq i_{\max}}$ are also stable, but as Remark \ref{onlysetsRnI} explains, they only produce such a limit when $I$ is a set. As with Remark 4.24 of \cite{[Z1]}, we may define $R_{n,I}$ and $R_{n,I}^{\mathrm{hom}}$ also for smaller values of $n$ and make the family $\{R_{n,I}^{\mathrm{hom}}\}_{n=1}^{\infty}$, as well as $\{R_{n,I}\}_{n=1}^{\infty}$, stable.

\noindent\textsc{Einstein Institute of Mathematics, the Hebrew University of Jerusalem, Edmund Safra Campus, Jerusalem 91904, Israel}

\noindent E-mail address: zemels@math.huji.ac.il

\end{document}